\documentclass[reqno,11pt]{amsart}
\usepackage{amssymb}
\usepackage[letterpaper,margin=1in]{geometry}
\usepackage[showonlyrefs]{mathtools}  
\usepackage{latexsym}
\usepackage[hidelinks]{hyperref}
\usepackage{amsmath}
\usepackage{amscd}
\usepackage{cite}
\usepackage{enumitem}
\usepackage{color}
\usepackage{amsfonts}
\usepackage{graphicx}
\usepackage{mathrsfs}
\usepackage{setspace}
\newcommand{\eps}{{\varepsilon}}
\theoremstyle{plain}
\newtheorem{theorem}{Theorem}
\newtheorem{proposition}[theorem]{Proposition}
\newtheorem{lemma}[theorem]{Lemma}
\newtheorem{corollary}[theorem]{Corollary}

\theoremstyle{definition}
\newtheorem{definition}[theorem]{Definition}
\newtheorem{remark}[theorem]{Remark}

\newcommand{\qtq}[1]{\quad\text{#1}\quad}
\numberwithin{equation}{section}
\numberwithin{theorem}{section}

\def\ge{\geqslant}
\def\le{\leqslant}
\def\geq{\geqslant}
\def\leq{\leqslant}

\def\R{\mathbb{R}}
\def\C{\mathbb{C}}
\def\eps{\varepsilon}

\allowdisplaybreaks[2]

\begin{document}

\title[GWP and scattering for mass-critical Hartree equation]{Global well-posedness and scattering for mass-critical Hartree equation}

\author[Z. Ma]{Zuyu Ma}
\address{Zuyu Ma, Graduate School of China Academy of Engineering Physics, Beijing 100088, China}
\email{mazuyu23@gscaep.ac.cn}

\author[C. Miao]{Changxing Miao}
\address{School of Mathematics and Physics, University of Science and Technology Beijing, Beijing 100083, China}
\email{miao\_changxing@ustb.edu.cn, miao\_changxing@iapcm.ac.cn}

\author[M. Rosenzweig]{Matthew Rosenzweig}
\address{Matthew Rosenzweig, Department of Mathematical Sciences, Carnegie Mellon University, Pittsburgh, PA, USA}
\email{mrosenz2@andrew.cmu.edu}

\author[J. Zheng]{Jiqiang Zheng}
\address{Jiqiang Zheng, Institute of Applied Physics and Computational Mathematics and National Key Laboratory of Computational Physics, Beijing 100088, China}
\email{zheng\_jiqiang@iapcm.ac.cn}

\subjclass[2020]{35Q55}
\keywords{Hartree equations, mass-critical,  long time Strichartz estimate, interaction Morawetz estimate, scattering.}
\maketitle 
\begin{abstract}
In this paper, we prove that the defocusing mass-critical Hartree equation is global well-posed and scatters for any initial data $u_0\in L^2$, and prove the analogous result in the focusing case, provided the mass of $u_0$ is strictly less than that of the ground state $Q$. This result removes the radial restriction from \cite{MXZ-JMPA}, thereby proving the scattering conjecture for the Hartree equation at the scaling-critical regularity. 
\end{abstract}

\section{Introduction}
We consider the mass-critical Hartree equation:
\begin{equation}\label{NLH}
\begin{cases}
i\partial_{t}u+\Delta u=\mu F(u),\\
u(0,x)=u_{0}(x)\in L^2(\R^{d}),
\end{cases}
\end{equation}
Throughout this paper, $d\geq3$. Here $F(u):=(V(x)\ast|u|^2)u$, $V=|x|^{-2}$ and $\mu\in\{\pm1\}$. When $\mu=1$, we say that the equation \eqref{NLH} is defocusing, and when $\mu=-1$, we say that the equation is focusing.

The Hartree equation arises in the study of quantum many-body systems with a large number of particles. More precisely, equation \eqref{NLH} is the mean field limit of a system of $N$ bosons in d-space with two-body interactions governed by the pair potential (see \cite{arxiv1}) 
\[
V(x) := \mu / |x|^2.
\]
The solution to the Hartree equation \eqref{NLH} is then an effective description of the dynamics of a single boson in a certain scaling limit as the number of particles $N$ tends to infinity. To our knowledge, a rigorous derivation of equation \eqref{NLH} from the underlying $N$-body Schr\"odinger problem is lacking due to the singularity of $V$, but we refer the reader to \cite{Ammari,Knowles} for derivations in the case of less singular potentials.

For $\lambda > $0, it is an easy computation that the solution to equation \eqref{NLH} is invariant under the scaling transformation
\[
u(t,x) \mapsto u_{\lambda}(t,x) := \lambda^{d/2}u(\lambda^2 t, \lambda x),
\]
which we hereafter refer to as the Hartree scaling. Additionally, classical solutions to the Hartree equation conserve mass, momentum, and energy, respectively given by the functionals

\[
M(u(t)) := \int_{\mathbb{R}^d} |u(t,x)|^2 dx,
\]
\[
P(u(t)) := 2 \int_{\mathbb{R}^d} \text{Im}\{\bar{u}\nabla u\}(t,x)dx,
\]
\[
E(u(t)) := \frac{1}{2} \int_{\mathbb{R}^d} |\nabla u(t,x)|^2 dx + \frac{\mu}{4} \int_{\mathbb{R}^d} |u(t,x)|^2 (V * |u|^2)(t,x)dx.
\]
Since the Hartree scaling preserves the $L_x^2(\mathbb{R}^d)$  norm of  $u(t)$, and $L^2$ is the regularity of the mass functional, we refer to the Hartree equation \eqref{NLH} as being  $L^2$ - or \emph{mass-critical}. In this article, we are interested in solutions at this critical regularity.

In order to discuss the local Cauchy theory for equation \eqref{NLH}, we must clarify our notion of a solution.  
We work exclusively with the class of strong (also called mild) solutions defined below. Thus, use of the word ``solution'' in the sequel should be understood, unless stated otherwise, as ``strong solution.''
\begin{definition}[Solution]
    We say that a function  $u : I \times \mathbb{R}^d \to \mathbb{C}$  on a time interval $I \subset \mathbb{R}$  is a \emph{strong solution} to equation \eqref{NLH} if it belongs to the function space  $C^0_{t} L_x^2(K \times \mathbb{R}^d) \cap L^{\frac{2(d+2)}{d}}_{t,x}(K \times \mathbb{R}^d)$ for any compact interval $K\subset I$, and obeys the Duhamel formula
\[
u(t_1) = e^{i(t_1-t_0)\Delta} u(t_0) - i\mu \int_{t_0}^{t_1} e^{i(t_1-t)\Delta} F(u(t)) \, dt, \quad t_0, t_1 \in I.
\]
Here, the notation $e^{it\Delta}$  denotes the Fourier multiplier with symbol $ e^{-it|\xi|^2} $. The interval  $I$  is the \emph{lifespan} of the solution, and we say that a solution has \emph{maximal lifespan} if  $I$  is not properly contained in an interval $J \subset \mathbb{R}$. We say that a solution is \emph{global} if  $I = \mathbb{R}$.
\end{definition}

If a solution does not have finite $L_{t,x}^{\frac{2(d+2)}{d}}$  norm on its lifespan, then it is said to blow up, a notion we make precise with the next definition. 
    \begin{definition}[Blowup]
We say that a solution $u$ to \eqref{NLH} blows up forward in time if there exists $t_0\in I$ such that
\[
\int_{t_0}^{\sup I}\int_{\mathbb{R}^d}|u(t,x)|^{\frac{2(d+2)}d}dxdt = +\infty,
\]
and blows up backward in time if
\[
\int_{\inf I}^{t_0}\int_{\mathbb{R}^d}|u(t,x)|^{\frac{2(d+2)}d}dxdt = +\infty.
\]
\end{definition}

The last notion we introduce before proceeding to the local well-posedness theory is that of scattering, which refers to global solutions that asymptotically evolve like solutions to the free Schr\"odinger equation.
\begin{definition}[Scattering]
    Let $ u : I \times \mathbb{R}^d \to \mathbb{C} $ be a solution to \eqref{NLH}. Given an asymptotic state $ u_+ \in L^2(\mathbb{R}^d) $, we say that $ u $ scatters forward in time to $ e^{it\Delta} u_+ $, if $ \sup I = \infty $ and $ \lim\limits_{t \to \infty} \|u(t) - e^{it\Delta} u_+\|_{L_x^2(\mathbb{R}^d)} = 0 $. Similarly, given $ u_- \in L^2(\mathbb{R}^d) $, we say that $ u $ scatters backward in time to $ e^{it\Delta} u_- $, if $ \inf I = -\infty $ and $ \lim\limits_{t \to -\infty} \|u(t) - e^{it\Delta} u_-\|_{L_x^2(\mathbb{R}^d)} = 0 $.
\end{definition}

To our knowledge, the mathematical study of the Cauchy problem for \eqref{NLH} with $L^2(\mathbb{R}^d)$  initial data was initiated by Miao, Xu and Zhao \cite{MXZ-JPDE}, who used the fixed-point method of Cazenave and Weissler \cite{Cazenave} for the NLS to establish local well-posedness for arbitrary initial data in $L^2$, as well as small-data global well-posedness and scattering. However, although their definition of a strong solution differs slightly from ours, it is easy to show through the standard local theory argument that the following local well-posedness theory holds.
\begin{theorem}[Local well-posedness, \cite{MXZ-JPDE}] \label{thm:local-well-posedness}
Fix  $\mu \in \{ \pm 1 \}$. For $u_0 \in L^2(\mathbb{R}^d)$, there exists a unique maximal-lifespan solution  $u : I \times \mathbb{R}^d \to \mathbb{C}$ to \eqref{NLH}. Moreover, the following assertions hold.
\begin{enumerate}
    \item There exists  $m_0>0$, such that if  $\|u_0\|_{L^2(\mathbb{R}^d)} < m_0$, then $u$ is global, \( \|u\|_{L_{t,x}^{\frac{2(d+2)}{d}}(\mathbb{R} \times \mathbb{R}^d)} \lesssim 1 \), and $u$ scatters both forward and backward in time.
    
    \item $I$ is an open interval containing $0$.
    
    \item If $\sup I < \infty$ (resp. $\inf I > -\infty$), then $u$ blows up forward (resp. backward) in time.
    
    \item The solution map $u_0$ from  $L^2(\mathbb{R}^d)$  to  $C_{t,\text{loc}}^0 L_x^2(I \times \mathbb{R}^d) \cap L_{t,x}^{\frac{2(d+2)}{d}}(I \times \mathbb{R}^d)$ is uniformly continuous on compact time intervals for bounded subsets of initial data.
    
    \item If $\sup I = \infty$ (resp. $\inf I = -\infty$ ) and $u$ does not blow up forward (resp. backward) in time, then $u$ scatters forward (resp. backward) in time.
\end{enumerate}
\end{theorem}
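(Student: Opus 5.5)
The plan is the standard Cazenave--Weissler contraction argument in Strichartz spaces. The whole argument rests on one nonlinear estimate. Set $q=\frac{2(d+2)}{d}$ and use the admissible pair $(q,q)$, whose dual exponent is $q'=\frac{2(d+2)}{d+4}$. The key estimate is
\[
\|F(u)-F(v)\|_{L^{q'}_{t,x}(I\times\R^d)}\lesssim \big(\|u\|_{L^q_{t,x}}^2+\|v\|_{L^q_{t,x}}^2\big)\|u-v\|_{L^q_{t,x}}.
\]
To prove it, write $F(u)-F(v)$ as a sum of trilinear terms $(V*(w_1\bar w_2))w_3$. For each fixed $t$, Hardy--Littlewood--Sobolev with $V=|x|^{-2}$ gives $\|V*(fg)\|_{L^{r}_x}\lesssim\|fg\|_{L^{p}_x}$, where $\frac1r=\frac1p-\frac{d-2}{d}$. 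Take $p=q/2=\frac{d+2}{d}$, so that $fg\in L^{p}$ by H\"older. This gives $\frac1r=\frac{d}{d+2}-\frac{d-2}{d}=\frac{4}{d(d+2)}$, and we need $r\in(1,\infty)$, which holds for $d\ge3$. H\"older then yields $\frac1r+\frac1q=\frac{4}{d(d+2)}+\frac{d}{2(d+2)}$. This exponent is not $\frac1{q'}$, so the spatial pairing must be corrected. The fix is to take three mixed admissible norms $L^{a}_tL^{b}_x$ instead, chosen so that $\frac3b-\frac{d-2}{d}=\frac1{\tilde b'}$ and $\frac3a=\frac1{\tilde a'}$. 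Every such choice is consistent with scaling, since the Hartree nonlinearity with $|x|^{-2}$ is exactly mass-critical, so admissible exponents exist. One concrete option is $(a,b)$ with $\frac2a=d(\frac12-\frac1b)$, with the output measured in the dual of an admissible pair. I will work in $X(I)=L^q_{t,x}\cap L^a_tL^b_x$, where both components are Strichartz-controlled.

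\textbf{Local existence and uniqueness.} The Duhamel map $\Phi(u)=e^{it\Delta}u_0-i\mu\int_0^te^{i(t-s)\Delta}F(u)\,ds$ is controlled by Strichartz estimates: $\|\Phi(u)\|_{X(I)}\le\|e^{it\Delta}u_0\|_{X(I)}+C\|u\|_{X(I)}^3$. By monotone convergence, $\|e^{it\Delta}u_0\|_{X(I)}\to0$ as $|I|\to0$. Hence, for $\eta$ small, $\Phi$ is a contraction on the ball $\{\|u\|_{X(I)}\le2\eta\}$ with the $X(I)$ metric. This gives a solution in $C_tL^2_x\cap X$. Unconditional uniqueness in the class of the Definition follows from the same difference estimate: two solutions agreeing at one time coincide on short intervals where their $L^q$ norms are small, and a continuity argument extends this. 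Gluing local solutions gives the maximal lifespan. Part (2) follows because the existence time depends only on the profile of the data, not just its norm, so the maximal interval is open.

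\textbf{Remaining items.} For (1), when $\|u_0\|_2<m_0$ we have $\|e^{it\Delta}u_0\|_{X(\R)}\lesssim\|u_0\|_2$ by global Strichartz, so the contraction can be run on $I=\R$. For (3), suppose $\sup I<\infty$ but $\|u\|_{L^q}<\infty$ near $\sup I$. Then a nonlinear Strichartz bootstrap, subdividing into intervals where $\|u\|_{L^q}$ is small, controls $\|u\|_X$. The Duhamel tail then shows $e^{-it\Delta}u(t)$ converges in $L^2$, so one can extend past $\sup I$, a contradiction. The same Cauchy argument, applied with $\sup I=\infty$, gives the scattering state $u_+=u_0-i\mu\int_0^\infty e^{-is\Delta}F(u)\,ds$, which proves (5). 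For (4), write the difference of two Duhamel formulas. On subintervals where the $X$ norms are small, the trilinear estimate gives Lipschitz bounds. The number of subintervals is bounded uniformly over a compact time interval and bounded data, by a stability/perturbation lemma, which gives uniform continuity.

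\textbf{Main obstacle.} The main step is choosing the exponents in the HLS/H\"older nonlinear estimate so that everything closes with admissible pairs and a cubic power of a norm that is dominated by $L^q_{t,x}$ together with Strichartz. The blowup criterion in (3) is stated in $L^q_{t,x}$, so the bootstrap must upgrade $L^q_{t,x}$ finiteness to the auxiliary norm. This upgrade is standard once the estimate can be interpolated with one factor in $L^q_{t,x}$ and the rest in auxiliary Strichartz norms.
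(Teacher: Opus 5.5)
Your overall route matches the paper's. The paper gives no argument beyond citing the Cazenave--Weissler contraction of \cite{MXZ-JPDE}, rerun in $L^\infty_tL^2_x\cap L^{2(d+2)/d}_{t,x}$.

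\textbf{The key estimate is false.} The bound you say everything rests on fails for every $d\ge3$, and not only for the spatial reason you noticed. With $q=\frac{2(d+2)}{d}$ one has $\frac3q=\frac{3d}{2(d+2)}>\frac{d+4}{2(d+2)}=\frac1{q'}$, so the time exponents do not match either. Take $u=1_{[0,\epsilon]}(t)f(x)$ and $v=0$. The left side is $\epsilon^{1/q'}\|F(f)\|_{L^{q'}_x}$ and the right side is $\epsilon^{3/q}\|f\|_{L^q_x}^3$, so the ratio blows up as $\epsilon\to0$, even on a bounded time interval.

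\textbf{The mixed-norm repair is not carried out.} Switching to mixed norms is the right fix, and your two relations do make $(\tilde a,\tilde b)$ automatically admissible. But you never fix the exponents, and your uniqueness argument still uses the false $L^q_{t,x}$ difference bound. You need $\max\{3,q\}\le a\le 6$. The bounds $3\le a\le6$ give $2\le\tilde a\le\infty$. The condition $a\ge q$ gives $\|u\|_{L^a_tL^b_x}\le\|u\|_{L^\infty_tL^2_x}^{1-\theta}\|u\|_{L^q_{t,x}}^{\theta}$. Two valid choices:
\begin{itemize}
\item $a=\tilde a=4$, $b=\tilde b=\frac{2d}{d-1}$. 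Here HLS maps $L^{\frac{d}{d-1}}_x\to L^d_x$, giving $\|F(u)\|_{L^{4/3}_tL^{\frac{2d}{d+1}}_x}\lesssim\|u\|^3_{L^4_tL^{\frac{2d}{d-1}}_x}$.
\item $a=6$, $b=\frac{6d}{3d-2}$, with output in $L^2_tL^{\frac{2d}{d+2}}_x$. This is exactly the estimate behind Lemma \ref{lorennorm}.
\end{itemize}
The interpolation from $a\ge q$ is what gives uniqueness in the class of the Definition, which assumes only $C^0_tL^2_x\cap L^q_{t,x}$ on compact intervals. It is also what converts finiteness of the $L^q_{t,x}$ norm into finiteness of the auxiliary norm in (3) and (5). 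For that step you additionally need mass conservation, or a bootstrap of $\|u\|_{L^\infty_tL^2_x}$, which you do not mention.

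\textbf{Part (4) contains a false step.} At mass-critical regularity, the number of small-$L^q_{t,x}$ subintervals of a fixed compact interval is not bounded uniformly over bounded sets of data. For $\mu=-1$, time-reversed translates of the pseudo-conformal solution from the introduction have mass $M(Q)$ and blow up at arbitrarily small positive times. So solutions with data in a bounded set need not even exist on a given compact interval. The stability lemma gives only local uniformity: for $J$ compact inside the lifespan of $u_0$, data in a small $L^2$-ball around $u_0$ yield solutions on $J$ depending Lipschitz-continuously on the data in $C^0_tL^2_x\cap L^q_{t,x}(J\times\R^d)$. This is the only form (4) can take at this regularity, and your argument should be phrased that way.

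\textbf{Minor point on (2).} Openness of the lifespan needs only that local existence can be reapplied at an endpoint of a closed lifespan. The dependence of the existence time on the profile of the data plays no role there.
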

\begin{remark}
    The proof is essentially the same as in \cite{MXZ-JPDE}. One can redefine the metric  $d(u,v)$ on the space $Y_{T,\rho}^0$ in \cite[Theorem 3.1]{MXZ-JPDE} as 
   \[
   \left\{ u \in C([0,T],L^2(\mathbb{R}^n)) \cap L_{t,x}^{\frac{2(d+2)}{d}}([0,T]\times\mathbb{R}^d) \; \middle| \; \|u\|_{L_t^\infty L^2\cap L_{t,x}^{\frac{2(d+2)}{d}}([0,T]\times\R^d)} \leq \rho \right\}
   \]
   and
   \[
   \|u-v\|_{L_t^\infty L^2\cap L_{t,x}^{\frac{2(d+2)}{d}}([0,T]\times\R^d)}
   \]
   respectively. 
\end{remark}
Having reviewed the local well-posedness for the Hartree equation, we proceed to discuss possible obstructions to global well-posedness and scattering of solutions with initial data of arbitrarily large mass. From (1) of Theorem \ref{thm:local-well-posedness}, we know that initial data with mass below the threshold $m_0^2$ yields a global solution with finite $L_{t,x}^{\frac{2(d+2)}{d}}$ norm and which scatters both forward and backward in time independent of the value of $\mu$. However, in the focusing case, there is a mass threshold such that finite-time blow up may occur for solutions with mass above that threshold. Indeed, analogous to the nonlinear Schr\"odinger equation, one has the following \emph{virial identity} for the Hartree equation (for instance, see Proposition 3.1 in \cite{MiaoXuZhaoblowup})
\[
\frac{d^2}{dt^2} V(t) = 16E(u(0)),
\]
where $V$ denotes the variance of the solution defined by
\[
V(t) := \int_{\mathbb{R}^d} |x|^2 |u(t,x)|^2 dx, \quad \langle x \rangle u \in L^2(\mathbb{R}^d).
\]

When $\mu = 1$, the energy functional $E$ is obviously positive definite, and one can use this fact to obtain a global solution in $C_t^0 H_x^1(\mathbb{R} \times \mathbb{R}^d)$ for initial data in $H^1(\mathbb{R}^d)$. When $\mu = -1$, the energy functional may be negative, in which case one can use the virial identity together with the standard Zakharov/Glassey argument to show finite-time blow-up. However, the virial identity alone does not determine the long-time dynamical behavior of the solution (i.e., whether it scatters). In the defocusing case, Miao, Xu and Zhao \cite{MXZ-JMPA} proved that any radial solution is globally well-posed and scatters in $L^2$. For the general  data, Chae, Cho and Lee \cite{CCS} used the I-method developed by the I-team \cite{Iteam} to prove that solutions are globally well-posed  in $H^{s}$ with $\frac{4(d-2)}{7d-8}<s<1$. In view of these results, it is natural to expect the solutions to equation \eqref{NLH} to be global and scatter for arbitrary initial data in $L^2$.

In the focusing case, there are known counterexamples to global well-posedness and scattering for \eqref{NLH}. Let $Q$ be the positive radial solution to the elliptic equation
\begin{equation}\label{grounstate}
    \Delta Q - Q + \left(|x|^{-2}*|Q|^2\right)Q = 0. 
\end{equation}
Existence of such solution (known as \emph{ground state}) was proved in \cite{MiaoXuZhaoblowup,Moroz}\footnote{However, to our knowledge, the uniqueness of positive, radial solutions in $H^1(\mathbb{R}^d)$ to equation \eqref{grounstate} is open, except in dimension four (cf. \cite{Krieger})}. Then $e^{it}Q$ is the solution to \eqref{NLH} that blows up both forward and backward in time. Moreover, by applying the pseudoconformal transformation to $e^{it}Q$, we obtain the solution
\[
\frac{1}{t^{\frac{d}{2}}}e^{i\frac{|x|^2-4}{4t}}Q\big(\tfrac{x}{t}\big)
\]
that has the same mass of that of ground state $Q$ and blows up in finite time. It is conjectured that the ground state is the minimal mass threshold to global well-posedness and scattering in the focusing case. To the best of the authors' knowledge, this conjecture has only been verified for radial initial data in Miao, Xu, and Zhao\cite{MXZ-JMPA}.

In this paper, we verify the scattering conjecture for equation \eqref{NLH} in both the defocusing and focusing cases. We state our result as follows:
\begin{theorem}\label{main}
Let $u_0\in L^2(\R^d)$. If $\mu=1$, then the solution to \eqref{NLH} is global and scatters in both time directions. If $\mu=-1$ and $M(u_0)<M(Q)$, then the solution to \eqref{NLH}  is global and scatters in both time directions.
\end{theorem}

\begin{remark}
    By a standard argument (for instance, see the lecture note of Killip--Vi\c{s}an \cite{KV}), the regularity is preserved: if $u_0\in H^s(\mathbb{R}^d)$ for some $s>0$, then $u(t)\in H^s(\mathbb{R}^d)$ for all $t\in\R$. In particular, our main result recovers the global well-posedness result of Chae, Cho and Lee \cite{CCS} in the defocusing case.
\end{remark}
\begin{remark}
    The global spacetime norm (e.g., the $L_{t,x}^{\frac{2(d+2)}{d}}$ norm) is bounded by a constant depending only on the mass: 
    \[
    \|u\|_{L_{t,x}^{\frac{2(d+2)}{d}}(\R\times\R^d)}\leq C(M(u)),
    \]
where $C(\cdot)$ is a nondecreasing function with $C(0)=0$, defined on $[0,\infty)$ for $\mu=1$ and on $[0,M(Q))$ for $\mu=-1$. This is because we adopt the Kenig--Merle concentration--compactness roadmap (cf.~\cite{KM-Invent}) to prove the main theorem; see the detailed discussion below.
\end{remark}

\subsection{Outline of the proof.}
To prove our main result,  we use the formulation of the Kenig-Merle concentration compactness/rigidity road map \cite{KM-Invent} for the mass-critical NLS \cite{TVZ-Forum} and Dodson's long-time Strichartz technique developed in \cite{Dodson-JAMS,Dodson-AJM,Dodson-Duke}. Compared with the mass-critical NLS, the main difficulty in our proof lies in the many challenges posed by the non-local nonlinear (as we will discuss in details later). This work, as well as the third author's previous work \cite{Rosenzweig}, are part of broader effort of making the existing mass-critical global well-posedness and scattering program for nonlinear Schr\"odinger equations, which has its antecedents in the pioneering work of Bourgain \cite{Bourgain}, more robust to non-local nonlinearities.

To facilitate the following discussion, we first give the following  definition.

\begin{definition}[Symmetry group $G$]
For a phase $\theta\in\R/2\pi\mathbb{Z},$ position $x_0\in\R^d,$ frequency $\xi_0\in\R^d$ and scaling parameter $\lambda>0$, we define the unitary transformation $g_{\theta,\xi_0,x_0,\lambda}: L^2(\R^d)\to L^2(\R^d)$ by
\begin{equation}\label{equ:gtrandef}
g_{\theta,\xi_0,x_0,\lambda}f(x):=\lambda^{-\frac{d}{2}}e^{i\theta}e^{ix\cdot\xi_0}f\big(\tfrac{x-x_0}{\lambda}\big).
\end{equation}
We let $G$ denote the collection of such transformations. One may check that $G$ is a group with identity $g_{0,0,0,1}$, inverse $g_{\theta,\xi_0,x_0,\lambda}^{-1}=g_{-\theta-x_0\cdot\xi_0,-\lambda\xi_0,-x_0/\lambda,\lambda^{-1}},$ and group law
\begin{equation}\label{equ:gtrslaw}
g_{\theta,\xi_0,x_0,\lambda}g_{\theta',\xi_0',x_0',\lambda'}
=g_{\theta+\theta'-x_0\cdot\xi_0'/\lambda,\xi_0+\xi_0'/\lambda,x_0+\lambda x_0',\lambda\lambda'}.
\end{equation}
$G$ induces a group action on $L^2(\R^d)$, and we denote the quotient of the action (i.e. the space of $G$-orbits $Gf:=\{gf: g\in G\}$ for $f\in L^2(\R^d)$) by $L^2(\R^d)/G$, which we endow with the quotient metric topology, which is complete. For $g_{\theta,\xi_0,x_0,\lambda}\in G,$ we define the action $T_{g_{\theta,\xi_0,x_0,\lambda}}$ on spacetime function $u:I\times\R^d\to\C$ by 
\begin{align}\label{equ:Tgthedef}
T_{g_{\theta,\xi_0,x_0,\lambda}}:&\;\lambda^2I\times\R^d\to\C,\quad \lambda^2I:=\{\lambda^2t:\;t\in I\}\\\label{equ:Tgtxdef}
\big(T_{g_{\theta,\xi_0,x_0,\lambda}}u\big)(t,x):=&\lambda^{-\frac{d}{2}}
e^{i\theta}e^{ix\cdot\xi_0}e^{-it|\xi_0|^2}u\big(\tfrac{t}{\lambda^2},\tfrac{x-x_0-2\xi_0t}{\lambda}\big),
\end{align}
which may be expressed more succinctly as 
\begin{equation}\label{equ:Tganodef}
\big(T_{g_{\theta,\xi_0,x_0,\lambda}}u\big)(t)=g_{\theta-t|\xi_0|^2,\xi_0,x_0+2\xi_0t,\lambda}\Big(u\big(\tfrac{t}{\lambda^2}\big)\Big).
\end{equation}
\end{definition}
\begin{definition}[Almost periodic modulo symmetries (APMS)] \label{def:apms}
We say that $ u \in C_{t,\text{loc}}^0 L_x^2(I \times \mathbb{R}^d) $ is \emph{almost periodic modulo $ G $} if there exists a spatial center function $ x : I \to \mathbb{R}^d $, a frequency center function $ \xi : I \to \mathbb{R}^d $, a frequency scale function $ N : I \to [0, \infty) $, and a compactness modulus function $ C : (0, \infty) \to [0, \infty) $, such that for every $ \eta > 0 $, we have the spatial and frequency localization estimates
\[
\int_{|x-x(t)| \geq C(\eta)/N(t)} |u(t,x)|^2 \, dx + \int_{|\xi-\xi(t)| \geq C(\eta)N(t)} |\hat{u}(t,\xi)|^2 \, d\xi \leq \eta^2, \quad \forall t \in I. 
\]
\end{definition}

The proof of Theorem \ref{main} proceeds by contradiction. Assuming that Theorem \ref{main} fails, there must exist a special solution $u$ that is almost periodic modulo the group $G$, which is generated by the symmetries of phase rotation, spatial translation, $L^2$ scaling, and Galilean transformation. By this we mean that its orbit is precompact in the quotient space defined by the group action of $G$ on $L^2(\mathbb{R}^d)$. Following well-known nomenclature, we call such a solution as minimal mass blow-up solution. A more precise statement in this regard is as follows.

\begin{theorem}[Reduction to almost-periodic solutions] \label{thm:reduction_ap}
Suppose Theorem \ref{main} fails for $\mu=1$. Then  there exists an almost-periodic maximal-lifespan solution $u: I \times \R^d \rightarrow \mathbb{C}$ to the Hartree equation \eqref{NLH} that blows up forward and backward in time. 
Similarly, suppose Theorem \ref{main} fails for $\mu=-1$, then there also exists a maximal-lifespan solution $u: I \times \R^d \rightarrow \mathbb{C}$ to the Hartree equation \eqref{NLH} that blows up forward and backward in time, and $M(u)<M(Q)$.
\end{theorem}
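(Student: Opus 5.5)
We follow the Kenig--Merle / Tao--Vi\c{s}an--Zhang scheme for the mass-critical NLS. For $0<m$ (with $m<M(Q)$ when $\mu=-1$) define
\[
L(m):=\sup\Big\{\|u\|_{L_{t,x}^{\frac{2(d+2)}{d}}(I\times\R^d)}:\ u \text{ a solution to \eqref{NLH} on } I,\ M(u)\le m\Big\},
\]
and let $m_c$ be the supremum of those $m$ with $L(m)<\infty$. By Theorem~\ref{thm:local-well-posedness}(1) we have $m_c\ge m_0^2>0$. If Theorem~\ref{main} fails, then by Theorem~\ref{thm:local-well-posedness}(3),(5) we get $m_c<\infty$ (resp.\ $m_c<M(Q)$). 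The goal is to produce a maximal-lifespan solution of mass exactly $m_c$ that blows up in both directions and whose orbit $\{Gu(t)\}$ is precompact in $L^2/G$. Precompactness then gives the parameters $x(t),\xi(t),N(t)$ and $C(\eta)$ of Definition~\ref{def:apms}.

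\emph{Step 1 (tools).} We first establish the following.
\begin{enumerate}
\item A stability/long-time perturbation lemma for \eqref{NLH} in $L_t^\infty L_x^2\cap L_{t,x}^{2(d+2)/d}$. The estimate needed is
\[
\|F(u)-F(v)\|_{N^0}\lesssim \big(\|u\|_{S}+\|v\|_{S}\big)^2\|u-v\|_{S},
\]
which follows from Hardy--Littlewood--Sobolev applied to $|x|^{-2}*$ together with H\"older in Strichartz spaces; for example, $F(u)$ is placed in $L_{t,x}^{2(d+2)/(d+4)}$.
\item The linear profile decomposition for bounded sequences in $L^2$ modulo $G$ (Merle--Vega, Carles--Keraani, B\'egout--Vargas), which is valid for all $d\ge3$.
\end{enumerate}

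\emph{Step 2 (Palais--Smale modulo $G$).} Take solutions $u_n$ on $I_n\ni t_n$ with $M(u_n)\to m_c$ and
\[
\|u_n\|_{L^{2(d+2)/d}(t\ge t_n)}\to\infty \quad\text{and}\quad \|u_n\|_{L^{2(d+2)/d}(t\le t_n)}\to\infty.
\]
We show that $u_n(t_n)$ is precompact modulo $G$. To do so, decompose $u_n(t_n)$ into profiles $g_n^j e^{it_n^j\Delta}\phi^j$ plus a remainder $w_n^J$. For each profile we build a nonlinear profile: if $t_n^j\to\pm\infty$ we use the wave operators given by Theorem~\ref{thm:local-well-posedness}(5) and small-data theory. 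Profiles with $\xi_n^j$ or $\lambda_n^j$ degenerating require no special treatment, since $G$ consists of exact symmetries of \eqref{NLH}: Galilean invariance holds because the nonlinearity depends only on $|u|$ through a convolution.

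If two or more profiles are nontrivial, each has mass below $m_c$ by mass decoupling, hence finite scattering size by the definition of $m_c$. In the focusing case this still holds because each mass is below $m_c<M(Q)$. We then need asymptotic decoupling of the nonlinear profiles, namely
\[
\Big\|F\Big(\sum_j v_n^j\Big)-\sum_j F(v_n^j)\Big\|_{N^0}\to0,
\]
with the remainder absorbed as well. Stability then yields bounded scattering size for $u_n$, a contradiction. Hence there is exactly one profile, $w_n\to0$ in $L^2$, and its time parameter must be bounded, since otherwise $u_n$ scatters on one side. This gives precompactness.

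\emph{Step 3 (extraction).} Apply Step 2 to an almost minimizer of $L$ to obtain $u_0$ with $M(u_0)=m_c$ whose solution blows up in both directions. Applying Step 2 again to $u(t_n)$ for arbitrary $t_n$ in its lifespan gives precompactness of the orbit modulo $G$, i.e.\ almost periodicity.

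\emph{Main obstacle.} The hardest point is the decoupling of cross terms for the nonlocal nonlinearity. A term such as $(|x|^{-2}*(v^j\overline{v^k}))v^l$ does not localize pointwise the way $|v^j|^{4/d}v^k$ does. I would first approximate each $v_n^j$ in the Strichartz norm by $g$-transformed $C_c^\infty$ spacetime functions, and then use orthogonality of the parameters $(\lambda,\xi,x,t)$ case by case:
\begin{itemize}
\item scale separation is handled via HLS and H\"older with disjoint scale supports;
\item frequency separation is handled via bilinear Strichartz, or by integrating by parts in the convolution;
\item space-time separation is handled via disjoint supports, where $|x|^{-2}*$ of a compactly supported function decays, giving vanishing interaction.
\end{itemize}

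The remainder terms, which involve $e^{it\Delta}w_n^J$ small only in $L^{2(d+2)/d}$, require an interpolation/local-smoothing argument as in TVZ. This is because $F$ is trilinear, and a single factor carrying $w_n^J$ must be controlled.
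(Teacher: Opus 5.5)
Your proposal is the standard Kenig--Merle concentration-compactness argument (critical mass, Palais--Smale modulo $G$ via the linear profile decomposition, decoupling of nonlinear profiles, stability, extraction), adapted to the nonlocal nonlinearity, and this is what the paper uses: it gives no proof of its own and refers to Killip--Vi\c{s}an's lecture notes and to Miao--Xu--Zhao's Hartree paper. Two refinements to your sketch: the frequency-separated case is handled most simply as a space--time separation, since after you approximate in each profile's own frame the Galilean boosts move the supports of $|v_n^j|^2$ and $v_n^l$ apart at relative speed $2|\xi_n^j-\xi_n^l|$ while $|x|^{-2}*|v_n^j|^2$ decays like $\operatorname{dist}^{-2}$; and the remainder needs no local smoothing, only interpolation, because $\|F(u)\|_{L_{t,x}^{2(d+2)/(d+4)}}\lesssim\|u\|_{L_{t,x}^{2(d+2)/d}}^{(d+4)/d}\|u\|_{L_t^\infty L_x^2}^{(2d-4)/d}$.
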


Next, by following the standard argument (e.g. \cite{Dodson,KV,TVZ-Forum}), we can establish the existence of a subclass of minimal blow-up solutions. We consider these special solutions exclusively in this work.

\begin{theorem}
Suppose Theorem \ref{main} fails for $\mu=1$. Then  there exists an almost-periodic maximal-lifespan solution $u: I \times \R^d \rightarrow \mathbb{C}$ to the Hartree equation \eqref{NLH} that blows up forward  in time, and satisfies the following conditions:
\begin{itemize}
\item[\textup{(i)}]  $[0, \infty) \subset I$;
    
\item[\textup{(ii)}]  $N(t) \leq 1$ for all $t \in[0, \infty)$ and $x(0) = \xi(0) = 0$;
    
\item[\textup{(iii)}] $N(t), \xi(t) \in C_{\mathrm{loc}}^1([0, \infty))$ and satisfy the pointwise derivative bounds
\begin{equation*}
\left|N'(t)\right| + \left|\xi'(t)\right| \lesssim_u N(t)^3, \quad t \in [0, \infty).
\end{equation*}
\end{itemize}
Similarly, suppose Theorem \ref{main} fails for $\mu=-1$, then there also exists a maximal-lifespan solution $u: I \times \R^d \rightarrow \mathbb{C}$ to the Hartree equation \eqref{NLH} that blows up forward and backward in time, $M(u)<M(Q)$ and satisfies \textup{(i)-(iii)}. We call such solutions as admissible minimal mass blow-up solutions.
\end{theorem}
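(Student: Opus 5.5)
Starting from the almost-periodic solution $u$ given by Theorem \ref{thm:reduction_ap}, I would follow the Killip--Tao--Vi\c{s}an / Dodson reduction, adapted to the nonlocal nonlinearity. The first step is to record the local constancy properties of an almost periodic solution. For any $t_0\in I$ there is $\delta=\delta(u)>0$ with $[t_0-\delta N(t_0)^{-2},t_0+\delta N(t_0)^{-2}]\subset I$, and on that interval $N(t)\sim_u N(t_0)$ and $|\xi(t)-\xi(t_0)|\lesssim_u N(t_0)$. The proof is by contradiction. Otherwise one gets a sequence of times along which the orbit, renormalized by $G$, converges in $L^2$ to some $\phi$. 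Local well-posedness (Theorem \ref{thm:local-well-posedness}, items (3),(4)) for the solution with data $\phi$ then gives a uniform Strichartz bound on a fixed rescaled interval. That bound contradicts either blow-up in finite time or a large change of $N$ or $\xi$. The Hartree nonlinearity enters only through the stability theory, which holds in $L^{2(d+2)/d}_{t,x}$ by Hardy--Littlewood--Sobolev.

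Next comes the choice of a good half-line. Either $I$ already contains a half-line on which $\inf N(t)>0$ after rescaling, or one extracts one. Following \cite{KV,TVZ-Forum}, take $\mathrm{osc}(T)$ and $a(t_0)$ to be the usual oscillation quantities built from $N$ on characteristic subintervals, and split into three scenarios: soliton-like, double high-to-low cascade, and self-similar. In each scenario, rescaling and passing to a limit via compactness in $L^2/G$ (the orbit's precompactness, with the limit's lifespan handled by Theorem \ref{thm:local-well-posedness}(4)) produces a new almost periodic blow-up solution of the same mass. That solution blows up forward in time, is defined on $[0,\infty)$, and has $N(t)\le 1$ there. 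Composing with an element of $G$ (translation and Galilean boost) normalizes $x(0)=\xi(0)=0$. Here the frequency center is handled through the Galilean part of $T_g$ in \eqref{equ:Tganodef}. Both mass and the Hartree structure are invariant, since $|x|^{-2}*|u|^2$ commutes with modulation and translation. In the focusing case the mass bound $M(u)<M(Q)$ is inherited because $G$ preserves the $L^2$ norm. Blow-up in both directions, as stated for $\mu=-1$, follows by choosing the limit profile from the two-sided minimal solution; when only forward blow-up survives, one can time-reverse.

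Finally, for (iii) I would use the fact that the compactness modulus allows $N(t),\xi(t)$ to be modified by bounded factors (respectively $O(N(t))$ shifts) without losing almost periodicity. With the local constancy from the first step, define $N(t)$ and $\xi(t)$ by smoothing their values at a discrete sequence of times $t_k$ spaced by $\sim\delta N(t_k)^{-2}$. Concretely, interpolate with a $C^1$ partition of unity adapted to the intervals $[t_k,t_{k+1}]$. Then on each such interval $|N'|\lesssim N(t_k)/(N(t_k)^{-2})=N^3$, and similarly $|\xi'|\lesssim N\cdot N^2$.

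I expect the main obstacle to be the second step, extracting a limit solution on $[0,\infty)$ with $N\le1$. Getting the limit requires that the nonlinear profile built from the limiting data truly blows up, so that the limit is again a minimal blow-up solution. That in turn needs the stability lemma for the nonlocal nonlinearity with $L^2$-convergent data, which I would verify using HLS and the $L^{2(d+2)/d}$ Strichartz norm. The Galilean normalization also needs care here, to make sure $\xi(t)$ remains controlled after taking limits.
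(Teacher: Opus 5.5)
Your proposal is correct and follows the route the paper takes. The paper gives no proof of its own: it defers to the standard Killip--Vi\c{s}an / Tao--Vi\c{s}an--Zhang reduction (local constancy via compactness and stability, then the three-scenario extraction) together with Dodson's smoothing of $N(t)$ and $\xi(t)$ on intervals of length $\sim N^{-2}$, which is exactly what you sketch. Two small corrections: you need no time reversal for two-sided blow-up, since any nonzero almost periodic maximal-lifespan solution automatically blows up in both directions (and time reversal could not turn one-sided into two-sided blow-up anyway); and in the self-similar scenario you must shift time so that $N(t)=(t+1)^{-1/2}\le 1$ on $[0,\infty)$.
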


We now want to show that the solution $ u \equiv 0 $, since by mass conservation, this fact would contradict that $ M(u) > 0 $. Using the dichotomy introduced by Dodson in the works \cite{Dodson-JAMS,Dodson-AJM,Dodson-Duke}, there are two scenarios for blow-up which we need to preclude. The first is the \emph{rapid frequency cascade scenario}
\[
\int_0^\infty N(t)^3 \, dt < \infty,
\]
and the second is the \emph{quasi-soliton scenario}
\[
\int_0^\infty N(t)^3 \, dt = \infty. 
\]

\subsubsection{Long-time Strichartz estimate} \label{sec:intro_out_LTSE}

The next step is to prove a long-time Strichartz estimate (see Theorem \ref{longtime} below) satisfied by admissible blow-up solutions to the Hartree equation \eqref{NLH}, which we use to preclude both the rapid frequency cascade and quasi-soliton scenarios. Roughly speaking, such an estimate does not hold for arbitrary solutions of equation \eqref{NLH} but is valid only for \emph{admissible minimal mass blow-up solutions}; hence, it is not an a prior estimate. Estimates of this type have been shown to have significant potential in other critical equations (for a detailed discussion, see \cite{Dodson} and the references therein). Specifically for the equation \eqref{NLH} under our study, we need to obtain appropriate control over the high frequencies of $u$. At a high level, we structure our estimate using the norms introduced by Dodson in \cite{Dodson-JAMS}. The proof relies on time-dependent Littlewood-Paley theory adapted to the frequency center $ \xi(t) $ and scale $ N(t) $, as well as specialized bilinear Strichartz estimates. Compared to \cite{Dodson-JAMS}, the main difficulties lie not only in the failure of the endpoint Hardy-Littlewood-Sobolev inequality 
\[
\big\||x|^{-2}\ast u\big\|_{L^{\frac{d}{2}}}\lesssim \|u\|_{L^1}
\]
but also in dealing with bilinear estimates of the following non-local form
\[
(|x|^{-2}\ast|P_{\leq N}|^2)P_{\geq M},\qtq{} N\ll M.
\]
 Similar issues appeared in the work of the third author and were resolved by using the interaction Morawetz estimate, inspired by \cite{Dodson-Duke}. In our case, given the smoothness of the nonlinear term, we introduce a stronger $ L_t^{2}L_x^{\frac{2d}{d-2},2} $ norm, employ an induction on frequency approach, and utilize the \emph{double frequency decomposition method} introduced by  Chae, Cho and  Lee in \cite{CCS} (see also the third author’s work \cite{Rosenzweig}) to obtain a stronger estimate of the solution in terms of the $L_t^{2}L_x^{\frac{2d}{d-2},2} $ norm. Compared to \cite{Rosenzweig}, the proof is significantly simplified. However, this method still cannot simplify the argument in \cite{Rosenzweig}, as the endpoint Strichartz estimate fails in $2d$. In fact, this is precisely the reason why Dodson \cite{Dodson-Duke} employed the interaction Morawetz method of Planchon and Vega \cite{Planchon} to prove the long-time Strichartz estimate. Finally, as a direct consequence of the long‑time Strichartz estimate, we prove a crucial Morawetz commutator estimate, which plays a key role in the subsequent rigidity part of our argument. In contrast to the proof in \cite{Dodson-JAMS} which relies on fractional derivative rules, we make heavy use of the Hardy–Littlewood–Sobolev inequality and frequency decomposition techniques, taking advantage of the good smoothness of the nonlinearity.

\subsubsection{Rigidity.}
Recall that we have two scenarios for blowup to preclude: the rapid frequency cascade scenario  and the quasi-soliton scenario. To preclude the rapid frequency cascade scenario, we use the idea of \cite{Dodson-JAMS} to combine our long-time Strichartz estimate with the additional regularity argument from the previous mass-critical NLS works of Tao, Vi\c{s}an, and Zhang \cite{TVZ-Duke} and Killip, Tao, and Vi\c{s}an \cite{KTV}.
\begin{lemma}[$H_x^3$ regularity]
    If $u$ is an admissible blow-up solution such that $\int_0^\infty N(t)^3 \, dt = K < \infty$, then $u \in C_t^0 H_x^3([0, \infty) \times \mathbb{R}^d)$ and satisfies the estimate
\[
\sup_{0 \leq t < \infty} \|u(t)\|_{H_x^3(\mathbb{R}^d)} \lesssim K^3.
\]
\end{lemma}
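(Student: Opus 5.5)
The plan is to follow Dodson's argument for the mass-critical NLS. The additional regularity will come from the long-time Strichartz estimate (Theorem \ref{longtime}) combined with the fact that in the rapid cascade scenario the frequency scale tends to zero. The first step is to record the consequences of $\int_0^\infty N(t)^3\,dt=K<\infty$. Since $|N'(t)|\lesssim_u N(t)^3$, the quantity $N(t)$ has a limit as $t\to\infty$. Integrability of $N^3$ forces this limit to be $0$, and we get $\sup_{s\ge t}N(s)\lesssim \int_t^\infty N(s)^3\,ds\,/\,(\text{length})$. More usefully, $|\xi'(t)|\lesssim N(t)^3$ gives $|\xi(t)|\le \int_0^\infty|\xi'|\lesssim K$, since $\xi(0)=0$. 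So the frequency center stays in a ball of radius $O(K)$, and after enlarging frequency thresholds by $O(K)$ we may essentially ignore $\xi(t)$.

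The second step is to apply the long-time Strichartz estimate on $[0,T]$. For any $M$ it gives a bound of the form
\[
\|P_{>M} u\|_{L^2_tL^{\frac{2d}{d-2},2}_x([0,T]\times\R^d)}\lesssim_u \Big(1+\tfrac{1}{M}\int_0^T N(t)^3\,dt\Big)^{1/2}+\text{(small in }\eta\text{)},
\]
uniformly in $T$. Using $\int_0^\infty N^3\le K$, this becomes a global bound on $[0,\infty)$. I will then upgrade it to decay in $M$ by showing the mass at high frequency is small. By almost periodicity and $N(t)\to0$, $\|P_{>M}u(t)\|_{L^2}\to0$ as $t\to\infty$ for fixed $M\gg K$. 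Using the Duhamel formula from $t=\infty$ (the weak limit $u(t)\rightharpoonup0$ as $N\to0$ with $\xi$ bounded), we write
\[
P_{>M}u(t)=i\mu\int_t^\infty e^{i(t-s)\Delta}P_{>M}F(u(s))\,ds .
\]
Strichartz then bounds $\|P_{>M}u\|_{L^\infty_tL^2_x}$ by a dual Strichartz norm of $P_{>M}F(u)$.

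The third step decomposes $F(u)=(|x|^{-2}\ast|u|^2)u$ into high–low pieces, using the double frequency decomposition already used in the proof of the long-time Strichartz estimate. In the output $P_{>M}F(u)$, at least one factor carries frequency $\gtrsim M$. A piece with two factors $\gtrsim M/8$ is estimated by Hardy–Littlewood–Sobolev and the $L^2_tL^{\frac{2d}{d-2},2}_x$ bound on high frequencies. A piece with a single high factor and low-frequency potential is estimated using the bilinear Strichartz estimates from Section~\ref{sec:intro_out_LTSE}, and it gains a factor of $\big(\tfrac{1}{M}\int N^3\big)^{1/2}$-type smallness. This should yield a recursive inequality of the form
\[
\mathcal{A}(M):=\|P_{>M}u\|_{L^\infty_tL^2_x}+\|P_{>M}u\|_{L^2_tL^{\frac{2d}{d-2},2}_x}\lesssim \Big(\tfrac{K}{M}\Big)^{\alpha}\mathcal{A}(M/8)+\eta\,\mathcal{A}(M/8)
\]
for some $\alpha>0$. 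Iterating it gives $\mathcal{A}(M)\lesssim_u (K/M)^{s}$ for arbitrarily large $s$ (we need $s>3$, obtained by iterating enough times, each iteration bootstrapping the decay). Summing dyadically then gives $\|\,|\nabla|^3u(t)\|_{L^2}\lesssim \sum_M M^3(K/M)^{s}\lesssim K^3$, and continuity in $H^3$ follows from the same Duhamel representation.

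I expect the third step to be the main obstacle. The non-local interaction $(|x|^{-2}\ast|P_{\le N}u|^2)P_{\ge M}u$ does not gain from frequency separation pointwise, and the endpoint Hardy–Littlewood–Sobolev inequality fails. My first attempt is to put the low-frequency potential $|x|^{-2}\ast|P_{\le N}u|^2$ in $L^\infty_tL^{d/2,\infty}_x$ via Lorentz-space Hardy–Littlewood–Sobolev, and pair it with the Lorentz $L^2_tL^{\frac{2d}{d-2},2}_x$ norm of the high piece through the Lorentz Hölder inequality. That bound has no smallness, so the smallness must come from restricting to $N\le \eta M$ and using almost periodicity with $N(t)\to0$. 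Where this fails, I would fall back on the bilinear estimate exactly as in the long-time Strichartz proof.
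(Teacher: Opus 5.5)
Your plan is essentially the paper's route. The paper just asserts that Dodson's additional-regularity argument carries over mutatis mutandis, and your steps are that argument: long-time Strichartz on $[0,\infty)$ with $\int_0^\infty N(t)^3\,dt=K$, Duhamel from $t=\infty$ justified by $N(t)\to0$, a high--low splitting of $F(u)$ in which the Hartree bilinear estimate of Lemma~\ref{bilinear2} supplies the $(K/M)^{1/2}$ gain, and an iterated decay bound summed against $M^3$.

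When you write it out, tidy three things:
\begin{itemize}
\item The gain for the potential built from $P_{\xi(t),\le C(\eta)N(t)}u$ must come from the bilinear estimate. Almost periodicity and $N(t)\to0$ alone give nothing, since $\int_0^\infty N(t)^2\,dt=\infty$.
\item An arbitrary decay rate $s>3$ comes from taking $\eta$ small and starting the recursion at $M\gtrsim K+C(\eta)$. Repeating a fixed-ratio recursion only gives a fixed rate.
\item The final dyadic sum should use $\min\{1,(K/M)^s\}$, since $\sum_M M^3(K/M)^s$ diverges at small $M$.
\end{itemize}
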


This additional regularity implies that the energy of a Galilean transformation $v$ of $u$, which is again a solution to equation \eqref{NLH}, tends to zero as time tends to $\infty$. Conservation of energy then implies that $v$ is identically zero. Since the mass functional is Galilean invariant, it then follows that $u$ is identically zero, which is a contradiction. This precludes the rapid frequency cascade scenario.

To preclude the quasi-soliton scenario, inspired by the approach of \cite{Dodson-Adv} for the focusing mass-critical NLS, we prove a frequency-localized interaction Morawetz ``type'' estimate for admissible blow-up solutions to equation \eqref{NLH} under the specific assumption that $\int_0^\infty N(t)^3 \, dt = \infty$. On an interval $[0, T]$, we show that our Morawetz functional $M(t)$ is bounded from below $\int_0^T N(t)^3 \, dt = K$ and bounded from above by a quantity which is $o(K)$. Since $K$ may be taken arbitrarily large (by taking $T$ arbitrarily large) in the quasi-soliton scenario, we obtain a contradiction. The analysis is much more delicate than that for the mass-critical NLS, as to extract the terms to which we then combine and apply the sharp Gagliardo-Nirenberg inequality for the Hartree nonlinearity. In particular, for the Hartree nonlinearity $F(u) = (|x|^{-2} * |u|^2)u$, the spatial cut-off does \emph{not} commute with the nonlinearity: $\chi F(u) \neq F(\chi u)$ for a characteristic function $\chi$. This necessitates a careful decomposition of the nonlinear term into high‑low frequency interactions and leads to additional commutator error terms. Moreover, in the Morawetz identity, the potential term $\int \nabla a \cdot \{F(v),v\}_P dx$ does \emph{not} admit a simple total differential expression. Consequently, one cannot directly apply integration by parts to extract a sign‑definite quantity. To overcome this, we perform a Taylor expansion of the weight function $a_R(x-y)$. However, this also introduces a large number of error terms. To solve this problem, we introduce a \emph{family of auxiliary weight functions} $\zeta_R, \phi_R, \psi_R$ depending on a small parameter $\eta_2$ and construct new Morawetz multipliers based on them. Through careful analysis (see Section \ref{quasisec} for more details), we then show that these error terms are negligible after a balancing of the floating parameters. We emphasize that this estimate is not an a priori estimate for sufficiently regular solutions to equation \eqref{NLH}. Frequency truncation of the solution introduces error terms which we can control with our long-time Strichartz estimate using an argument of \cite{Dodson-JAMS}, similar in spirit to the ``almost Morawetz'' estimates often used in conjunction with the I-method \cite{Iteam} and induction on energy \cite{Iteam2}. Moreover, we also find that the Lorentz norm plays a simplifying role in obtaining some certain estimates.

Finally, we point out that as a byproduct of the above argument, we can also obtain additional information about the admissible minimal mass blow-up solution  with mass $M(u) = M(Q)$ in the focusing case, as observed in \cite{Dodson-SIAM,Dodson-NA,F}. This is a crucial step in obtaining the characterization of the minimal mass blow-up solution at the $L^2$ level in an ongoing project.

The structure of this paper is as follows: In Section \ref{preli}, we present the necessary notations and lemmas required for the article. In Section \ref{sec3}, we further reduce the proof of Theorem \ref{main} to the exclusion of admissible minimal mass blow-up solutions. In Section \ref{sec4}, using the tools developed in previous sections, we establish the long-time Strichartz estimates. Subsequently, in Sections \ref{cascadesec} and \ref{quasisec}, we employ these estimates to rule out the rapid cascade scenario and the quasi-soliton scenario, respectively. Finally, in the Appendix \ref{app}, we further discuss some properties of admissible minimal mass blow-up solutions with mass $M(u)=M(Q)$ in the focusing case.

\subsection*{Acknowledgements} 
  C. Miao was supported by the National Key RR\&D program
of China: No.2021YFA1002500 and NSFC Grants 12026407 and 12531005. M. Rosenzweig was supported by NSF grants DMS-2441170, DMS-2345533, DMS-2342349.  J. Zheng was supported by  National key R\&D program of China: 2021YFA1002500 and  NSFC Grant 12271051.

\section{Preliminaries}\label{preli}
In this section, we  collect some  analysis tools and prove some technical lemmas needed throughout the paper. 

\subsection{Notation} We begin by setting up some notation. For nonnegative quantities $X$ and $Y$, we write $X\lesssim Y$ to denote the estimate $X\leq C Y$ for some $C>0$. If $X\lesssim Y\lesssim X$, we will write $X\sim Y$. Dependence on various parameters will be indicated by subscripts, e.g. $X\lesssim_u Y$ indicates $X\leq CY$ for some $C=C(u)$. 

We will use the expression $O(X)$ to denote a finite linear combination of terms that resemble $X$ up to Littlewood--Paley projections, complex conjugation, and/or maximal functions. We will use the expression $X\pm$ to denote $X\pm\eps$ for some small $\eps>0$. 

For a spacetime slab $I\times\R^d$, we write $L_t^q L_x^r(I\times\R^d)$ for the Banach space of functions $u:I\times\R^d\to\C$ equipped with the norm
\[
\|u\|_{L_t^{q}L_x^r(I\times\R^d)}:=\bigg(\int_I \|u(t)\|_{L_x^r(\R^d)}^q\,dt\bigg)^{1/q},
\]
with the usual adjustments if $q$ or $r$ is infinity. When $q=r$, we abbreviate $L_t^qL_x^q=L_{t,x}^q$. We also abbreviate $\|f\|_{L_x^r(\R^d)}$ to $\|f\|_{L_x^r}.$ For $1\leq r\leq\infty$, we use $r'$ to denote the dual exponent to $r$, i.e. the solution to $\frac{1}{r}+\frac{1}{r'}=1.$

The Fourier transform on $\mathbb{R}^d$ is given by
\begin{equation*}
\aligned \widehat{f}(\xi)= \mathcal{F} f(\xi)= \tfrac{1}{(2\pi)^{\frac{d}{2}}}\int_{\mathbb{R}^d}e^{- ix\cdot \xi}f(x)\,dx.
\endaligned
\end{equation*}
We then define the fractional differentiation operator $|\nabla|^s=\mathcal{F}^{-1}|\xi|^s\mathcal{F}$, with the corresponding homogeneous Sobolev norm 
\[
\|f\|_{\dot{H}_x^s(\R^d)}:=\||\nabla|^s f\|_{L_x^2(\R^d)}.
\]

Let $\varphi\in C_0^\infty(\R^d)$ be a real-valued, non-negative, even, and radially-decreasing function such that
\begin{align*}
\varphi(\xi) =
\begin{cases}
1, \ |\xi| \leq 1, \\
0, \  |\xi| \ge 2,
\end{cases}
\end{align*}
and define $\chi(\xi)=\varphi(\xi)-\varphi(2\xi)$. For a  number $N\in 2^{\mathbb{Z}}$, we define the Littlewood-Paley projectors $P_{\leq N}, P_{N}, P_{\geq N}$ via
\begin{align*}
\widehat{P_Nf}(\xi):=\chi\big(\tfrac{\xi}{N} \big)\widehat{f}(\xi),  \quad \widehat{P_{\leq N}f}(\xi):=\varphi\big(\tfrac{\xi}{N} \big)\widehat{f}(\xi),\quad \widehat{P_{\geq N}f}(\xi):=\Big(1-\varphi\big(\tfrac{\xi}{N} \big)\Big)\widehat{f}(\xi).
\end{align*}

We denote the free Schr\"odinger propagator by $e^{it\Delta}$.  It is most naturally defined as the Fourier multiplier operator with symbol $e^{-it|\xi|^2}$, but also has a physical-space representation given by convolution with a complex Gaussian. This operator satisfies well-known estimates known as Strichartz estimates, which we state as follows.

First, we call a pair of exponents $(q,r)$ \emph{Schr\"odinger admissible}  if $2\leq q,r\leq\infty$ and $\tfrac{2}{q}+\tfrac{d}{r}=\frac{d}{2}.$ For a spacetime slab $I\times\R^d$, we define the Strichartz norm 
\[
\|u\|_{S^0(I)}:=\sup\big\{\|u\|_{L_t^{q}L_x^r(I\times\R^d)}:(q,r)\text{ is Schr\"odinger admissible}\big\}.
\]
We denote $S^0(I)$ to be the closure of all test functions under this norm and write $N^0(I)$ for the dual of
$S^0(I)$.

\subsection{Ground state and profile decomposition}
We next give the following basic property of ground state:
\begin{proposition}[Existence of ground state,\cite{MiaoXuZhaoblowup}]
     There exists a positive, radial function $Q \in H^1(\R^d)$  which solves the ground state equation \eqref{grounstate}. Moreover, $Q$ has the variational characterization
\[
J(Q) = \min_{u \in H^1(\R^d)} J(u), \quad J(u) := \frac{\|u\|_{L^2(\R^d)}^2 \|\nabla u\|_{L^2(\R^d)}^2}{\int_{\R^d} |u(x)|^2 (V * |u|^2)(x) dx}.
\]
Moreover, J(Q) = $\frac{\|Q\|_{L^2(\R^d)}^2}{2}$, so that
\[
\int_{\R^d} |u(x)|^2 (V * |u|^2)(x) dx \leq \frac{2 \|u\|_{L^2(\R^d)}^2 \|\nabla u\|_{L^2(\R^d)}^2}{\|Q\|_{L^2(\R^d)}^2}, \quad \forall u \in H^1(\R^d).
\]
\end{proposition}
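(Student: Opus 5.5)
The plan follows the standard variational route of Weinstein for the sharp Gagliardo--Nirenberg inequality, adapted to the nonlocal Hartree functional. Write $\|u\|_2,\|\nabla u\|_2$ and $H(u):=\int|u|^2(V*|u|^2)\,dx$. By the Hardy--Littlewood--Sobolev inequality, $H(u)\lesssim\|u\|_{L^{2d/(d-1)}}^4$. Sobolev embedding and interpolation then give $H(u)\lesssim\|u\|_2^2\|\nabla u\|_2^2$, so $J$ is bounded below by a positive constant and $\alpha:=\inf_{u\neq0}J(u)>0$.

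\textbf{Step 1 (minimizing sequence).} $J$ is invariant under $u\mapsto a\,u(\lambda\cdot)$ (this uses $V=|x|^{-2}$, since both sides scale the same way), under translations, and under replacing $u$ by $|u|$ (as $\|\nabla|u|\|_2\le\|\nabla u\|_2$). Hence I take a nonnegative minimizing sequence normalized by $\|u_n\|_2=\|\nabla u_n\|_2=1$. I may also replace $u_n$ by its symmetric decreasing rearrangement: the gradient norm decreases by Pólya--Szegő, and $H$ increases by the Riesz rearrangement inequality, since $|x|^{-2}$ is symmetric decreasing. So the $u_n$ are radial and bounded in $H^1$.

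\textbf{Step 2 (compactness).} This is the main step. Radial $H^1$ functions embed compactly into $L^p$ for $2<p<2^*$ (Strauss), and in particular into $L^{2d/(d-1)}$. Passing to a subsequence, $u_n\rightharpoonup u$ in $H^1$ and $u_n\to u$ in $L^{2d/(d-1)}$. By HLS, $H(u_n)\to H(u)=1/\alpha>0$, so $u\ne0$. Weak lower semicontinuity gives $\|u\|_2,\|\nabla u\|_2\le1$, and therefore $J(u)\le\alpha$. Thus $u$ is a nonnegative radial minimizer, and in fact equality holds in both norms.

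\textbf{Step 3 (Euler--Lagrange and identification).} For a minimizer, $\frac{d}{d\eps}J(u+\eps\phi)|_{\eps=0}=0$ gives
\[
\|\nabla u\|_2^2\,u-\|u\|_2^2\,\Delta u-\tfrac{2\|u\|_2^2\|\nabla u\|_2^2}{H(u)}(V*|u|^2)u=0 .
\]
With $\|u\|_2=\|\nabla u\|_2=1$ this reads $-\Delta u+u=2\alpha(V*u^2)u$. Setting $Q:=\sqrt{2\alpha}\,u$ solves \eqref{grounstate}. Positivity follows from the strong maximum principle: $-\Delta Q+Q=(V*Q^2)Q\ge0$ with $Q\ge0$, $Q\not\equiv0$, so $Q>0$. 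Since $J$ is $0$-homogeneous, $Q$ is also a minimizer.

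\textbf{Step 4 (value of $J(Q)$).} Pair the equation with $Q$ to get $\|\nabla Q\|_2^2+\|Q\|_2^2=H(Q)$. The Pohozaev identity comes from pairing with $x\cdot\nabla Q$, or equivalently from differentiating $J$ along $\lambda^{d/2}Q(\lambda x)$ at $\lambda=1$. Under this rescaling the kinetic term scales like $\lambda^2$ and $H$ scales like $\lambda^2$. Applying the scaling to the action functional $\frac12\|\nabla Q\|^2+\frac12\|Q\|^2-\frac14H(Q)$ gives $\|\nabla Q\|_2^2=\frac12H(Q)$. Combining the two identities yields $\|Q\|_2^2=\|\nabla Q\|_2^2=\frac12H(Q)$, hence $J(Q)=\|Q\|_2^2\cdot\frac12=\|Q\|_2^2/2$. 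The stated inequality is then just $J(u)\ge J(Q)$ rearranged.

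The anticipated obstacle is Step 2, namely ruling out loss of compactness. Rearrangement disposes of translations and Strauss compactness handles vanishing. Scaling is fixed by the normalization $\|u_n\|_2=\|\nabla u_n\|_2=1$, but only because $H$ is controlled by a subcritical Lebesgue norm through HLS.
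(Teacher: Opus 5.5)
Your argument is correct. The paper does not prove this proposition; it cites \cite{MiaoXuZhaoblowup} (and \cite{Moroz} for existence), and its appendix footnote notes that \cite{MiaoXuZhaoblowup} treats only $d=4$. Your Weinstein-type route is therefore a self-contained replacement: you reduce to nonnegative radially decreasing minimizing sequences via $|u|$, P\'olya--Szeg\H{o} and Riesz rearrangement. You then use Strauss compactness at the Hardy--Littlewood--Sobolev exponent $\frac{2d}{d-1}\in(2,\frac{2d}{d-2})$, followed by the Euler--Lagrange equation and a rescaling. What it buys is that it works verbatim in every $d\ge3$, which is the range the paper actually uses.

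A few small points. $J$ is non-increasing, not invariant, under $u\mapsto|u|$. After rearranging you only know $\|\nabla u_n^*\|_2\le1$, so you should renormalize afterwards; this is harmless, since $a\,u(\lambda\cdot)$ preserves radial monotonicity.

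More substantively, in Step 4, differentiating $J$ along $\lambda^{d/2}Q(\lambda\cdot)$ gives nothing, because $J$ is invariant under that scaling. The action-functional version of Pohozaev is right formally, but it requires justifying $x\cdot\nabla Q$ as a test direction, via regularity and decay of $Q$ or a cutoff argument. You can skip Pohozaev entirely. Step 2 already gives $\|u\|_2=\|\nabla u\|_2=1$ and $H(u)=1/\alpha$. Hence $Q=\sqrt{2\alpha}\,u$ satisfies $\|Q\|_2^2=\|\nabla Q\|_2^2=2\alpha$ and $H(Q)=4\alpha$, and so $J(Q)=\alpha=\|Q\|_2^2/2$ directly.
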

Next, we record $L^2$ profile decomposition:
\begin{proposition}[Linear profile decomposition in $L_x^2 ( \mathbb{R}^d)$,\cite{MV}] \label{pro3.9v23} Let $d\geq3$. Let $\{u_n\}_{n\ge 1}$ 
be a bounded sequence in $L_x^2 (\mathbb{R}^d)$. Then after passing to a subsequence if necessary,
there exist $J^* \in \{0,1, \cdots\} \cup \{\infty\}$,
functions $\phi^{j}$ in $L_x^2(\mathbb{R}^d)$ and mutually orthogonal frames $(\lambda_n^j, t_n^j, x_n^j,\xi_n^j )_{n\ge 1} \subseteq (0,\infty) \times \mathbb{R} \times \mathbb{R}^d \times \mathbb{R}^d$,
which means
\begin{align*}
&\frac{\lambda_n^j}{\lambda_n^k} + \frac{\lambda_n^k}{\lambda_n^j}
+ \lambda_n^j \lambda_n^{k} |\xi_n^j - \xi_n^{k}|^2 \\
&\quad+ \frac{\left|x_n^j-x_n^k-2t_n^j(\lambda_n^j)^2(\xi_n^j-\xi_n^k)\right|^2}{\lambda_n^j\lambda_n^k}
+ \frac{|(\lambda_n^j)^2 t_n^j - (\lambda_n^k)^2 t_n^k |}{\lambda_n^j \lambda_n^k}
\to \infty, \text{ as } n \to \infty, \text{ for } j\ne k,
\end{align*}
and for every $J \le J^*$, a sequence $r_n^J \in L_x^2  (\mathbb{R}^d)$, such that
\begin{align*}
u_n(x)  = \sum\limits_{j=1}^J \frac1{(\lambda_n^j)^{\frac{d}{2}}} e^{ix\cdot\xi_n^j} \left(e^{it_n^j\Delta_{\R^d}} \phi^j\right)\left(\frac{x-x_n^j}{\lambda_n^j} \right)  +r_n^J(x),
\end{align*}
furthermore, 
\begin{align*}
& \lim\limits_{n\to \infty} \left(\|u_n\|_{L_x^2 }^2 - \sum\limits_{j=1}^J \left\|   \frac1{(\lambda_n^j)^{\frac{d}{2}}} e^{ix\cdot\xi_n^j} \left(e^{it_n^j\Delta_{\R^d  }} \phi^j\right)\left(\frac{x-x_n^j}{\lambda_n^j}\right)
\right\|_{ L_x^2}^2 - \| r_n^J\|^2_{L_x^2}  \right)  = 0, \notag \\
& (\lambda_n^j)^{\frac{d}{2}}   e^{-it_n^j \Delta_{\R^d  }}\left(e^{-i(\lambda_n^j x + x_n^j)\cdot \xi_n^j} r_n^J\left(\lambda_n^j x+ x_n^j \right)\right)  \rightharpoonup 0 \text{ in } L_x^2 ,  \text{ as  } n\to \infty,\text{ for each }  j\le J,\notag\\
&  \limsup\limits_{n\to \infty} \|e^{it \Delta_{\R^d    }} r_n^{J} \|_{L_{t,x}^{\frac{2(d+2)}{d}}(\mathbb{R}\times \mathbb{R}^d)}   \to 0, \text{ as } J\to J^*.
\end{align*}
\end{proposition}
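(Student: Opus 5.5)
The plan is to follow the standard Merle--Vega / Bégout--Vargas argument. The decomposition is extracted inductively from the sequence $\{u_n\}$, using a refined Strichartz estimate together with an inverse Strichartz theorem. Dimension enters only through the Strichartz and bilinear restriction estimates, so the plan works for every $d\geq3$.

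\textbf{Step 1 (refined Strichartz).} First establish
\[
\|e^{it\Delta}f\|_{L_{t,x}^{\frac{2(d+2)}{d}}}\lesssim \|f\|_{L^2}^{1-\theta}\Big(\sup_{Q}|Q|^{\frac{1}{q}-\frac12}\|e^{it\Delta}f_Q\|_{L_{t,x}^{q}}\Big)^{\theta}
\]
for some $q<\frac{2(d+2)}{d}$ and $\theta\in(0,1)$. Here $Q$ ranges over dyadic cubes in frequency and $f_Q$ is the Fourier restriction of $f$ to $Q$. This follows from Whitney decomposition and Tao's bilinear restriction estimate for paraboloids, as in Bégout--Vargas.

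\textbf{Step 2 (inverse Strichartz).} Suppose $\|f_n\|_{L^2}\le A$ and $\|e^{it\Delta}f_n\|_{L_{t,x}^{2(d+2)/d}}\ge\eps$. Step 1 produces a cube $Q_n$ of side $(\lambda_n)^{-1}$ and center $\xi_n$ carrying a non-negligible fraction of the Strichartz norm. Hölder interpolation with $L^\infty$ then yields a point $(t_n,x_n)$ at which $|e^{it_n\Delta}(f_n)_{Q_n}(x_n)|\gtrsim \lambda_n^{-d/2}$. Rescale by the corresponding symmetry, a group element $g_n$ composed with $e^{it_n\Delta}$. The rescaled sequence then converges weakly to some $\phi\neq0$ with $\|\phi\|_{L^2}\gtrsim A(\eps/A)^{C}$.

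\textbf{Step 3 (iteration).} Apply Step 2 to $u_n$ to get $\phi^1$ and its frame, and set $r_n^1=u_n-g_n^1e^{it_n^1\Delta}\phi^1$. The weak convergence gives the $L^2$ Pythagorean decoupling and the stated weak-limit property of $r_n^1$. Repeat with $r_n^J$ to extract $\phi^{J+1}$. Since $\sum_j\|\phi^j\|_{L^2}^2\le\limsup\|u_n\|_{L^2}^2$, the lower bound in Step 2 forces $\limsup_n\|e^{it\Delta}r_n^J\|_{L^{2(d+2)/d}_{t,x}}\to0$.

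\textbf{Step 4 (orthogonality of frames).} Argue by contradiction. If frames $j<k$ were not orthogonal, then after passing to a subsequence the operators $(g_n^j e^{it_n^j\Delta})^{-1}g_n^ke^{it_n^k\Delta}$ would converge strongly to a unitary operator. Since the rescaled $r_n^{k-1}\rightharpoonup0$ in the $j$-frame, this would force $\phi^k=0$, a contradiction.

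The main obstacle is Step 1: the bilinear restriction input is the only genuinely deep ingredient. Everything else is soft functional analysis, and since the statement is quoted from \cite{MV}, I would cite it rather than reprove it.
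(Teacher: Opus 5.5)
Your overall architecture (refined Strichartz, inverse Strichartz, iterative extraction, orthogonality of frames) is the standard Merle--Vega/B\'egout--Vargas argument. The paper gives no proof beyond citing it, so citing is the right call. However, Steps 1 and 2 do not work as you have written them.

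First, the weight $|Q|^{\frac1q-\frac12}$ is not scale-invariant. Under $f\mapsto\lambda^{d/2}f(\lambda\,\cdot)$ with $\lambda$ dyadic, the left side and $\|f\|_{L^2}$ are unchanged, while every term $|Q|^{\frac1q-\frac12}\|e^{it\Delta}f_Q\|_{L^q_{t,x}}$ is multiplied by $\lambda^{-2/q}$. So the displayed inequality fails as $\lambda\to\infty$; the invariant weight is $|Q|^{\frac{d+2}{dq}-\frac12}$.

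Second, and more seriously, with that weight and $q<\frac{2(d+2)}{d}$ the exponent of $|Q|$ is positive. Along the dyadic cubes $[0,2^k)^d$ (or those of another orthant carrying part of $\hat f$), Fatou's lemma keeps the norms $\|e^{it\Delta}f_Q\|_{L^q_{t,x}}$ bounded away from zero. Hence the supremum is $+\infty$ for every $f\neq0$, and Step 1 carries no information.

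Nor can the H\"older step of Step 2 be repaired at such $q$. To turn a lower bound on an $L^q_{t,x}$ norm into a lower bound on the $L^\infty_{t,x}$ norm, you need an upper bound at an exponent below $q$. Free solutions with $L^2$ data have no such uniform bound below $\frac{2(d+2)}{d}$. For instance, take $\hat f=\eps^{-d/2}\hat\phi((\cdot-\xi_Q)/\eps)$ with $0\neq\hat\phi\in C_c^\infty$ and $\xi_Q$ the center of a fixed unit cube $Q$. Then $\|f\|_{L^2}$ is fixed and $\|e^{it\Delta}f_Q\|_{L^q_{t,x}}\to\infty$, yet $\|e^{it\Delta}f_Q\|_{L^\infty_{t,x}}\lesssim\eps^{d/2}\to0$.

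The interpolation with $L^\infty$ belongs inside the proof of Step 1. Tao's estimate controls the Whitney products $e^{it\Delta}f_Q\,e^{it\Delta}f_{Q'}$ in $L^r_{t,x}$ for $\frac{d+3}{d+1}<r<\frac{d+2}{d}$. Interpolating these with the trivial $L^\infty$ bound gives
\[
\|e^{it\Delta}f\|_{L^{\frac{2(d+2)}{d}}_{t,x}}\lesssim\|f\|_{L^2}^{1-\theta}\sup_{Q}\Big(|Q|^{-\frac12}\|e^{it\Delta}f_Q\|_{L^\infty_{t,x}}\Big)^{\theta}
\]
for some $\theta\in(0,1)$. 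Alternatively, you may use $q>\frac{2(d+2)}{d}$ with weight $|Q|^{\frac{d+2}{dq}-\frac12}$; there H\"older against the Strichartz bound does yield the $L^\infty$ lower bound. From this form the point $(t_n,x_n)$ is read off directly. Pairing the rescaled $f_n$ with the fixed function $\mathcal{F}^{-1}1_{Q_0}$, where $Q_0$ is the rescaled cube, then gives the nonzero weak limit.

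Finally, the weak-null property of $r_n^J$ in the frames $j<J$ needs the orthogonality from Step 4, while Step 4 uses that property for $r_n^{k-1}$. So Steps 3 and 4 have to be carried out as a single induction.
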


\subsection{Atom space $U_{\Delta}^2$}
\begin{definition}[$U_\Delta^2(I\times\R^d)$ space, \cite{Dodson-AJM,Dodson-Duke,Dodson,HHK}]  We define $U_\Delta^2(\R\times\R^d)$ as an atomic space with atoms  $v^\lambda(t,x)$ given by 
\begin{align*}
v^\lambda(t,x)=\sum_{k=0}^{M}\chi_{[t_k,t_{k+1}]}(t)e^{it\Delta} v_k^\lambda(x),
\quad\sum_{k=0}^M\left\| v_k^\lambda(x) \right\|_{L_x^2(\R^d)}^2=1.
\end{align*}
In the summation above, $M$ can be finite or infinite. If $M$ is finite, we further assume $t_0=-\infty$ and $t_{M+1}=+\infty$. The norm of $U_\Delta^2 (\R\times\R^d)$ is defined as 
\begin{align*}
\| v \|_{U_\Delta^2(\R\times\R^d)}
=\inf\left\{\sum_{\lambda\in\mathbb{Z}} |c_\lambda|:  v =\sum_{\lambda\in\mathbb{Z}}c_\lambda  v^\lambda, \hspace{1ex} v^\lambda\mbox{ is an atom}\right\},
\end{align*}
where the infimum is taken over all atom decompositions.  For any  interval $I\subseteq\R$, we can define the local version as
\begin{align*}
\| v\|_{U_\Delta^2 \left(I\times\R^d\right)}=\| v \chi_I(t)\|_{U_\Delta^2 \left(\R\times\R^d\right)}.
\end{align*}
\end{definition}
\begin{proposition}[Strichartz estimates, \cite{Dodson-AJM,Dodson-Duke,Dodson,HHK}]
Let $I$ be an interval, then
\begin{equation}
\|u\|_{U_{\Delta}^2(I\times\R^d)}\lesssim \min_{t\in I}\|u(t)\|_{L^2}+\|(i\partial_t+\Delta)u\|_{L_t^1L_x^2(I\times\R^d)}.
\end{equation}
\end{proposition}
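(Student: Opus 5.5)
The plan is to verify the estimate at the level of atoms and then reduce the inhomogeneous problem to an atomic decomposition. Write $t_0\in I$ for a time at which $\|u(t_0)\|_{L^2}$ is (nearly) minimal; if the minimum is not attained, we take an approximating sequence. By Duhamel's formula on $I$,
\[
u(t)=e^{i(t-t_0)\Delta}u(t_0)-i\int_{t_0}^t e^{i(t-s)\Delta}F(s)\,ds,\qquad F:=(i\partial_t+\Delta)u .
\]
Let $\chi_I$ denote the indicator of $I$. The free part $\chi_I(t)e^{i(t-t_0)\Delta}u(t_0)$ is a single-step function of the form $\chi_{[a,b]}(t)e^{it\Delta}v_0$ with $v_0=e^{-it_0\Delta}u(t_0)$. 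Normalizing, it is a multiple $\|u(t_0)\|_{L^2}$ of an atom; when $I$ is bounded, we pad with the zero pieces $v_k=0$ on the complementary intervals. Hence its $U^2_\Delta$ norm is at most $\|u(t_0)\|_{L^2}$.

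For the Duhamel term we use Minkowski's inequality in the atomic norm. For $t\ge t_0$ we write
\[
\int_{t_0}^t e^{i(t-s)\Delta}F(s)\,ds=\int_I \chi_{\{t\ge s\}}\,e^{it\Delta}\big(e^{-is\Delta}F(s)\big)\,ds,
\]
and symmetrically for $t<t_0$, with the sign and indicator adjusted. For each fixed $s$, the function $t\mapsto \chi_I(t)\chi_{\{t\ge s\}}e^{it\Delta}(e^{-is\Delta}F(s))$ is a one-step function. By unitarity of $e^{-is\Delta}$, it equals $\|F(s)\|_{L^2}$ times an atom, so its $U^2_\Delta$ norm is at most $\|F(s)\|_{L_x^2}$.

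Since $U^2_\Delta$ is a Banach space, the triangle inequality in integral form (Minkowski) gives
\[
\Big\|\int_{t_0}^t e^{i(t-s)\Delta}F(s)\,ds\Big\|_{U^2_\Delta(I\times\R^d)}\le \int_I\|F(s)\|_{L^2_x}\,ds=\|F\|_{L^1_tL^2_x(I\times\R^d)}.
\]
This step is the main technical point. To justify it rigorously, we first take $F$ to be a simple function in $t$, that is, piecewise constant with values in $L^2$. In that case the integral is a finite sum of atoms plus a piecewise free evolution, and the inequality follows from the triangle inequality over the atomic decomposition. We then pass to general $F\in L^1_tL^2_x$ by density, using completeness of $U^2_\Delta$ together with the embedding $U^2_\Delta\hookrightarrow L^\infty_tL^2_x$ to identify the limit with the Duhamel integral.

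Combining the free and Duhamel parts gives the claimed bound. Note that the minimum over $t\in I$ of $\|u(t)\|_{L^2}$ is finite, since $u\in C^0_tL^2_x$; for half-open $I$ it should be read as an infimum, which is handled by the approximating choice of $t_0$.
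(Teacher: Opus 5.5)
Your treatment of the free part and the choice of $t_0$ are fine. However, the step you flag as ``the main technical point'' carries the entire content of the proposition, and the justification you give does not work. Write $D(t)=-i\int_{t_0}^t e^{i(t-s)\Delta}F(s)\,ds$ for the Duhamel term.

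\emph{The simple-function step is false.} The claim that $D$ is a finite sum of atoms when $F$ is piecewise constant in $t$ fails. Its profile $e^{-it\Delta}D(t)=-i\int_{t_0}^t e^{-is\Delta}F(s)\,ds$ is continuous and nonconstant in $t$ unless $F\equiv 0$. A finite combination of atoms always has piecewise constant profile. Taking $e^{-is\Delta}F(s)$ piecewise constant instead only makes the profile piecewise linear.

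\emph{Minkowski has no Bochner meaning here.} If $g(s)$ is your one-step piece, then for $s<s'$ one has $\|g(s)-g(s')\|_{U^2_\Delta}\geq\|g(s)-g(s')\|_{L^\infty_tL^2_x}\geq\|F(s)\|_{L^2}$. So $s\mapsto g(s)$ is not essentially separably valued, hence not strongly measurable in $U^2_\Delta$.

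\emph{Completeness does not rescue the density step.} Uniform $U^2_\Delta$ bounds plus convergence in $L^\infty_tL^2_x$ do not make a sequence Cauchy in $U^2_\Delta$.

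\emph{A quick test shows an idea is missing.} Your argument never uses the exponent $2$. Verbatim, it would prove $\|D\|_{U^1_\Delta}\leq\|F\|_{L^1_tL^2_x}$ for the analogous atomic space with $\sum_k\|v_k\|_{L^2}=1$, and that is false. Take $t_0=0$ and $F(s)=\chi_{[0,1]}(s)e^{is\Delta}\phi$. The profile equals $-it\phi$ on $[0,1]$, so its pairing with $\phi$ has a nonzero absolutely continuous derivative there. But any element of $U^1_\Delta$ is a uniformly convergent $\ell^1$-combination of step functions, each with jumps of total size at most $2$. Every such pairing therefore has purely atomic distributional derivative.

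\emph{The missing ingredient is square-summability.} Let $A=\|F\|_{L^1_tL^2_x(I\times\R^d)}$ and $G(t)=e^{-it\Delta}D(t)$, so that $\|G(t)-G(t')\|_{L^2}\leq|\int_{t'}^{t}\|F(s)\|_{L^2}\,ds|$. Take nested partitions of $I$ into $2^n$ consecutive intervals, each carrying $\int\|F\|_{L^2}=2^{-n}A$. Let $G_n$ be the step function equal to $G$ at the left endpoint of each interval (a limit if that endpoint is $-\infty$). Then:
\begin{itemize}
\item $\|G_n-G\|_{L^\infty_tL^2_x}\leq 2^{-n}A$;
\item $\|e^{it\Delta}G_0\|_{U^2_\Delta}\leq A$, since $G(t_0)=0$;
\item $G_{n+1}-G_n$ is a step function with at most $2^n$ nonzero values, each of norm at most $2^{-n-1}A$, so $\|e^{it\Delta}(G_{n+1}-G_n)\|_{U^2_\Delta}\leq(2^n\cdot 2^{-2n-2}A^2)^{1/2}=2^{-n/2-1}A$.
\end{itemize}
This is summable in $n$. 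The same count in $U^1_\Delta$ gives $A/2$ at every level, which is exactly where that case breaks. So the telescoping series converges in $U^2_\Delta$, and by $U^2_\Delta\hookrightarrow L^\infty_tL^2_x$ its limit is $D$. Hence $\|D\|_{U^2_\Delta}\lesssim A$.

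The paper gives no proof of its own; the references it cites obtain this bound from the Hadac--Herr--Koch duality $(U^2_\Delta)^*=V^2_\Delta$ together with $V^2_\Delta\hookrightarrow L^\infty_tL^2_x$. Either argument, combined with your free part, completes the proof.
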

By local theory, the above lemma implies that
\begin{equation}\label{localboundU2}
\|u\|_{L_{t,x}^{\frac{2(d+2)}{d}}(I\times\R^d)}=1\Rightarrow\|u\|_{U_{\Delta}^2(I\times\R^d)}\lesssim_{M(u)}1.  
\end{equation}

\subsection{Bilinear estimates}
In this subsection, we prove some bilinear estimates. We need the following bilinear Strichartz estimates from \cite{KV}.

\begin{lemma}[Bilinear Strichartz estimate, \cite{KV}]\label{prop:bs_B} 
Let $u_{0},v_{0}\in L^{2}(\R^{d})$. If $u_{0}$ and $v_{0}$ have Fourier supports in the annuli $|\xi-\xi_0|\sim N$ and $|\xi-\xi_0|\sim M$, with $M\ll N$, respectively, then 
\begin{equation}
\|(e^{it\Delta}u_{0})(e^{it\Delta}v_{0})\|_{L_{t,x}^2(\R\times\R^{d})} \lesssim \left(\frac{M^{d-1}}{N}\right)^{1/2}\|u_{0}\|_{L^{2}(\R^{d})}\|v_{0}\|_{L^{2}(\R^{d})}.
\end{equation}
\end{lemma}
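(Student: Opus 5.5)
First use the symmetries to normalize: by Galilean invariance (conjugating by $e^{ix\cdot\xi_0}$ and translating along $x\mapsto x-2t\xi_0$, which preserves the $L^2_{t,x}$ norm of the product and all $L^2$ norms) we may take $\xi_0=0$. By scaling $x\mapsto Nx$, $t\mapsto N^2t$, we may also take $N=1$; the $L^2_{t,x}$ norm of the product then picks up a factor $N^{(d+2)/2}\cdot N^{-d}$, which reduces everything to proving the claim with $N=1$, $M\ll 1$ and constant $M^{(d-1)/2}$. So $\hat u_0$ is supported in $|\xi|\sim 1$ and $\hat v_0$ in $|\xi|\le M$.

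Next write the product in Fourier variables. The spacetime Fourier transform of $(e^{it\Delta}u_0)(e^{it\Delta}v_0)$ at $(\tau,\xi)$ is
\[
\int \hat u_0(\xi-\eta)\,\hat v_0(\eta)\,\delta\big(\tau+|\xi-\eta|^2+|\eta|^2\big)\,d\eta .
\]
By Plancherel and Cauchy--Schwarz in $\eta$, the square of the $L^2_{t,x}$ norm is bounded by
\[
\sup_{\tau,\xi}\ \int_{|\eta|\le M}\delta\big(\tau+|\xi-\eta|^2+|\eta|^2\big)\,d\eta\ \times\ \|u_0\|_{L^2}^2\|v_0\|_{L^2}^2 .
\]
So it suffices to show that the surface measure of $\{\eta:|\eta|\le M,\ |\xi-\eta|^2+|\eta|^2=-\tau\}$, weighted by $1/|\nabla_\eta\phi|$ with $\phi(\eta)=|\xi-\eta|^2+|\eta|^2$, is $\lesssim M^{d-1}$.

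For the key step, compute $\nabla_\eta\phi=2(2\eta-\xi)$. On the support we have $|\xi-\eta|\sim1$ and $|\eta|\le M\ll1$, hence $|\xi|\sim1$ and $|2\eta-\xi|\sim1$. The level set is a sphere centered at $\xi/2$; it meets the ball of radius $M$ in a piece of hypersurface with $(d-1)$-measure $\lesssim M^{d-1}$, since a sphere of radius $\gtrsim M$ intersected with a ball of radius $M$ has area $\lesssim M^{d-1}$. The coarea weight $1/|\nabla\phi|$ is $\sim1$, so the supremum is $\lesssim M^{d-1}$. Taking square roots and undoing the scaling gives the factor $(M^{d-1}/N)^{1/2}$.

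The only delicate points are tracking the powers of $N$ under rescaling and ensuring that the transversality $|\nabla\phi|\gtrsim N$ holds. This is exactly where the hypothesis $M\ll N$ is used: it forces $|2\eta-\xi|\sim N$. A routine density argument removes any smoothness assumptions on the data.
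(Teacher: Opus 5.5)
Your proof is correct: the paper does not prove this lemma itself (it only cites Killip--Vi\c{s}an), and your argument is the standard proof of this estimate. It consists of a Galilean reduction to $\xi_0=0$ (the same reduction the paper uses for Lemma~\ref{prop:cube_BS}), rescaling to $N=1$, Plancherel in $(t,x)$ with Cauchy--Schwarz in $\eta$, and the bound $\lesssim M^{d-1}$ for the $|\nabla_\eta\phi|^{-1}$-weighted area of the sphere $\{\phi=-\tau\}$ inside $\{|\eta|\le M\}$.

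One thing to fix in the write-up: the weight produced by Cauchy--Schwarz must carry both support cutoffs $1_{\{|\xi-\eta|\sim 1\}}1_{\{|\eta|\le M\}}$, not only $1_{\{|\eta|\le M\}}$ as in your display. With the latter alone, the supremum over $\xi$ near $0$ is of order $M^{d-2}$. It is exactly the $\hat u_0$-cutoff that forces $|\xi|\sim1$, and hence the transversality $|2\eta-\xi|\sim1$ you invoke.
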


By Galilean invariance of the free Schr\"{o}dinger equation, we can also prove the following cube-localized version of the bilinear estimate.
\begin{lemma}\label{prop:cube_BS} 
Let $0<c_{1}<c_{2}$, and let $\delta \in (0,1)$. Let $\xi_{0}, \xi_1\in\R^{d}$, and let $Q_{\xi_{0}}$ be a cube centered at $\xi_{0}$ of side length $l(Q_{\xi_{0}})=M$. Let $u_{0},v_{0}\in L^{2}(\R^{d})$. 
If $u_{0}$ has Fourier support in the annulus $A(\xi_1,c_{1}N,c_{2}N)$ and $v_{0}$ has Fourier support in the cube $Q_{\xi_{0}} \subset B(\xi_1,(1-\delta)c_{1}N)$, where $M\ll N$, then 
\begin{equation}
\|(e^{it\Delta}u_{0})(e^{it\Delta}v_{0})\|_{L_{t,x}^{2}(\R\times\R^{d})}\lesssim_{c_{1},c_{2},\delta} \left(\frac{M^{d-1}}{N}\right)^{1/2}\|u_{0}\|_{L^{2}(\R^{d})}\|v_{0}\|_{L^{2}(\R^{d})}.
\end{equation}
\end{lemma}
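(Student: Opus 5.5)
We will deduce the estimate from Lemma~\ref{prop:bs_B} by a Galilean change of variables followed by a decomposition of the annulus. The Galilean transformation $g_{0,-\xi_1,0,1}$ commutes with the free evolution up to a phase and a spatial translation:
\[
e^{it\Delta}\big(e^{-ix\cdot\xi_1}f\big)(x)=e^{-ix\cdot\xi_1}e^{-it|\xi_1|^2}\big(e^{it\Delta}f\big)(x+2t\xi_1).
\]
Applying it to both $u_0$ and $v_0$ changes the product $(e^{it\Delta}u_0)(e^{it\Delta}v_0)$ only by a unimodular factor and the measure-preserving map $(t,x)\mapsto(t,x+2t\xi_1)$. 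Hence the $L^2_{t,x}$ norm of the product is unchanged, and so are the $L^2$ norms of the data. We may therefore assume $\xi_1=0$. Then $\hat u_0$ is supported in $\{c_1N\le|\xi|\le c_2N\}$, and $\hat v_0$ is supported in the cube $Q_{\xi_0}\subset B(0,(1-\delta)c_1N)$. In particular, on the supports we have the frequency separation $|\xi-\eta|\ge \delta c_1 N$.

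The key point is that the proof of Lemma~\ref{prop:bs_B} uses only this kind of separation, together with the fact that $v_0$ lives in a set of diameter $\lesssim M$. To make this precise, we cover the annulus $A(0,c_1N,c_2N)$ by $O_{c_1,c_2,\delta}(1)$ caps $C_k$, each of angular width small compared with $\delta$ and of diameter $\sim\delta N$. Split $u_0=\sum_k u_{0,k}$ with $\hat u_{0,k}=\hat u_0\chi_{C_k}$. For each $k$ there is a unit vector $\omega_k$ such that
\[
|(\xi-\eta)\cdot\omega_k|\gtrsim_{c_1,c_2,\delta} N \quad \text{for all } \xi\in C_k,\ \eta\in Q_{\xi_0}.
\]
Here is why. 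Since $|\xi-\eta|\ge\delta c_1N$ and $|\xi-\eta|\le 2c_2N$, the direction of $\xi-\eta$ is not arbitrary. As $\eta$ ranges over the cube of side $M\ll N$, this direction varies by only $O(M/N)$. As $\xi$ ranges over the thin cap, it varies by a small amount controlled by the cap width relative to $\delta$. So we may take $\omega_k$ to be the direction of $\xi_k-\xi_0$, where $\xi_k$ is the center of $C_k$.

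We then run the standard bilinear argument for each piece. Write
\[
(e^{it\Delta}u_{0,k})(e^{it\Delta}v_0)=\int e^{ix\cdot(\xi+\eta)-it(|\xi|^2+|\eta|^2)}\hat u_{0,k}(\xi)\hat v_0(\eta)\,d\xi\,d\eta.
\]
Take the space-time Fourier transform and apply Plancherel. Change variables $(\xi,\eta)\mapsto(\tau,\sigma,\eta')$, where $\sigma=\xi+\eta$, $\tau=|\xi|^2+|\eta|^2$, and $\eta'$ denotes the components of $\eta$ orthogonal to $\omega_k$. The Jacobian of $\eta\cdot\omega_k\mapsto\tau$ at fixed $\sigma,\eta'$ is $2|(\xi-\eta)\cdot\omega_k|\gtrsim N$. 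The orthogonal variables $\eta'$ range over a set of measure $\lesssim M^{d-1}$, by the cube constraint. Cauchy--Schwarz in $\eta'$ then gives
\[
\|(e^{it\Delta}u_{0,k})(e^{it\Delta}v_0)\|_{L^2_{t,x}}\lesssim \big(M^{d-1}/N\big)^{1/2}\|u_{0,k}\|_{L^2}\|v_0\|_{L^2}.
\]
Alternatively, after a rotation taking $\omega_k$ to a fixed axis, each piece is exactly in the configuration treated in the proof of Lemma~\ref{prop:bs_B} in \cite{KV}, and we can quote that estimate directly.

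Summing over the $O_{c_1,c_2,\delta}(1)$ caps with the triangle inequality, and using $\|u_{0,k}\|_{L^2}\le\|u_0\|_{L^2}$, yields the claim with the stated dependence of the constant. The only step needing care is the uniform transversality bound above. Its constant depends on $\delta$ through the separation $|\xi-\eta|\ge\delta c_1N$, which is exactly where the hypothesis $Q_{\xi_0}\subset B(\xi_1,(1-\delta)c_1N)$ enters. The remaining steps are routine.
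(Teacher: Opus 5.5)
Your proof is correct, but it takes a different route from the paper's. The difference is the choice of Galilean shift.

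\textbf{The paper's route.} The paper shifts by the cube center $\xi_0$, not by the annulus center $\xi_1$. After applying $g_{0,-\xi_0,0,1}$, the datum $v_0$ lives in the centered cube $[-M/2,M/2]^d$. Since $|\xi_0-\xi_1|\le(1-\delta)c_1N$, the reverse triangle inequality puts the Fourier support of $u_0$ in the centered annulus $A(0,\delta c_1N,(c_2+(1-\delta)c_1)N)$. Lemma \ref{prop:bs_B} then applies as a black box, and the proof is a few lines.

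\textbf{Your route.} Your shift by $\xi_1$ centers the annulus but leaves the cube at $\xi_0-\xi_1$, possibly at distance $\sim N$ from the origin. So Lemma \ref{prop:bs_B} cannot be quoted as stated. Your remark that each cap piece is ``exactly'' in that configuration holds only after a further shift by $\xi_0$, or in the sense that the \emph{proof} in \cite{KV} goes through. You therefore reprove the bilinear estimate: cap decomposition, transversality, and the $(\sigma,\tau,\eta')$ change of variables with Cauchy--Schwarz over the projection of the cube onto $\omega_k^\perp$.

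\textbf{What each buys.} The paper's argument is much shorter. Yours is self-contained, and it shows that the factor $M^{d-1}$ comes only from the $(d-1)$-dimensional projection of the cube, so no annular structure is needed for the low-frequency piece. The paper's appeal to Lemma \ref{prop:bs_B}, whose statement has $|\xi-\xi_0|\sim M$, implicitly uses the version with support in $|\eta|\lesssim M$.

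\textbf{Two steps to tighten.} First, transversality follows most cleanly from the triangle inequality. Take $\xi_k\in C_k$ to be a point of the annulus, so that $|\xi_k-\xi_0|\ge\delta c_1N$. Then
\[
(\xi-\eta)\cdot\omega_k\ \ge\ |\xi_k-\xi_0|-\operatorname{diam}(C_k)-\tfrac{\sqrt d}{2}M\ \ge\ \tfrac12\delta c_1N .
\]
This needs caps of diameter at most $\delta c_1N/4$ and $M\ll\delta c_1N$. So the implicit constant in $M\ll N$ depends on $\delta$ and $c_1$, exactly as in the paper's reduction. Second, this fixed sign of $(\xi-\eta)\cdot\omega_k$ on the supports is what makes $\eta\cdot\omega_k\mapsto\tau$ injective at fixed $(\sigma,\eta')$. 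That injectivity is what justifies your change of variables.
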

\begin{proof}
 Observe that by translation invariance of the Lebesgue measure and Galilean invariance of the free Schr\"{o}dinger equation, we can write
\begin{align}
\|(e^{it\Delta}u_{0})(e^{it\Delta}v_{0})\|_{L_{t,x}^{2}(\R\times\R^{d})} &= \|\left({T_{g_{0,-\xi_{0},0,1}}(e^{it\Delta}u_{0})}\right)\left({T_{g_{0,-\xi_{0},0,1}}(e^{it\Delta}v_{0})}\right)\|_{L_{t,x}^{2}(\R\times\R^{d})} \nonumber\\
&= \|(e^{it\Delta}u_{0,\xi_{0}})(e^{it\Delta}v_{0,\xi_{0}})\|_{L_{t,x}^{2}(\R\times\R^{d})},
\end{align}
where $u_{0,\xi_{0}} := g_{0,-\xi_{0},0,1}u_{0}$ and $v_{0,\xi_{0}} := g_{0,-\xi_{0},0,1}v_{0}$. Observe that $v_{0,\xi_{0}}$ has Fourier support in the cube $Q_{0} := [-\frac{M}{2},\frac{M}{2}]^{d}$. Now if $\xi\in \text{supp}(\hat{u}_{0,\xi_{0}})$, then $\xi+\xi_{0}\in A(\xi_1,c_{1}N,c_{2}N)$. Since $Q_{\xi_{0}} \subset B(\xi_1,(1-\delta)c_{1}N)$, in particular, $|\xi_{0}-\xi_1| \leq (1-\delta)c_{1}N$. Hence by the (reverse) triangle inequality,
\begin{equation}
\delta c_{1}N = \left({c_{1}-(1-\delta)c_{1}}\right)N \leq |\xi| \leq \left({c_{2}+(1-\delta)c_{1}}\right)N,
\end{equation}
which implies that $u_{0,\xi_{0}}$ has Fourier support in the annulus $A(0,\delta c_{1}N, (c_{2}+(1-\delta)c_{1})N)$. Applying Proposition \ref{prop:bs_B} with $u_{0,\xi_{0}}$ and $v_{0,\xi_{0}}$, we conclude that
\begin{align*}
\|(e^{it\Delta}u_{0,\xi_{0}})(e^{it\Delta}v_{0,\xi_{0}})\|_{L_{t,x}^{2}(\R\times\R^{d})} &\lesssim_{\delta,c_{1},c_{2}} \left({\frac{M^{d-1}}{N}}\right)^{1/2} \|u_{0,\xi_{0}}\|_{L^{2}(\R^{d})} \|v_{0,\xi_{0}}\|_{L^{2}(\R^{d})}\notag\\
&= \left({\frac{M^{d-1}}{N}}\right)^{1/2} \|u_{0}\|_{L^{2}(\R^{d})} \|v_{0}\|_{L^{2}(\R^{d})},
\end{align*}
as desired.
\end{proof}
As directly application of above two lemmas, we have
\begin{corollary}\label{bilinear}
Let $M\ll N$. Let $J$ be an interval. Let $u,v\in U_{\Delta}^2(J\times\R^d)$. If $u$ and $v$ have Fourier supports in the annuli $|\xi-\xi_1|\sim N$ and $|\xi-\xi_1|\sim M$ respectively, or if $u$ has Fourier support in the annulus $A(\xi_1,c_{1}N,c_{2}N)$ and $v$ has Fourier support in the cube $Q_{\xi_{0}} \subset B(\xi_1,(1-\delta)c_{1}N)$ for some $0<c_{1}<c_{2}$ and  $\delta \in (0,1)$, then
\begin{align}
\|uv\|_{L_{t,x}^2(J\times\R^d)}\lesssim \left({\frac{M^{d-1}}{N}}\right)^{1/2}  \|u\|_{U_{\Delta}^2(J\times\R^d)}\|v\|_{U_{\Delta}^2(J\times\R^d)}.
\end{align}
\end{corollary}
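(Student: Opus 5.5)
The plan is to pass from free solutions to $U_\Delta^2$ functions through the atomic structure of $U_\Delta^2$. Since $\|uv\|_{L^2_{t,x}}$ is a norm in each argument, it suffices by the atomic definition to prove the estimate when $u$ and $v$ are single atoms. Given general $u=\sum_\lambda c_\lambda u^\lambda$ and $v=\sum_\mu d_\mu v^\mu$, we would sum the atom bound $\sum_{\lambda,\mu}|c_\lambda||d_\mu|$ and take the infimum over decompositions, which yields the product of the two $U^2_\Delta$ norms.

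One caveat concerns the Fourier support hypothesis, which must survive this reduction. Before decomposing, we replace $u$ by $\tilde P u$ and $v$ by $\tilde P' v$. Here $\tilde P$ and $\tilde P'$ are smooth Fourier multipliers that equal $1$ on the stated annulus (respectively cube) and are supported in slightly enlarged versions: an annulus $A(\xi_1,c_1'N,c_2'N)$ with $c_1'<c_1$ close enough that the cube still sits inside $B(\xi_1,(1-\delta')c_1'N)$, and a cube of side $\sim M$. These multipliers commute with $e^{it\Delta}$, so they map atoms to $L^2$-bounded multiples of atoms with frequency-localized pieces, and they are bounded on $U^2_\Delta$. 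Consequently we may assume every atom piece $v_k^\lambda$ carries the required Fourier support, at the cost of harmless constant changes in $c_1,c_2,\delta$.

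Now take atoms $u=\sum_k \chi_{[t_k,t_{k+1})}e^{it\Delta}u_k$ and $v=\sum_l \chi_{[s_l,s_{l+1})}e^{it\Delta}v_l$, with $\sum\|u_k\|_2^2=\sum\|v_l\|_2^2=1$. We refine to the common partition given by the intervals $I_{k,l}=[t_k,t_{k+1})\cap[s_l,s_{l+1})$; these are disjoint, so
\[
\|uv\|_{L^2_{t,x}}^2=\sum_{k,l}\|(e^{it\Delta}u_k)(e^{it\Delta}v_l)\|_{L^2_{t,x}(I_{k,l}\times\R^d)}^2 .
\]
Each summand is bounded by the global estimate, namely Lemma \ref{prop:bs_B} in the annulus case and Lemma \ref{prop:cube_BS} in the cube case, which gives $\frac{M^{d-1}}{N}\|u_k\|_2^2\|v_l\|_2^2$. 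Summing over $k,l$ then gives $\frac{M^{d-1}}{N}\cdot 1\cdot 1$, and taking square roots finishes the atom case.

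For a general interval $J$ we would apply the above to $\chi_J u$ and $\chi_J v$, since $\|w\|_{U^2_\Delta(J)}=\|\chi_J w\|_{U^2_\Delta(\R)}$ and $\chi_J u\,\chi_J v=uv$ on $J$. The only step requiring care is the preservation of Fourier support under the atomic decomposition, handled by the smooth projections above; everything else follows directly from the disjointness of the time intervals.
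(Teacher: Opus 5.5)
Your proposal is correct and is the standard atomic transference argument the paper has in mind when it calls the corollary a direct application of Lemmas \ref{prop:bs_B} and \ref{prop:cube_BS}: reduce to atoms, refine to the common time partition, apply the free bilinear estimate on each piece, and sum in $\ell^2$. One simplification is available: you can use the sharp projection $\mathcal{F}^{-1}(1_A\mathcal{F}\,\cdot)$ onto the given annulus or cube instead of smooth enlarged multipliers, since it is an $L^2$ contraction commuting with $e^{it\Delta}$ (as in the proof of Lemma \ref{Xdeltanorm}), and this avoids adjusting $c_1,c_2,\delta$.
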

    
Next, we establish a square-type function estimate, which will play an important role in the proof of long-time Strichartz esitmate.
\begin{lemma}\label{Xdeltanorm}
Let $J$ be an interval. Let $\{Q\}$ be a collection of cubes in $\R^{d}$ such that $\|\sum_{Q}1_{Q}\|_{L^{\infty}}<\infty$ (i.e. boundedly overlapping). Let $P_{Q}u:=\mathcal{F}^{-1}(\chi_{Q}\mathcal{F}u)$, then we have
\begin{equation}
\left(\sum_{Q}\|P_{Q}u\|_{U_{\Delta}^{2}(J\times\R^{d})}^{2}\right)^{1/2} \lesssim \|u\|_{U_{\Delta}^{2}(J\times\R^{d})},
\end{equation}
where the implicit constant only depends on $\|\sum_{Q}1_{Q}\|_{L^{\infty}}$.
\end{lemma}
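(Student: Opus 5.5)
Since the $U^2_\Delta$ norm is defined by an infimum over atomic decompositions, the plan is to reduce to atoms. The left-hand side is an $\ell^2$-sum of norms. If $u=\sum_\lambda c_\lambda v^\lambda$, then $P_Q u=\sum_\lambda c_\lambda P_Q v^\lambda$ for each $Q$, because $P_Q$ is a Fourier multiplier in $x$ only. Minkowski's inequality in $\ell^2_Q$ then gives
\[
\Big(\sum_Q\|P_Qu\|_{U^2_\Delta}^2\Big)^{1/2}\le \sum_\lambda |c_\lambda|\Big(\sum_Q\|P_Qv^\lambda\|_{U^2_\Delta}^2\Big)^{1/2}.
\]
It therefore suffices to prove the bound $\big(\sum_Q\|P_Q v\|_{U^2_\Delta(\R\times\R^d)}^2\big)^{1/2}\lesssim 1$ for a single atom $v$, and then take the infimum over decompositions.

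To handle the interval $J$, we apply this to $u\chi_J(t)$. Since $P_Q$ commutes with multiplication by $\chi_J(t)$, we have $P_Q(u\chi_J)=(P_Qu)\chi_J$. Hence the local version follows directly from the global one.

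Now fix an atom $v=\sum_k\chi_{[t_k,t_{k+1})}(t)e^{it\Delta}v_k$ with $\sum_k\|v_k\|_{L^2}^2=1$. Because $P_Q$ commutes with $e^{it\Delta}$, we get
\[
P_Qv=\sum_k\chi_{[t_k,t_{k+1})}(t)e^{it\Delta}P_Qv_k.
\]
This has the same step structure as $v$, so after normalization it is a multiple of an atom. Concretely, if $a_Q:=\big(\sum_k\|P_Qv_k\|_{L^2}^2\big)^{1/2}$ is nonzero, then $P_Qv/a_Q$ is an atom, and so $\|P_Qv\|_{U^2_\Delta}\le a_Q$. Summing over $Q$ and using Plancherel together with bounded overlap,
\[
\sum_Q a_Q^2=\sum_k\sum_Q\|\chi_Q\hat v_k\|_{L^2}^2=\sum_k\int\Big(\sum_Q 1_Q\Big)|\hat v_k|^2\,d\xi\le \Big\|\sum_Q1_Q\Big\|_{L^\infty}\sum_k\|v_k\|_{L^2}^2 .
\]
The last expression is at most $\|\sum_Q1_Q\|_{L^\infty}$. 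This gives the atomic bound with constant $\|\sum_Q 1_Q\|_{L^\infty}^{1/2}$, and combining with the first paragraph completes the argument.

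The main obstacle is justifying the interchange of $P_Q$ with the possibly infinite atomic sums and time cutoffs. Since $P_Q$ is bounded on $L^2$, the series converges in $L^\infty_tL^2_x$, which makes the interchange legitimate. The remaining point is that the infimum is compatible with the $\ell^2_Q$ norm, and this is exactly what the Minkowski step in the first paragraph provides.
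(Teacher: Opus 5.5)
Your proposal is correct and follows the paper's argument: $P_Q$ sends each atom to a multiple $\bigl(\sum_k\|P_Qu_{j,k}\|_{L^2}^2\bigr)^{1/2}$ of an atom, then Minkowski in $\ell^2_Q$, Plancherel with bounded overlap, and the infimum over atomic decompositions. The only differences are cosmetic: you reduce to a single atom via the countable triangle inequality for $\|\cdot\|_{U^2_\Delta}$, whereas the paper writes the atomic decomposition $P_Qu=\sum_j c_{j,Q}u_{j,Q}$ directly, and you additionally spell out the commutation with $\chi_J$ and the $L^\infty_tL^2_x$ convergence that the paper leaves implicit.
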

\begin{proof}
Suppose $u=\sum_{j}c_{j}u_{j}$ is a $U_{\Delta}^{2}$ atomic decomposition of $u$, where
\begin{equation}
u_{j} = \sum_{k}1_{[t_{j,k},t_{j,k+1})} e^{it\Delta}u_{j,k}.
\end{equation}
For each cube $Q$, define the function
\begin{equation}
u_{j, Q} := \sum_{k}1_{[t_{j,k},t_{j,k+1})}e^{it\Delta}\frac{P_{Q}u_{j,k}}{(\sum_{k}\|P_{Q}u_{j,k}\|_{L^{2}}^{2})^{1/2}},
\end{equation}
which is a $U_{\Delta}^{2}$ atom. Observe that 
\begin{equation}
u_{Q} := P_{Q}u = \sum_{j}c_{j,Q}u_{j, Q}, \qquad c_{j, Q} := c_{j}\left({\sum_{k}\|P_{Q}u_{j, k}\|_{L^{2}}^{2}}\right)^{1/2}
\end{equation}
is an atomic decomposition for $u_{Q}$. Therefore by Minkowski's inequality,
\begin{align}
\left(\sum_{Q}\|u_{Q}\|_{U_{\Delta}^{2}(J\times\R^{2})}^{2}\right)^{1/2} &\leq \left(\sum_{Q} \left(\sum_{j}|c_{j}|\left(\sum_{k}\|P_{Q}u_{j, k}\|_{L^{2}}^{2}\right)^{1/2}\right)^{2} \right)^{1/2}\nonumber\\
&\leq \sum_{j}|c_{j}| \left(\sum_{Q}\sum_{k}\|P_{Q}u_{j,k}\|_{L^{2}}^{2}\right)^{1/2} \nonumber\\
&= \sum_{j}|c_{j}| \left(\sum_{k}\sum_{Q}\|P_{Q}u_{j, k}\|_{L^{2}}^{2}\right)^{1/2} \nonumber\\
&\lesssim \sum_{j}|c_{j}|\left(\sum_{k}\|u_{j, k}\|_{L^{2}}^{2}\right)^{1/2}\nonumber\\
&= \sum_{j}|c_{j}|,
\end{align}
where we use Plancherel's theorem together with the bounded overlap of the cubes $Q$ to obtain the penultimate inequality. Taking the infimmum of the RHS of the ultimate equality over all $U_{\Delta}^{2}$ atomic decompositions of $u$ completes the proof of the lemma.
\end{proof}

\subsection{Lorentz spaces and  nonlinear estimates}\label{S:2.2}
Let $f$ be a measurable function on $\mathbb{R}^d$. The distribution function of $f$ is defined by
\begin{equation}
d_f(\lambda)=: |\{x\in \mathbb{R}^d : |f(x)|>\lambda\}|, \quad \lambda>0,\notag
\end{equation}
where $|A|$ is the Lebesgue measure of a set $A$ in $\mathbb{R}^d$. The decreasing rearrangement of $f$ is defined by
\begin{equation}
f^*(s)=: \inf \left\{ \lambda>0 : d_f(\lambda)\leq s\right\}, \quad s>0.\notag
\end{equation}

\begin{definition}[Lorentz spaces] 
Let $0<r<\infty$ and $0<\rho\leq \infty$. The Lorentz space $L^{r,\rho}(\mathbb{R}^d)$ is defined by
\begin{equation}
L^{r,\rho}(\mathbb{R}^d)=: \left\{ f \text{ is measurable on } \mathbb{R}^d : \|f\|_{L^{r,\rho}}<\infty\right\}, \notag
\end{equation}
where
\[\|f\|_{L^{r,\rho}}=: \left\{
\begin{array}{cl}
( \frac{\rho}{r} \int_0^\infty (s^{1/r} f^*(s))^\rho \frac{1}{s}ds)^{1/\rho} &\text{ if } \rho <\infty, \\
\sup_{s>0} s^{1/r} f^*(s) &\text{ if } \rho=\infty.
\end{array}\right.\]
\end{definition}

We collect the following basic properties of $L^{r,\rho}(\mathbb{R}^d)$ in the following lemmas.
\begin{lemma}[Properties of Lorentz spaces, \cite{ONeil}] \label{basicp} We have

\begin{itemize}
\item[$(i)$] For $1<r<\infty$, $L^{r,r}(\mathbb{R}^d) \equiv L^r(\mathbb{R}^d)$ and by convention, $L^{\infty,\infty}(\mathbb{R}^d)= L^\infty(\mathbb{R}^d)$.

\item[$(ii)$] For $1<r<\infty$ and $0<\rho_1<\rho_2\leq \infty$, $L^{r,\rho_1}(\mathbb{R}^d)\subset L^{r,\rho_2}(\mathbb{R}^d)$. 
		
\item[$(iii)$] For $1<r<\infty$, $0<\rho \leq \infty$, and $\theta>0$, $\||f|^\theta\|_{L^{r,\rho}} = \|f\|^\theta_{L^{\theta r, \theta \rho}}$.

\item[$(iv)$]  $|x|^{-2} \in L^{\frac{d}{2},\infty}(\mathbb{R}^d)$.
\end{itemize}
\end{lemma}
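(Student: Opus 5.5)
The four items in Lemma~\ref{basicp} are classical facts about Lorentz spaces, and I would verify them directly from the rearrangement definition of $\|\cdot\|_{L^{r,\rho}}$. Throughout I use three elementary properties of the decreasing rearrangement: it is nonincreasing, it satisfies $(|f|^\theta)^*=(f^*)^\theta$, and $f$ and $f^*$ are equimeasurable.

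\textbf{Item (i).} Take $\rho=r$. The normalization $\frac{\rho}{r}=1$ gives
\[
\|f\|_{L^{r,r}}^r=\int_0^\infty f^*(s)^r\,ds .
\]
By equimeasurability this equals $\int_{\R^d}|f|^r\,dx$. The endpoint $L^{\infty,\infty}=L^\infty$ is a convention, since $\sup_s f^*(s)=f^*(0^+)=\|f\|_{L^\infty}$.

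\textbf{Item (iii).} Since $(|f|^\theta)^*=(f^*)^\theta$, we get
\[
\||f|^\theta\|_{L^{r,\rho}}^{\rho}=\frac{\rho}{r}\int_0^\infty s^{\rho/r}f^*(s)^{\theta\rho}\,\frac{ds}{s}.
\]
Write $s^{\rho/r}=(s^{1/(\theta r)})^{\theta\rho}$ and note that $\frac{\rho}{r}=\frac{\theta\rho}{\theta r}$. The right-hand side is then exactly $\|f\|_{L^{\theta r,\theta\rho}}^{\theta\rho}$, and taking $\rho$-th roots gives the claim. The case $\rho=\infty$ is the same computation with sups in place of integrals.

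\textbf{Item (ii).} This is the only step needing an estimate, and I expect it to be the main (mild) obstacle. I would first prove the embedding into $L^{r,\infty}$. Because $f^*$ is nonincreasing, for each fixed $t$,
\[
\bigl(t^{1/r}f^*(t)\bigr)^{\rho_1}=f^*(t)^{\rho_1}\,\frac{\rho_1}{r}\int_0^t s^{\rho_1/r}\,\frac{ds}{s}\le \frac{\rho_1}{r}\int_0^t \bigl(s^{1/r}f^*(s)\bigr)^{\rho_1}\,\frac{ds}{s}\le \|f\|_{L^{r,\rho_1}}^{\rho_1}.
\]
Taking the sup over $t$ gives $\|f\|_{L^{r,\infty}}\le\|f\|_{L^{r,\rho_1}}$. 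For finite $\rho_2>\rho_1$, I would interpolate pointwise:
\[
\bigl(s^{1/r}f^*\bigr)^{\rho_2}\le \|f\|_{L^{r,\infty}}^{\rho_2-\rho_1}\bigl(s^{1/r}f^*\bigr)^{\rho_1}.
\]
Integrating against $ds/s$ yields $\|f\|_{L^{r,\rho_2}}\lesssim \|f\|_{L^{r,\rho_1}}$, with a constant depending only on $r,\rho_1,\rho_2$. So we get an inclusion with a bounded embedding; under this normalization the constant is in fact $\le 1$, which one can check but which is not needed.

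\textbf{Item (iv).} For $f(x)=|x|^{-2}$ the distribution function is explicit:
\[
d_f(\lambda)=|\{|x|<\lambda^{-1/2}\}|=c_d\lambda^{-d/2}.
\]
Inverting this gives $f^*(s)=(c_d/s)^{2/d}$. Hence $s^{2/d}f^*(s)=c_d^{2/d}$ for every $s$, so $\sup_s s^{2/d}f^*(s)<\infty$ and $|x|^{-2}\in L^{\frac d2,\infty}$.

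Everything else in the lemma is routine bookkeeping, and for anything further one may simply cite \cite{ONeil}.
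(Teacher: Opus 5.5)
Your proposal is correct: (i), (iii) and (iv) are exact computations from the rearrangement definition. In (ii), your sup-interpolation gives $\|f\|_{L^{r,\rho_2}}\le(\rho_2/\rho_1)^{1/\rho_2}\|f\|_{L^{r,\rho_1}}$, which suffices for the inclusion; the constant $1$ you mention follows if you use your own intermediate bound $(t^{1/r}f^*(t))^{\rho_1}\le\Phi(t):=\frac{\rho_1}{r}\int_0^t(s^{1/r}f^*(s))^{\rho_1}\frac{ds}{s}$ in place of the global sup, and then integrate $\frac{\rho_2}{\rho_1}\Phi^{\rho_2/\rho_1-1}\Phi'$.

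The paper gives no argument of its own and simply cites O'Neil, so your direct verification is the standard proof behind that citation.
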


\begin{lemma}[H\"older's inequality, \cite{ONeil}]  There holds
\begin{itemize}
\item[$(i)$]  Let $1<r, r_1, r_2<\infty$ and $1\leq \rho, \rho_1, \rho_2 \leq \infty$ be such that
\[\frac{1}{r}=\frac{1}{r_1}+\frac{1}{r_2}, \quad \frac{1}{\rho} \leq \frac{1}{\rho_1}+\frac{1}{\rho_2}.
		\]
Then for any $f \in L^{r_1, \rho_1}(\mathbb{R}^d)$ and $g\in L^{r_2, \rho_2}(\mathbb{R}^d)$
\[\|fg\|_{L^{r,\rho}} \lesssim \|f\|_{L^{r_1, \rho_1}} \|g\|_{L^{r_2,\rho_2}}.\]
\item[$(ii)$] Let  $1<r_1,r_2<\infty $ and  $1\le \rho_1,\rho_2\le \infty $  be such that 
\begin{equation}
1=\frac{1}{r_1}+\frac{1}{r_2},\quad 1\le \frac{1}{\rho_1}+\frac{1}{\rho_2}.\notag
\end{equation}
Then  for any $f \in L^{r_1, \rho_1}(\mathbb{R}^d)$ and $g\in L^{r_2, \rho_2}(\mathbb{R}^d)$
\begin{equation}
\|fg\|_{L^1}\lesssim  \|f\|_{L^{r_1, \rho_1}} \|g\|_{L^{r_2,\rho_2}}.\notag
\end{equation}
\end{itemize}
\end{lemma}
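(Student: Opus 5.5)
We prove both parts by reducing to weighted one-dimensional Hölder inequalities for decreasing rearrangements on $(0,\infty)$ with the measure $ds/s$. We use two standard rearrangement facts. The first is
\[
(fg)^*(s_1+s_2)\le f^*(s_1)\,g^*(s_2),
\]
which follows from $\{|fg|>\lambda_1\lambda_2\}\subset\{|f|>\lambda_1\}\cup\{|g|>\lambda_2\}$. The second is the Hardy--Littlewood inequality $\int|fg|\,dx\le\int_0^\infty f^*(s)g^*(s)\,ds$. We also use the nesting property (ii) of Lemma \ref{basicp}, in the quantitative form $\|h\|_{L^{r,\rho_2}}\lesssim\|h\|_{L^{r,\rho_1}}$ for $\rho_1<\rho_2$. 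Its proof rests on the monotonicity of $h^*$, which gives $s^{1/r}h^*(s)\lesssim\|h\|_{L^{r,\rho_1}}$ for every $s$, followed by interpolation between this sup bound and the $L^{\rho_1}(ds/s)$ bound. This bound holds for all $0<\rho_1$, including the quasi-normed range.

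\emph{Part (i).} Setting $s_1=s_2=s/2$ gives $(fg)^*(s)\le f^*(s/2)g^*(s/2)$. Since $\frac1r=\frac1{r_1}+\frac1{r_2}$,
\[
s^{1/r}(fg)^*(s)\le 2^{1/r}\big[(s/2)^{1/r_1}f^*(s/2)\big]\big[(s/2)^{1/r_2}g^*(s/2)\big].
\]
Define $\rho_0$ by $\frac1{\rho_0}=\frac1{\rho_1}+\frac1{\rho_2}$. Apply Hölder's inequality in $L^{\rho_0}((0,\infty),ds/s)$ with exponents $\rho_1,\rho_2$; the measure $ds/s$ is invariant under $s\mapsto s/2$, so the dilation costs nothing. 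This yields $\|fg\|_{L^{r,\rho_0}}\lesssim\|f\|_{L^{r_1,\rho_1}}\|g\|_{L^{r_2,\rho_2}}$, with the harmless normalizing constants $(\rho/r)^{1/\rho}$ absorbed. The case $\rho_1=\infty$ or $\rho_2=\infty$ is handled the same way by pulling out a supremum. Finally, since $\frac1\rho\le\frac1{\rho_0}$, i.e.\ $\rho\ge\rho_0$, the nesting property gives $\|fg\|_{L^{r,\rho}}\lesssim\|fg\|_{L^{r,\rho_0}}$, which completes (i). Note that $\rho_0$ may be smaller than $1$; this is why we need the nesting inequality in the quasi-normed range.

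\emph{Part (ii).} By Hardy--Littlewood and $\frac1{r_1}+\frac1{r_2}=1$,
\[
\int|fg|\,dx\le\int_0^\infty\big(s^{1/r_1}f^*(s)\big)\big(s^{1/r_2}g^*(s)\big)\frac{ds}{s}.
\]
If $\frac1{\rho_1}+\frac1{\rho_2}>1$, choose $\tilde\rho_2\ge\rho_2$ with $\frac1{\rho_1}+\frac1{\tilde\rho_2}=1$; this is possible since $\rho_1\ge1$. Hölder's inequality in $L^1(ds/s)$ with conjugate exponents $\rho_1,\tilde\rho_2$ bounds the integral by $\|f\|_{L^{r_1,\rho_1}}\|g\|_{L^{r_2,\tilde\rho_2}}$. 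The nesting property then gives $\|g\|_{L^{r_2,\tilde\rho_2}}\lesssim\|g\|_{L^{r_2,\rho_2}}$, which completes (ii).

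The main point needing care is the nesting embedding for $\rho_0<1$ in part (i), where Minkowski's inequality is unavailable. The monotonicity argument sketched above avoids Minkowski entirely, so we expect no real obstruction; the two rearrangement facts are classical.
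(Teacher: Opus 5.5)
Your proof is correct, and it follows a different route from the one the paper relies on. The paper gives no argument for this lemma and simply cites O'Neil. O'Neil works throughout with the maximal rearrangement $f^{**}(t)=\frac1t\int_0^t f^*(s)\,ds$ and the genuine norm built from it. Passing between $f^*$ and $f^{**}$ is done with Hardy's inequality, and that is where the hypothesis $r>1$ enters.

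You instead work directly with the $f^*$-based quasi-norm that the paper takes as its definition. Your argument rests on three ingredients:
\begin{itemize}
\item the dilation bound $(fg)^*(s)\le f^*(s/2)g^*(s/2)$, as in Hunt's treatment of $L^{p,q}$ spaces;
\item the invariance of $ds/s$ under $s\mapsto s/2$, together with generalized H\"older in $ds/s$;
\item the nesting inequality for Lorentz spaces.
\end{itemize}
This avoids Hardy's inequality entirely, and part (i) does not even use $r>1$.

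You also correctly found and resolved the one delicate point: $\rho_0=(1/\rho_1+1/\rho_2)^{-1}$ can fall below $1$, for instance when $\rho_1=\rho_2=1$. Your monotonicity argument gives $s^{1/r}h^*(s)\le\|h\|_{L^{r,p}}$. Interpolating then yields $\|h\|_{L^{r,q}}\le(q/p)^{1/q}\|h\|_{L^{r,p}}$ for all $0<p<q\le\infty$, with no appeal to Minkowski's inequality.

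Part (ii) is the standard Hardy--Littlewood plus H\"older argument. Your choice of $\tilde\rho_2\ge\rho_2$ with $1/\rho_1+1/\tilde\rho_2=1$ is legitimate because $\rho_1\ge1$.

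What O'Neil's approach gives in exchange is a genuine norm, with the triangle inequality and explicit constants, which is what he needs for his convolution estimates. For the qualitative $\lesssim$ statement used in this paper, your self-contained argument suffices.
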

\begin{lemma}[{Interpolation in Lorentz spaces}, \cite{Dao}]\label{CZ}
Let  $1<p_0<p<p_1<\infty $,  $r>0$ and $0<\theta<1$ satisfy  $\frac{1}{p}=\frac{1-\theta}{p_0}+\frac{\theta}{p_1}$. For every  $f\in L^{p_0,\infty }(\mathbb{R}^d)\cap L^{p_1,\infty }(\mathbb{R}^d)$, we have
	\begin{equation}
		\|f\|_{L^{p,r}(\mathbb{R} ^d)}\lesssim  \|f\|_{L^{p_0,\infty }(\mathbb{R} ^d)}^{1-\theta} \|f\|_{L^{p_1,\infty }(\mathbb{R} ^d)}^{\theta}.\notag
	\end{equation}
\end{lemma}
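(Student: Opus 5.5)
The natural route is real interpolation via the layer-cake (dyadic) characterization of the Lorentz quasinorm. Since $f^*$ is nonincreasing, the Lorentz quasinorm is equivalent to a discrete sum over dyadic levels:
\[
\|f\|_{L^{p,r}}^r \sim_{p,r} \sum_{k\in\mathbb{Z}} \bigl(2^k\, d_f(2^k)^{1/p}\bigr)^r .
\]
This follows by splitting $s$ into the dyadic blocks where $f^*(s)$ lies between consecutive powers of $2$, or equivalently by changing variables $\lambda = f^*(s)$ in the integral defining $\|f\|_{L^{p,r}}$. Likewise,
\[
\|f\|_{L^{p_i,\infty}} \sim \sup_{\lambda>0} \lambda\, d_f(\lambda)^{1/p_i}.
\]
Write $A_i := \|f\|_{L^{p_i,\infty}}$. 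Then for every $\lambda>0$,
\[
d_f(\lambda) \le \min\bigl\{ (A_0/\lambda)^{p_0},\ (A_1/\lambda)^{p_1} \bigr\}.
\]

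Inserting this bound, we must estimate
\[
\sum_k 2^{kr} \min\bigl\{ (A_0 2^{-k})^{p_0},\ (A_1 2^{-k})^{p_1} \bigr\}^{r/p}.
\]
The two expressions inside the minimum cross at the level $\lambda_* = \bigl(A_1^{p_1}/A_0^{p_0}\bigr)^{1/(p_1-p_0)}$.
\begin{itemize}
\item For $2^k \le \lambda_*$ use the $p_0$-bound. The summand is $2^{kr(1-p_0/p)} A_0^{p_0 r/p}$. Since $p_0<p$, this is a geometric series increasing in $k$, so its sum is dominated by the term at $2^k\approx\lambda_*$.
\item For $2^k > \lambda_*$ use the $p_1$-bound. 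The summand is $2^{kr(1-p_1/p)} A_1^{p_1 r/p}$. Since $p_1>p$, the series decreases in $k$, so again the dominant term is at $\lambda_*$.
\end{itemize}
Both exponents are nonzero precisely because $p_0<p<p_1$; this strictness is what makes the series geometric, and constants depend on $p_0,p,p_1,r$. Hence the total is $\lesssim \bigl(\lambda_*\, d^{1/p}\bigr)^r$, where $d := (A_0/\lambda_*)^{p_0} = (A_1/\lambda_*)^{p_1}$ is the common value at the crossover.

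It then remains to check that
\[
\lambda_* \bigl(A_0/\lambda_*\bigr)^{p_0/p} = A_0^{1-\theta} A_1^{\theta}.
\]
Solving $\lambda_*^{p_1-p_0} = A_1^{p_1} A_0^{-p_0}$ and using $\frac{1}{p} = \frac{1-\theta}{p_0} + \frac{\theta}{p_1}$, the exponent of $A_0$ becomes
\[
\frac{p_0}{p} - \Bigl(\frac{p_0}{p} - 1\Bigr)\frac{p_0}{p_1-p_0}\cdot\frac{1}{p_0}\cdot p_0 ,
\]
which simplifies to $1-\theta$ by the relation $\theta = \frac{1/p_0 - 1/p}{1/p_0 - 1/p_1}$. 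The exponent of $A_1$ comes out as $\theta$ similarly. Taking $r$-th roots completes the argument.

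Degenerate cases are harmless: if $A_0=0$ or $A_1=0$ then $f=0$ a.e.

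The only mildly delicate step is the dyadic equivalence for the $L^{p,r}$ quasinorm when $0<r<1$, where it is not a norm. But the equivalence is a pure monotonicity argument on $f^*$ and holds for all $r>0$. So I expect no real obstacle; the exponent bookkeeping at the crossover is the main computation.
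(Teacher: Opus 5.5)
The paper does not prove this lemma; it quotes it from \cite{Dao}. Your proposal therefore supplies an argument where the paper has none, and the argument is correct. It is the standard direct proof: bound $d_f(\lambda)$ by $\min\{(A_0/\lambda)^{p_0},(A_1/\lambda)^{p_1}\}$, split at the crossover level $\lambda_*$, and use the strict inequalities $p_0<p<p_1$ so that both pieces are convergent geometric sums.

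There is one bookkeeping slip. Since $\lambda_*=A_1^{p_1/(p_1-p_0)}A_0^{-p_0/(p_1-p_0)}$, the exponent of $A_0$ in $\lambda_*^{1-p_0/p}A_0^{p_0/p}$ is
$\frac{p_0}{p}+\bigl(\frac{p_0}{p}-1\bigr)\frac{p_0}{p_1-p_0}=\frac{p_0(p_1-p)}{p(p_1-p_0)}=1-\theta$.
The expression you display has the opposite sign in front of the second term, plus a stray factor $\frac{1}{p_0}\cdot p_0$. As written it does not equal $1-\theta$. The identity you claim is nevertheless true, and your $A_1$ exponent $\frac{p_1(p-p_0)}{p(p_1-p_0)}=\theta$ is correct.

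The dyadic discretization is also optional. With the paper's normalization one has exactly $\|f\|_{L^{p,r}}^r=r\int_0^\infty\lambda^{r-1}d_f(\lambda)^{r/p}\,d\lambda$ for every $r>0$. Splitting this integral at $\lambda_*$ gives the same computation with explicit constants. If you keep the dyadic form, note that only the upper bound $\|f\|_{L^{p,r}}^r\lesssim\sum_k\bigl(2^k d_f(2^k)^{1/p}\bigr)^r$ is needed, and it follows from the monotonicity of $d_f$ alone.

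An abstract alternative would identify $L^{p,r}$ with $(L^{p_0,\infty},L^{p_1,\infty})_{\theta,r}$ by reiteration and then apply $\|f\|_{\theta,r}\lesssim\|f\|_{X_0}^{1-\theta}\|f\|_{X_1}^{\theta}$. Compared with that, your route is elementary and covers the quasi-normed range $0<r<1$ without extra machinery.
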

\begin{lemma}[Young's inequality for weak type spaces, \cite{Grafakos}]\label{lem:weakYoung}
Let $1 \leq p < \infty$ and $1 < q, r < \infty$ satisfy
\[\frac{1}{q} + 1 = \frac{1}{p} + \frac{1}{r}.\]
Then there exists a constant $C_{p,q,r} > 0$  such that for all $f \in L^p(\R^d)$  and $g \in L^{r,\infty}(\R^d)$  we have
\[\|f*g\|_{L^{q,\infty}(\R^d)} \leq C_{p,q,r}\|g\|_{L^{r,\infty}(\R^d)}\|f\|_{L^p(\R^d)}.
\]
\end{lemma}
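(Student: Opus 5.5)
The plan is to reduce this to the classical weak-type Young (Hardy--Littlewood--Sobolev type) inequality by real interpolation; it is the weak-type form of O'Neil's convolution inequality. Fix $g\in L^{r,\infty}$ and regard $T f := f*g$ as a linear operator. We want boundedness $L^p\to L^{q,\infty}$ with $\frac1q+1=\frac1p+\frac1r$, $1<q,r<\infty$.

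First take $p=1$. Then $q=r$, and the claim $\|f*g\|_{L^{r,\infty}}\lesssim \|f\|_{L^1}\|g\|_{L^{r,\infty}}$ follows because $L^{r,\infty}$ with $r>1$ is normable. Indeed, $\|h\|^*:=\sup_{|E|>0}|E|^{1/r-1}\int_E|h|$ is a norm equivalent to the quasinorm $\|h\|_{L^{r,\infty}}$ when $r>1$, and it is translation invariant. Minkowski's integral inequality for this norm then gives
\[
\|f*g\|^*\le \int |f(y)|\,\|g(\cdot-y)\|^*\,dy=\|f\|_{L^1}\|g\|^*.
\]

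Next handle $p>1$. Here $1<p<r'$ is forced, since $q<\infty$. Fix $\lambda>0$ and split $g=g_1+g_2$ with $g_1=g\,1_{|g|>s}$ and $g_2=g\,1_{|g|\le s}$, where the threshold $s$ is chosen later. From $g\in L^{r,\infty}$ we get the layer-cake bounds
\[
\|g_1\|_{L^1}\lesssim s^{1-r}\|g\|_{L^{r,\infty}}^r,\qquad \|g_2\|_{L^{p'}}\lesssim s^{1-r/p'}\|g\|_{L^{r,\infty}}^{r/p'},
\]
using $r<p'$ in the second and $r>1$ in the first. Hölder gives $\|f*g_2\|_\infty\le \|f\|_p\|g_2\|_{p'}$. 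Choose $s$ so that this is at most $\lambda/2$. Chebyshev together with Young, $\|f*g_1\|_p\le\|f\|_p\|g_1\|_1$, then gives
\[
|\{|f*g|>\lambda\}|\le (2/\lambda)^p\|f\|_p^p\|g_1\|_1^p .
\]
Substituting $s$ and using the exponent relation yields $\lambda^q|\{|f*g|>\lambda\}|\lesssim (\|f\|_p\|g\|_{L^{r,\infty}})^q$. Normalizing $\|f\|_p=\|g\|_{L^{r,\infty}}=1$ by homogeneity simplifies the bookkeeping.

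The main obstacle is the exponent bookkeeping in choosing $s$. One must check that the powers of $s$ and $\lambda$ combine exactly to $\lambda^{-q}$; this uses $\frac1q=\frac1p+\frac1r-1$. An alternative is to simply cite O'Neil's inequality $\|f*g\|_{L^{q,\infty}}\lesssim\|f\|_{L^{p,\infty}}\|g\|_{L^{r,\infty}}$ together with $L^p\subset L^{p,\infty}$ for $p>1$; the $p=1$ case is then covered by the normability argument above.
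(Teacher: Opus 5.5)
Your argument is correct, and it is essentially the standard proof behind the paper's citation: the paper gives no proof of its own and simply cites Grafakos. The truncation $g=g1_{\{|g|>s\}}+g1_{\{|g|\le s\}}$, the $L^\infty$ bound on $f*g_2$ via H\"older, and Chebyshev plus Young on $f*g_1$ is the argument given there, and your exponent count does give $\lambda^{-q}$ because $q=\frac{pr}{p+r-pr}$. Your separate normability argument for $p=1$ is valid but not needed, since the same splitting works in that case with $p'=\infty$ and $\|g_2\|_{L^\infty}\le s$.
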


\begin{lemma}[Bernstein's inequality in Lorentz space, \cite{LMZ}]\label{L:Bernstein}
Let  $N>0$,  $1<r_1<r_2<\infty $ and  $1\le \rho_1\le\rho_2\le \infty $. Then 
\begin{equation}
\|P_{N}f\|_{L^{r_2,\rho_2}(\mathbb{R} ^d)}\lesssim  N^{d(\frac{1}{r_1}-\frac{1}{r_2})} \|f\|_{L^{r_1,\rho_1}(\mathbb{R} ^d)}.\notag  
\end{equation}
\end{lemma}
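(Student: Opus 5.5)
The plan is to reduce the estimate to the standard Bernstein inequality on $L^p$ and then pass to Lorentz spaces by real interpolation (equivalently, via Lemma \ref{CZ}). Write $P_N f=\check\chi_N*f$ with $\check\chi_N(x)=N^d\check\chi(Nx)$, where $\check\chi$ is Schwartz. Young's inequality gives, for all $1\le p\le q\le\infty$,
\[
\|P_Nf\|_{L^q}\le \|\check\chi_N\|_{L^s}\|f\|_{L^p}\lesssim N^{d(\frac1p-\frac1q)}\|f\|_{L^p},\qquad 1+\tfrac1q=\tfrac1s+\tfrac1p,
\]
since $\|\check\chi_N\|_{L^s}=N^{d(1-\frac1s)}\|\check\chi\|_{L^s}$ and $1-\frac1s=\frac1p-\frac1q$.

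The key observation is that the exponent gap $\frac1p-\frac1q$ can be kept fixed equal to $\alpha:=\frac1{r_1}-\frac1{r_2}$ while $p$ moves. Choose $p_0<r_1<p_1$ close to $r_1$, all strictly between $1$ and $\infty$, and define $q_i$ by $\frac1{q_i}=\frac1{p_i}-\alpha$; for $p_i$ close enough to $r_1$ we have $1<p_i<q_i<\infty$. The operator $T=N^{-d\alpha}P_N$ is then bounded $L^{p_0}\to L^{q_0}$ and $L^{p_1}\to L^{q_1}$, with norms uniform in $N$. Pick $\theta\in(0,1)$ with $\frac1{r_1}=\frac{1-\theta}{p_0}+\frac\theta{p_1}$; the same $\theta$ then gives $\frac1{r_2}=\frac{1-\theta}{q_0}+\frac\theta{q_1}$. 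The real interpolation theorem (Marcinkiewicz in its Lorentz-space form, i.e.\ $(L^{p_0},L^{p_1})_{\theta,\rho}=L^{r_1,\rho}$) yields $T:L^{r_1,\rho}\to L^{r_2,\rho}$ for every $1\le\rho\le\infty$. Alternatively, Lemma \ref{CZ} handles the target side in the same way.

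To conclude, since $\rho_1\le\rho_2$, Lemma \ref{basicp}(ii) gives
\[
\|P_Nf\|_{L^{r_2,\rho_2}}\lesssim\|P_Nf\|_{L^{r_2,\rho_1}}\lesssim N^{d\alpha}\|f\|_{L^{r_1,\rho_1}}.
\]
The implicit constants depend only on $r_1,r_2,\rho_1,\rho_2,d$, because the interpolation constants depend only on the exponents and the endpoint bounds are uniform after rescaling.

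The main obstacle is making sure the interpolation step is legitimate. We need the off-diagonal version, with different target exponents, and we need the second Lorentz index preserved. This holds because interpolating the pairs $(L^{p_0},L^{p_1})$ and $(L^{q_0},L^{q_1})$ with the same $(\theta,\rho)$ is exactly the general real method. An alternative that avoids the interpolation machinery is to work with rearrangements: O'Neil's convolution inequality with $\check\chi_N\in L^{s,1}$, where $1+\frac1{r_2}=\frac1s+\frac1{r_1}$, gives the bound directly, because $\|\check\chi_N\|_{L^{s,1}}\sim N^{d\alpha}$ by scaling.
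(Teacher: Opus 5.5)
Your argument is correct. The paper gives no proof of this lemma; it simply quotes it from \cite{LMZ}, so there is no internal argument to compare with, and both of your routes are sound.

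In the interpolation route the essential points are the ones you identify.
\begin{itemize}
\item Fixing the gap $\alpha=\frac1{r_1}-\frac1{r_2}$ makes the same $\theta$ work on both sides.
\item $q_0\neq q_1$ guarantees $(L^{q_0},L^{q_1})_{\theta,\rho}=L^{r_2,\rho}$.
\item The endpoint bounds $\|N^{-d\alpha}P_N\|_{L^{p_i}\to L^{q_i}}\le\|\check\chi\|_{L^{s}}$ are uniform in $N$, with the same $s$ (given by $\frac1s=1-\alpha$) for $i=0,1$.
\end{itemize}
The O'Neil route is shorter. Since $\frac1{\rho_2}\le 1\le\frac1{\rho_1}+1$, it lands directly in $L^{r_2,\rho_2}$ with constant $N^{d\alpha}\|\check\chi\|_{L^{s,1}}$, so the embedding step is unnecessary. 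The interpolation route has the advantage of using only the scalar Young inequality plus the abstract real method. It therefore applies verbatim to any linear operator satisfying fixed-gap $L^p\to L^q$ bounds.

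You should remove one remark: the claim that the passage to Lorentz spaces can be done ``equivalently, via Lemma \ref{CZ}'', or that this lemma ``handles the target side''. Lemma \ref{CZ} is an inequality for a single function, not an operator interpolation theorem. Applied to $P_Nf$ with exponents $q_0,q_1$, it only gives $\|P_Nf\|_{L^{r_2,\rho}}\lesssim N^{d\alpha}\|f\|_{L^{p_0}}^{1-\theta}\|f\|_{L^{p_1}}^{\theta}$. The right-hand side is not controlled by $\|f\|_{L^{r_1,\rho_1}}$; comparing $L^{r_1,\rho}$ with $\|f\|_{L^{p_0}}^{1-\theta}\|f\|_{L^{p_1}}^{\theta}$ goes the other way. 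Indeed, a function in $L^{r_1,\rho_1}$, even one in $L^{r_1}$, need not belong to $L^{p_0}$ or $L^{p_1}$. On the source side you genuinely need either to split $f$ by height (the $K$-functional behind the real method) or to use O'Neil's inequality. Your main argument never relies on Lemma \ref{CZ}, so this does not affect it.
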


\begin{lemma}[Strichartz estimates in Lorentz space, \cite{KT}]\label{P:SZ} We have
\begin{itemize}
\item[$(i)$] Let $(q,r)$ be an admissible pair, then for any $f\in L^2(\mathbb{R}^d)$
\begin{align}  
\|e^{it\Delta }f\|_{L^q_t L^{r,2}_x(\mathbb{R}\times \mathbb{R}^d)} \lesssim \|f\|_{L^2_x(\mathbb{R}^d)}.\notag
\end{align}
		
\item[$(ii)$] Let $(q_1, r_1), (q_2,r_2)$ be two admissible pairs,  and $I\subset \mathbb{R}$ be an interval containing $t_0$. Then for any $F\in L_t^{q_2'}L^{r_2',2}_x(I\times \mathbb{R}^d)$ 
\begin{align} 
\left\|\int_{t_0}^t e^{i(t-\tau)\Delta } F(\tau) d\tau\right\|_{L^{q_1}_tL^{r_1,2}_x(I\times \mathbb{R}^d)} \lesssim \|F\|_{L^{q_2'}_tL^{r_2',2}_x(I\times \mathbb{R}^d)}.\notag
\end{align}
\end{itemize}
\end{lemma}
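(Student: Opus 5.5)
The Lorentz-space Strichartz estimates in Lemma \ref{P:SZ} are the Keel--Tao estimates, so the plan is to rerun Keel--Tao's abstract argument with the $L^r$-bounds replaced by Lorentz bounds. The inputs are two estimates for $U(t)=e^{it\Delta}$: the energy estimate $\|U(t)f\|_{L^2}=\|f\|_{L^2}$, and the dispersive estimate $\|U(t)U(s)^*g\|_{L^\infty}\lesssim|t-s|^{-d/2}\|g\|_{L^1}$. Interpolating these by real interpolation, with second index $2$, gives for $2<r<\infty$
\[
\|U(t)U(s)^*g\|_{L^{r,2}}\lesssim |t-s|^{-d(\frac12-\frac1r)}\|g\|_{L^{r',2}}.
\]
This works because $(L^1,L^\infty)_{\theta,2}=L^{r,2}$ and $(L^1,L^2)_{\theta,2}=L^{r',2}$, and the real method interpolates operators that are bounded from $L^1\to L^\infty$ and $L^2\to L^2$.

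For a non-endpoint admissible pair with $q>2$, I would use the $TT^*$ argument. It suffices to bound the bilinear form
\[
\Big|\iint \langle U(s)^*F(s),U(t)^*G(t)\rangle\,ds\,dt\Big|\lesssim \|F\|_{L^{q'}_tL^{r',2}_x}\|G\|_{L^{q'}_tL^{r',2}_x}.
\]
Apply Lorentz H\"older in $x$, the duality $(L^{r,2})^*=L^{r',2}$, the interpolated decay bound, and then one-dimensional Hardy--Littlewood--Sobolev in time with exponent $d(\frac12-\frac1r)=\frac2q<1$. This gives $(i)$ by duality. The inhomogeneous bound $(ii)$ for pairs $(q_1,r_1),(q_2,r_2)$ then follows by the same decay bound combined with the Christ--Kiselev lemma, which is valid since $q_1>q_2'$ off the double endpoint.

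The endpoint $q=2$, $r=\frac{2d}{d-2}$ is the main obstacle, and it is the case the paper needs for the $L_t^2L_x^{\frac{2d}{d-2},2}$ norm. Here I would follow Keel--Tao's dyadic decomposition: split $|t-s|\sim 2^j$ and write the bilinear form as $\sum_j T_j(F,G)$. For each $j$, prove a two-parameter family of bounds
\[
|T_j(F,G)|\lesssim 2^{-j\beta(a,b)}\|F\|_{L^2_tL^{a'}_x}\|G\|_{L^2_tL^{b'}_x}
\]
for $(1/a,1/b)$ in a neighbourhood of $(\frac1r,\frac1r)$, where $\beta(r,r)=0$. Then apply the bilinear real-interpolation lemma in Keel--Tao, which interpolates $\ell^\infty_\beta$-weighted sequence spaces. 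The natural output of that lemma is the Lorentz space $L^{r',2}$ in each slot, so it yields
\[
\sum_j|T_j(F,G)|\lesssim \|F\|_{L^2_tL^{r',2}_x}\|G\|_{L^2_tL^{r',2}_x}.
\]
This is exactly the strengthened statement. Dualizing gives the homogeneous endpoint bound with $L^{r,2}$ on the left.

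For the inhomogeneous endpoint, the same argument goes through with the time integral truncated to $s<t$, since Keel--Tao's proof is insensitive to this restriction. Mixed pairs with one endpoint exponent are handled by combining the two cases. Throughout, the delicate point is to check that each interpolation step lands in second index $2$ rather than $\infty$; this holds because the interpolation is carried out with parameter $(\theta,2)$ in the Lions--Peetre real method.
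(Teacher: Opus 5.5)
Your proposal is correct and is essentially the argument behind the citation: the paper gives no proof of its own and simply invokes Keel--Tao, whose endpoint argument (dyadic $T_j$ decomposition restricted to $s<t$, then bilinear real interpolation with second index $2$) outputs $L^2_tL^{r',2}_x$ directly, with the non-endpoint and mixed cases obtained via $TT^*$ and Christ--Kiselev as you describe. Two small fixes: the target couple when interpolating $U(t)U(s)^*$ is $(L^\infty,L^2)_{\theta,2}=L^{r,2}$, not $(L^1,L^\infty)_{\theta,2}$; and for $(q_1,r_1)\neq(q_2,r_2)$ the untruncated bound you feed into Christ--Kiselev comes from composing $(i)$ with its dual, not from a single decay estimate.
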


With the above Lemma in hands, we next prove that 
\begin{lemma}\label{lorennorm}
Let $u$ be a solution to \eqref{NLH}. Let $J$ be an interval such that $\|u\|_{L_{t,x}^{\frac{2(d+2)}{d}}(J\times\R^d)}\leq M$. Then we have
\begin{equation}\label{lorennormes}
\|u\|_{L_t^{\infty}L_x^2\cap L_t^2L_x^{\frac{2d}{d-2},2}(J\times\R^d)}\lesssim_{M(u),M} 1.
\end{equation}
\end{lemma}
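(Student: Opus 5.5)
We argue by the standard subdivision-and-bootstrap scheme, run with the Lorentz-space Strichartz estimates of Lemma \ref{P:SZ}. Set $X(I):=\|u\|_{L_t^\infty L_x^2\cap L_t^2L_x^{\frac{2d}{d-2},2}(I\times\R^d)}$. The $L_t^\infty L_x^2$ part is controlled by mass conservation, so only the endpoint Lorentz norm needs attention.

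First fix a small $\eta>0$, to be chosen depending only on $M(u)$. Partition $J$ into $K\lesssim (M/\eta)^{\frac{2(d+2)}{d}}$ consecutive subintervals $J_k$ on which $\|u\|_{L_{t,x}^{\frac{2(d+2)}{d}}(J_k\times\R^d)}\le\eta$. Since the square of the $L_t^2$ norm is additive over the subintervals, it suffices to prove $X(J_k)\lesssim_{M(u)}1$ on each $J_k$. The final bound is then $X(J)\lesssim K^{1/2}$, which depends only on $M(u)$ and $M$.

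On a fixed $J_k=[t_k,t_{k+1}]$ we write the Duhamel formula from $t_k$ and apply Lemma \ref{P:SZ}. The choice of dual pair is what matters; the natural one is $(2,\frac{2d}{d-2})$, so we need to bound
\[
\|F(u)\|_{L_t^2L_x^{\frac{2d}{d+2},2}}.
\]

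For the nonlinearity we use H\"older in Lorentz spaces. The convolution $|x|^{-2}\ast|u|^2$ is estimated by the Hardy--Littlewood--Sobolev inequality in Lorentz form, i.e. Young's inequality for Lorentz spaces (O'Neil) with $|x|^{-2}\in L^{\frac d2,\infty}$.

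A simple attempt is to put $|u|^2\in L^{1}$. That would place $V\ast|u|^2$ in $L^{\frac d2,\infty}$, but this is exactly the forbidden endpoint: Lemma \ref{lem:weakYoung} requires $p\ge1$ with $q<\infty$, and the case $p=1$ is allowed only in weak form. Pairing $L^{\frac d2,\infty}$ with $u\in L^{\frac{2d}{d-2},2}$ would give $F(u)\in L^{\frac{2d}{d+2},2}$, yet then no power of $\eta$ appears.

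Instead we distribute the exponents through the $L_{t,x}^{\frac{2(d+2)}{d}}$ norm. Write $p=\frac{2(d+2)}{d}$. Apply H\"older in time with $\frac12=\frac{\theta}{p}\cdot2+\dots$, choosing mixed Strichartz-admissible norms $L_t^{q}L_x^{r,2}$ interpolated between $L_t^\infty L_x^2$, $L_t^2L_x^{\frac{2d}{d-2},2}$ and $L_{t,x}^{p}$ (Lemma \ref{CZ}). The goal is
\[
\|F(u)\|_{L_t^2L_x^{\frac{2d}{d+2},2}}\lesssim \|u\|_{L_{t,x}^{p}}^{\alpha}\,X(J_k)^{3-\alpha}
\]
for some $\alpha>0$.

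Concretely, use Lorentz--Young to get $\| V\ast|u|^2\|_{L^{s,\infty}}\lesssim \||u|^2\|_{L^{a}}$ with $\frac1s=\frac1a-\frac{d-2}{d}$ and $1<a<\frac d{2}$. Taking $u(t)\in L^{\frac{2d}{d-2},2}$ in the third slot then forces $\frac1s=\frac2d$, i.e. $a=1$, which fails again. So the time--space exponents must be unbalanced: one factor is measured in $L_{t,x}^{p}$ (time exponent $p$) and the remaining factors in admissible pairs that sum correctly, for example $L_t^{p}L_x^{p}\cdot L_t^{q}L_x^{r,2}\cdot L_t^{q'}L_x^{r',2}$ with admissible pairs. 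The Lorentz second index $2$ is preserved by Lemma \ref{basicp} together with O'Neil's H\"older inequality.

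Granting this bound, we obtain
\[
X(J_k)\lesssim M(u)^{1/2}+\eta^{\alpha}X(J_k)^{3-\alpha}.
\]
A continuity argument in the right endpoint closes this, provided $\eta$ is small depending on $M(u)$; the required continuity and finiteness come from the local theory on compact subintervals.

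The main obstacle is the exponent bookkeeping in the nonlinear estimate. The naive pairing hits the failed endpoint HLS, so one must find admissible Lorentz pairs that avoid $a=1$ and still extract a positive power of the $L_{t,x}^p$ norm. The fix is to place the $L^p_{t,x}$ factor inside the convolution, so that $|u|^2\in L^{\frac p2}_x$ with $\frac p2\in(1,\frac d2)$. The convolution output then lies in $L^{s,\infty}$ with $s$ finite, and the two remaining copies of $u$ are placed in intermediate admissible Lorentz norms via Lemma \ref{CZ}.
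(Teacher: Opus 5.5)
Your skeleton is the same as the paper's: the Lorentz Strichartz estimate with the dual endpoint pair $(2,\frac{2d}{d-2})$, subdivision of $J$ into intervals where $\|u\|_{L_{t,x}^{\frac{2(d+2)}{d}}}$ is small, then a bootstrap. The gap is that the one non-routine step is never proved. That step is a bound for $\|F(u)\|_{L_t^2L_x^{\frac{2d}{d+2},2}}$ carrying a positive power of $\|u\|_{L_{t,x}^{\frac{2(d+2)}{d}}}$; you write ``granting this bound,'' and the only concrete recipe you offer does not work.

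\textbf{Why the stated recipe fails.} You propose putting the $L_{t,x}^p$ norm, $p=\frac{2(d+2)}{d}$, inside the convolution so that $|u|^2\in L_x^{p/2}$, i.e.\ both inner copies in $L_x^p$. Every norm you control lies on the line $\frac2q+\frac dr=\frac d2$, so spatial exponent $p$ forces time exponent $p$. Then $V*|u|^2$ is only in $L_t^{\frac{d+2}{d}}$. Since $\frac{d}{d+2}>\frac12$ for $d\geq3$, the outer factor would need a negative time exponent, equivalently a spatial exponent below $2$. This is also inconsistent with your own phrase ``the two remaining copies of $u$.''

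\textbf{Smaller errors in the bookkeeping.}
\begin{itemize}
\item The range for $a$ is wrong. Finiteness of $s$ requires $1\le a<\frac{d}{d-2}$, not $1<a<\frac d2$; note $\frac p2=\frac53>\frac32$ when $d=3$.
\item $a=1$ is not a forbidden endpoint. Lemma \ref{lem:weakYoung} with $p=1$ gives $\|V*|u|^2\|_{L^{d/2,\infty}}\lesssim M(u)$, which the paper uses in \eqref{TRIVIAL1}. It simply produces no smallness.
\end{itemize}

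\textbf{The paper's choice of exponents.} Use H\"older in the form $\|V*|u|^2\|_{L_t^3L_x^{\frac{3d}{4},\infty}}\|u\|_{L_t^6L_x^{\frac{6d}{3d-2},2}}$.
\begin{itemize}
\item Weak Young gives $\|V*|u|^2\|_{L_x^{\frac{3d}{4},\infty}}\lesssim\|u\|_{L_x^{\frac{6d}{3d-2}}}^2$, and the pair $(6,\frac{6d}{3d-2})$ is admissible.
\item Interpolate the two inner factors between $L_t^\infty L_x^2$ and $L_{t,x}^{\frac{2(d+2)}{d}}$. This gives $M(u)^{\frac{2(d-1)}{3d}}\|u\|_{L_{t,x}^{\frac{2(d+2)}{d}}}^{\frac{2d+4}{3d}}$.
\item Interpolate the outer factor between $L_t^\infty L_x^2$ and $L_t^2L_x^{\frac{2d}{d-2},2}$.
\end{itemize}

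\textbf{A variant of your own idea.} A single $L_{t,x}^p$ factor also works if you pair it inside the convolution with a factor in $L_t^\infty L_x^2$. Then $u\bar u\in L_t^pL_x^{\frac{d+2}{d+1}}$ and $V*(u\bar u)\in L_t^pL_x^{\frac{d(d+2)}{d+4},\infty}$. Put the outer factor in the admissible norm $L_t^{d+2}L_x^{r,2}$ with $\frac1r=\frac12-\frac{2}{d(d+2)}$. One checks $\frac1p+\frac1{d+2}=\frac12$ and $\frac{d+4}{d(d+2)}+\frac1r=\frac{d+2}{2d}$.

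\textbf{Closing the argument.} Either way the estimate is linear in the Lorentz norm, $X\lesssim M(u)^{1/2}+C(M(u))\,\eta^{\alpha}X$, with a fixed $\alpha>0$. It therefore closes by absorption on each subinterval rather than by a superlinear continuity argument. Absorption still needs the Lorentz norm to be finite on compact subintervals, which the paper also takes for granted.
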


\begin{proof}
We observe that by Lemma \ref{P:SZ}, the conservation law of mass, H\"older's inequality, Hardy-Littlewood-Sobolev's inequality, Lemma \ref{basicp} and interpolation, we see that
\begin{align*}
\|u\|_{L_t^{\infty}L_x^2\cap L_t^2L_x^{\frac{2d}{d-2},2}(J\times\R^d)}&\lesssim M(u)^{1/2} +\|F(u)\|_{L_t^{2}L_{x}^{\frac{2d}{d+2},2}(J\times\R^d)}\\
&\lesssim M(u)^{1/2} +\||x|^{-2}\ast|u|^2\|_{L_t^{3}L_{x}^{\frac{3d}{4},\infty}(J\times\R^d)}\|u\|_{L_t^{6}L_x^{\frac{6d}{3d-2},2}(J\times\R^d)}\\
&\lesssim M(u)^{1/2}+\|u\|^2_{L_t^{6}L_x^{\frac{6d}{3d-2}}(J\times\R^d)}\|u\|_{L_t^{6}L_x^{\frac{6d}{3d-2},2}(J\times\R^d)}\\
&\lesssim M(u)^{1/2}+\|u\|^2_{L_t^{6}L_x^{\frac{6d}{3d-2}}(J\times\R^d)}\|u\|_{L_t^{\infty}L_x^2\cap L_t^2L_x^{\frac{2d}{d-2},2}(J\times\R^d)}\\
&\lesssim M(u)^{1/2}+M(u)^{\frac{2(d-1)}{3d}}\|u\|^{\frac{2d+4}{3d}}_{L_{t,x}^{\frac{2(d+2)}{d}}(J\times\R^d)}\|u\|_{L_t^{\infty}L_x^2\cap L_t^2L_x^{\frac{2d}{d-2},2}(J\times\R^d)}.
\end{align*}
Using the fact that $\|u\|_{L_{t,x}^{\frac{2(d+2)}{d}}}\leq M$, we can split the interval $J$ into finitely many subintervals on which the $\|u\|_{L_{t,x}^{\frac{2(d+2)}{d}}}$-norm of $u$ is small and run a standard bootstrap argument to obtain \eqref{lorennormes}.
\end{proof}

\section{Reduction to the almost-periodic solution}\label{sec3}

In this section, we argue by contradiction and suppose that Theorem~\ref{main} fails. By local well-posedness theory, we know that global existence and scattering for sufficiently small initial data, and then we can deduce the existence of a critical threshold size, below which the theorem holds but above which we can find solutions with arbitrarily large scattering size. Using a limiting argument, we can then deduce the existence of minimal counterexamples, that is, blow-up solutions that live exactly at the critical threshold. The key property of these minimal counterexamples is that of almost periodicity modulo the symmetries of the equation. 

\begin{definition}[Almost periodic modulo symmetries (APMS)]
We say that $u \in C_{t, \text { loc }}^0 L_x^2\left(I \times \R^d\right)$ is almost periodic modulo $G$ if there exists a spatial center function $x: I \rightarrow \R^d$, a frequency center function $\xi: I \rightarrow \R^d$, a frequency scale function $N: I \rightarrow[0, \infty)$, and a compactness modulus function $C:(0, \infty) \rightarrow[0, \infty)$, such that for every $\eta>0$, we have the spatial and frequency localization estimates
\begin{equation}\label{localized}
    \int_{|x-x(t)| \geq C(\eta) / N(t)}|u(t, x)|^2 d x+\int_{|\xi-\xi(t)|\geq C(\eta) N(t)}|\hat{u}(t, \xi)|^2 d \xi \leq \eta^2, \quad \forall t \in I.
\end{equation}

\end{definition}

We are now in a position to state precisely the main step in the proof of Theorem \ref{main}.

\begin{theorem}\label{reduction}
Suppose Theorem \ref{main} fails for $\mu=1$. Then  there exists an almost-periodic maximal-lifespan solution $u: I \times \R^d \rightarrow \mathbb{C}$ to the Hartree equation \eqref{NLH} that blows up forward in time, and satisfies the following conditions:
\begin{itemize}
\item[\textup{(i)}]  $[0, \infty) \subset I$;
    
\item[\textup{(ii)}]  $N(t) \leq 1$ for all $t \in[0, \infty)$ and $x(0) = \xi(0) = 0$;
    
\item[\textup{(iii)}] $N(t), \xi(t) \in C_{\mathrm{loc}}^1([0, \infty))$ and satisfy the pointwise derivative bounds
\begin{equation}\label{xxibound}
\left|N'(t)\right| + \left|\xi'(t)\right| \lesssim_u N(t)^3, \quad t \in [0, \infty).
\end{equation}
\end{itemize}
Similarly, suppose Theorem \ref{main} fails for $\mu=-1$, then there also exists a maximal-lifespan solution $u: I \times \R^d \rightarrow \mathbb{C}$ to the Hartree equation \eqref{NLH} that blows up forward  in time, $M(u)<M(Q)$ and satisfies \textup{(i)-(iii)}.
\end{theorem}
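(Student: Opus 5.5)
The argument has two stages. First, a Kenig--Merle concentration-compactness step produces a minimal-mass almost periodic blow-up solution, which is Theorem \ref{thm:reduction_ap}. Second, following Tao--Vi\c{s}an--Zhang \cite{TVZ-Forum} and Dodson \cite{Dodson-JAMS}, a normalization/rescaling step upgrades this solution to one satisfying (i)--(iii).

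\textbf{Stage 1: a minimal-mass almost periodic blow-up solution.} For $m\ge 0$ define
\[
L(m):=\sup\big\{\|u\|_{L_{t,x}^{\frac{2(d+2)}{d}}(I\times\R^d)} : M(u)\le m\big\},
\]
the supremum taken over all maximal-lifespan solutions $u$. By Theorem \ref{thm:local-well-posedness}(1), $L(m)<\infty$ for small $m$. If Theorem \ref{main} fails, then the critical mass $m_c:=\sup\{m: L(m)<\infty\}$ is finite; in the focusing case we must also show $m_c<M(Q)$.

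Next take a sequence of solutions $u_n$ with $M(u_n)\to m_c$ whose scattering sizes tend to infinity forward and backward from $t_n$. Apply the linear profile decomposition (Proposition \ref{pro3.9v23}) to $u_n(t_n)$. The aim is to show there is exactly one profile, with vanishing remainder in $L^2$, and with $t_n^j$ bounded so that no scattering is lost. To rule out several profiles, I would do the following:
\begin{itemize}
\item build nonlinear profiles using the stability theory, which follows from Lemma \ref{lorennorm} and the Lorentz-space Strichartz estimates;
\item prove asymptotic decoupling of the Hartree nonlinearity for orthogonal frames. This means showing that cross terms such as $(|x|^{-2}\ast(v^j\bar v^k))v^l$ vanish in $L_t^2L_x^{\frac{2d}{d+2},2}$ when at least two of the frames are orthogonal, using Hardy--Littlewood--Sobolev and approximation of the profiles by $C_c^\infty$ functions.
\end{itemize}
The Galilean parameters $\xi_n^j$ are handled by the invariance of \eqref{NLH} under $T_g$.

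In the focusing case, each profile with mass below $M(Q)$ must scatter; this is exactly the inductive hypothesis since $m_c<M(Q)$. Precompactness of $\{u(t)\}$ modulo $G$ then follows by applying the same argument along any sequence of times, which yields the functions $x(t),\xi(t),N(t)$ in \eqref{localized}.

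\textbf{Stage 2: normalization to (i)--(iii).} This follows \cite{TVZ-Forum} and the Dodson normalization in \cite{Dodson-JAMS}.

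First, by the local theory one may choose the modulation parameters so that they are constant on characteristic intervals $J_k$ of length $\sim N(J_k)^{-2}$ on which $\|u\|_{L_{t,x}^{\frac{2(d+2)}{d}}}\sim 1$. Compactness then gives $N(t)\sim N(t')$ and $|\xi(t)-\xi(t')|\lesssim N(t)$ for $t,t'$ in the same or adjacent intervals.

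Second, to get $N(t)\le 1$ on $[0,\infty)$, use the standard trichotomy argument of \cite{TVZ-Forum}: given an almost periodic blow-up solution, one can extract (by rescaling around times where $N$ is nearly maximal over long windows, then passing to a limit using compactness) a new almost periodic solution with $N(t)\le 1$ on a half-line $[0,\infty)\subset I$. This limiting procedure uses that limits of almost periodic blow-up solutions at mass $m_c$ remain blow-up solutions, via stability. Then translate with $G$ so that $x(0)=\xi(0)=0$.

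Third, obtain the smooth $C^1$ versions of $N,\xi$ with $|N'|+|\xi'|\lesssim N^3$ by mollifying the piecewise-constant choices at scale $N(J_k)^{-2}$ in time. The derivative bounds come from the adjacency estimates divided by interval length $N^{-2}$. Modifying $N$ and $\xi$ by a bounded factor, respectively by $O(N)$, only changes the compactness modulus $C(\eta)$.

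\textbf{Main obstacle.} I expect the main difficulty to be the nonlinear decoupling step of Stage 1. The Hartree nonlinearity is nonlocal, so orthogonality of frames must be exploited inside the convolution $|x|^{-2}\ast$. The cases with different scales or separated frequencies need the decay of $|x|^{-2}$, while the translation/time-orthogonal cases need density arguments in Lorentz norms, since endpoint Hardy--Littlewood--Sobolev fails.

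In the focusing case, a second difficulty is showing that profiles with mass $<M(Q)$ are global and scatter only via the induction hypothesis. Here one must also ensure $m_c\le M(u_0)<M(Q)$, which is immediate since counterexamples are assumed to have mass $<M(Q)$.
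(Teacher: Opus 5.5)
Your outline follows the standard route: Kenig--Merle concentration compactness, then the Tao--Vi\c{s}an--Zhang/Killip--Tao--Vi\c{s}an scenario reduction and Dodson's normalization. This is essentially the same approach as the paper, which gives no argument of its own for this theorem and simply defers to \cite{KV} and \cite{MXZ-JMPA}. One slip to fix: in the profile step, each profile scatters because its mass is strictly below $m_c$ (by the definition of $m_c$), not because its mass is below $M(Q)$, which would assume the conjecture; the bound $m_c\le M(u_0)<M(Q)$ is needed only to guarantee that the resulting critical element satisfies $M(u)<M(Q)$.
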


The reduction to almost-periodic solutions is now widely regarded as a standard technique in the study of dispersive equations at critical regularity. For the mass-critical case, we refer to Killip-Vi\c{s}an's lecture note \cite{KV} and \cite{MXZ-JMPA} for the  Hartree equation.

The parameters $N(t),x(t),\xi(t)$ of almost-periodic solutions can be shown to obey the following local constancy property (see \cite{KV} for details).

\begin{proposition}[Local constancy of parameters]
Let $u: I \times \R^d \rightarrow \mathbb{C}$ be a nonzero maximal-lifespan solution to \eqref{NLH} as in Theorem \ref{reduction}. Then there exists a $0<\delta=\delta(u) \leq 1$ such that for every $t_0 \in I$,
\[
\Big[t_0-\frac{\delta}{N(t_0)^2}, t_0+\frac{\delta}{N\left(t_0\right)^2}\Big] \subset I
\]
and
\begin{align*}
N(t) & \sim_u N\left(t_0\right) \\
\left|\xi(t)-\xi\left(t_0\right)\right| & \lesssim_u N\left(t_0\right) \\
\left|x(t)-x\left(t_0\right)-2\left(t-t_0\right) \xi\left(t_0\right)\right| & \lesssim_u N\left(t_0\right)^{-1}
\end{align*}
for all $\left|t-t_0\right| \leq \delta N\left(t_0\right)^{-2}$.
\end{proposition}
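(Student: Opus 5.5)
We argue by a compactness/contradiction argument in the spirit of \cite{KV}, using only the almost periodicity \eqref{localized}, the local theory (Theorem \ref{thm:local-well-posedness}) and the blow-up criterion. Throughout we use that $u$ blows up in both time directions on its maximal lifespan $I$, which is the property of the solutions produced by Theorem \ref{reduction}.

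\emph{Step 1: lifespan and scattering size on short intervals.} We claim there is $\delta=\delta(u)>0$ such that for every $t_0\in I$ the interval $[t_0-\delta N(t_0)^{-2},t_0+\delta N(t_0)^{-2}]$ lies in $I$ and
\[
\|u\|_{L_{t,x}^{\frac{2(d+2)}{d}}}\le 1
\]
on it. Suppose instead that there are $t_n\in I$ and $\delta_n\to0$ for which one of these fails. Set $v_n:=g_n^{-1}u(t_n)$ with $g_n=g_{0,\xi(t_n),x(t_n),N(t_n)^{-1}}$. By \eqref{localized}, the family $\{v_n\}$ is precompact in $L^2$, so after passing to a subsequence $v_n\to v_0$ in $L^2$. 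The maximal solution $v$ with data $v_0$ exists on some interval $[-\tau,\tau]$ with finite scattering norm there, and by dominated convergence this norm is small on a sufficiently short window around $0$. Stability (item (4) of Theorem \ref{thm:local-well-posedness}) transfers this bound to the solutions with data $v_n$ for large $n$. Undoing the symmetry with $T_{g_n}$, which preserves the $L_{t,x}^{2(d+2)/d}$ norm and maps times $s$ to $t_n+sN(t_n)^{-2}$, gives a contradiction.

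\emph{Step 2: comparability of $N(t)$.} Suppose $t_n,t_n'$ satisfy $|t_n'-t_n|\le\delta N(t_n)^{-2}$ but $N(t_n')/N(t_n)\to0$ or $\infty$. Normalize at $t_n$ as in Step 1, so the profile converges in $L^2$ to $v_0$. By continuity of the flow the renormalized solution at the rescaled time $s_n=(t_n'-t_n)N(t_n)^2\in[-\delta,\delta]$ converges, along a subsequence, in $L^2$ to $v(s_*)\neq0$. On the other hand, almost periodicity at $t_n'$ says this same function concentrates at frequency scale $N(t_n')/N(t_n)\to 0$ or $\infty$. Hence it converges weakly to $0$, contradicting strong convergence to a nonzero limit. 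Nonvanishing holds because $M(u)>0$ and mass is conserved.

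\emph{Step 3: comparability of $\xi(t)$ and $x(t)$.} The same normalization, with the appropriate Galilean factor, gives the remaining bounds. If $|\xi(t_n')-\xi(t_n)|/N(t_n)\to\infty$, the renormalized state at time $s_n$ has its frequency center escaping to infinity and again tends weakly to zero, a contradiction. For the spatial center, we compare $x(t_n')$ with the free-flow prediction $x(t_n)+2(t_n'-t_n)\xi(t_n)$, which is exactly the spatial translation built into $T_g$ in \eqref{equ:Tgtxdef}. A discrepancy much larger than $N(t_n)^{-1}$ would force spatial escape to infinity, and weak convergence to $0$ follows as before.

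\emph{Main obstacle.} The main difficulty is the precise bookkeeping in Step 3: one must check that the Galilean action $T_g$ in \eqref{equ:Tganodef} maps the recentered limit solution to the correct frame, with spatial shift $x_0+2\xi_0t$. This is what produces the drift term $2(t-t_0)\xi(t_0)$ in the stated spatial bound. The nonlocal Hartree nonlinearity plays no role beyond the local well-posedness and stability theory, since $F$ commutes with all elements of $G$.
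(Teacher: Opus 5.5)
Your proposal is correct and is essentially the standard compactness argument of Killip--Vi\c{s}an (with Dodson's extension to the $\xi(t)$ and $x(t)$ bounds), which is exactly what the paper cites instead of giving a proof. The argument runs as follows: normalize at $t_n$ using almost periodicity, use stability of the local theory on a short window, and derive a contradiction because the renormalized state at $s_n$ would converge weakly to zero while also converging strongly to a nonzero limit. One point is worth making explicit in Step 2: the limit solution $v$ with data $v_0$ exists on $[-\delta,\delta]$, and $u_n(s_n)\to v(s_*)$ in $L^2$, because the stability lemma can be applied with the solutions $u_n$ as approximate solutions, since Step 1 gives them scattering norm at most $1$ on $[-\delta,\delta]$.
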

\begin{proof}
See \cite{Dodson,KV}.
\end{proof}

We also have the following result relating the frequency scale function of an almost-periodic solution to its Strichartz norms (see \cite{Dodson,KV}).

\begin{proposition}[Stricharz norms via $N(t)$]\label{zzzz}
Let $u: I \times \R^d \rightarrow \mathbb{C}$ be a nonzero maximal-lifespan solution to \eqref{NLH} as in Theorem \ref{reduction}. Then for any compact subinterval $J \subset I$,
\[
    \int_J N(t)^2 d t \lesssim_{u,q}\|u\|_{L_t^{q} L_x^{r}\left(J \times \R^d\right)}^q \lesssim_{u,q} 1+\int_J N(t)^2 d t
    \]
for any admissible pair $(q,r)$ with $q<\infty$.
\end{proposition}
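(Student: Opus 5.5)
We prove the two inequalities separately. In both cases we use the local constancy proposition to break $J$ into pieces of the natural time length $\delta N(t)^{-2}$. Write $J=\bigcup_k J_k$, where each $J_k=[t_k,t_{k+1}]$ satisfies $|J_k|\sim \delta N(t_k)^{-2}$ and $N(t)\sim_u N(t_k)$ on $J_k$. At most one piece, at an endpoint of $J$, may be shorter than this. On each full piece we then have $\int_{J_k}N(t)^2\,dt\sim_u \delta$. So both inequalities reduce to bounding $\|u\|_{L_t^qL_x^r(J_k\times\R^d)}^q$ above and below by constants depending only on $u$ and $q$: summing over $k$ gives the claim, since the number of full pieces is comparable to $\int_J N(t)^2\,dt$. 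The additive $1$ accounts for the single short piece.

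\emph{Upper bound.} Fix a full piece $J_k$. By rescaling we may set $N(t_k)=1$. Almost periodicity together with compactness of the normalized orbit in $L^2/G$ gives a uniform bound on the scattering size over unit-scale intervals: $\|u\|_{L_{t,x}^{2(d+2)/d}(J_k\times\R^d)}\lesssim_u 1$. If this failed, we could take a sequence of pieces whose scattering size tends to infinity, normalize by $G$, and extract a limit in $L^2$ using precompactness. The limiting solution would have to blow up within time $O(\delta)$, contradicting the fact that local constancy keeps the lifespan of size $\delta N^{-2}$ inside $I$. This is exactly the stability argument of Theorem \ref{thm:local-well-posedness}(4). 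Lemma \ref{lorennorm} and Strichartz then give $\|u\|_{S^0(J_k)}\lesssim_u 1$, hence $\|u\|_{L^q_tL^r_x(J_k)}^q\lesssim_u 1$. The constant is independent of the piece because all norms involved are invariant under $G$ together with the induced time rescaling.

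\emph{Lower bound.} On $J_k$, almost periodicity with $\eta$ small (relative to $M(u)>0$) puts most of the mass of $u(t)$ in the ball $|x-x(t)|\le C(\eta)/N(t)$. Hölder on this ball gives
\[
\|u(t)\|_{L^r_x}\gtrsim_u N(t)^{d(\frac12-\frac1r)}=N(t)^{2/q},
\]
where the last equality is the admissibility relation $\frac2q+\frac dr=\frac d2$. Raising to the power $q$ and integrating gives $\|u\|_{L^q_tL^r_x(J_k)}^q\gtrsim_u\int_{J_k}N(t)^2\,dt$. This estimate holds pointwise in $t$ on all of $J$, including the short piece, so we do not need to sum over pieces for this direction. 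Note that the lower bound requires $r\ge 2$, which always holds, and fails only in the trivial case $u\equiv 0$, which is excluded here.

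The main obstacle is the uniform upper bound on each piece. It needs a contradiction-and-compactness argument combined with the stability theory. That stability theory is only sketched in the paper through Theorem \ref{thm:local-well-posedness}(4) and Lemma \ref{lorennorm}, so we would invoke it as in \cite{KV}, checking that the Hartree nonlinearity is controlled by the Strichartz norms via Hardy--Littlewood--Sobolev, as done in the proof of Lemma \ref{lorennorm}.
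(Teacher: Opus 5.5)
Your outline is the standard argument that the paper defers to (Killip--Vi\c{s}an). The lower bound, via H\"older on the ball $|x-x(t)|\le C(\eta)/N(t)$, is correct as written. Reducing the upper bound to a uniform bound on pieces of length $\sim\delta N(t_k)^{-2}$ is also the right structure.

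The gap is in the step you identify as the main obstacle. Your contradiction ends by saying that the limiting solution $v$, with data $\phi=\lim g_n^{-1}u(t_{k_n})$, must blow up before time $\delta$, ``contradicting'' local constancy. But local constancy is a statement about $u$ alone. It tells you that each normalized solution $u_n$ lives on $[0,\delta]$; it does not say that $v$ does. Stability only gives lower semicontinuity of the lifespan under $L^2$ convergence of data. So, in general, a sequence of solutions that each exist on $[0,\delta]$, with scattering sizes tending to infinity, can converge at time $0$ to data whose solution blows up before $\delta$. As written, no contradiction has been reached.

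What is missing is uniform local existence on the compact set $\mathcal K:=\overline{\{g(t)^{-1}u(t):t\in I\}}\subset L^2$, where $g(t):=g_{0,\xi(t),x(t),N(t)^{-1}}$.
\begin{itemize}
\item For each $\phi\in\mathcal K$, one has $\|e^{it\Delta}\phi\|_{L^{2(d+2)/d}_{t,x}([-s,s]\times\R^d)}\to0$ as $s\to0$.
\item Combine this with $\|e^{it\Delta}(\phi-\psi)\|_{L^{2(d+2)/d}_{t,x}}\lesssim\|\phi-\psi\|_{L^2}$ and a finite subcover of $\mathcal K$. This yields $\delta_0=\delta_0(u)>0$ such that every free evolution from $\mathcal K$ is below the small-data threshold (at mass $M(u)$) on $[-\delta_0,\delta_0]$.
\item Hence every normalized solution has scattering size $\lesssim_u1$ on $[-\delta_0,\delta_0]$.
\end{itemize}
The conclusions of the local constancy proposition survive when $\delta$ is decreased. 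So run your partition with $\min\{\delta,\delta_0\}$ in place of $\delta$. The per-piece bound then holds directly, and no contradiction argument is needed; this is the same compactness mechanism that proves local constancy.

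Alternatively, you can keep your contradiction, but then you must show that $v$ exists past time $\delta$. By local constancy of $N$, $\xi$ and $x$, the normalized states $u_n(s)$, $0\le s\le\delta$, lie in a fixed compact subset of $L^2$. Hence so does $v(s)$ for $s$ in its lifespan, and uniform local existence on that compact set forbids blow-up before $\delta$. With either repair, the rest of your argument (counting full pieces, the additive $1$ for the short piece, and passing to general admissible $(q,r)$ via Lemma~\ref{lorennorm}) goes through.
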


Finally, we state the properties of the parameters of almost-periodic solution in the local constant time.

\begin{proposition}\label{zzzz2}
Let $u: I \times \R^d \rightarrow \mathbb{C}$ be a nonzero maximal-lifespan solution to \eqref{NLH} as in Theorem \ref{reduction}. For every compact subinterval $J \subset I$ with $\|u\|_{L_{t, x}^{\frac{2(d+2)}{d}}\left(J \times \R^d\right)} \leq 1$, we have that
\begin{align*}
N\left(t_1\right) & \sim_u N\left(t_2\right), \\
\left|\xi\left(t_1\right)-\xi\left(t_2\right)\right| & \lesssim_u N(J), \\
\left|x\left(t_1\right)-x\left(t_2\right)-2\left(t_1-t_2\right) \xi\left(t_2\right)\right| & \lesssim_u N(J)^{-1},
\end{align*}
for all $t_1,t_2\in J$, where $N(J):=\sup_{t\in J}N(t)$. Moreover,
\[
 |J|\lesssim_u N(J)^{-2},\qquad
 \int_JN(t)^3\,dt\lesssim_u N(J).
\]
If $\|u\|_{L_{t,x}^{\frac{2(d+2)}d}(J\times\R^d)}=1$, then
\[
 |J|\sim_u N(J)^{-2},\qquad
 \int_JN(t)^3\,dt\sim_u N(J).
\]
Consequently, if a compact interval $K\subset I$ is a union of consecutive
intervals $J_l$ with
$\|u\|_{L_{t,x}^{\frac{2(d+2)}d}(J_l\times\R^d)}=1$, then
\begin{equation}\label{NJde}
 \int_KN(t)^3\,dt\sim_u\sum_lN(J_l).
\end{equation}
For a partition with an incomplete final interval, \eqref{NJde} applies to
the union of the complete intervals, while the remaining interval satisfies
the upper bounds above.
\end{proposition}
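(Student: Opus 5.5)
The first three relations follow by iterating the local constancy proposition across $J$. The bounds $|J|\lesssim N(J)^{-2}$ and $\int_J N^3\,dt\lesssim N(J)$ come from Proposition \ref{zzzz}. The two-sided versions come from a lower bound on the Strichartz norm over a short window.

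\emph{Step 1: comparability of $N$ on $J$.} Fix $t_0\in J$ and let $\delta=\delta(u)$ be the constant from the local constancy proposition. First I claim that $J$ can be covered by $O_u(1)$ consecutive windows of the form $[t_k,t_k+\delta N(t_k)^{-2}]$. By almost periodicity and a compactness argument, as in the standard proof of the left inequality of Proposition \ref{zzzz}, each full window $W_k$ carries a fixed amount of the scattering norm:
\[
\|u\|_{L^{\frac{2(d+2)}{d}}_{t,x}(W_k\times\R^d)}\gtrsim_u 1 .
\]
Otherwise a limiting profile of $u$ at time $t_k$ would have vanishing nonlinear evolution on the rescaled unit window. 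Since $\|u\|_{L^{\frac{2(d+2)}{d}}_{t,x}(J)}\le1$, only boundedly many full windows fit inside $J$, plus one partial window at the end. Chaining $N(t)\sim_u N(t_k)$ across these $O_u(1)$ windows gives $N(t_1)\sim_u N(t_2)$ for all $t_1,t_2\in J$, hence $N(t)\sim_u N(J)$ on $J$.

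\emph{Step 2: bounds on $\xi$ and $x$.} Chaining $|\xi(t)-\xi(t_k)|\lesssim N(t_k)$ along the same windows gives $|\xi(t_1)-\xi(t_2)|\lesssim_u N(J)$. For $x$, on each window
\[
\bigl|x(t)-x(t_k)-2(t-t_k)\xi(t_k)\bigr|\lesssim N(t_k)^{-1}.
\]
Summing these, the cross terms are of the form $2(t_{k+1}-t_k)\bigl(\xi(t_k)-\xi(t_2)\bigr)$. Each is bounded by $\delta N^{-2}\cdot N=O(N(J)^{-1})$, so the total is still $O_u(N(J)^{-1})$ because there are $O_u(1)$ windows.

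\emph{Step 3: length and $N^3$ integral.} The number of windows is $O_u(1)$ and each has length $\lesssim N(J)^{-2}$, so $|J|\lesssim_u N(J)^{-2}$. Then $\int_J N^3\,dt\lesssim N(J)^3|J|\lesssim N(J)$. If $\|u\|_{L^{\frac{2(d+2)}{d}}_{t,x}(J)}=1$, the right inequality of Proposition \ref{zzzz} gives
\[
1\lesssim_u 1+\int_J N^2\,dt .
\]
This alone does not bound $|J|$ from below. Instead, I would use local well-posedness: on a window of length $cN(J)^{-2}$ with $c$ small, almost periodicity and Strichartz estimates show that the scattering norm is $<1$. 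Hence $|J|\gtrsim_u N(J)^{-2}$, and therefore $\int_J N^3\,dt\sim_u N(J)$. This small-window bound is the main obstacle. It follows the usual proof of the local constancy proposition (cf.\ \cite{KV}): any $u(t_0)$ rescaled by $N(t_0)$ lies in a compact set modulo $G$, so its evolution has uniformly small scattering norm on rescaled short intervals.

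\emph{Step 4: unions and partial intervals.} Estimate \eqref{NJde} follows by summing the two-sided bounds over the intervals $J_l$. For an incomplete final interval only the upper bounds are available, which is exactly the stated caveat.
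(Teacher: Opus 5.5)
Your proposal is correct and is essentially the standard Killip--Vi\c{s}an/Dodson argument that the paper invokes by citing \cite{Dodson,KV}. You chain the local-constancy windows, bound their number by the scattering norm, and obtain $|J|\gtrsim_u N(J)^{-2}$ when $\|u\|_{L^{2(d+2)/d}_{t,x}(J\times\R^d)}=1$ from the uniform small-window bound that underlies local constancy. One simplification: the lower bound $\|u\|_{L^{2(d+2)/d}_{t,x}(W_k\times\R^d)}\gtrsim_u 1$ on each full window needs no separate compactness argument, since the left inequality of Proposition \ref{zzzz} together with $N(t)\sim_u N(t_k)$ on $W_k$ gives $\delta\lesssim_u\int_{W_k}N(t)^2\,dt\lesssim_u\|u\|^{2(d+2)/d}_{L^{2(d+2)/d}_{t,x}(W_k\times\R^d)}$ directly.
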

\begin{proof}
See \cite{Dodson,KV}.
\end{proof}

\section{Long time Strichartz estimate}\label{sec4}
In this section, we aim to establish the long-time Stricharzt estimate. 

Let $u$ be an almost-periodic solution as in Theorem \ref{reduction}. Then by \eqref{xxibound}, there exists some $\eta_1$ such that  
\begin{equation}\label{condi1}
|\xi'(t)| + |N'(t)| \leq \frac{N(t)^3}{\eta_1}. 
\end{equation}

Let $k_0$ be a nonnegative integer and let $[a,b]$ be a compact interval satisfying 
\begin{equation}\label{condi2}
\int_a^b \int_{\R^d} |u(t,x)|^{2(d+2)/d}  dx  dt \leq 2^{k_0}.
\end{equation}

By rescaling, we may further assume that  
\begin{equation}\label{condi3}
\int_a^b N(t)^3  dt = \eta_{\ast} 2^{k_0}
\end{equation}
for some $\eta_{\ast}\ll \eta_1$.
\begin{definition}[Galilean Littlewood–Paley projection]
For $j\in\mathbb Z$, let $P_{2^j}$ and $P_{\leq(\geq) 2^j}$ be the Littlewood–Paley operators, then for any $\xi_0\in\R^d$, we define  
\begin{align*}
P_{\xi_0,2^j}f =& e^{ix\cdot \xi_0} P_{2^j} \left( e^{-ix\cdot \xi_0} f \right)\\
P_{\xi_0,\leq(\geq)2^j}f =& e^{ix\cdot \xi_0} P_{\leq(\geq) 2^j} \left( e^{-ix\cdot \xi_0} f \right).
\end{align*}
We have the indentiy
\begin{equation}\label{equ:GaLP}
P_{\xi_0,2^j}f=P_{|\xi-\xi_0|\simeq 2^j}f.
\end{equation}
\end{definition}

We use the homogeneous decomposition, with $P_{\xi_0,2^j}$ defined as above for every $j\in\mathbb Z$. The smooth high- and low-frequency cutoffs are defined at every positive frequency. For  $1 \leq p \leq \infty$  and $1 \leq q \leq \infty$, define the norm
\[
\|P_{\xi(t),2^j}f\|_{L_t^p L_x^q (I \times \mathbb{R}^d)} = \left\| \|P_{\xi(t),2^j}f(t,x)\|_{L_x^q (\mathbb{R}^d)} \right\|_{L_t^p (I)}. 
\]

\begin{definition}[Long-time Strichartz seminorm] Let $I=[a,b]$ be an interval, then we define the seminorm $X([a,b]\times\R^d)$ as
\[
\|u\|_{X([a,b]\times\R^d)}^2 = \sup_{0 \leq j \leq k_0} 2^{j-k_0} \|P_{\xi(t),\geq 2^j}u\|_{L_t^2 L_x^{\frac{2d}{d-2},2} ([a,b] \times \mathbb{R}^d)}^2,
\]
where $k_0$ is as in \eqref{condi2}.
\end{definition}

\begin{theorem}\label{longtime}
    Let $u$ be a rescaling of an almost-periodic solution from Theorem \ref{reduction}. If $u$ satisfies \eqref{condi1}-\eqref{condi3}, then for any $\eta_{\ast}\ll 1$, we have
\begin{equation}
    \|u\|_{X([a,b]\times\R^d)} \lesssim 1,
\end{equation}
with constant independent of $k_0$ and $\eta_{\ast}$.
\end{theorem}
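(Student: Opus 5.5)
The plan is to run Dodson's induction on frequency \cite{Dodson-JAMS}, with the endpoint Lorentz norm $L_t^2L_x^{\frac{2d}{d-2},2}$ in place of the $U^2_\Delta$-type quantity. For $0\le j\le k_0$ set
\[
A(j):=\sup_{J}\ \|P_{\xi(t),\geq 2^j}u\|_{L_t^2L_x^{\frac{2d}{d-2},2}(J\times\R^d)}^2 ,
\]
where the supremum runs over subintervals $J\subset[a,b]$ obtained by partitioning $[a,b]$ into $2^{k_0-j}$ consecutive pieces, each carrying $\int_J N(t)^3\,dt\le \eta_\ast 2^{j}$ and $\|u\|_{L^{\frac{2(d+2)}{d}}_{t,x}(J)}^{\frac{2(d+2)}{d}}\lesssim 2^j$. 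The theorem is the case $j=0$, read after summing over such pieces, so the goal is the recursion
\begin{equation}\label{plan-rec}
A(j)\le C_0 + \tfrac12 A(j+1) + C\eta_\ast^{\theta}\,\|u\|_{X}^{2}\,(\text{or } A(j)),\qquad j<k_0,
\end{equation}
together with a trivial base case. For $j=k_0$ (and generally small $2^j$), cover $J$ by $\lesssim 2^j$ intervals on which the scattering norm equals $1$ and apply Lemma \ref{lorennorm}; this gives $A(j)\lesssim 2^{j}$. Iterating \eqref{plan-rec} downward with $X$-normalization $2^{j-k_0}$ then closes by bootstrap, using that $\eta_\ast\ll 1$.

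To derive \eqref{plan-rec} on an interval $J$, first freeze the frequency center. By \eqref{condi1} and \eqref{condi3}, $|\xi(t)-\xi(t')|\le \eta_1^{-1}\int_J N^3\le \eta_\ast\eta_1^{-1}2^{j}\ll 2^j$, so $P_{\xi(t),\ge 2^j}$ can be replaced by $P_{\xi_J,\ge 2^{j}}$ up to harmless tails in a neighbouring annulus, absorbed via the $A(j+1)$-term. Next apply the Lorentz Strichartz estimate, Lemma \ref{P:SZ}, to the Duhamel formula:
\[
\|P_{\xi_J,\ge 2^j}u\|_{L^2_tL^{\frac{2d}{d-2},2}_x(J)}\lesssim \|u(t_0)\|_{L^2}+\|P_{\xi_J,\ge 2^j}F(u)\|_{L^2_tL_x^{\frac{2d}{d+2},2}(J)} .
\]
Now decompose $u=u_{l}+u_{h}$ with $u_l=P_{\xi_J,\le \eta 2^{j}}u$. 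The all-low term $F(u_l)$ has frequency support in $|\xi-\xi_J|\lesssim \eta 2^j$, so it is killed by the projection. Terms with at least two high factors are handled with HLS, via $|x|^{-2}\in L^{\frac d2,\infty}$ and Lemma \ref{lem:weakYoung}, followed by Lorentz Hölder. This yields $\|u_h\|^2_{L^2_tL^{\frac{2d}{d-2},2}}\|u\|_{L^\infty_tL^2}$-type bounds: one factor is estimated by its $L^2_tL^{\frac{2d}{d-2},2}$ norm, and the other, placed in $L^\infty L^2$, is small by almost periodicity, since $C(\eta)N(t)\ll\eta 2^j$ on most of $J$. This gives the small-constant contributions.

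The main obstacle is the term that is linear in high frequency, $(|x|^{-2}\ast|u_l|^2)u_h$ together with $(|x|^{-2}\ast(\bar u_l u_h))u_l$. The estimate cannot simply be closed in $L^2_tL^{2d/(d-2)}$ because endpoint HLS fails. Here I would use the double frequency decomposition of \cite{CCS}. First, split $|u_l|^2$ into the pieces $P_{\xi_J,2^k}u\cdot\overline{P_{\xi_J,2^{k'}}u}$ with $2^k,2^{k'}\le\eta2^j$, and simultaneously decompose the output of the convolution in frequency. This puts each product into the bilinear Strichartz form of Corollary \ref{bilinear}, applied to $u_h$ against $P_{\xi_J,2^k}u$, with gain $(2^{k(d-1)}/2^{j})^{1/2}$. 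Second, cube-localize $u_h$ at scale $2^k$ and sum with Lemma \ref{Xdeltanorm}. The $U^2_\Delta$ norms of the pieces are controlled by \eqref{localboundU2} and by induction, through $A(k)$ for $k\le j$ on shorter subintervals. The convolution with $|x|^{-2}$ then contributes the factor $2^{(2-d)k}$ on the low-frequency product. Combining these, the resulting dyadic sum is $\sum_{k\le j}2^{(k-j)/2}A(k)^{1/2}\cdots$. This is where the geometric decay needed to produce the $\tfrac12 A(j+1)$ and $\eta$-small terms should come from; verifying the exponent bookkeeping in this step, and the Lorentz-space versions of the bilinear bounds, is what I expect to be the hardest part.
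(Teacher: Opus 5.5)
\textbf{There is a genuine gap in how you close the argument.} Your recursion $A(j)\le C_0+\tfrac12A(j+1)+\dots$ runs in the wrong direction.

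\emph{The iteration gives nothing.} The only starting value you have is $A(k_0)\lesssim 2^{k_0}$. Iterating downward then yields $A(j)\le 2C_0+C2^{j}$, which is exactly the trivial local-theory bound.

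\emph{The target is misidentified.} The theorem is not ``the case $j=0$'', since that term of the supremum is trivial. Its content is the uniformity of $2^{j-k_0}\|P_{\xi(t),\geq2^j}u\|^2_{L_t^2L_x^{\frac{2d}{d-2},2}([a,b]\times\R^d)}$ all the way up to $j=k_0$.

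\emph{$A(j+1)$ cannot arise from the estimates.} Every piece of $P_{\xi(t),\geq2^{j-1}}F(u)$ contains a factor $P_{\xi(t),\geq2^{j-4}}u$. That is a \emph{lower} frequency cutoff on the same time interval. Likewise, the tails from freezing $\xi(t)$ sit at $|\xi-\xi_J|\approx2^j$, not above $2^{j+1}$.

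\emph{What actually closes the argument in the paper.} This factor enters \emph{linearly}, with a coefficient made small by the compactness parameter $\eta$ (not by $\eta_\ast$). Split the two convolved factors at $C(\eta)N(t)$ and apply weak Young and Lorentz--H\"older. The contribution is then $\lesssim\eta\,\|P_{\xi(t),\geq2^{j-4}}u\|_{L_t^2L_x^{\frac{2d}{d-2},2}}\lesssim\eta\,2^{(k_0-j)/2}\|u\|_X$. Hence $\|u\|_X^2\lesssim1+C(\eta)^2+\eta^2\|u\|_X^2$, and you absorb because $\|u\|_X<\infty$ a priori. Equivalently, you induct \emph{upward} from $j\le4$. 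Note also that your bound $\|u_h\|^2_{L_t^2L_x^{\frac{2d}{d-2},2}}\|u\|_{L_t^\infty L_x^2}$, as written, is quadratic in the bootstrapped quantity and would not close. Exactly one factor may go in $L_t^2L_x^{\frac{2d}{d-2},2}$, with a small factor in $L_t^\infty L_x^2$.

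\textbf{The term linear in the high frequency has a second gap.} You want the $U^2_\Delta$ norms in Corollary \ref{bilinear} on the long pieces $J$ to be controlled ``by induction, through $A(k)$''. But $A(k)$ is an $L_t^2L_x^{\frac{2d}{d-2},2}$ quantity and does not control $U^2_\Delta$; this is why Dodson's original norm is built on $U^2_\Delta$. The only $U^2_\Delta$ control available here is $\|u\|_{U^2_\Delta(J_l)}\lesssim1$ on the small intervals, from \eqref{localboundU2}.

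Your cut at $\eta2^j$, followed by a dyadic decomposition $2^k\le\eta2^j$, also fails if you apply the bilinear estimate interval by interval. A fixed low frequency $2^k$ costs $2^k/2^j$ (squared) per $J_l$, and $J$ may contain $\sim2^j$ small intervals, so the sum is $\sim 2^k$ and does not stay bounded.

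\emph{The missing idea} is to put the low/high cut at the compactness scale $C(\eta)N(t)$ instead.
\begin{itemize}
\item Frequencies $C(\eta)N(t)\lesssim|\xi-\xi(t)|\lesssim2^{j-4}$ are small in $L_t^\infty L_x^2$ by \eqref{localized}. They are absorbed together with the high--high terms as above.
\item $P_{\xi(t),\leq C(\eta)N(t)}u$ is paired with $P_{\xi(t),\geq2^{j-4}}u$ on each $J_l$ via Corollary \ref{bilinear} and Lemma \ref{bilinear2}, with $M\sim C(\eta)N(J_l)$ and $N\sim2^j$. This gives $(C(\eta)N(J_l)/2^j)^{1/2}$ per interval.
\item These squares sum to $\lesssim C(\eta)\eta_\ast2^{k_0-j}$ by \eqref{NJde} and \eqref{condi3}. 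Intervals with $C(\eta)N(J_l)\gtrsim2^j$ are handled by the trivial bound.
\end{itemize}
With this cut, no induction on $U^2_\Delta$ norms and no intermediate dyadic sum are needed.
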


To prove this theorem, we first divide the interval $[a,b]$ in two different ways.
\begin{definition}[Small intervals]
    Divide $[a,b]$ into at most $2^{k_0}$ consecutive, disjoint intervals $J_l$, such that
\[\int_{J_l} \|u(t)\|_{L_x^{2(d+2)/d}(\mathbb{R}^d)}^{2(d+2)/d} dt = 1.\]
For the last interval, the integral is allowed to be less than one.
\end{definition}

\begin{definition}
    Divide $[a,b]$ into at most $2^{k_0}$ consecutive, disjoint intervals $J_\alpha$ such that
\[\int_{J_\alpha} \left( N(t)^3 + \eta_{\ast} \|u(t)\|_{L_x^{2(d+2)/d}(\mathbb{R}^d)}^{2(d+2)/d} \right) dt = 2\eta_{\ast}. \]
The last interval is allowed to have a smaller integral. We set the remaining $J_\alpha$ to be empty.
For any integer $0 \leq j < k_0$, let
\begin{equation}\label{condi4}
 G_k^j = \bigcup_{\alpha=k2^j}^{(k+1)2^j-1} J_\alpha,    
\end{equation}
and for $ j \geq k_0 $, let
\[ G_k^j = [a, b]. \]
If $G_\alpha^i = [\tilde{a}, \tilde{b}]$ , let $\xi (G_\alpha^i) = \xi (\tilde{a})$ . Define $ \xi (J_l)$ in a similar manner.
\end{definition}
\begin{remark}
$(i)$ Let $N(J_l) = \sup_{t \in J_l} N(t)$. On each complete small interval, \eqref{NJde} gives
\[
\int_{J_l} N(t)^3 \sim \sup_{t \in J_l} N(t) = N(J_l) \sim \inf_{t \in J_l} N(t).
\]

$(ii)$ By \eqref{condi1} and \eqref{condi4}, for all $t \in G_\alpha^i$ , we have
\begin{equation}\label{condi5}
    |\xi (t) - \xi (G_\alpha^i)| \leq \int_{G_\alpha^i} \eta_1^{-1} N(t)^3 dt \leq 2\eta_{\ast} \eta_1^{-1} 2^i,
\end{equation}
so for all  $t \in G_\alpha^i$ ,
\[ \big\{\xi : 2^{i-1} \leq |\xi - \xi (t)| \leq 2^{i+1}\big\} \subset \big\{\xi : 2^{i-2} \leq |\xi - \xi (G_\alpha^i)| \leq 2^{i+2}\big\}.
\]
\end{remark}

Now we are in position to prove Theorem \ref{longtime}

\begin{proof}[Proof of Theorem \ref{longtime}]
For $0\leq j\leq\min\{4,k_0\}$, the local Strichartz estimate on the small intervals gives the claimed bound directly. Thus fix $5\leq j\leq k_0$ and write $[a,b]$ as the union of the nonempty groups among $\{G_k^j\}_{k=0}^{2^{k_0-j}-1}$;  for any $t, t_0 \in G_k^j$, we have by Duhamel's formula
\[ 
P_{\xi (t), \geq 2^j} u(t) = e^{i(t-t_0)\Delta} P_{\xi (t), \geq 2^j} u(t_0) - i\mu \int_{t_0}^t e^{i(t-\tau)\Delta} P_{\xi (t), \geq 2^j} F(u(\tau)) d\tau. 
\]
Then by the appropriate choice of  $t_0^k$  for each $G_k^j$,  Strichartz estimates, and \eqref{condi5},
\begin{align}\notag
&\|P_{\xi (t), \geq 2^j} u(t)\|_{L_t^2 L_x^{\frac{2d}{d-2},2} ([a,b] \times \mathbb{R}^d)}^2\\\notag
\lesssim&\sum_{G_k^j \subset [a,b]} \|P_{\xi (t), \geq 2^j} e^{i(t-t_0^k)\Delta}u(t_0^k)\|_{L_t^2 L_x^{\frac{2d}{d-2},2} (G_k^j \times \mathbb{R}^d)}^2\\\notag
&+ \sum_{G_k^j \subset [a,b]} \left\|P_{\xi (t), \geq 2^j} \int_{t_0^k}^t e^{i(t-\tau)\Delta} F(u(\tau)) d\tau\right\|_{L_t^2 L_x^{\frac{2d}{d-2},2} (G_k^j \times \mathbb{R}^d)}^2\\\notag
\lesssim&2^{k_0-j}\|u\|^2_{L_t^{\infty}L_x^2}+ \sum_{G_k^j \subset [a,b]} \left\|P_{\xi (t), \geq 2^j} \int_{t_0^k}^t e^{i(t-\tau)\Delta} F(u(\tau)) d\tau\right\|_{L_t^2 L_x^{\frac{2d}{d-2},2} (G_k^j \times \mathbb{R}^d)}^2\\\notag
\lesssim& 2^{k_0-j}+\sum_{G_k^j \subset [a,b]} \left\|P_{\xi (t), \geq 2^j} \int_{t_0^k}^t e^{i(t-\tau)\Delta} F(u(\tau)) d\tau\right\|_{L_t^2 L_x^{\frac{2d}{d-2},2} (G_k^j \times \mathbb{R}^d)}^2\\\label{estq0}
\lesssim& 2^{k_0-j}+ \|P_{\xi (\tau), \geq 2^{j-1}} F(u(\tau))\|_{L_t^2 L_x^{\frac{2d}{d+2},2} ([a,b] \times \mathbb{R}^d)}^2.
\end{align}

Next we observe that
\[P_{\xi(\tau),\geq 2^{j-1}}F(u)= P_{\xi(\tau),\geq 2^{j-1}}\left[\big(P_{\xi(\tau),\geq 2^{j-4}}u)(|x|^{-2}\ast|u|^2)+\big(P_{\xi(\tau),\leq 2^{j-4}}u\big)P_{\geq 2^{j-3}}(|x|^{-2}\ast|u|^2)\right].
\]
Therefore, by H\"older's inequality, Lemma \ref{lem:weakYoung} and Bernstein's inequality (Lemma \ref{L:Bernstein}):
\begin{align}\notag
&\|P_{\xi(\tau),\geq 2^{j-1}}F(u(\tau))\|_{L_{\tau}^2L_x^{\frac{2d}{d+2},2}([a,b]\times\R^d)}\\
\lesssim& \big\|(P_{\xi(\tau),\geq 2^{j-4}}u) (|x|^{-2}\ast|P_{\xi(\tau),\geq C(\eta)N(\tau)}u|^2)\big\|_{L_{\tau}^2L_x^{\frac{2d}{d+2},2}([a,b]\times\R^d)} \notag\\
&+\big\|(P_{\xi(\tau),\geq 2^{j-4}}u) (|x|^{-2}\ast|P_{\xi(\tau),\leq C(\eta)N(\tau)}u|^2)\big\|_{L_{\tau}^2L_x^{\frac{2d}{d+2},2}([a,b]\times\R^d)}\notag\\
&+\big\|(P_{\xi(\tau),\leq 2^{j-4}}u)P_{\geq 2^{j-3}}(|x|^{-2}\ast O(P_{\xi(\tau),\geq C(\eta)N(\tau)}uP_{\xi(\tau),\geq 2^{j-4}}u))\big\|_{L_{\tau}^2L_x^{\frac{2d}{d+2},2}([a,b]\times\R^d)}\notag\\
&+\big\|(P_{\xi(\tau),\leq 2^{j-4}}u)P_{\geq 2^{j-3}}(|x|^{-2}\ast O(P_{\xi(\tau),\leq C(\eta)N(\tau)}uP_{\xi(\tau),\geq 2^{j-4}}u))\big\|_{L_{\tau}^2L_x^{\frac{2d}{d+2},2}([a,b]\times\R^d)}\notag\\
\lesssim& \|(P_{\xi(\tau),\geq 2^{j-4}}u\|_{L_{\tau}^2L_x^{\frac{2d}{d-2},2}}\||x|^{-2}\ast |P_{\xi(\tau),\geq C(\eta)N(\tau)}u|^2\|_{L_{\tau}^{\infty}L_x^{\frac{d}{2},\infty}}\notag\\
&+\big\|(P_{\xi(\tau),\geq 2^{j-4}}u) (|x|^{-2}\ast|P_{\xi(\tau),\leq C(\eta)N(\tau)}u|^2)\big\|_{L_{\tau}^2L_x^{\frac{2d}{d+2},2}([a,b]\times\R^d)}\notag\\
&+\|P_{\xi(\tau),\geq C(\eta)N(\tau)}u\|_{L_{\tau}^{\infty}L_x^{2}}\|u\|_{L_{\tau}^{\infty}L_x^2}\|P_{\xi(\tau),\geq 2^{j-4}}u\|_{L_{\tau}^2L_x^{\frac{2d}{d-2},2}}\notag\\
&+\big\|(P_{\xi(\tau),\leq 2^{j-4}}u)P_{\geq 2^{j-3}}(|x|^{-2}\ast O(P_{\xi(\tau),\leq C(\eta)N(\tau)}uP_{\xi(\tau),\geq 2^{j-4}}u))\big\|_{L_{\tau}^2L_x^{\frac{2d}{d+2},2}([a,b]\times\R^d)}\notag\\
\lesssim& (\eta+\eta^2)\|P_{\xi(\tau),\geq 2^{j-4}}u\|_{L_{\tau}^{2}L_x^{\frac{2d}{d-2},2}}\notag\\
&+\big\|(P_{\xi(\tau),\leq 2^{j-4}}u)P_{\geq 2^{j-3}}(|x|^{-2}\ast O(P_{\xi(\tau),\leq C(\eta)N(\tau)}uP_{\xi(\tau),\geq 2^{j-4}}u))\big\|_{L_{\tau}^2L_x^{\frac{2d}{d+2},2}([a,b]\times\R^d)}\notag\\
&+\|P_{\xi(\tau),\geq 2^{j-4}}u (|x|^{-2}\ast|P_{\xi(\tau),\leq C(\eta)N(\tau)}u|^2)\|_{L_{\tau}^2L_x^{\frac{2d}{d+2},2}([a,b]\times\R^d)},\label{estq}
\end{align}
where 
\begin{align*}
&O(P_{\xi(\tau),\leq(\geq) C(\eta)N(\tau)}uP_{\xi(\tau),\geq 2^{j-4}}u)\notag\\
=&P_{\xi(\tau),\leq(\geq) C(\eta)N(\tau)}u\overline{P_{\xi(\tau),\geq 2^{j-4}}u}+\overline{P_{\xi(\tau),\leq(\geq) C(\eta)N(\tau)}u}P_{\xi(\tau),\geq 2^{j-4}}u.
\end{align*}

Next, let $J_l$ be a complete small interval such that
$C(\eta)N(J_l)\ll2^j$. By Proposition \ref{zzzz2}, we have
$N(t)\sim_u N(J_l)$ and
$|\xi(t)-\xi(J_l)|\lesssim_u N(J_l)$ on $J_l$. Taking $C(\eta)$
sufficiently large, set
\[
 \xi_0=\xi(J_l),\qquad M=4C(\eta)N(J_l),\qquad N=2^{j-6}.
\]
The Fourier supports give the reproducing identities
\begin{align*}
 P_{\xi(t),\leq C(\eta)N(t)}
 &=P_{\xi(t),\leq C(\eta)N(t)}P_{\xi_0,\leq M},\\
 P_{\xi(t),\geq2^{j-4}}
 &=P_{\xi(t),\geq2^{j-4}}P_{\xi_0,\geq N}.
\end{align*}
The kernel of the first time-dependent projection satisfies
\begin{align*}
 &\sup_{t\in J_l}\left|
 [C(\eta)N(t)]^d e^{iy\cdot\xi(t)}
       \varphi^\vee(C(\eta)N(t)y)\right|\\
 &\qquad\lesssim_u M^d\langle My\rangle^{-d-1},\qquad
 \int_{\R^d}M^d\langle My\rangle^{-d-1}\,dy\lesssim_d1.
\end{align*}
For the high-frequency projection, we use
$P_{\xi(t),\geq2^{j-4}}=1-P_{\xi(t),\leq2^{j-4}}$; the low-frequency
kernel satisfies the same bound with $M$ replaced by $2^{j-4}$.
Thus Minkowski's inequality, spatial translation invariance and
Corollary \ref{bilinear} give
\begin{align*}
 &\|(P_{\xi(t),\geq2^{j-4}}u)
        (P_{\xi(t),\leq C(\eta)N(t)}u)\|_{L_{t,x}^2(J_l\times\R^d)}\\
 &\lesssim_u\sup_{y,z\in\R^d}
       \|(\tau_zP_{\xi_0,\geq N}u)
         (\tau_yP_{\xi_0,\leq M}u)\|_{L_{t,x}^2(J_l\times\R^d)}\\
 &\lesssim_u \frac{M^{(d-1)/2}}{N^{1/2}}
                  \|u\|_{U_\Delta^2(J_l\times\R^d)}^2.
\end{align*}
Here the high-frequency factor is decomposed into annuli of radii
$2^lN$, $l\geq0$, and the factors $2^{-l/2}$ are summed. The same bound
holds with either factor conjugated. By H\"older's inequality, Sobolev
embedding, Bernstein's inequality and \eqref{localboundU2}, we obtain
\begin{align}\notag
&\big\|(P_{\xi(\tau),\leq 2^{j-4}}u)P_{\geq 2^{j-3}}(|x|^{-2}\ast O(P_{\xi(\tau),\leq C(\eta)N(\tau)}uP_{\xi(\tau),\geq 2^{j-4}}u))\big\|_{L_{\tau}^2L_x^{\frac{2d}{d+2},2}(J_l\times\R^d)}\\\notag
\lesssim&\|u\|_{L_t^\infty L_x^2} \big\|P_{\geq 2^{j-3}}(|x|^{-2}\ast O(P_{\xi(\tau),\leq C(\eta)N(\tau)}uP_{\xi(\tau),\geq 2^{j-4}}u))\big\|_{L_t^2L_x^d(J_l\times\R^d)}\\\notag
\lesssim&\big\||\nabla|^{-\frac{d-2}2}P_{\geq 2^{j-3}} O(P_{\xi(\tau),\leq C(\eta)N(\tau)}uP_{\xi(\tau),\geq 2^{j-4}}u)\big\|_{L_{t,x}^2(J_l\times\R^d)}  \\
\lesssim& 2^{-(d-2)j/2}\|O(P_{\xi(\tau),\leq C(\eta)N(\tau)}uP_{\xi(\tau),\geq 2^{j-4}}u)\|_{L_{t,x}^2(J_l\times\R^d)}\notag\\\notag
\lesssim&2^{-(d-2)j/2} \frac{C(\eta)^{(d-1)/2}N(J_l)^{(d-1)/2}}{2^{(j-4)/2}}\\\label{equ:bilHSBe}
\simeq&\Big(\frac{C(\eta)N(J_l)}{2^j}\Big)^{\frac{d-1}2}.
\end{align}
On the other hand, by Lemma \ref{lem:weakYoung}  and Lemma \ref{lorennorm}, we have
\begin{align}\label{TRIVIAL1}
&\big\|(P_{\xi(\tau),\leq 2^{j-4}}u)P_{\geq 2^{j-3}}(|x|^{-2}\ast O(P_{\xi(\tau),\leq C(\eta)N(\tau)}uP_{\xi(\tau),\geq 2^{j-4}}u))\big\|_{L_{\tau}^2L_x^{\frac{2d}{d+2},2}(J_l\times\R^d)}\notag\\
\lesssim&\|u\|^2_{L_{\tau}^{\infty}L_x^2}\|u\|_{L_{\tau}^{2}L_x^{\frac{2d}{d-2},2}(J_l\times\R^d)}\lesssim 1.
\end{align}
For $C(\eta)N(J_l)\ll2^j$, we combine this with \eqref{equ:bilHSBe}.
For $C(\eta)N(J_l)\gtrsim2^j$, we use \eqref{TRIVIAL1} directly. In both
cases, we obtain
\begin{align}\label{estq1}
&\big\|(P_{\xi(\tau),\leq 2^{j-4}}u)P_{\geq 2^{j-3}}(|x|^{-2}\ast O(P_{\xi(\tau),\leq C(\eta)N(\tau)}uP_{\xi(\tau),\geq 2^{j-4}}u))\big\|_{L_{\tau}^2L_x^{\frac{2d}{d+2},2}(J_l\times\R^d)}\notag\\
\lesssim& \Big(\frac{C(\eta)N(J_l)}{2^j}\Big)^{1/2}.
\end{align}
Inserting \eqref{estq1} into \eqref{estq} and then recalling the definition of $X$-norm, we have
\begin{align}\notag
&\|P_{\xi(\tau),\geq 2^{j-1}}F(u(\tau))\|_{L_{\tau}^2L_x^{\frac{2d}{d+2},2}([a,b]\times\R^d)}\\\notag
\lesssim& (\eta+\eta^2)2^{(k_0-j)/2}\|u\|_{X([a,b])}
+\Big(\sum_{J_l\subset[a,b]}\frac{C(\eta)N(J_l)}{2^j}\Big)^{1/2}\\
&+\big\|(P_{\xi(\tau),\geq 2^{j-4}}u)(|x|^{-2}\ast|P_{\xi(\tau),\leq C(\eta)N(\tau)}u|^2)\big\|_{L_{\tau}^2L_x^{\frac{2d}{d+2},2}([a,b]\times\R^d)} \notag\\
\lesssim& (\eta+\eta^2)2^{(k_0-j)/2}\|u\|_{X([a,b])} +C(\eta)^{1/2}\eta_{\ast}^{1/2}2^{(k_0-j)/2}\notag\\
&+\big\|(P_{\xi(\tau),\geq 2^{j-4}}u)(|x|^{-2}\ast|P_{\xi(\tau),\leq C(\eta)N(\tau)}u|^2)\big\|_{L_{\tau}^2L_x^{\frac{2d}{d+2},2}([a,b]\times\R^d)}.\label{estq2}
\end{align}
It remains to estimate the last term in \eqref{estq2}, which is non-local. To overcome this difficulty,  we use the double frequency decomposition technique, which builds upon an idea of Chae, Cho, and Lee \cite{CCS}.

\begin{lemma}[Bilinear estimate for the non-local term]\label{bilinear2}
Let $M\ll N$ and $J$ be an interval. Then for any $\xi_0\in\R^d$,
\[\left\|(P_{\xi_0, \geq N} u)|\nabla|^{2-d}\left(\left|P_{\xi_0, \leq M} u\right|^{2}\right)\right\|_{L_{t}^{2} L_{x}^{\frac{2d}{d+2},2}\left(J \times \mathbb{R}^{d}\right)}\lesssim \left(\frac{MM(u)} {N}\right)^{1/2}\|u\|^2_{U_{\Delta}^2(J\times\R^d)},\]
with the implicit constant only depends on $d$.
\end{lemma}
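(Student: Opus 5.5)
The idea is to undo the non-locality of $|\nabla|^{2-d}$ with a \emph{double frequency decomposition}: localize the product $|P_{\xi_0,\le M}u|^2$ at output frequency $L$, and then cut $P_{\xi_0,\le M}u$ itself into cubes of side $L$. The bilinear Strichartz estimate (Corollary \ref{bilinear}) then applies to the high-frequency factor against one cube piece, and Bernstein handles the other cube piece. We fix $t$-independent $\xi_0$ and, after a Galilean transformation (as in Lemma \ref{prop:cube_BS}), take $\xi_0=0$. Write $w=P_{\ge N}u$ and $v=P_{\le M}u$. Since $\widehat{|v|^2}$ is supported in $|\xi|\le 4M$, we decompose
\[
|\nabla|^{2-d}(|v|^2)=\sum_{L\le 4M}|\nabla|^{2-d}P_L(v\bar v).
\]
For dyadic $L$, the operator $|\nabla|^{2-d}P_L$ is convolution with $L^{2-d}K_L$, where $K_L(y)=L^{d}K(Ly)$ and $\|K_L\|_{L^1}\lesssim 1$.

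Fix $L$. We tile frequency space by cubes $Q$ of side $L$ and write $v=\sum_Q P_Qv$. Since $P_L(P_Qv\,\overline{P_{Q'}v})=0$ unless $\operatorname{dist}(Q,Q')\lesssim L$, each $Q$ pairs with only $O(1)$ cubes $Q'$. By Minkowski's inequality in the convolution variable $y$, it suffices to bound, uniformly in $y$,
\[
L^{2-d}\Big\|\sum_{Q\sim Q'} w\,(\tau_yP_Qv)(\tau_y\overline{P_{Q'}v})\Big\|_{L_t^2L_x^{\frac{2d}{d+2},2}(J\times\R^d)}.
\]
This uses the fact that translation commutes with $e^{it\Delta}$ and preserves $U_\Delta^2$. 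For each $t$ we apply Lorentz--H\"older with $\frac{d+2}{2d}=\frac12+\frac1d$, followed by Lemma \ref{L:Bernstein}:
\[
\|P_{Q'}v(t)\|_{L^{d,2}}\lesssim L^{\frac{d-2}{2}}\|P_{Q'}v(t)\|_{L^2}.
\]
Cauchy--Schwarz in $Q$ at fixed $t$, followed by Plancherel, then gives the bound
\[
L^{2-d}L^{\frac{d-2}2}\sup_t\Big(\sum_{Q'}\|P_{Q'}u(t)\|_{L^2}^2\Big)^{1/2}\Big(\sum_Q\|w\,\tau_yP_Qv\|_{L_{t,x}^2}^2\Big)^{1/2}.
\]
The supremum factor is at most $M(u)^{1/2}$, and the mass enters exactly here.

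For the remaining sum we split $w=\sum_{K\ge N}P_Kw$ dyadically. Each cube $Q$ has side $L\le 4M\ll N\le K$ and lies inside $B(0,8M)$, so Corollary \ref{bilinear} (cube version) yields
\[
\|P_Kw\,\tau_yP_Qv\|_{L^2_{t,x}}\lesssim \Big(\frac{L^{d-1}}{K}\Big)^{1/2}\|u\|_{U_\Delta^2}\|P_Qu\|_{U_\Delta^2}.
\]
Summing over $K\ge N$ gives the factor $(L^{d-1}/N)^{1/2}$. Lemma \ref{Xdeltanorm} then gives $\big(\sum_Q\|P_Qu\|_{U^2_\Delta}^2\big)^{1/2}\lesssim\|u\|_{U^2_\Delta}$. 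Collecting powers of $L$, namely $L^{2-d}\cdot L^{\frac{d-2}{2}}\cdot L^{\frac{d-1}{2}}=L^{1/2}$, the $L$-piece is bounded by $(L\,M(u)/N)^{1/2}\|u\|_{U^2_\Delta}^2$. Summing the geometric series over $L\le 4M$ yields the claimed $(M\,M(u)/N)^{1/2}$.

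The step I expect to be the main obstacle is the bookkeeping when $P_L$ acts on the product $v\bar v$ and not separately on each factor. The output-frequency localization has to be converted into the restriction $Q\sim Q'$ while keeping the kernel $K_L$ outside the bilinear estimate through Minkowski's inequality. This is what allows the square-function Lemma \ref{Xdeltanorm} and the fixed-time Cauchy--Schwarz to separate the $U^2_\Delta$ and $L^\infty L^2$ factors. Two further points need checking: that the cube decomposition is boundedly overlapping for each fixed $L$, and that the constant in Corollary \ref{bilinear} is uniform over cubes $Q\subset B(0,8M)$ and over $y$.
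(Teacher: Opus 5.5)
Your proposal is correct and follows the paper's proof essentially step for step. Both arguments decompose $|\nabla|^{2-d}$ at output scales $2^k\le 4M$ and tile frequency space by cubes of side $2^k$ with only boundedly many interacting neighbors. Both use Minkowski in the kernel variable and Hölder, then Bernstein/Plancherel for the factor $2^{k(d-2)/2}M(u)^{1/2}$, and the cube bilinear Strichartz estimate with Lemma \ref{Xdeltanorm} for the factor $2^{k(d-1)/2}N^{-1/2}$. Summing $2^{k/2}$ gives $(M/N)^{1/2}$. The only differences are cosmetic: you reduce to $\xi_0=0$ by a Galilean transformation, whereas the paper works at $\xi_0$ directly. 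You also use Lorentz--Hölder with Bernstein in $L^{d,2}$, whereas the paper uses the embedding $L^{\frac{2d}{d+2}}\hookrightarrow L^{\frac{2d}{d+2},2}$ with a plain $L^d$ bound.
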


For the moment, let us take this lemma for granted and proceed to finish
the proof. On each complete $J_l$ with $C(\eta)N(J_l)\ll2^j$, take
$\xi_0,M,N$ as above. The reproducing identity, Cauchy--Schwarz and the
kernel bound give
\[
 |P_{\xi(t),\leq C(\eta)N(t)}u(t,x)|^2
 \lesssim_u\int_{\R^d}M^d\langle My\rangle^{-d-1}
              |\tau_yP_{\xi_0,\leq M}u(t,x)|^2\,dy.
\]
Since $V\geq0$, we may insert this bound into the convolution with $V$.
Using the high-frequency kernel bound and Minkowski's inequality, followed
by the translated form of Lemma \ref{bilinear2} proved below, we obtain
\begin{align*}
 &\big\|(P_{\xi(t),\geq2^{j-4}}u)
          (V*|P_{\xi(t),\leq C(\eta)N(t)}u|^2)\big\|_
             {L_t^2L_x^{\frac{2d}{d+2},2}(J_l\times\R^d)}\\
 &\lesssim_u\sup_{y,z\in\R^d}
       \big\|(\tau_zP_{\xi_0,\geq N}u)
          (V*|\tau_yP_{\xi_0,\leq M}u|^2)\big\|_
             {L_t^2L_x^{\frac{2d}{d+2},2}(J_l\times\R^d)}\\
 &\lesssim_u\left(\frac MN\right)^{1/2}M(u)^{1/2}
                      \|u\|_{U_\Delta^2(J_l\times\R^d)}^2
 \lesssim_u\left(\frac{C(\eta)N(J_l)}{2^j}\right)^{1/2}.
\end{align*}
If $C(\eta)N(J_l)\gtrsim 2^j$, then similar to \eqref{TRIVIAL1}, we can prove that
\begin{align}\notag
 &\big\|(P_{\xi(\tau),\geq 2^{j-4}}u) (|x|^{-2}\ast|P_{\xi(\tau),\leq C(\eta)N(\tau)}u|^2)\big\|_{L_{\tau}^2L_x^{\frac{2d}{d+2},2}(J_l\times\R^d)}\\
 \lesssim&  1\lesssim\big(\tfrac{C(\eta)N(J_l)}{2^j}\big)^{1/2}.
\end{align}
Therefore,
\begin{align}\label{estq3}
&\big\|(P_{\xi(\tau),\geq 2^{j-4}}u) (|x|^{-2}\ast|P_{\xi(\tau),\leq C(\eta)N(\tau)}u|^2)\big\|_{L_{\tau}^2L_x^{\frac{2d}{d+2},2}([a,b]\times\R^d)}\notag\\
\lesssim&\Big(\sum_{J_l\subset[a,b]}\frac{C(\eta)N(J_l)}{2^j}\Big)^{1/2}\lesssim C(\eta)2^{(k_0-j)/2}.
\end{align}
The possibly incomplete final small interval is estimated directly by the local Strichartz and weak Young inequalities. Its contribution to each squared spacetime estimate is $O(1)$, and $2^{j-k_0}O(1)\lesssim1$ for $j\leq k_0$. All sums involving $N(J_l)$ above are taken over complete intervals; their sum is bounded by a constant times $\int_a^bN(t)^3dt$. In view of \eqref{estq0}, \eqref{estq2} and \eqref{estq3}, choosing $\eta$ sufficiently small closes the argument.
\end{proof}
Now we prove Lemma \ref{bilinear2}.

\begin{proof}[Proof of Lemma \ref{bilinear2}]
We make the homogeneous Littlewood--Paley decomposition
\[
 |\nabla|^{2-d}=\sum_{k\in\mathbb Z}\dot P_{2^k}|\nabla|^{2-d}
              =\sum_{k\in\mathbb Z}|\nabla|_k^{2-d}.
\]
The kernel $\mathcal K_k$ of $|\nabla|_k^{2-d}$ is a Schwartz function.
By homogeneity and integration by parts, we have
\begin{equation}\label{decayk}
 |\mathcal K_k(y)|\lesssim_d2^{2k}\langle2^ky\rangle^{-d-1},\qquad
 \|\mathcal K_k\|_{L^1}\lesssim_d2^{(2-d)k}.
\end{equation}
Since $|P_{\xi_0,\leq M}u|^2$ has Fourier support in $\{|\xi|\leq4M\}$,
it suffices to sum over $2^k\leq4M$.
For each $k\in\mathbb Z$, let $\{Q_a^k\}_{a\in\mathbb Z^d}$ be the
partition of $\mathbb R^d$ into cubes of side length $2^k$. Then
\begin{equation}\label{tri1}
 |P_{\xi_0,\leq M}u|^2
 =\sum_{a,a'\in\mathbb Z^d}
 (P_{\xi_0,\leq M}P_{Q_a^k}u)
 \overline{P_{\xi_0,\leq M}P_{Q_{a'}^k}u}.
\end{equation}
The Fourier support of a product on the right-hand side is contained
in $Q_a^k-Q_{a'}^k$. Consequently,
\[
 |\nabla|_k^{2-d}\big((P_{\xi_0,\leq M}P_{Q_a^k}u)
               \overline{P_{\xi_0,\leq M}P_{Q_{a'}^k}u}\big)\neq0
 \quad\Longrightarrow\quad
 \operatorname{dist}(Q_a^k,Q_{a'}^k)\leq2^{k+10}.
\]
For each $a$, the number of such $a'$ is bounded by a constant depending
only on $d$. We keep the cube summation inside the norm and use the
embedding $L^{\frac{2d}{d+2}}\hookrightarrow L^{\frac{2d}{d+2},2}$,
Minkowski's inequality, spatial H\"older's inequality, and
Cauchy--Schwarz in $a,a'$ to obtain
\begin{align*}
 &\big\|(P_{\xi_0,\geq N}u)|\nabla|_k^{2-d}
                         (|P_{\xi_0,\leq M}u|^2)\big\|_
                         {L_t^2L_x^{\frac{2d}{d+2},2}(J\times\R^d)}\\
 &\lesssim_d\int_{\R^d}|\mathcal K_k(y)|
 \Bigg\|\sum_{\substack{a,a'\in\mathbb Z^d\\
                 \operatorname{dist}(Q_a^k,Q_{a'}^k)\leq2^{k+10}}}
 \big\|(P_{\xi_0,\geq N}u(t))
       (P_{\xi_0,\leq M}P_{Q_a^k}\tau_yu(t))\big\|_{L_x^2}\\
 &\hspace{28ex}\times
       \|P_{\xi_0,\leq M}P_{Q_{a'}^k}\tau_yu(t)\|_{L_x^d}
 \Bigg\|_{L_t^2(J)}\,dy\\
 &\lesssim_d\int_{\R^d}|\mathcal K_k(y)|
 \Bigg\|\left(\sum_a
       \big\|(P_{\xi_0,\geq N}u(t))
       (P_{\xi_0,\leq M}P_{Q_a^k}\tau_yu(t))\big\|_{L_x^2}^{2}
       \right)^{1/2}\\
 &\hspace{20ex}\times
       \left(\sum_{a'}
       \|P_{\xi_0,\leq M}P_{Q_{a'}^k}\tau_yu(t)\|_{L_x^d}^{2}
       \right)^{1/2}\Bigg\|_{L_t^2(J)}\,dy.
\end{align*}
By Bernstein's inequality, Plancherel's theorem and conservation of mass,
\begin{equation}\label{bi0}
 \sup_{t\in J}\left(\sum_a
       \|P_{\xi_0,\leq M}P_{Q_a^k}\tau_yu(t)\|_{L_x^d}^{2}
       \right)^{1/2}
 \lesssim_d2^{k(d-2)/2}M(u)^{1/2}.
\end{equation}
To estimate the other factor, we decompose $P_{\xi_0,\geq N}u$ into
annuli of radii $2^lN$, $l\geq0$, and apply Corollary \ref{bilinear}.
Only cubes meeting $B(\xi_0,2M)$ occur. Taking the separation in $M\ll N$
sufficiently large depending on $d$, their doubles lie in
$B(\xi_0,N/4)$. The factors $2^{-l/2}$ are summable. Hence, by
Lemma \ref{Xdeltanorm} and the translation invariance of $U_\Delta^2$,
\begin{align}\label{bi1}
 &\left(\sum_a\big\|(P_{\xi_0,\geq N}u)
       (P_{\xi_0,\leq M}P_{Q_a^k}\tau_yu)\big\|_{L_{t,x}^2(J\times\R^d)}^2
       \right)^{1/2}\notag\\
 &\lesssim_d\frac{2^{k(d-1)/2}}{N^{1/2}}
       \|u\|_{U_\Delta^2(J\times\R^d)}
       \left(\sum_a\|P_{\xi_0,\leq M}P_{Q_a^k}u\|_
                            {U_\Delta^2(J\times\R^d)}^2\right)^{1/2}\notag\\
 &\lesssim_d\frac{2^{k(d-1)/2}}{N^{1/2}}
                  \|u\|_{U_\Delta^2(J\times\R^d)}^2.
\end{align}
Both bounds are uniform in $y$. Using \eqref{decayk}, \eqref{bi0}
and \eqref{bi1}, we conclude that
\begin{align*}
 &\big\|(P_{\xi_0,\geq N}u)|\nabla|^{2-d}
                (|P_{\xi_0,\leq M}u|^2)\big\|_
                {L_t^2L_x^{\frac{2d}{d+2},2}(J\times\R^d)}\\
 &\lesssim_d\sum_{2^k\leq4M}
       2^{(2-d)k}2^{k(d-2)/2}\frac{2^{k(d-1)/2}}{N^{1/2}}
       M(u)^{1/2}\|u\|_{U_\Delta^2(J\times\R^d)}^2\\
 &\lesssim_d\left(\frac{MM(u)}N\right)^{1/2}
                  \|u\|_{U_\Delta^2(J\times\R^d)}^2,
\end{align*}
as desired.
The estimates \eqref{bi0} and \eqref{bi1} are unchanged if the high-frequency
factor and the low-frequency factor inside the square are translated
independently. Indeed, spatial translations commute with all the projections and
preserve the $L_x^2$ and $U_\Delta^2$ norms. Thus the same argument gives
\[
 \sup_{y,z\in\R^d}
 \big\|(\tau_zP_{\xi_0,\geq N}u)
       |\nabla|^{2-d}(|\tau_yP_{\xi_0,\leq M}u|^2)\big\|_
             {L_t^2L_x^{\frac{2d}{d+2},2}(J\times\R^d)}
 \lesssim_d\left(\frac{MM(u)}N\right)^{1/2}
                  \|u\|_{U_\Delta^2(J\times\R^d)}^2.
\]
\end{proof}

Now let $u$ be a solution as in Theorem \ref{reduction}, and let $[a,b]$ be a compact interval satisfying
\[
 \int_a^bN(t)^3\,dt=K>0.
\]
Choose a nonnegative integer $k_0$ such that
\begin{align*}
 &\max\left\{1,\int_a^b\int_{\R^d}|u(t,x)|^{\frac{2(d+2)}d}\,dx\,dt\right\}
 \leq 2^{k_0}\\
 &\hspace{3ex}<2\max\left\{1,\int_a^b\int_{\R^d}|u(t,x)|^{\frac{2(d+2)}d}\,dx\,dt\right\},
\end{align*}
and let
\[
 \lambda=\frac{\eta_{\ast}2^{k_0}}K,
 \qquad v(t,x)=\lambda^{d/2}u(\lambda^2t,\lambda x).
\]
Then $v$ is an almost-periodic solution with frequency scale function
$N_v(t)=\lambda N(\lambda^2t)$ and frequency center function
$\xi_v(t)=\lambda\xi(\lambda^2t)$. By a change of variables, we have
\begin{gather*}
 \int_{a/\lambda^2}^{b/\lambda^2}\int_{\R^d}|v(t,x)|^{\frac{2(d+2)}d}\,dx\,dt
 =\int_a^b\int_{\R^d}|u(t,x)|^{\frac{2(d+2)}d}\,dx\,dt\leq2^{k_0},\\
 \int_{a/\lambda^2}^{b/\lambda^2}N_v(t)^3\,dt
 =\lambda K=\eta_{\ast}2^{k_0}.
\end{gather*}
Moreover, $v$ satisfies \eqref{condi1}. Therefore, by Theorem \ref{longtime},
\[
 \sup_{0\leq j\leq k_0}2^{(j-k_0)/2}
 \|P_{\xi_v(t),\geq2^j}v\|_{L_t^2L_x^{\frac{2d}{d-2},2}
 ([a/\lambda^2,b/\lambda^2]\times\R^d)}\lesssim1.
\]
Since
\begin{align*}
 &\|P_{\xi_v(t),\geq2^j}v\|_{L_t^2L_x^{\frac{2d}{d-2},2}
 ([a/\lambda^2,b/\lambda^2]\times\R^d)}\\
 &=\|P_{\xi(t),\geq2^j/\lambda}u\|_{L_t^2L_x^{\frac{2d}{d-2},2}([a,b]\times\R^d)}
\end{align*}
and $2^{k_0}/\lambda=\eta_{\ast}^{-1}K$, we obtain the desired bound for
$2^{-k_0}\eta_{\ast}^{-1}K\leq2^j\leq\eta_{\ast}^{-1}K$ by choosing adjacent dyadic cutoffs.
For $2^j<2^{-k_0}\eta_{\ast}^{-1}K$, the local Strichartz estimate on at most
$2^{k_0}$ small intervals gives
\[
 \|P_{\xi(t),\geq2^j}u\|_{L_t^2L_x^{\frac{2d}{d-2},2}([a,b]\times\R^d)}
 \lesssim2^{k_0/2}\leq(2^{-j}\eta_{\ast}^{-1}K)^{1/2}.
\]
Combining these estimates and relabeling the dyadic index, we conclude that
\begin{equation}\label{longtime02}
 \sup_{\substack{j\in\mathbb Z\\2^j\leq\eta_{\ast}^{-1}K}}
 2^{j/2}\eta_{\ast}^{1/2}K^{-1/2}
 \|P_{\xi(t),\geq2^j}u\|_{L_t^2L_x^{\frac{2d}{d-2},2}([a,b]\times\R^d)}
 \lesssim1.
\end{equation}

With the above  estimate in hand,  similar to the case of the mass-critical nonlinear Schr\"odinger equation (cf. \cite{Dodson-JAMS,Dodson-Duke,Dodson-AJM} ), we can derive the following commute operator estimate, which will be used to preclude the existence of quasi-solitary solutions. 
\begin{proposition}[Frequency truncation error estimate]\label{reminder}
    Let $a:[0, \infty) \times \R^d \rightarrow \R^d$ be a map which is odd in the spatial variable (i.e. $a(t, x)=-a(t,-x)$ ) and for which there exists a constant $C_0(a)>0$ such that
    \begin{equation}\label{amapb}
        \max \left\{\|a\|_{L_{t, x}^{\infty}\left([0, \infty) \times \R^d\right)}, \sup _{x \in \R^d}|x|\|\nabla a(x)\|_{L_t^{\infty}([0, \infty))}\right\} \leq C_0(a)
    \end{equation}
Let $u$ be the solution as in Theorem \ref{reduction} satisfying $\int_0^{\infty} N(t)^3dt=\infty$. Then  there exist  constants $\eta_0=\eta_0(u)\ll1$ and $K_0=K_0(u)\gg1$ such that if $\eta_{\ast}\leq \eta_0$ and $\int_0^T N(t)^3dt=K\geq K_0$, then
\begin{align*}
& \left| 2\int_0^{T} \int_{\mathbb{R}^{2d}} a(t, x-y) \cdot \operatorname{Im}\{\bar{w} \mathcal{N}\}(t, y) \operatorname{Im}\left\{\bar{w}\nabla w\right\}(t, x)  dx  dy  dt \right. \\
& \quad +\int_0^{T} \int_{\mathbb{R}^{2d}} a(t, x-y) \cdot|w(t, y)|^2 \operatorname{Re}\left\{\bar{\mathcal{N}}\nabla w\right\}(t, x) \, dx \, dy dt \\
& \quad \left. - \int_0^{T } \int_{\mathbb{R}^{2d}} a(t, x-y) \cdot |w(t, y)|^2 \operatorname{Re}\left\{\bar{w}\nabla \mathcal{N}\right\}(t, x)  dx  dy  dt \right| \\
=& \left| 2\int_0^{T} \int_{\mathbb{R}^{2d}} a(t, x-y) \cdot \operatorname{Im}\{\bar{w} \mathcal{N}\}(t, y) \operatorname{Im}\left\{\bar{w}\left(\nabla-i \xi(t)\right) w\right\}(t, x)  dx  dy  dt \right. \\
& \quad +\int_0^{T} \int_{\mathbb{R}^{2d}} a(t, x-y) \cdot|w(t, y)|^2 \operatorname{Re}\left\{\bar{\mathcal{N}}\left(\nabla-i \xi(t)\right) w\right\}(t, x) \, dx \, dy dt \\
& \quad \left. - \int_0^{T } \int_{\mathbb{R}^{2d}} a(t, x-y) \cdot |w(t, y)|^2 \operatorname{Re}\left\{\bar w\left(\nabla-i \xi(t)\right) \mathcal{N}\right\}(t, x)  dx  dy  dt \right| \\
 :=&\Big|\Gamma_1+\Gamma_2+\Gamma_3\Big|\lesssim_{u,\eta_{\ast}} C_0(a) o(K),
\end{align*}
where $\frac{o(K)}{K}\to0$ $K\to\infty$, where $w:=P_{\leq \eta_{\ast}^{-1}K}u$ and  $\mathcal{N}:=P_{\leq \eta_{\ast}^{-1}K}F(u)-F(P_{\leq \eta_{\ast}^{-1}K}u)$.
\end{proposition}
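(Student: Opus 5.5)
The first equality in the statement is algebraic; the bound $o(K)$ is the real content. For the equality I would note that the three extra terms created by replacing $\nabla$ with $\nabla-i\xi(t)$ are
\[
2\int\!\!\int a\cdot\xi(t)\,\mathrm{Im}\{\bar w\mathcal N\}(y)|w(x)|^2,\quad
\int\!\!\int a\cdot\xi(t)|w(y)|^2\,\mathrm{Im}\{\bar{\mathcal N}w\}(x),\quad
-\int\!\!\int a\cdot\xi(t)|w(y)|^2\,\mathrm{Im}\{\bar w\mathcal N\}(x).
\]
Since $\mathrm{Im}\{\bar{\mathcal N}w\}=-\mathrm{Im}\{\bar w\mathcal N\}$, the last two combine to $-2\int\!\!\int a(x-y)\cdot\xi|w(y)|^2\,\mathrm{Im}\{\bar w\mathcal N\}(x)$. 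Swapping $x\leftrightarrow y$ and using that $a$ is odd, this is exactly minus the first term, so everything cancels. By Galilean invariance, it then suffices to work with the centered frequency variables, in which $(\nabla-i\xi(t))w$ has size $\lesssim|\xi-\xi(t)|$ on the Fourier side.

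For the estimate, I would decompose $\mathcal N$ by writing $u=w+P_{>\eta_*^{-1}K}u$ with $w=P_{\leq\eta_*^{-1}K}u$. This gives
\[
\mathcal N=P_{\leq}F(u)-F(w)=-P_{>}F(w)+P_{\leq}\big[F(u)-F(w)\big],
\]
where $F(u)-F(w)$ is a sum of terms in which at least one of the three factors (inside the convolution or outside) is $u_{hi}:=P_{>\eta_*^{-1}K}u$. Each piece is treated as follows.

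\emph{Terms with a $u_{hi}$ factor.} I would use \eqref{longtime02} at the top scale $2^j\sim\eta_*^{-1}K$, which gives $\|u_{hi}\|_{L_t^2L_x^{\frac{2d}{d-2},2}}\lesssim 1$ with a small prefactor. Combined with the weak Young inequality (Lemma \ref{lem:weakYoung}) for $|x|^{-2}*$, this gives $\|\mathcal N\|_{L_t^2L_x^{\frac{2d}{d+2},2}}\lesssim_u 1$. For the partner factor, $\Gamma_1$ pairs $\mathrm{Im}\{\bar w\mathcal N\}(y)$ in $L^1_y$ with $\|a\|_\infty\,\|w\|_{L^2}\|(\nabla-i\xi)w\|_{L^2}$ in $x$, and $\Gamma_2,\Gamma_3$ are handled similarly with the mass of $w$ in $y$. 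The derivative is controlled by $\|(\nabla-i\xi(t))w\|_{L_t^2L_x^{\frac{2d}{d-2},2}}$: summing \eqref{longtime02} dyadically over $2^j\le\eta_*^{-1}K$ bounds this by $\sum_j 2^j(2^{-j}\eta_*^{-1}K)^{1/2}\lesssim\eta_*^{-1}K$. Taken alone, this is $O(K)$, not $o(K)$.

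\emph{Terms $P_>F(w)$ (commutator part).} Here $P_>F(w)$ is nonzero only if at least one factor of $w$ has centered frequency $\gtrsim\eta_*^{-1}K$ (after accounting for $\xi(t)$). Schematically, $\mathcal N$ is always a product involving a high-frequency piece near the cutoff and low-frequency pieces.

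\emph{Gaining $o(K)$.} I would split each factor of $w$ at a frequency $\epsilon\eta_*^{-1}K$. Interactions among pieces at frequency $\sim\eta_*^{-1}K$ are estimated by the full long-time Strichartz bound. The decisive gain comes from the almost-periodicity: $\|P_{\xi(t),\ge R}u\|_{L^\infty L^2}\to0$ uniformly on the set where $N(t)\le\epsilon R$. Near the cutoff, I would use the bilinear estimates, namely Lemma \ref{bilinear2} and Corollary \ref{bilinear}, on small intervals $J_l$ to gain $(N(J_l)/(\eta_*^{-1}K))^{1/2}$. Summing over $J_l$ and using $\sum_l N(J_l)\sim K$, these pieces are $O(\eta_*^{1/2}\cdot)$ and smaller. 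Choosing $\epsilon,\eta_0$ and then letting $K\to\infty$ produces $o(K)$.

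\emph{Main obstacle.} The hardest piece is the nonlocal high–low interaction $u_{hi}\,(|x|^{-2}*|w_{lo}|^2)$ paired against $(\nabla-i\xi)w$ near the cutoff. There the derivative costs a full factor $\eta_*^{-1}K$. I would handle it with the double frequency decomposition of Lemma \ref{bilinear2}, together with dyadic summation in which the derivative's frequency $2^j$ is matched against the $2^{-j/2}$ decay from both long-time Strichartz factors. I expect this to yield $\sum_j (2^j\eta_*K^{-1})^{\delta}\cdot K\cdot(\text{small})$, and hence $o(K)$.
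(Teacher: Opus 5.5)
The first equality you derive is true, but your signs are off. Replacing $\nabla$ by $\nabla-i\xi(t)$ produces, from the second and third integrals, the extra terms $-\int\!\!\int a\cdot\xi(t)|w(y)|^2\mathrm{Im}\{\bar{\mathcal N}w\}(x)$ and $+\int\!\!\int a\cdot\xi(t)|w(y)|^2\mathrm{Im}\{\bar w\mathcal N\}(x)$; you have both signs reversed. These sum to $+2\int\!\!\int a(x-y)\cdot\xi(t)|w(y)|^2\mathrm{Im}\{\bar w\mathcal N\}(x)$, and only then does the swap $x\leftrightarrow y$ with oddness of $a$ cancel the first extra term. With your signs the swap would double it.

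\textbf{The main gap: no mechanism for $o(K)$.} The bilinear gains on the $J_l$ that you invoke sum to a factor of order $\eta_*^{1/2}$ (up to powers of $C(\eta)$), which does not depend on $K$. The statement allows constants depending on $\eta_*$ and requires $o(K)/K\to0$ with $\eta_*$ fixed, so such a factor can never give $o(K)$. In any case the derivative at the cutoff costs $\eta_*^{-1}K$, so the resulting bound is of size $\eta_*^{-1/2}K$.

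Nor can the almost-periodicity smallness $\|P_{\xi(t),\geq C(\eta)N(t)}u\|_{L^\infty_tL^2_x}\leq\eta$ enter the endpoint norms you chose. Interpolating it with \eqref{longtime02} in $L^q_t$ gives a factor $\eta^{1-2/q}$, which is trivial at $q=2$. So your bound $\|(\nabla-i\xi(t))w\|_{L^2_tL^{\frac{2d}{d-2},2}_x}\lesssim\eta_*^{-1}K$ cannot be improved.

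In the paper, $o(K)$ comes from measuring $(\nabla-i\xi(t))w$ in $L^\infty_tL^2_x$, $L^4_tL^{\frac{2d}{d-1}}_x$ or $L^{\frac{2(d+2)}{d}}_{t,x}$. There, splitting at $C(\eta)N(t)$ gives bounds of the form $C(\eta)^{O(1)}(\eta_*^{-1}K)^{\theta}+\eta^{c}\eta_*^{-1}K$ with $\theta<1$, $c>0$ (see \eqref{kineticb}, \eqref{10q}, \eqref{zz12}). One divides by $K$, lets $K\to\infty$, then sends $\eta\to0$.

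\textbf{What this choice of norms requires.} All remaining factors must then be $O(1)$ in the dual, non-endpoint norms. For terms with exactly one high-frequency factor, this needs structure your proposal does not use.

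In $\Gamma_1$, your pairing ($\mathrm{Im}\{\bar w\mathcal N\}$ in $L^1_{t,y}$ against $\|(\nabla-i\xi(t))w\|_{L^\infty_tL^2_x}$) only works for $F_2+F_3+F_4$, which contain two factors controlled by \eqref{longtime2}. The one-high-factor part $F_1$ has no $K$-uniform $L^1_{t,x}$ bound from the available estimates, because its low-frequency factor in $L^2_tL^{\frac{2d}{d-2}}_x([0,T])$ grows with $\int_0^TN(t)^2\,dt$. The paper uses that $F_1$ is Fourier supported in $|\xi|\gtrsim\eta_*^{-1}K$. It writes $F_1=\partial_k(\partial_k\Delta^{-1}F_1)$ and integrates by parts in $y$ onto $a$, using $|\nabla a(z)|\leq C_0(a)|z|^{-1}$, which gains $\eta_*K^{-1}$.

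In $\Gamma_3$ the derivative falls on $\mathcal N$, where almost periodicity gives no control at all. The paper integrates by parts in $x$ so the derivative lands on $a$ or on $w$.

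For $\Gamma_2$ (and the second piece of $\Gamma_3$), the bound $\|\mathcal N\|_{L^{4/3}_tL^{\frac{2d}{d+1}}_x}\lesssim1$ rests on two things. First, the commutator $[P_{\leq\eta_*^{-1}K},V*|u_l|^2]u_h$ puts a derivative on the low-frequency factors, which gains $\eta_*K^{-1}$ and makes the low-frequency dyadic sums converge. Second, the Fourier-support cancellations $F_0=\mathcal N_0=0$. Your splitting $\mathcal N=-P_>F(w)+P_\leq[F(u)-F(w)]$ separates the two halves of that commutator. Each half is bounded only in the endpoint norm $L^2_tL^{\frac{2d}{d+2},2}_x$, so the cancellation is lost.

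Lemma \ref{bilinear2} plays no role in the paper's proof of this proposition.
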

\begin{proof}
We estimate the terms $\Gamma_1,\Gamma_2,\Gamma_3$ individually.
Throughout this proof, choose $k_0$ such that
$2^{k_0}\leq\eta_{\ast}^{-1}K<2^{k_0+1}$.
By Lemma \ref{basicp} and \eqref{longtime02}, we have
\begin{equation}\label{longtime28}
 2^{(j-k_0)/2}\|P_{\xi(t),\geq2^j}u\|_{L_t^2L_x^{\frac{2d}{d-2}}}
 \lesssim
 2^{(j-k_0)/2}\|P_{\xi(t),\geq2^j}u\|_{L_t^2L_x^{\frac{2d}{d-2},2}}
 \lesssim1,\qquad j\leq k_0.
\end{equation}
Since $\xi(0)=0$, \eqref{condi1} implies
$\sup_{[0,T]}|\xi(t)|\leq\eta_1^{-1}K$.
Taking $\eta_{\ast}$ sufficiently small, we may assume that
$\eta_1^{-1}K\leq2^{k_0-12}$. Hence, by the reproducing formula for smooth Fourier multipliers, we obtain
\begin{equation}\label{longtime29}
 \|P_{\geq2^j}u\|_{L_t^2L_x^{\frac{2d}{d-2},2}}
 \lesssim\|P_{\xi(t),\geq2^{j-2}}u\|_{L_t^2L_x^{\frac{2d}{d-2},2}},
 \qquad k_0-8\leq j\leq k_0.
\end{equation}
Therefore,
\begin{equation}\label{longtime2}
 \|P_{\geq2^j}u\|_{L_t^2L_x^{\frac{2d}{d-2}}}
 \lesssim\|P_{\geq2^j}u\|_{L_t^2L_x^{\frac{2d}{d-2},2}}
 \lesssim1,\qquad k_0-8\leq j\leq k_0.
\end{equation}
From now on, we fix $\eta_{\ast}$ and $K$ as above. Interpolating
\eqref{longtime28} with conservation of mass, we obtain, for every admissible
pair $(q,r)$,
\[
 \|P_{\xi(t),2^j}u\|_{L_t^qL_x^{r,2}}
 \lesssim_u (2^{-j}\eta_{\ast}^{-1}K)^{1/q},
 \qquad 2^j\leq8\eta_{\ast}^{-1}K.
\]
For $j>k_0$, the same bound follows from the high-frequency estimate at
$2^{k_0}$ and the boundedness of the smooth multipliers; only a fixed number
of additional dyadic scales occurs.

\textbf{Contribution of $\Gamma_1$:}
We first perform the near-far frequency decomposition
\[
u=u_{l}+u_{h}, \qquad u_{l} := P_{\leq\frac{\eta_{\ast}^{-1}K}{32}}u,
\]
into the expression $\operatorname{Im}\{\bar{w}\mathcal{N}\}$, and algebraically expand, grouping terms based on the number of factors $u_{l}$ and $u_{h}$ to obtain the decomposition
\[
\operatorname{Im}\{\bar{w}\mathcal{N}\} = F_0 + F_1 + F_2 + F_3 + F_4, 
\]
where $F_j$ consists of $4 - j$ factors $u_{l}$ and $j$ factors $u_{h}$, for $j = 0, \ldots, 4$. As reader may verify, the precise formulae for $F_j$ are given below
\begin{align*}
F_0 :=& \operatorname{Im}\left\{ \overline{u_l} \, P_{\leq \eta_{\ast}^{-1}K}\Bigl(|\nabla|^{2-d}(|u_l|^2) u_l\Bigr) - |u_l|^2 \, |\nabla|^{2-d}(|u_l|^2) \right\}, \\
F_1 :=& {\rm Im}\Big\{ \overline{u_l} \, P_{\leq \eta_{\ast}^{-1}K}\Bigl(|\nabla|^{2-d}(|u_l|^2) u_h\Bigr) 
+ 2\overline{u_l} \, P_{\leq \eta_{\ast}^{-1}K}\Bigl(|\nabla|^{2-d}\bigl(\operatorname{Re}\{u_l \overline{u_h}\}\bigr) u_l\Bigr)\\ 
&\qquad+ \bigl(\overline{P_{\leq \eta_{\ast}^{-1}K} u_h}\bigr) P_{\leq \eta_{\ast}^{-1}K}\Bigl(|\nabla|^{2-d}(|u_l|^2) u_l\Bigr) \Big\} \\
& - \operatorname{Im}\Big\{ \overline{u_l} \, |\nabla|^{2-d}(|u_l|^2) \bigl(P_{\leq \eta_{\ast}^{-1}K} u_h\bigr) 
+ 2\overline{u_l} \, |\nabla|^{2-d}\Bigl(\operatorname{Re}\{u_l \bigl(\overline{P_{\leq \eta_{\ast}^{-1}K} u_h}\bigr)\}\Bigr) u_l\\ 
&\qquad+ \bigl(\overline{P_{\leq \eta_{\ast}^{-1}K} u_h}\bigr) |\nabla|^{2-d}(|u_l|^2) u_l \Big\}, \\
F_2 :=& 2\operatorname{Im}\left\{ \bigl(\overline{P_{\leq \eta_{\ast}^{-1}K} u_l}\bigr) P_{\leq \eta_{\ast}^{-1}K}\Bigl(|\nabla|^{2-d}\bigl(\operatorname{Re}\{u_l \bar{u}_h\}\bigr) u_h\Bigr) \right\} \\
&+ 2\operatorname{Im}\left\{ \bigl(\overline{P_{\leq \eta_{\ast}^{-1}K} u_h}\bigr) P_{\leq \eta_{\ast}^{-1}K}\Bigl(|\nabla|^{2-d}\bigl(\operatorname{Re}\{u_l \bar{u}_h\}\bigr) u_l\Bigr) \right\} \\
& + \operatorname{Im}\Big\{ \bigl(\overline{P_{\leq \eta_{\ast}^{-1}K} u_l}\bigr) P_{\leq \eta_{\ast}^{-1}K}\Bigl(|\nabla|^{2-d}(|u_h|^2) u_l\Bigr) \\
&\qquad+ \bigl(\overline{P_{\leq \eta_{\ast}^{-1}K} u_h}\bigr) P_{\leq \eta_{\ast}^{-1}K}\Bigl(|\nabla|^{2-d}(|u_l|^2) u_h\Bigr) \Big\} \\
& - 2\operatorname{Im}\left\{ \overline{u_l} \, |\nabla|^{2-d}\Bigl(\operatorname{Re}\{u_l \bigl(\overline{P_{\leq \eta_{\ast}^{-1}K} u_h}\bigr)\}\Bigr) \bigl(P_{\leq \eta_{\ast}^{-1}K} u_h\bigr) \right\} \\
&- 2\operatorname{Im}\left\{ \bigl(\overline{P_{\leq \eta_{\ast}^{-1}K} u_h}\bigr) |\nabla|^{2-d}\Bigl(\operatorname{Re}\{u_l \bigl(\overline{P_{\leq \eta_{\ast}^{-1}K} u_h}\bigr)\}\Bigr) u_l \right\} \\
& - \operatorname{Im}\left\{ |u_l|^2 \, |\nabla|^{2-d}\bigl(|P_{\leq \eta_{\ast}^{-1}K} u_h|^2\bigr) + |P_{\leq \eta_{\ast}^{-1}K} u_h|^2 \, |\nabla|^{2-d}(|u_l|^2) \right\}, \\
F_3 :=& \operatorname{Im}\left\{ \overline{u_l} \, P_{\leq \eta_{\ast}^{-1}K}\Bigl(|\nabla|^{2-d}(|u_h|^2) u_h\Bigr) 
+ \bigl(\overline{P_{\leq \eta_{\ast}^{-1}K} u_h}\bigr) P_{\leq \eta_{\ast}^{-1}K}\Bigl(|\nabla|^{2-d}(|u_h|^2) u_l\Bigr) \right\} \\
& + 2\operatorname{Im}\left\{ \bigl(\overline{P_{\leq \eta_{\ast}^{-1}K} u_h}\bigr) P_{\leq \eta_{\ast}^{-1}K}\Bigl(|\nabla|^{2-d}\bigl(\operatorname{Re}\{u_h \bar{u}_l\}\bigr) u_h\Bigr) \right\}, \\
F_4 :=& \operatorname{Im}\Big\{ \bigl(\overline{P_{\leq \eta_{\ast}^{-1}K} u_h}\bigr) P_{\leq \eta_{\ast}^{-1}K}\Bigl(|\nabla|^{2-d}(|u_h|^2) u_h\Bigr) \\
&\qquad- |P_{\leq \eta_{\ast}^{-1}K} u_h|^2 \, |\nabla|^{2-d}\bigl(|P_{\leq \eta_{\ast}^{-1}K} u_h|^2\bigr) \Big\}.
\end{align*}

We now consider the contributions of the $F_j$ to $\Gamma_1$ in groups. Hereafter, the mixed norm notation $L_t^p L_x^q$ implicitly denotes $L_t^p L_x^q([0, T] \times \mathbb{R}^d)$.

\begin{itemize}
\item Since the symbol of $P_{\leq \eta_{\ast}^{-1}K}$ is identically one on the Fourier support of $|\nabla|^{2-d}(|u_{l}|^2)u_{l}$, we see that $F_0 \equiv 0$. Hence, there is no contribution to $\Gamma_1$.

\item For the terms $F_2$, $F_3$, and $F_4$, thanks to H\"older's inequality, Bernstein's inequality, Lemma \ref{basicp}, the weak Young inequality, and the Hardy–Littlewood–Sobolev inequality, we place two $L_t^{2}L_x^{\frac{2d}{d-2},2}$ norms on $u_h$, while the remaining terms are estimated using the $L_t^{\infty}L_x^2$ norm, and then apply \eqref{longtime2} to obtain the bound
\[
\|F_2 + F_3 + F_4\|_{L_{t,x}^1} \lesssim \|u\|^2_{L_t^{\infty}L_x^2} \|u_h\|^2_{L_t^2L_x^{\frac{2d}{d-2},2}}\lesssim 1.
\]
\end{itemize}
Therefore, by the estimate \eqref{amapb} for the map $a$, Fubini-Tonelli, Cauchy-Schwarz, Plancherel's theorem, followed by conservation of mass, we obtain that
\[
\begin{aligned}
&\left| \int_0^{T} \int_{\mathbb{R}^{2d}} a(t, x - y) \cdot (F_2 + F_3 + F_4)(t, y) \operatorname{Im}\{\bar{w}(\nabla - i\xi(t))w\}(t, x) \,dx\,dy\,dt \right| \\
&\leq \|F_2 + F_3 + F_4\|_{L_{t,x}^1} \|a\|_{L_{t,x}^{\infty}} \|\bar{w}(\nabla - i\xi(t))w\|_{L_t^\infty L_x^1} \\
&\lesssim C_0(a)\|(\nabla - i\xi(t))w\|_{L_t^\infty L_x^2}.
\end{aligned}
\]

In order to show that the ultimate line can be made small, as measured by the parameter $\eta$, we first decompose
\[
w = P_{\xi(t), \leq C(\eta)N(t)} w + P_{\xi(t), \geq C(\eta)N(t)} w,
\]
where $C(\cdot)$ denotes the compactness modulus function for $u$. Since $N(t) \leq 1$, by the triangle inequality, the frequency localization property \eqref{localized}, Plancherel's theorem, and mass conservation, we obtain
\begin{align}\label{kineticb}
   \| (\nabla -i\xi(t))w\|_{L_{t}^{\infty}L_{x}^{2}} \notag
\le& \| (\nabla -i\xi(t))P_{\xi(t),\leq C(\eta)N(t)}w\|_{L_{t}^{\infty}L_{x}^{2}} \notag\\
&+\| (\nabla -i\xi(t))P_{\xi(t),\geq C(\eta)N(t)}w\|_{L_{t}^{\infty}L_{x}^{2}} \notag\\
\lesssim& C(\eta)\| N(t)\|_{L_{t}^{\infty}}M(u)^{1/2} + \eta \eta_{\ast}^{-1}KM(u)^{1/2} \notag\\
\lesssim& C(\eta) +\eta \eta_{\ast}^{-1}K. 
\end{align}
Thus we have shown
\begin{align*}
&\left|\int_{0}^{T}\int_{\mathbb{R}^{2d}}a(t,x - y)\cdot (F_{2} + F_{3} + F_{4})(t,y)\operatorname{Im}\{\bar{w} (\nabla -i\xi(t))w\} (t,x)\,dx\,dy\,dt\right| \\
\lesssim& C_0(a)[C(\eta) +\eta \eta_{\ast}^{-1}K].
\end{align*}

 Next, the contribution to $F_1$ with $u_h$ replaced by
$P_{\leq\eta_{\ast}^{-1}K/4}u_h$ vanishes, since all the outer projections
are the identity on the corresponding products. Thus only
$P_{\geq\eta_{\ast}^{-1}K/4}u_h=P_{\geq\eta_{\ast}^{-1}K/4}u$
contributes. We observe that $F_{1}$ has Fourier support in the region $\{\vert \xi \vert \geq \frac{\eta_{\ast}^{-1}K}{16}\}$. Hence the Fourier multiplier $\Delta^{-1}$ is well-defined on the Fourier support of $F_{1}$. Therefore we can integrate by parts in $y$ to obtain
 \begin{align*}
&\int_{0}^{T }\int_{\mathbb{R}^{2d}}\left(\frac{\Delta}{\Delta}F_{1}\right)(t,y)a(t,x - y)\cdot \operatorname{Im}\{\bar{w} (\nabla -i\xi(t))w\} (t,x)\,dx\,dy\,dt \\
=& \int_{0}^{T }\int_{\mathbb{R}^{2d}}\left(\frac{\partial_{k}}{\Delta}F_{1}\right)(t,y)(\partial_{k}a)(t,x - y)\cdot \operatorname{Im}\{\bar{w} (\nabla -i\xi(t))w\} (t,x)\,dx\,dy\,dt, 
 \end{align*}
where the right-hand side is written using Einstein summation. Next, by the $a$-map estimate \eqref{amapb}, Fubini-Tonelli, and H\"older's inequality, we have
\begin{align*}
&\left|\int_{0}^{T}\int_{\mathbb{R}^{2d}}\left(\frac{\partial_{k}}{\Delta}F_{1}\right)(t,y)(\partial_{k}a)(t,x - y)\cdot \operatorname{Im}\{\bar{w} (\nabla -i\xi(t))w\} (t,x)\,dx\,dy\,dt\right| \\
\lesssim C_0(a)& \| P_{\geq \frac{\eta_{\ast}^{-1}K}{32}}\frac{\nabla}{\Delta}F_{1}\|_{L_{t}^{\frac{2(d+2)}{d+4}}L_{x}^{\frac{d(d+2)}{d^2+2d-2}}}\| \tilde{V} *(|w|\,|(\nabla -i\xi(t))w|)\|_{L_{t}^{2(d+2)/d}L_{x}^{d(d+2)/2}},
\end{align*}
where $\tilde{V}(x) \coloneqq |x|^{-1}$. By the Hardy-Littlewood-Sobolev inequality followed by H\"older's inequality,
\begin{align*}
    \||\nabla |^{1-d}(|w|\,|(\nabla -i\xi(t))w|)\|_{L_{t}^{2(d+2)/d}L_{x}^{d(d+2)/2}}
&\lesssim \| w(\nabla -i\xi(t))w\|_{L_{t}^{2(d+2)/d}L_{x}^{\frac{d+2}{d+1}}} \\
&\lesssim \| w\|_{L_{t}^{\infty}L_{x}^{2}}\| (\nabla -i\xi(t))w\|_{L_{t}^{2(d+2)/d}L_{x}^{2(d+2)/d}} \\
&\lesssim \| (\nabla -i\xi(t))w\|_{L_{t}^{2(d+2)/d}L_{x}^{2(d+2)/d}},
\end{align*}
where we also use mass conservation to obtain the last inequality. Using the frequency localized property \eqref{localized}, interpolation estimate, Bernstein's inequality, \eqref{longtime29} and \eqref{longtime2}, we see that
\begin{align}
    &\| (\nabla -i\xi(t))w\|_{L_{t}^{2(d+2)/d}L_{x}^{2(d+2)/d}}\lesssim \sum_{2^j\leq 8\eta_{\ast}^{-1}K} 2^{j}\|P_{\xi(t),2^j}w\|_{L_{t}^{2(d+2)/d}L_{x}^{2(d+2)/d}}\notag\\
    \lesssim&\sum_{2^j\leq4C(\eta)} 2^{j}\|P_{\xi(t),2^j}P_{\xi(t),\leq C(\eta)N(t)}w\|_{L_{t}^{2(d+2)/d}L_{x}^{2(d+2)/d}}\\
    &+\sum_{2^j\leq 8\eta_{\ast}^{-1}K} 2^{j}\|P_{\xi(t),2^j}P_{\xi(t),\geq C(\eta)N(t)}w\|_{L_{t}^{2(d+2)/d}L_{x}^{2(d+2)/d}}\notag\\
    \lesssim& \sum_{2^j\leq4C(\eta)} 2^{j}2^{\frac{d(k_0-j)}{2(d+2)}}(2^{\frac{j-k_0}{2}}\|P_{\xi(t),2^j}u\|_{L_{t}^{2}L_{x}^{\frac{2d}{d-2}}})^{\frac{d}{d+2}}\\
    &+\sum_{2^j\leq 8\eta_{\ast}^{-1}K} 2^{j}2^{\frac{d(k_0-j)}{2(d+2)}}\eta^{\frac{2}{d+2}}(2^{\frac{j-k_0}{2}}\|P_{\xi(t),2^j}u\|_{L_{t}^{2}L_{x}^{\frac{2d}{d-2}}})^{\frac{d}{d+2}}\notag\\
    \lesssim&  [C(\eta)^{\frac{d+4}{2(d+2)}}(\eta_{\ast}^{-1}K)^{\frac{d}{2(d+2)}}+\eta^{\frac{2}{d+2}} \eta_{\ast}^{-1}K]\notag\\
    \lesssim& C(\eta)^{\frac{d+4}{2(d+2)}}(\eta_{\ast}^{-1}K)^{\frac{d}{2(d+2)}}+\eta^{\frac{2}{d+2}} \eta_{\ast}^{-1}K.\label{10q}
\end{align}
By Bernstein's inequality followed by unpacking the definition of $F_{1}$  and applying the triangle inequality,
\begin{align*}
   & \Big\| P_{\geq \frac{\eta_{\ast}^{-1}K}{32}}\frac{\nabla}{\Delta}F_{1}\Big\|_{L_{t}^{\frac{2(d+2)}{d+4}}L_{x}^{\frac{d(d+2)}{d^2+2d-2}}}\\
\lesssim& \eta_{\ast}K^{-1}\| F_{1}\|_{L_{t}^{\frac{2(d+2)}{d+4}}L_{x}^{\frac{d(d+2)}{d^2+2d-2}}} \\
\lesssim& \eta_{\ast}K^{-1}\| \overline{u_{l}} P_{\leq \eta_{\ast}^{-1}K}\big(|\nabla |^{2-d}(|u_{l}|^{2})(P_{\geq\frac{\eta_{\ast}^{-1}K}{4}}u)\big)\|_{L_{t}^{\frac{2(d+2)}{d+4}}L_{x}^{\frac{d(d+2)}{d^2+2d-2}}}\\
& +\eta_{\ast}K^{-1}\| \overline{u_{l}} P_{\leq \eta_{\ast}^{-1}K}\big(|\nabla |^{2-d}(\operatorname{Re}\{u_{l}\overline{P_{\geq\frac{\eta_{\ast}^{-1}K}{4}}u}\})u_{l}\big)\|_{L_{t}^{\frac{2(d+2)}{d+4}}L_{x}^{\frac{d(d+2)}{d^2+2d-2}}} \\
& +\eta_{\ast}K^{-1}\| \overline{u_{l}} |\nabla |^{2-d}(|u_{l}|^{2})(P_{\leq\eta_{\ast}^{-1}K}P_{\geq\eta_{\ast}^{-1}K/4}u)\|_{L_{t}^{\frac{2(d+2)}{d+4}}L_{x}^{\frac{d(d+2)}{d^2+2d-2}}} \\
\coloneqq& \mathrm{Term}_{1} + \mathrm{Term}_{2} + \mathrm{Term}_{3}.
\end{align*}

We observe that
\[
\mathrm{Term}_{1}\lesssim \eta_{\ast}K^{-1}\| P_{\geq \frac{\eta_{\ast}^{-1}K}{4}}u\|_{L_{t}^{2}L_{x}^{\frac{2d}{d-2}}}M(u)^{1/2}\| |\nabla |^{2-d}(|u_{l}|^{2})\|_{L_{t,x}^{d+2}}
\lesssim \eta_{\ast}K^{-1}\| |\nabla |^{2-d}(|u_{l}|^{2})\|_{L_{t,x}^{d+2}},
\]
where we use mass conservation and \eqref{longtime2} to obtain the last inequality. By the Hardy-Littlewood-Sobolev inequality,
\[
\| |\nabla |^{2-d}(|u_{l}|^{2})\|_{L_{t,x}^{d+2}}\lesssim \| u_{l}\|_{L_{t}^{2(d+2)}L_{x}^{\frac{2d(d+2)}{d^2+d-4}}}^{2}. 
\]
Since $\big(2(d+2),\frac{2d(d+2)}{d^2+2d-2}\big)$ is admissible,
interpolation, Bernstein's inequality and \eqref{longtime28} give
\begin{align}
 &\|u_l\|_{L_t^{2(d+2)}L_x^{\frac{2d(d+2)}{d^2+d-4}}}\notag\\
 &\lesssim_u\sum_{\substack{j\in\mathbb Z\\2^j\leq\eta_{\ast}^{-1}K/4}}
       2^{j/2}\|P_{\xi(t),2^j}u_l\|_
                   {L_t^{2(d+2)}L_x^{\frac{2d(d+2)}{d^2+2d-2}}}\notag\\
 &\lesssim_u(\eta_{\ast}^{-1}K)^{\frac1{2(d+2)}}
       \sum_{\substack{j\in\mathbb Z\\2^j\leq\eta_{\ast}^{-1}K/4}}
                  2^{\frac{d+1}{2(d+2)}j}
 \lesssim_u(\eta_{\ast}^{-1}K)^{1/2}.\label{zz0}
\end{align}
Here the summation range follows from the Fourier support of $u_l$ and
$\sup_{[0,T]}|\xi(t)|\leq2^{k_0-12}$.
Hence,
\[
\mathrm{Term}_{1}\lesssim 1,
\]
and by the same argument we also obtain
\[
\mathrm{Term}_{3}\lesssim 1. 
\]

For $\mathrm{Term}_2$, H\"older's inequality and the boundedness of the
smooth projections give
\begin{align*}
 \mathrm{Term}_2
 &\lesssim\eta_{\ast}K^{-1}
 \|u_l\|_{L_t^{2(d+2)}L_x^{\frac{2d(d+2)}{d^2+d-4}}}^{2}\\
 &\qquad\times
 \big\||\nabla|^{2-d}\operatorname{Re}
       (u_l\overline{P_{\geq\eta_{\ast}^{-1}K/4}u})\big\|_{L_t^2L_x^d}.
\end{align*}
By the Hardy--Littlewood--Sobolev inequality and \eqref{longtime2},
\begin{align*}
 &\big\||\nabla|^{2-d}\operatorname{Re}
       (u_l\overline{P_{\geq\eta_{\ast}^{-1}K/4}u})\big\|_{L_t^2L_x^d}\\
 &\lesssim_d\|u_l\|_{L_t^\infty L_x^2}
       \|P_{\geq\eta_{\ast}^{-1}K/4}u\|_
                         {L_t^2L_x^{\frac{2d}{d-2}}}
 \lesssim_u1.
\end{align*}
Together with \eqref{zz0}, this yields
\[
 \mathrm{Term}_2\lesssim_u
 (\eta_{\ast}^{-1}K)^{-1}(\eta_{\ast}^{-1}K)\lesssim_u1,
\]
and hence
\begin{align*}
    \left|\Gamma_1\right|\lesssim C_0(a)[C(\eta)^{\frac{d+4}{2(d+2)}}(\eta_{\ast}^{-1}K)^{\frac{d}{2(d+2)}}+\eta^{\frac{2}{d+2}} \eta_{\ast}^{-1}K].
\end{align*}

\vskip 0.1in

\textbf{Contribution of $\Gamma_2$:} By H\"older's inequality and
conservation of mass,
\begin{align*}
 |\Gamma_2|
 &\leq\|a\|_{L_{t,x}^\infty}\|w\|_{L_t^\infty L_x^2}^2
               \|\mathcal N(\nabla-i\xi(t))w\|_{L_{t,x}^1}\\
 &\lesssim_u C_0(a)\|\mathcal N\|_{L_t^{4/3}L_x^{\frac{2d}{d+1}}}
               \|(\nabla-i\xi(t))w\|_{L_t^4L_x^{\frac{2d}{d-1}}}.
\end{align*}
Mimicking the proof of \eqref{10q}, we can also show that
\begin{equation}\label{zz1}
    \| (\nabla -i\xi(t))w\|_{L_{t}^{\frac{2(d+1)}{d}}L_{x}^{\frac{2(d+1)}{d-1}}}\lesssim C(\eta)^{\frac{d+2}{2(d+1)}}\eta_{\ast}^{-\frac{d}{2(d+1)}}K^{\frac{d}{2(d+1)}}+\eta^{\frac{1}{d+1}}\eta_{\ast}^{-1}K
\end{equation}
and
\begin{equation}\label{zz12}
    \| (\nabla -i\xi(t))w\|_{L_{t}^{4}L_{x}^{\frac{2d}{d-1}}}\lesssim C(\eta)^{3/4}\eta_{\ast}^{-1/4}K^{1/4}+\eta^{1/2}\eta_{\ast}^{-1}K.
\end{equation}
Next, we decompose
\[
\mathcal{N} = \mathcal{N}_0 + \mathcal{N}_1 + \mathcal{N}_2 + \mathcal{N}_3,
\]
where $\mathcal{N}_j$ contains $j$ factors $u_{h}$ and $3 - j$ factors $u_{l}$. Specifically,
\begin{align*}
\mathcal{N}_{0} &:= P_{\leq \eta_{\ast}^{-1}K} (|\nabla|^{2-d}(|u_l|^2) u_l) - |\nabla|^{2-d}(|u_l|^2) u_l,\\
\mathcal{N}_{1} &:= [P_{\leq \eta_{\ast}^{-1}K}, |\nabla|^{2-d}(|u_l|^2)] u_h + 2P_{\leq \eta_{\ast}^{-1}K} (|\nabla|^{2-d}(\operatorname{Re}\{u_l \overline{u_h}\}) u_l) \\
&\qquad - 2|\nabla|^{2-d}(\operatorname{Re}\{u_l (\overline{P_{\leq \eta_{\ast}^{-1}K} u_h})\}) u_l,\\
\mathcal{N}_{2} &:= P_{\leq \eta_{\ast}^{-1}K} (|\nabla|^{2-d}(|u_h|^2) u_l) + 2P_{\leq \eta_{\ast}^{-1}K} (|\nabla|^{2-d}(\operatorname{Re}\{u_h \overline{u_l}\}) u_h) \\
&\qquad - |\nabla|^{2-d}(|P_{\leq \eta_{\ast}^{-1}K} u_h|^2) u_l - 2|\nabla|^{2-d}(\operatorname{Re}\{(P_{\leq \eta_{\ast}^{-1}K} u_h) \overline{u_l}\})(P_{\leq \eta_{\ast}^{-1}K} u_h),\\
\mathcal{N}_{3} &:= P_{\leq \eta_{\ast}^{-1}K} (|\nabla|^{2-d}(|u_h|^2) u_h) - |\nabla|^{2-d}(|P_{\leq \eta_{\ast}^{-1}K} u_h|^2)(P_{\leq \eta_{\ast}^{-1}K} u_h).
\end{align*}

Since the symbol of $P_{\leq \eta_{\ast}^{-1}K}$ is identically one on the Fourier support of $|\nabla|^{2-d}(|u_l|^2)u_l$, we see that $\mathcal{N}_0 \equiv 0$. Hence $\|\mathcal{N}_0\|_{L_t^{4/3}L_x^{\frac{2d}{d+1}}} = 0$.

By the triangle inequality,
\begin{align*}
&\|\mathcal{N}_{1}\|_{L_{t}^{4/3}L_{x}^{\frac{2d}{d+1}}}
\lesssim \| [P_{\leq \eta_{\ast}^{-1}K},|\nabla |^{2-d}(|u_{l}|^{2})] u_{h}\|_{L_{t}^{4/3}L_{x}^{\frac{2d}{d+1}}} \\
&+\| P_{\leq \eta_{\ast}^{-1}K}(|\nabla |^{2-d}(\operatorname{Re}\{u_{l}\overline{u_{h}}\}) u_{l}) - |\nabla |^{2-d}(\operatorname{Re}\{u_{l}(\overline{P_{\leq \eta_{\ast}^{-1}K}u_{h}})\}) u_{l}\|_{L_{t}^{4/3}L_{x}^{\frac{2d}{d+1}}} \\
&=: \mathrm{Term}_{1} + \mathrm{Term}_{2}. 
\end{align*}
By the fundamental theorem of calculus followed by Minkowski's and Hölder's inequalities,
\begin{align*}
   \mathrm{Term}_{1}
&\leq \eta_{\ast}K^{-1}\| |\nabla |^{2-d}(\nabla |u_{l}|^{2})\|_{L_{t}^{8/3}L_{x}^{4d/5}}\| u_{h}\|_{L_{t}^{8/3}L_{x}^{\frac{4d}{2d-3}}} \\
&\lesssim \eta_{\ast}K^{-1}\| \nabla |u_{l}|^{2}\|_{L_{t}^{8/3}L_{x}^{\frac{4d}{4d-3}}}\| u_{h}\|_{L_{t}^{8/3}L_{x}^{\frac{4d}{2d-3}}} \\
&\lesssim \eta_{\ast}K^{-1}\| u_{l}\|_{L_{t}^{\infty}L_{x}^{2}}\| (\nabla -i\xi(t))u_{l}\|_{L_{t}^{8/3}L_{x}^{\frac{4d}{2d-3}}}\| u_{h}\|_{L_{t}^{8/3}L_{x}^{\frac{4d}{2d-3}}} \\
&\lesssim 1, 
\end{align*}
where we used
$\nabla|u_l|^2=2\operatorname{Re}\{(\nabla-i\xi(t))u_l\,\overline{u_l}\}$.
Indeed, interpolation and Bernstein's inequality give
\[
 \|(\nabla-i\xi(t))u_l\|_{L_t^{8/3}L_x^{\frac{4d}{2d-3}}}
 \lesssim_u(\eta_{\ast}^{-1}K)^{3/8}
       \sum_{\substack{j\in\mathbb Z\\2^j\leq\eta_{\ast}^{-1}K/4}}2^{5j/8}
 \lesssim_u\eta_{\ast}^{-1}K,
\]
and $\|u_h\|_{L_t^{8/3}L_x^{\frac{4d}{2d-3}}}\lesssim_u1$ follows from
\eqref{longtime2} and conservation of mass.

For $\mathrm{Term}_2$, split
\[
 u_h=P_{\leq\eta_{\ast}^{-1}K/4}u_h
             +P_{\geq\eta_{\ast}^{-1}K/4}u_h.
\]
The first part has Fourier support in $|\xi|\leq\eta_{\ast}^{-1}K/2$.
Since $u_l$ has Fourier support in $|\xi|\leq\eta_{\ast}^{-1}K/16$,
the corresponding cubic products are supported in
$|\xi|\leq5\eta_{\ast}^{-1}K/8$. Both projections in their difference
are therefore the identity, so this contribution vanishes. Moreover,
$P_{\geq\eta_{\ast}^{-1}K/4}u_h=P_{\geq\eta_{\ast}^{-1}K/4}u$.
Thus, by H\"older's inequality and the boundedness of the smooth projections,
\begin{align*}
 \mathrm{Term}_2
 &\lesssim_u
 \big\||\nabla|^{2-d}\operatorname{Re}
       (u_l\overline{P_{\geq\eta_{\ast}^{-1}K/4}u})\big\|_
                                      {L_t^{4/3}L_x^{2d}}\\
 &\quad+
 \big\||\nabla|^{2-d}\operatorname{Re}
       (u_l\overline{P_{\leq\eta_{\ast}^{-1}K}
                          P_{\geq\eta_{\ast}^{-1}K/4}u})\big\|_
                                      {L_t^{4/3}L_x^{2d}}.
\end{align*}
The products inside $|\nabla|^{2-d}$ are supported in
$|\xi|\geq3\eta_{\ast}^{-1}K/16$. Consequently, Bernstein's inequality
and the Hardy--Littlewood--Sobolev inequality yield
\begin{align*}
 &\big\||\nabla|^{2-d}\operatorname{Re}
       (u_l\overline{P_{\geq\eta_{\ast}^{-1}K/4}u})\big\|_
                                      {L_t^{4/3}L_x^{2d}}\\
 &\lesssim_d(\eta_{\ast}^{-1}K)^{-1/2}
 \big\||\nabla|^{5/2-d}\operatorname{Re}
       (u_l\overline{P_{\geq\eta_{\ast}^{-1}K/4}u})\big\|_
                                      {L_t^{4/3}L_x^{2d}}\\
 &\lesssim_d(\eta_{\ast}^{-1}K)^{-1/2}
       \|u_l\|_{L_t^4L_x^{\frac{2d}{d-2}}}
       \|P_{\geq\eta_{\ast}^{-1}K/4}u\|_
                                      {L_t^2L_x^{\frac{2d}{d-2}}}.
\end{align*}
Here $d-5/2>0$ and
$\frac{d-2}{d}-\frac{d-5/2}{d}=\frac1{2d}$ for every $d\geq3$.
Using the admissible pair $(4,2d/(d-1))$, we obtain
\begin{align*}
 \|u_l\|_{L_t^4L_x^{\frac{2d}{d-2}}}
 &\lesssim_u(\eta_{\ast}^{-1}K)^{1/4}
       \sum_{\substack{j\in\mathbb Z\\2^j\leq\eta_{\ast}^{-1}K/4}}2^{j/4}
 \lesssim_u(\eta_{\ast}^{-1}K)^{1/2}.
\end{align*}
The other norm is bounded by \eqref{longtime2}. The term containing the
additional projection $P_{\leq\eta_{\ast}^{-1}K}$ is estimated in the same
way. Hence $\mathrm{Term}_2\lesssim_u1$, and we conclude that
\[
 \|\mathcal N_1\|_{L_t^{4/3}L_x^{\frac{2d}{d+1}}}\lesssim_u1.
\]

By triangle and H\"older's inequalities together with Bernstein's lemma,
\begin{align*}
    \|\mathcal{N}_{2}\|_{L_{t}^{4/3}L_{x}^{\frac{2d}{d+1}}}
&\lesssim \| |\nabla|^{2-d}(|u_{h}|^{2})\|_{L_{t}^{4/3}L_{x}^{2d}}\| u_{l}\|_{L_{t}^{\infty}L_{x}^{2}}
+ \| |\nabla|^{2-d}(|P_{\leq \eta_{\ast}^{-1}K}u_{h}|^{2})\|_{L_{t}^{4/3}L_{x}^{2d}}\| u_{l}\|_{L_{t}^{\infty}L_{x}^{2}} \\
&\quad +\| |\nabla|^{2-d}(\operatorname{Re}\{u_{h}\overline{u_{l}}\})\|_{L_{t}^{8/3}L_{x}^{\frac{4d}{5}}}\| u_{h}\|_{L_{t}^{8/3}L_{x}^{\frac{4d}{2d-3}}} \\
&\quad +\| |\nabla|^{2-d}(\operatorname{Re}\{(P_{\leq \eta_{\ast}^{-1}K}u_{h})\overline{u_{l}}\})\|_{L_{t}^{8/3}L_{x}^{\frac{4d}{5}}}\| u_{h}\|_{L_{t}^{8/3}L_{x}^{\frac{4d}{2d-3}}}. 
\end{align*}
By Hardy-Littlewood-Sobolev's inequality, mass conservation, \eqref{longtime2} and interpolation,
\[
\begin{aligned}
&\| u_{l}\|_{L^{\infty}L_{x}^{2}}\Big(\| |\nabla|^{2-d}(|u_{h}|^{2})\|_{L_{t}^{4/3}L_{x}^{2d}}
+ \| |\nabla|^{2-d}(|P_{\leq \eta_{\ast}^{-1}K}u_{h}|^{2})\|_{L_{t}^{4/3}L_{x}^{2d}}\Big) \\
\lesssim& M(u)^{1/2}\| u_{h}\|_{L_{t}^{8/3}L_{x}^{\frac{4d}{2d-3}}}^{2} 
\lesssim 1
\end{aligned}
\]
 and
\[
\begin{aligned}
&\| u_{h}\|_{L_{t}^{8/3}L_{x}^{\frac{4d}{2d-3}}}\Big(\| |\nabla|^{2-d}(\operatorname{Re}\{u_{h}\overline{u_{l}}\})\|_{L_{t}^{8/3}L_{x}^{4d/5}}
+ \| |\nabla|^{2-d}(\operatorname{Re}\{(P_{\leq \eta_{\ast}^{-1}K}u_{h})\overline{u_{l}}\})\|_{L_{t}^{8/3}L_{x}^{4d/5}}\Big) \\
\lesssim& \| u_{h}\|_{L_{t}^{8/3}L_{x}^{\frac{4d}{2d-3}}}^{2}\| u_{l}\|_{L_{t}^{\infty}L_{x}^{2}}
\lesssim 1.
\end{aligned}
\]
Therefore,
\[
\|\mathcal{N}_{2}\|_{L_{t}^{4/3}L_{x}^{\frac{2d}{d+1}}}\lesssim 1 .
\]
By the same line of reasoning used to obtain the estimate for $\|\mathcal{N}_{2}\|_{L_{t}^{4/3}L_{x}^{\frac{2d}{d+1}}}$, except now putting one of the factors $u_{h}$ in $L_{t}^{\infty}L_{x}^{2}$, we obtain
\[
\|\mathcal{N}_{3}\|_{L_{t}^{4/3}L_{x}^{\frac{2d}{d+1}}}\lesssim 1. 
\]
Combining the preceding estimates, we obtain
\[
 \|\mathcal N\|_{L_t^{4/3}L_x^{\frac{2d}{d+1}}}\lesssim_u1.
\]
By \eqref{zz1} and \eqref{zz12}, we conclude that
\begin{align*}
   \left| \Gamma_2\right| &\lesssim C_0(a)[C(\eta)^{\frac{d+2}{2(d+1)}}\eta_{\ast}^{-\frac{d}{2(d+1)}}K^{\frac{d}{2(d+1)}}+\eta^{\frac{1}{d+1}}\eta_{\ast}^{-1}K\\
    &+ C(\eta)^{3/4}\eta_{\ast}^{-1/4}K^{1/4}+\eta^{1/2}\eta_{\ast}^{-1}K].
\end{align*}

\vskip 0.1in

\textbf{Contribution of $\Gamma_3$:}
We use the decomposition $\mathcal N=\mathcal N_0+\cdots+\mathcal N_3$
from above and write
\[
 |\Gamma_3|\leq\sum_{j=0}^3\mathrm{Term}_{\mathcal N_j}.
\]
Since $\mathcal N_0=0$, it remains to consider $j=1,2,3$. For $\mathcal N_1$,
the commutativity of Fourier multipliers gives
\begin{align*}
 \mathcal N_1={}&
 [P_{\leq\eta_{\ast}^{-1}K},|\nabla|^{2-d}(|u_l|^2)]u_h\\
 &+2[P_{\leq\eta_{\ast}^{-1}K},u_l]
              |\nabla|^{2-d}\operatorname{Re}(u_l\overline{u_h})\\
 &+|\nabla|^{2-d}\big(
       [P_{\leq\eta_{\ast}^{-1}K},u_l]\overline{u_h}
       +[P_{\leq\eta_{\ast}^{-1}K},\overline{u_l}]u_h\big)u_l.
\end{align*}
For each $j=1,2,3$, since $\xi(t)$ is independent of $x$,
\[
 \operatorname{Re}\{\bar w(\nabla-i\xi(t))\mathcal N_j\}
 =\nabla\operatorname{Re}(\bar w\mathcal N_j)
 -\operatorname{Re}\{\overline{(\nabla-i\xi(t))w}\,\mathcal N_j\}.
\]
Integrating by parts in $x$, we obtain
\begin{align*}
 &\int_0^T\int_{\R^{2d}}a(t,x-y)\cdot|w(t,y)|^2
       \operatorname{Re}\{\bar w(\nabla-i\xi(t))\mathcal N_j\}(t,x)
                                  \,dx\,dy\,dt\\
 ={}&-\int_0^T\int_{\R^{2d}}(\nabla_x\cdot a)(t,x-y)|w(t,y)|^2
       \operatorname{Re}(\bar w\mathcal N_j)(t,x)\,dx\,dy\,dt\\
 &-\int_0^T\int_{\R^{2d}}a(t,x-y)\cdot|w(t,y)|^2
       \operatorname{Re}\{\overline{(\nabla-i\xi(t))w}\,\mathcal N_j\}(t,x)
                                  \,dx\,dy\,dt.
\end{align*}
By the estimates for $\mathcal N_j$ in the treatment of $\Gamma_2$ and
\eqref{zz12}, the absolute value of the second integral is bounded by
\[
 C_0(a)\big[C(\eta)^{3/4}(\eta_{\ast}^{-1}K)^{1/4}
                    +\eta^{1/2}\eta_{\ast}^{-1}K\big].
\]
For the first integral, \eqref{amapb} gives
$|\nabla_x\cdot a(t,x-y)|\lesssim_d C_0(a)|x-y|^{-1}$.
By weak Young's inequality and conservation of mass,
\[
 \|\tilde V*|w|^2\|_{L_t^\infty L_x^{d,\infty}}\lesssim_u1,
 \qquad \tilde V(x)=|x|^{-1}.
\]
Thus Lorentz H\"older's inequality yields
\begin{align*}
 &\left|\int_0^T\int_{\R^{2d}}(\nabla_x\cdot a)(t,x-y)|w(t,y)|^2
       \operatorname{Re}(\bar w\mathcal N_j)(t,x)\,dx\,dy\,dt\right|\\
 &\lesssim_u C_0(a)\|w\mathcal N_j\|_{L_t^1L_x^{\frac d{d-1},1}}.
\end{align*}

We first consider $j=1$. Since $V\in L^{d/2,\infty}$,
\[
 \|V*|u_l|^2\|_{L_x^\infty}
 \lesssim_d\||u_l|^2\|_{L_x^{\frac d{d-2},1}}
 \lesssim_d\|u_l\|_{L_x^{\frac{2d}{d-2},2}}^2.
\]
The same estimate holds for $u_h$. By Cauchy--Schwarz in the convolution
integral,
\[
 \|V*(u_l\overline{u_h})\|_{L_x^\infty}
 \lesssim_d\|u_l\|_{L_x^{\frac{2d}{d-2},2}}
             \|u_h\|_{L_x^{\frac{2d}{d-2},2}}.
\]
Applying these estimates to the original expression for $\mathcal N_1$,
and using the boundedness of the smooth projections, we find
\begin{align*}
 \|w\mathcal N_1\|_{L_t^1L_x^{\frac d{d-1},1}}
 &\lesssim_u\|\mathcal N_1\|_{L_t^1L_x^{\frac{2d}{d-2},2}}\\
 &\lesssim_u\|u_l\|_{L_t^4L_x^{\frac{2d}{d-2},2}}^2
             \|u_h\|_{L_t^2L_x^{\frac{2d}{d-2},2}}.
\end{align*}
For fixed $\eta$, take $K$ sufficiently large that
$C(\eta)\leq\eta_{\ast}^{-1}K$. Splitting $u_l$ at $C(\eta)N(t)$ and
using \eqref{longtime28}, interpolation and Bernstein's inequality, we get
\begin{align*}
 \|P_{\xi(t),\leq C(\eta)N(t)}u_l\|_
                       {L_t^4L_x^{\frac{2d}{d-2},2}}
 &\lesssim_u(\eta_{\ast}^{-1}K)^{1/4}
       \sum_{2^j\leq4C(\eta)}2^{j/4}\\
 &\lesssim_u C(\eta)^{1/4}(\eta_{\ast}^{-1}K)^{1/4},\\
 \|P_{\xi(t),\geq C(\eta)N(t)}u_l\|_
                       {L_t^3L_x^{\frac{6d}{3d-8},2}}
 &\lesssim_u(\eta_{\ast}^{-1}K)^{1/3}
       \sum_{2^j\leq\eta_{\ast}^{-1}K/4}2^{j/3}\\
 &\lesssim_u(\eta_{\ast}^{-1}K)^{2/3}.
\end{align*}
Here $(3,6d/(3d-4))$ is admissible, and the Bernstein factor in the second
estimate is $2^{2j/3}$. By interpolation and \eqref{localized},
\begin{align*}
 &\|P_{\xi(t),\geq C(\eta)N(t)}u_l\|_
                       {L_t^4L_x^{\frac{2d}{d-2},2}}\\
 &\lesssim_d\|P_{\xi(t),\geq C(\eta)N(t)}u_l\|_{L_t^\infty L_x^2}^{1/4}
       \|P_{\xi(t),\geq C(\eta)N(t)}u_l\|_
                       {L_t^3L_x^{\frac{6d}{3d-8},2}}^{3/4}\\
 &\lesssim_u\eta^{1/4}(\eta_{\ast}^{-1}K)^{1/2}.
\end{align*}
Consequently,
\[
 \|w\mathcal N_1\|_{L_t^1L_x^{\frac d{d-1},1}}
 \lesssim_u C(\eta)(\eta_{\ast}^{-1}K)^{1/2}
                   +\eta^{1/2}\eta_{\ast}^{-1}K.
\]

For $j=2,3$, we place two high-frequency factors in
$L_t^2L_x^{\frac{2d}{d-2},2}$ and the remaining factor in
$L_t^\infty L_x^2$. In particular,
\begin{align*}
 \|(V*|u_h|^2)u_l\|_{L_t^1L_x^2}
 &\lesssim_d\|u_h\|_{L_t^2L_x^{\frac{2d}{d-2},2}}^2
               \|u_l\|_{L_t^\infty L_x^2},\\
 \|(V*(u_h\overline{u_l}))u_h\|_{L_t^1L_x^2}
 &\lesssim_d\|V*(u_h\overline{u_l})\|_{L_t^2L_x^d}
               \|u_h\|_{L_t^2L_x^{\frac{2d}{d-2}}}\\
 &\lesssim_d\|u_l\|_{L_t^\infty L_x^2}
               \|u_h\|_{L_t^2L_x^{\frac{2d}{d-2}}}^2.
\end{align*}
The same estimates apply to the projected factors and to the terms with
three factors $u_h$. By \eqref{longtime2}, we obtain
\[
 \|\mathcal N_2\|_{L_t^1L_x^2}+\|\mathcal N_3\|_{L_t^1L_x^2}\lesssim_u1.
\]
On the other hand, Bernstein's inequality and \eqref{localized} give
\begin{align*}
 \|w\|_{L_t^\infty L_x^{\frac{2d}{d-2},2}}
 &\lesssim_u C(\eta)\|N\|_{L_t^\infty}
       +\eta_{\ast}^{-1}K
          \|P_{\xi(t),\geq C(\eta)N(t)}w\|_{L_t^\infty L_x^2}\\
 &\lesssim_u C(\eta)+\eta\eta_{\ast}^{-1}K.
\end{align*}
Therefore, for $j=2,3$,
\[
 \|w\mathcal N_j\|_{L_t^1L_x^{\frac d{d-1},1}}
 \lesssim_d\|w\|_{L_t^\infty L_x^{\frac{2d}{d-2},2}}
             \|\mathcal N_j\|_{L_t^1L_x^2}
 \lesssim_u C(\eta)+\eta\eta_{\ast}^{-1}K.
\]
Collecting these estimates and taking $0<\eta\leq1$ and
$\eta_{\ast}^{-1}K\geq1$, we conclude that
\[
 |\Gamma_3|\lesssim_u C_0(a)
 \big[C(\eta)(\eta_{\ast}^{-1}K)^{1/2}
                 +\eta^{\frac1{d+1}}\eta_{\ast}^{-1}K\big].
\]

Collecting our estimates for $\Gamma_1,\Gamma_2,\Gamma_3$, we have
\begin{align}
    \left|\Gamma_1+\Gamma_2+ \Gamma_3\right| &\lesssim C_0(a)\Bigl[C(\eta)^{\frac{d+2}{2(d+1)}}\eta_{\ast}^{-\frac{d}{2(d+1)}}K^{\frac{d}{2(d+1)}}+\eta^{\frac{1}{d+1}}\eta_{\ast}^{-1}K\notag\\
    &+ C(\eta)^{3/4}\eta_{\ast}^{-1/4}K^{1/4}+\eta^{1/2}\eta_{\ast}^{-1}K
    +C(\eta)^{\frac{d+4}{2(d+2)}}(\eta_{\ast}^{-1}K)^{\frac{d}{2(d+2)}}+\eta^{\frac{2}{d+2}} \eta_{\ast}^{-1}K\notag\\
    &+C(\eta)(\eta_{\ast}^{-1}K)^{1/2}+\eta^{\frac{1}{d+1}}\eta_{\ast}^{-1}K\Bigr].\label{finalmL}
\end{align}
For fixed $\eta_{\ast}$ and $\eta$, divide \eqref{finalmL} by $K$ and
let $K\to\infty$. Since every power of $K$ in the remaining terms is
strictly less than one, we obtain
\[
 \limsup_{K\to\infty}\frac{|\Gamma_1+\Gamma_2+\Gamma_3|}{K}
 \lesssim_u C_0(a)\eta_{\ast}^{-1}\eta^{\frac1{d+1}}.
\]
Letting $\eta\to0$ completes the proof.
\end{proof}


\section{No rapid frequency cascade case}\label{cascadesec}
In this section, we preclude the rapid frequency cascade scenario $\int_{0}^{\infty} N(t)^{3} d t<\infty$ by the additional regularity argument of \cite{Dodson-JAMS,Dodson-AJM,Dodson-Duke}. In particular, we can use the long-time Strichartz estimates to show that the hypothesis $\int_{0}^{\infty} N(t)^{3} d t=K<\infty$ implies that the solution $u \in C_{t}^{0} H_{x}^{3}\left([0, \infty) \times \R^d\right)$. From this additional regularity, we derive a contradiction to conservation of energy for $H^{1}$ solutions to the Hartree equation \eqref{NLH}. We therefore conclude that the rapid frequency cascade scenario does not occur.

\begin{lemma}[$H_{x}^{3}$ regularity]\label{addregu}
 Let $u$ be a solution to \eqref{NLH} as in Theorem \ref{reduction}. If $\int_{0}^{\infty} N(t)^{3}dt=K<\infty$, then $u \in C_{t}^{0} H_{x}^{3}\left([0, \infty) \times \R^d\right)$ and satisfies the estimate
\begin{equation*}
\sup _{t\geq0}\|u(t)\|_{\dot{H}_{x}^3\left(\R^d\right)} \lesssim K^3. 
\end{equation*}
\end{lemma}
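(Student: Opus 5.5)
The plan is to follow Dodson's additional-regularity argument, adapted to the Hartree nonlinearity. Because $\int_0^\infty N(t)^3\,dt=K<\infty$ and $N(t)\le1$, the long-time Strichartz estimate \eqref{longtime02} holds on $[0,\infty)$ with $K$ fixed. It gives, uniformly in $T$,
\[
\|P_{\xi(t),\geq 2^j}u\|_{L_t^2L_x^{\frac{2d}{d-2},2}([0,\infty)\times\R^d)}\lesssim_u (K/2^j)^{1/2}, \qquad 2^j\gtrsim K.
\]
Since $|\xi'(t)|\lesssim N(t)^3$ and $\xi(0)=0$, we have $|\xi(t)|\lesssim_u K$ for all $t$. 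So for $2^j\gg K$ the Galilean projections may be replaced by the standard ones $P_{\geq 2^j}$. The rapid cascade hypothesis also gives $N(t)\to0$ as $t\to\infty$, since $N'\lesssim N^3$ forbids $N$ from staying bounded below. With $\xi(t)$ bounded, compactness then yields $\|P_{\geq M}u(t)\|_{L^2}\to0$ as $t\to\infty$ for every fixed $M\gg K$.

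The first key step is a frequency-envelope bootstrap for $\mathcal{A}(M):=\sup_{t\ge0}\|P_{\geq M}u(t)\|_{L^2_x}$ for $M\gg K$. Starting at a time $t$, I will run Duhamel forward to $T\to\infty$ and use $P_{\geq M}u(T)\rightharpoonup0$ weakly in $L^2$. This gives
\[
P_{\geq M}u(t)=i\mu\int_t^\infty e^{i(t-\tau)\Delta}P_{\geq M}F(u(\tau))\,d\tau,
\]
and the Lorentz-space dual Strichartz estimate (Lemma \ref{P:SZ}) then bounds $\mathcal{A}(M)$ by $\|P_{\geq M}F(u)\|_{L_t^2L_x^{\frac{2d}{d+2},2}([0,\infty))}$. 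I will decompose $F(u)$ exactly as in \eqref{estq}.

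\begin{itemize}
\item The terms with $P_{\geq M/8}u$ against $|x|^{-2}*|u|^2$ are controlled with $P_{\geq M/8}u$ in the long-time norm. Here I also split off the high part $P_{\geq M/8}u=P_{\geq M/8}P_{\geq M/8}u$ and interpolate, putting one factor in $L^\infty_tL^2$ where $\mathcal{A}(M/8)$ enters.
\item The term $(P_{\le M/8}u)P_{\geq M/4}(|x|^{-2}*|u|^2)$ needs at least one factor inside the convolution at frequency $\gtrsim M/8$.
\item The bilinear pieces with a low factor at scale $\lesssim K$ are handled by Lemma \ref{bilinear2} and Corollary \ref{bilinear}. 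These produce gains $(K/M)^{1/2}$ times $U^2_\Delta$ norms on the $\lesssim K$ small intervals, together with the Strichartz norm of the high piece.
\end{itemize}

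The target recursion is
\[
\mathcal{A}(M)\lesssim_u \Big(\frac{K}{M}\Big)^{1/2}\mathcal{A}(M/16)+\sum_{M'\le M/16}\Big(\frac{M'}{M}\Big)^{\sigma}\mathcal{A}(M')\,(\cdots)
\]
for some $\sigma>0$. Iterating it gives $\mathcal{A}(M)\lesssim_u (K/M)^{s}$ for every $s$ up to some $s_0>0$, and repeating the argument with improved inputs reaches $s=3+$, the Hartree nonlinearity being smooth.

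The second step upgrades these $L^2$ tail bounds to Strichartz tails $\|P_{\geq M}u\|_{L_t^2L_x^{\frac{2d}{d-2},2}}\lesssim (K/M)^{3+}$. The same Duhamel/Strichartz argument on $[0,\infty)$ does this, since the whole solution has finite "$N^3$ mass". Summing dyadic pieces, $\sum_{M\gtrsim K}M^3(K/M)^{3+\epsilon}\lesssim K^3$, gives $\sup_t\|u(t)\|_{\dot H^3}\lesssim K^3$. Continuity in time of the $H^3$ norm then follows from the local theory with persistence of regularity.

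The main obstacle is the nonlocal low-high interaction $(P_{\le M/8}u)\,P_{\ge M/4}\big(|x|^{-2}*(u_{\text{low}}\overline{u_{\text{high}}})\big)$, together with the fact that the endpoint HLS bound into $L^{d/2}$ fails. To gain decay in $M$ here I will first try the double frequency decomposition of Lemma \ref{bilinear2}, localizing the convolution kernel dyadically and pairing cubes of side $2^k$. This should produce a factor $(K/M)^{1/2}$ multiplied by $\mathcal{A}$ at a comparable frequency. If the gain is insufficient, I will fall back on Bernstein for $|\nabla|^{2-d}P_{\geq M}$ as in \eqref{equ:bilHSBe}, which yields $M^{-(d-2)/2}$ and absorbs the remaining factors using mass and the long-time bound.
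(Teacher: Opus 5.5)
Your overall plan matches the Dodson additional-regularity argument that the paper invokes without details: Duhamel from $t=+\infty$ using $N(t)\to0$ and $|\xi(t)|\lesssim K$, a bootstrap for high-frequency tails, bilinear gains, and dyadic summation into $\dot H^3$. However, the step that is supposed to generate decay rests on two false premises.

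First, \eqref{longtime02} only covers $2^j\leq\eta_{\ast}^{-1}K$, where it gives $\|P_{\xi(t),\geq2^j}u\|_{L_t^2L_x^{\frac{2d}{d-2},2}}\lesssim(K/(\eta_{\ast}2^j))^{1/2}$. At the top of that range this is only $O(1)$, and nothing is said for $2^j\gg\eta_{\ast}^{-1}K$. Decay of the Strichartz tail above $K$ is part of what has to be proved. So you cannot use it as input in your first bullet (``controlled with $P_{\geq M/8}u$ in the long-time norm'') and only recover it in your second step.

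Second, in the rapid-cascade scenario $u$ still blows up forward in time, so $\int_0^\infty N(t)^2\,dt=\infty$. There are therefore infinitely many small intervals $J_l\subset[0,\infty)$, and every Strichartz or $U^2_\Delta$ norm over $[0,\infty)$ of $u$, and of its low-frequency part, is infinite. The only finite quantity is $\sum_lN(J_l)\sim\int_0^\infty N(t)^3\,dt=K$. A bilinear estimate with the low factor at the fixed scale $K$ yields $(K/M)^{1/2}$ on \emph{each} $J_l$, and these contributions cannot be summed. The phrase ``the $\lesssim K$ small intervals'' is exactly where the argument breaks.

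What is needed is to bootstrap $\mathcal{A}(M)$ jointly with $\mathcal{S}(M):=\|P_{\geq M}u\|_{L_t^2L_x^{\frac{2d}{d-2},2}([0,\infty)\times\R^d)}$. Both are bounded by $\|P_{\geq M}F(u)\|_{L_t^2L_x^{\frac{2d}{d+2},2}([0,\infty)\times\R^d)}$ via Duhamel from $+\infty$. Decompose as in \eqref{estq}, with the low factor taken to be $P_{\xi(t),\leq C(\eta)N(t)}u$, not a piece at scale $K$. The contributions then work out as follows.
\begin{enumerate}
\item On each $J_l$ the bilinear estimates give the gain $(C(\eta)N(J_l)/M)^{1/2}$. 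The $L_t^1L_x^2$ Strichartz bound gives $\|P_{\geq M}u\|_{U^2_\Delta(J_l)}\lesssim\mathcal{A}(cM)+\|P_{\geq cM}u\|_{L_t^2L_x^{\frac{2d}{d-2},2}(J_l\times\R^d)}$. Hence the square sum over $l$ is $\lesssim C(\eta)M^{-1}(K\mathcal{A}(cM)^2+\mathcal{S}(cM)^2)$.
\item The intermediate frequencies $C(\eta)N(t)\lesssim|\xi-\xi(t)|\lesssim cM$ get no decay. They are only small in $L_t^\infty L_x^2$ and contribute $\eta\,\mathcal{S}(cM)$.
\item Terms with two high factors are quadratic, of size $\mathcal{S}(cM)\mathcal{A}(cM)$. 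They become linear with a small coefficient once $M$ is large compared with $K$ (depending on $\eta$).
\end{enumerate}
The resulting recursion closes at the rate $(K/M)^s$ only when $\eta c^{-s}\ll1$. So $s=3+$ is reached by choosing $\eta$ depending on $s$. It is not reached by ``repeating the argument with improved inputs'': the sum term in your recursion bottoms out at $(K/M)^\sigma$ unless your unspecified factor $(\cdots)$ is a norm of a piece at frequency $\gtrsim M$.
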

\begin{proof}
The proof of \cite{Dodson-JAMS,Dodson-AJM,Dodson-Duke} carries over mutatis mutandis. We therefore omit the details.
\end{proof}
We now use Lemma \ref{addregu} to exclude the rapid frequency cascade case:
\begin{theorem}[No rapid frequency cascade]\label{rapidcascade}
    There does not exist a solution $u$ as in Theorem \ref{reduction}  such that $\int_{0}^{\infty} N(t)^{3} d t=K<\infty$.
\end{theorem}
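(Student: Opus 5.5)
We argue by contradiction. Suppose $u$ is as in Theorem \ref{reduction} with $\int_0^\infty N(t)^3\,dt=K<\infty$. By Lemma \ref{addregu}, $u\in C_t^0H_x^3([0,\infty)\times\R^d)$ with $\sup_{t\ge0}\|u(t)\|_{\dot H^3_x}\lesssim K^3$. Interpolating with mass conservation gives $\sup_t\|u(t)\|_{H^1}<\infty$, so the energy is finite and conserved on $[0,\infty)$.

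The first step is to show $N(t)\to0$ as $t\to\infty$. Since $N$ is locally constant on intervals of length $\sim N(t)^{-2}$, each such interval contributes $\int N^3\,dt\sim N(t)$. Finiteness of $K$ therefore forces $N(t)\to0$. The bound $|\xi'(t)|\lesssim N(t)^3$ from \eqref{xxibound} gives $\int_0^\infty|\xi'(t)|\,dt\lesssim K$, so $\xi(t)\to\xi_\infty$ for some $\xi_\infty$ with $|\xi_\infty|\lesssim K$.

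Next we Galilean-boost to remove $\xi_\infty$. Set $v=T_{g_{0,-\xi_\infty,0,1}}u$. This is again a solution of \eqref{NLH}: it lies in $H^1$ with the same mass, and its frequency center is $\xi(t)-\xi_\infty\to0$. The key estimate is
\[
\|\nabla v(t)\|_{L^2}^2\to0 .
\]
To prove it, split $\hat v(t)$ at $|\xi-\xi_v(t)|\le C(\eta)N(t)$. The low part contributes at most $(C(\eta)N(t)+|\xi(t)-\xi_\infty|)^2M(u)\to0$. For the high part, combine the mass smallness $\eta^2$ from \eqref{localized} with the uniform $\dot H^3$ bound via the interpolation $\|f\|_{\dot H^1}\lesssim\|f\|_{L^2}^{2/3}\|f\|_{\dot H^3}^{1/3}$; this gives a bound $\lesssim \eta^{4/3}K^2$. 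Letting $t\to\infty$ and then $\eta\to0$ yields the claim.

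Then the potential term in $E(v(t))$ also tends to zero. In the defocusing case it is controlled by Hardy--Littlewood--Sobolev, $\int|v|^2(V*|v|^2)\lesssim\|v\|_{L^{2d/(d-1)}}^4\lesssim\|v\|_{L^2}^2\|\nabla v\|_{L^2}^2$. In the focusing case the sharp Gagliardo--Nirenberg inequality gives the same conclusion. Hence $E(v(t))\to0$, and by conservation $E(v)=0$.

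For $\mu=1$, positivity of the energy gives $\nabla v\equiv0$, so $v\equiv0$. For $\mu=-1$, the sharp inequality from the ground state proposition gives
\[
E(v)\ge\tfrac12\bigl(1-M(u)/M(Q)\bigr)\|\nabla v\|_{L^2}^2 ,
\]
so $M(u)<M(Q)$ again forces $\nabla v\equiv0$, hence $v\equiv0$. Since $M(u)=M(v)$, this contradicts the fact that $u$ blows up, so in particular $M(u)>0$.

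The main obstacle is making the $\dot H^1$-decay step rigorous, specifically establishing $\xi(t)\to\xi_\infty$ and $N(t)\to0$ from $K<\infty$ and local constancy. I would handle this through Proposition \ref{zzzz2}: $\int_{J_l}N^3\sim N(J_l)$ with $\sum_l N(J_l)\lesssim K$, and this requires infinitely many intervals $J_l$, which holds because $u$ blows up forward in time.
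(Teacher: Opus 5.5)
Your proposal is correct and follows essentially the same route as the paper. The paper also boosts to $\xi_\infty=0$, splits $\|u(t)\|_{\dot H^1}$ at $|\xi-\xi(t)|\sim C(\eta)N(t)$, interpolates the high part between the $L^2$ smallness from \eqref{localized} and the $\dot H^3$ bound of Lemma \ref{addregu}, and concludes from $E\to0$ and positivity of the energy (which you correctly obtain via sharp Gagliardo--Nirenberg when $\mu=-1$). Your local-constancy argument that $N(t)\to0$ fills in a step the paper only asserts, and your bound $|\xi_\infty|\lesssim K$ is exactly what keeps the boosted solution's $\dot H^3$ norm $\lesssim_u K^3$.
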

 
\begin{proof}
We observe that by the fundamental theorem of calculus and the estimate \eqref{xxibound} for $\xi^{\prime}(t)$, it follows that $\xi_{\infty} := \lim\limits_{t \to \infty} \xi(t)$ exists in $\mathbb{R}^d$. Then, by applying a Galilean transformation to $u$, we can set $\xi_{\infty} = 0$. Note that the Galilean transformation also preserves the uniform boundedness in $\dot{H}_x^3$ norm and does not change the frequency scale $N(t)$.

Now the triangle inequality,
\begin{equation*}
\|u(t)\|_{\dot{H}_{x}^{1}\left(\R^d\right)} \leq\left\|P_{\xi(t), \leq C(\eta) N(t)} u(t)\right\|_{\dot{H}_{x}^{1}\left(\R^d\right)}+\left\|P_{\xi(t),\geq C(\eta) N(t)} u(t)\right\|_{\dot{H}_{x}^{1}\left(\R^d\right)}, 
\end{equation*}
for any $\eta \in(0,1)$. Interpolating between $L_{x}^{2}$ and $\dot{H}_{x}^{3}$ and using \eqref{localized}, we have that
\begin{equation*}
\left\|P_{\xi(t), \geq C(\eta) N(t)} u(t)\right\|_{\dot{H}_{x}^{1}\left(\R^d\right)} \lesssim \eta^{1 / 3} K, \quad \forall t \geq 0 
\end{equation*}
Now by using Plancherel's theorem and the translation/modulation symmetry of the Fourier transform (i.e. Galilean invariance of the mass), we have that
\begin{equation*}
\left\|P_{\xi(t), \leq C(\eta) N(t)} u(t)\right\|_{\dot{H}_{x}^{1}\left(\R^d\right)} \lesssim\left(\left|\xi(t)\right|+C(\eta) N(t)\right)\|u(t)\|_{L_{x}^{2}\left(\R^d\right)}=M(u)^{1 / 2}\left(\left|\xi(t)\right|+C(\eta) N(t)\right). 
\end{equation*}
Since $\left|\xi(t)\right|$ and $N(t)$ tend to zero as $t \rightarrow \infty$, we conclude that 
\begin{equation}
\limsup _{t \rightarrow \infty}\|u(t)\|_{\dot{H}_{x}^{1}\left(\R^d\right)} \leq \eta^{1 / 3} K, \quad \forall \eta \in(0,1) \Longrightarrow \limsup _{t \rightarrow \infty}\|u(t)\|_{\dot{H}_{x}^{1}\left(\R^d\right)}=0.
\end{equation}
Finally, by H\"older's inequality, the Hardy-Littlewood-Sobolev inequality and interpolation,
\begin{align*}
\left\|\left(|x|^{-2}\ast |u(t)|^{2}\right)|u(t)|^{2}\right\|_{L_{x}^{1}\left(\mathbb{R}^{d}\right)} \lesssim_{M(u)} \|u(t)\|_{\dot{H}_{x}^{1}\left(\R^d\right)}^{2}.
\end{align*}
Therefore, $E(u(t)) \rightarrow 0$ as $t \rightarrow \infty$, which, by conservation of energy, implies that $E\left(u_{0}\right)=0$. Since the energy functional is positive definite under our assumptions, we conclude that $u_{0} \equiv 0$, which is a contradiction. The proof of Theorem \ref{rapidcascade} is now complete. 
\end{proof}

\section{No quasi-soliton case}\label{quasisec}

In this section, we preclude the quasi-soliton scenario $\int_{0}^{\infty} N(t)^{3} d t=\infty$.

\begin{theorem}\label{quasi}
    There does not exist a solution $u$ as in Theorem \ref{reduction}  such that $\int_{0}^{\infty} N(t)^{3} d t= \infty$.
\end{theorem}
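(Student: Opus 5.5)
We argue by contradiction, following the frequency-localized interaction Morawetz strategy of Dodson for the mass-critical NLS, adapted to the Hartree nonlinearity. Suppose $u$ is as in Theorem \ref{reduction} with $\int_0^\infty N(t)^3\,dt=\infty$. Fix small parameters $\eta_{\ast}\leq\eta_0(u)$ and $\eta_2$, choose $T$ so that $\int_0^TN(t)^3\,dt=K\geq K_0$, and set $w=P_{\leq\eta_{\ast}^{-1}K}u$. Then $w$ solves
\[
i\partial_tw+\Delta w=\mu F(w)+\mu\mathcal N,\qquad \mathcal N=P_{\leq \eta_{\ast}^{-1}K}F(u)-F(P_{\leq \eta_{\ast}^{-1}K}u).
\]
We introduce a Galilean-corrected interaction Morawetz functional
\[
M(t)=\int_{\R^{2d}}|w(t,y)|^2\,a_R(x-y)\cdot\operatorname{Im}\{\bar w(\nabla-i\xi(t))w\}(t,x)\,dx\,dy .
\]
Here $a_R$ is an odd vector field, built from the auxiliary radial weights $\zeta_R,\phi_R,\psi_R$ depending on $\eta_2$. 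It equals $x$ for $|x|\lesssim R$, is bounded by $R$, and satisfies \eqref{amapb} with $C_0(a)\lesssim R$. The scale $R$ is a large multiple of $\eta_2^{-1}C(\eta)/N(t)$, so after rescaling we will take $R\sim \eta_{\ast}^{-1}$-dependent.

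\textbf{Upper bound.} By Cauchy--Schwarz, mass conservation and \eqref{kineticb},
\[
\sup_{[0,T]}|M(t)|\lesssim R\,\|(\nabla-i\xi(t))w\|_{L_t^\infty L_x^2}\lesssim R\bigl(C(\eta)+\eta\eta_{\ast}^{-1}K\bigr),
\]
so $M(T)-M(0)$ is $o(K)$ after choosing $\eta$ small relative to $\eta_{\ast}$ and $R$. The $\xi'(t)$ term coming from differentiating $\xi(t)$ contributes at most $R\int_0^T|\xi'|\,dt\lesssim R\eta_1^{-1}K$ times a small factor. I expect this to be controlled using Galilean invariance of the interaction term, since momentum relative to $\xi(t)$ appears symmetrically. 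If it is not controlled this way, $\xi(t)$ must be chosen more cleverly, as in Dodson's work.

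\textbf{Computing $\frac{d}{dt}M$.} The derivative splits into three groups.

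First, the kinetic term $\int\!\!\int |w(y)|^2\partial_ja^k_R(x-y)\operatorname{Re}(\overline{\partial_jw}\partial_kw)$, corrected by $\xi(t)$. Together with the $-\Delta\nabla\cdot a$ term, this gives the positive quantity
\[
\int\!\!\int |w(y)|^2\phi_R(x-y)\,|(\nabla-i\xi)w(x)|^2
\]
plus the angular and tail errors, which carry the $\eta_2$ weights.

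Second, the potential term $\int\nabla a\cdot\{F(w),w\}_P$. The Hartree momentum bracket is not a divergence, so we Taylor-expand $a_R(x-y)$ and symmetrize in the nonlocal kernel variables. This produces
\[
-\tfrac{\mu}{2}\int\!\!\int |w(y)|^2\phi_R(x-y)\,(V*|w|^2)|w|^2(x)
\]
plus commutator errors. Those errors involve $|x-z|\,|\nabla a(x-y)-\nabla a(z-y)|\,|x-z|^{-3}$, and are bounded using Hardy--Littlewood--Sobolev in Lorentz spaces and the fact that $\phi_R-\psi_R$ is supported where $|x|\gtrsim R$.

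Third, the terms involving $\mathcal N$. These are exactly $\Gamma_1+\Gamma_2+\Gamma_3$ in Proposition \ref{reminder}, hence $\lesssim R\,o(K)$.

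\textbf{Main obstacle: the lower bound.} I expect the crucial step to be showing
\[
\int_0^T\!\!\int |w(y)|^2\Bigl[\int\phi_R(x-y)\bigl(|(\nabla-i\xi)w|^2-\tfrac{\mu}{2}(V*|w|^2)|w|^2\bigr)dx\Bigr]dy\,dt\gtrsim \int_0^TN(t)^3dt=K.
\]
We localize with $\chi_y(x)=\psi_R(x-y)^{1/2}$, and for the focusing case apply the sharp Gagliardo--Nirenberg inequality from the ground-state proposition to $\chi_yw$. Since $M(u)<M(Q)$, this yields a fixed fraction $1-M(u)/M(Q)$ of $\|(\nabla-i\xi)(\chi_yw)\|_2^2$. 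The difficulty is that $\chi F(w)\neq F(\chi w)$. We write $(V*|w|^2)|w|^2\chi^2$ as $(V*|\chi w|^2)|\chi w|^2$ plus errors where one factor lies outside $\operatorname{supp}\chi$. Those errors are small by almost periodicity: the mass of $u$ outside radius $C(\eta)/N$ is at most $\eta$, and the kernel $|x-z|^{-2}$ is $\lesssim (\eta_2R)^{-2}$ on separated supports. The gradient commutator $|\nabla\chi|^2\lesssim R^{-2}$ is likewise small.

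Finally, averaging over $y$ and using almost periodicity together with $\int|w(y)|^2dy\sim M(u)$ gives $\|(\nabla-i\xi)(\chi_yw)\|_2^2\gtrsim N(t)^2$ on a set of $y$ of positive mass. After rescaling this yields the lower bound $\gtrsim K/R$ times the mass factor. Balancing $\eta_2$, $R$, $\eta$ and $\eta_{\ast}$ in that order, and letting $K\to\infty$, contradicts the $o(K)$ upper bound and proves Theorem \ref{quasi}.
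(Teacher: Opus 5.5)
You have the right overall architecture, which is also the paper's: a frequency-truncated interaction Morawetz functional, Proposition \ref{reminder} for the $\mathcal N$ terms, a Taylor expansion of $a_R$ for the Hartree term, the sharp Gagliardo--Nirenberg inequality, and a lower bound $\gtrsim K$ played against an upper bound $R\,o(K)$. However, the lower-bound step has a real gap.

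\textbf{The momentum cross term.} The kinetic part of $\frac{d}{dt}M$ is not just $\iint|w(y)|^2\varphi_R(x-y)|(\nabla-i\xi)w(x)|^2$ plus angular errors. It also contains the momentum--momentum interaction
\[
-\iint\varphi_R(|x-y|)\operatorname{Im}\{\bar w\nabla w\}(y)\cdot\operatorname{Im}\{\bar w\nabla w\}(x)\,dx\,dy,
\]
which is the paper's $\operatorname{Term}_{2,1}$. This term is nonpositive and of the same order as the kinetic term, because the momentum relative to $\xi(t)$ is only known to be $O(N(t))$. In the focusing case, where $1-M(u)/M(Q)$ may be small, it cannot be dropped.

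Your global shift by $\xi(t)$ does nothing here. Since $a_R$ is odd, $\iint a_R(x-y)|w(y)|^2|w(x)|^2\,dx\,dy=0$, so your functional coincides with the uncorrected one. In particular, the $\xi'(t)$ term you worry about is identically zero.

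The paper instead uses Dodson's local Galilean trick. It builds
\[
\varphi_R(|x-y|)=\|\zeta_R(|\cdot|)\|_{L^1}^{-1}\int\zeta_R(|x-z|)\zeta_R(|y-z|)\,dz,
\]
so both terms become $z$-averages. The $z$-dependent shift $v_z=e^{-ix\cdot\xi(t,z)}w$, with $\xi(t,z)$ the momentum center of $w$ on $\operatorname{supp}\zeta_R(|\cdot-z|)$, then cancels the cross term slice by slice. Gagliardo--Nirenberg is applied to $\vartheta_{R,z}v_z$, which is cut off around $z$, not around $y$.

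This convolution structure is also why $\varphi_R$ is not constant near $0$. Consequently $\psi_R-\varphi_R$ is not supported in $\{|x|\gtrsim R\}$, as you assert; it is only bounded by $\eta_2+1_{\{|x|\geq\eta_2R\}}$. That is the source of the $\eta_2K$ errors and of the $\varphi_{R,j}$, $\psi_{R,j}$ decompositions.

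\textbf{Varying $N(t)$.} You never handle a varying $N(t)$, and neither reading of your weight works.

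If $R$ is fixed, then heuristically, at times with $N(t)\ll R^{-1}$, the localized kinetic lower bound is only of order $R^dN(t)^{d+2}=(RN(t))^{d-1}RN(t)^3$. Moreover $\int_0^TN^{d+2}\,dt$ can stay bounded while $K\to\infty$; for example, take $N(t)\sim t^{-1/3}$. So no contradiction with $R\,o(K)$ arises.

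If instead the weight scale follows $1/N(t)$, as you suggest, then $a$ is no longer uniformly bounded by a fixed $R$. Differentiating the time-dependent weight also produces a term of size roughly $R\int_0^T|N'(t)|\,dt$. Under the only available bound $|N'|\lesssim N^3$, this is $O(RK)$, which is larger than the lower bound.

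The paper takes $a_R(t,x)=\tilde N(t)\psi_R(\tilde N(t)|x|)x$. This is bounded by $CR$ and has $\nabla a_R\approx\tilde N(t)I$ near $0$, which gives the lower bound $\int_0^T\tilde NN^2\,dt\geq K/C(n)$. It absorbs half of the resulting $\partial_ta_R$ term into the retained positive localized kinetic term by Cauchy--Schwarz. It bounds the rest by
\[
\frac{R^2}{\delta}\int_0^T|\tilde N'|^2\tilde N^{-3}\,dt\lesssim\frac{R^2}{\delta n}\int_0^T\tilde NN^2\,dt,
\]
using Dodson's smoothing algorithm.

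Without these two ingredients, your argument covers at most the model case $N(t)\equiv1$, and even there only once the cross term is treated.
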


\begin{remark}
    We remark that in the defocusing case, Theorem \ref{quasi} admits an alternative proof, as an interaction Morawetz estimate analogous to that for the NLS is available; hence, this theorem can be established by following a similar line of argument as in \cite{Dodson-JAMS, Dodson-AJM, Dodson-Duke}. For a unified presentation, however, we have chosen to treat both the focusing and defocusing cases simultaneously.
\end{remark}

We begin with some preliminary spatial and frequency localization estimates for almost-periodic solutions to \eqref{NLH}. Throughout this section, we use the notation
$o_A(1)$, where A denotes a positive real parameter, to denote a quantity satisfying
\[
\lim_{A\to\infty}o_A(1)=0.
\]
\begin{lemma}\label{quasisolitonlemma1}
Let $u$ be the solution as in Theorem \ref{reduction}. Then
\begin{equation}\label{truncateu}
 \sup_{t\geq0}\int_{|x-x(t)|\geq R/N(t)}|w(t,x)|^2\,dx=o_R(1),
\end{equation}
where $w:=P_{\leq K_0}u$, uniformly in $K_0\geq1$. Consequently, if
$[0,T]$ is a union of complete small intervals $J_l$, then
\begin{align}\label{truncateu2}
 &\int_0^T\int_{|x-x(t)|\geq R/N(t)}
       \tilde N(t)|w(t,x)|^2(V*|w|^2)(t,x)\,dx\,dt\notag\\
 &\qquad\lesssim_u o_R(1)\int_0^T\tilde N(t)N(t)^2\,dt,
\end{align}
uniformly in $T$ and $K_0\geq1$, where $0\leq\tilde N(t)\leq N(t)$
satisfies $\sup_{t\in J_l}\tilde N(t)\sim\inf_{t\in J_l}\tilde N(t)$ on
each small interval. For arbitrary $T>0$, the right-hand side of
\eqref{truncateu2} is replaced by $o_R(1)(1+\int_0^T\tilde N(t)N(t)^2\,dt)$.
\end{lemma}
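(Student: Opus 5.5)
Both parts follow from the spatial localization in \eqref{localized}. The first part uses an $L^2$ kernel estimate for $P_{\le K_0}$. The second combines this with an $L^\infty$ bound on $V*|w|^2$ and a Strichartz count on each small interval.

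\textbf{Step 1: proof of \eqref{truncateu}.} Fix $\eta>0$ and write $u=u\chi_{\{|x-x(t)|<C(\eta)/N(t)\}}+u\chi_{\{|x-x(t)|\ge C(\eta)/N(t)\}}$. By \eqref{localized}, the second piece has $L^2$ norm at most $\eta$. Since $P_{\le K_0}$ is bounded on $L^2$ uniformly in $K_0$, this piece contributes at most $\eta$ to $w$.

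For the first piece, the kernel $K_0^d\check\varphi(K_0\cdot)$ satisfies $|K_0^d\check\varphi(K_0 z)|\lesssim K_0^d\langle K_0 z\rangle^{-d-1}$. Take $|x-x(t)|\ge R/N(t)$ with $R\ge 2C(\eta)$. Then every point $y$ in the support of the first piece satisfies $|x-y|\ge R/(2N(t))$. Schur's test restricted to this region gives an $L^2$ bound of
\[
\int_{|z|\ge R/(2N(t))}K_0^d\langle K_0z\rangle^{-d-1}\,dz\lesssim \big(1+K_0R/N(t)\big)^{-1}\le (1+R)^{-1},
\]
which uses $K_0\ge1\ge N(t)$. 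Hence the tail of $w$ outside $B(x(t),R/N(t))$ is at most $\eta+C(1+R)^{-1}M(u)^{1/2}$. Letting $R\to\infty$ and then $\eta\to0$ gives \eqref{truncateu}, uniformly in $t$ and $K_0$.

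\textbf{Step 2: proof of \eqref{truncateu2}.} For fixed $t$, apply H\"older's inequality:
\[
\int_{|x-x(t)|\ge R/N}|w|^2(V*|w|^2)\,dx\le \|V*|w|^2\|_{L^\infty_x}\int_{|x-x(t)|\ge R/N}|w|^2\,dx .
\]
As in the treatment of $\Gamma_3$, Lorentz H\"older with $V\in L^{d/2,\infty}$ gives $\|V*|w|^2\|_{L^\infty}\lesssim\|w\|_{L^{2d/(d-2),2}}^2\lesssim \|u(t)\|_{L_x^{2d/(d-2),2}}^2$. The spatial factor is $o_R(1)$ by Step 1. Summing over small intervals, the integral in \eqref{truncateu2} is at most
\[
o_R(1)\sum_l\sup_{J_l}\tilde N\,\|u\|_{L_t^2L_x^{\frac{2d}{d-2},2}(J_l)}^2\lesssim_u o_R(1)\sum_l\sup_{J_l}\tilde N .
\]
The last bound uses Lemma~\ref{lorennorm}, since $\|u\|_{L^{2(d+2)/d}_{t,x}(J_l)}=1$. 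By Proposition~\ref{zzzz2}, $|J_l|\sim N(J_l)^{-2}$ and $N(t)\sim N(J_l)$ on $J_l$. Together with $\sup_{J_l}\tilde N\sim\inf_{J_l}\tilde N$, this gives
\[
\sup_{J_l}\tilde N\sim\int_{J_l}\tilde N(t)N(t)^2\,dt .
\]
Summing over $l$ yields \eqref{truncateu2}.

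\textbf{Arbitrary $T$.} Decompose $[0,T]$ into complete small intervals plus one final incomplete interval. On the incomplete interval the Strichartz norm is still $\lesssim1$, and $\tilde N\le N\le1$, so its contribution is $o_R(1)\cdot O(1)$. This accounts for the extra term in $o_R(1)(1+\int_0^T\tilde N N^2\,dt)$.

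The main obstacle is the uniformity in $K_0$ in Step 1. It hinges on the kernel tail estimate together with $K_0\ge N(t)$, which is guaranteed by $N(t)\le1\le K_0$. Everything else is bookkeeping from the earlier propositions.
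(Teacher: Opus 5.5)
Your proof is correct and follows the same route as the paper. Both split the kernel tail from the localized part to get \eqref{truncateu}, then obtain an $o_R(1)$ bound on each complete small interval, weight it by $\sup_{J_l}\tilde N\lesssim_u\int_{J_l}\tilde N N^2\,dt$, and sum. The only differences are in the bookkeeping. In Step 1 you split $u$ rather than the kernel variable. In Step 2 you put $V*|w|^2$ in $L^\infty_x$ via Lorentz duality and Lemma \ref{lorennorm}, whereas the paper uses Hardy--Littlewood--Sobolev in $L_t^4L_x^{2d/(d-1)}$ and interpolates the cut-off factor between the $L_t^\infty L_x^2$ tail and $L_t^2L_x^{2d/(d-2)}$.
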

\begin{proof}
The kernel of $P_{\leq K_0}$ is $K_0^d\varphi^\vee(K_0z)$. By
Cauchy--Schwarz and the $L^1$ bound for this kernel,
\begin{align*}
 &\int_{|x-x(t)|\geq R/N(t)}|w(t,x)|^2\,dx\\
 &\lesssim_d
 \int_{|z|\leq R/(2N(t))}K_0^d|\varphi^\vee(K_0z)|
       \int_{|x-x(t)|\geq R/N(t)}|u(t,x-z)|^2\,dx\,dz\\
 &\quad+
 \int_{|z|>R/(2N(t))}K_0^d|\varphi^\vee(K_0z)|
       \int_{|x-x(t)|\geq R/N(t)}|u(t,x-z)|^2\,dx\,dz\\
 &=:\operatorname{Term}_1+\operatorname{Term}_2.
\end{align*}
If $|z|\leq R/(2N(t))$ and $|x-x(t)|\geq R/N(t)$, then
$|x-z-x(t)|\geq R/(2N(t))$. Thus \eqref{localized} gives
$\operatorname{Term}_1\lesssim_u o_R(1)$. For the second term, conservation
of mass and the decay of $\varphi^\vee$ yield
\begin{align*}
 \operatorname{Term}_2
 &\lesssim_u\int_{|z|>R/(2N(t))}K_0^d|\varphi^\vee(K_0z)|\,dz\\
 &\lesssim_u\int_{|y|>K_0R/(2N(t))}\langle y\rangle^{-d-1}\,dy
 \lesssim_u\frac{N(t)}{K_0R}\lesssim_u R^{-1},
\end{align*}
since $K_0\geq1$ and $N(t)\leq1$. This proves \eqref{truncateu}.

We next estimate the potential energy on a small interval $J_l$.
By H\"older's inequality and the Hardy--Littlewood--Sobolev inequality,
\begin{align*}
 &\int_{J_l}\int_{|x-x(t)|\geq R/N(t)}
       |w(t,x)|^2(V*|w|^2)(t,x)\,dx\,dt\\
 &\lesssim_d
 \|1_{\{|x-x(t)|\geq R/N(t)\}}w\|_
                   {L_t^4L_x^{\frac{2d}{d-1}}(J_l\times\R^d)}^2
 \|w\|_{L_t^4L_x^{\frac{2d}{d-1}}(J_l\times\R^d)}^2.
\end{align*}
The local Strichartz estimates and the boundedness of $P_{\leq K_0}$ give
\[
 \|w\|_{L_t^4L_x^{\frac{2d}{d-1}}(J_l\times\R^d)}
 +\|w\|_{L_t^2L_x^{\frac{2d}{d-2}}(J_l\times\R^d)}\lesssim_u1.
\]
Interpolating with \eqref{truncateu}, we obtain
\begin{align*}
 &\|1_{\{|x-x(t)|\geq R/N(t)\}}w\|_
                   {L_t^4L_x^{\frac{2d}{d-1}}(J_l\times\R^d)}^2\\
 &\leq
 \|1_{\{|x-x(t)|\geq R/N(t)\}}w\|_{L_t^\infty L_x^2(J_l\times\R^d)}
 \|w\|_{L_t^2L_x^{\frac{2d}{d-2}}(J_l\times\R^d)}
 \lesssim_u o_R(1).
\end{align*}
Hence the potential energy integral on each small interval is $o_R(1)$,
uniformly in $K_0$. If $J_l$ is complete, Proposition \ref{zzzz2} and the
local comparability of $\tilde N$ imply
\[
 \sup_{t\in J_l}\tilde N(t)
 \lesssim_u\int_{J_l}\tilde N(t)N(t)^2\,dt.
\]
Multiplying the local estimate by $\sup_{J_l}\tilde N$ and summing over
the complete intervals proves \eqref{truncateu2}. For an incomplete final
interval, the same local estimate gives an additional $o_R(1)$, since
$\tilde N(t)\leq N(t)\leq1$.
\end{proof}

\begin{lemma}\label{quasisolitonlemma2}
Let $u$ be the solution as in Theorem \ref{reduction} that satisfies $\int_{0}^{\infty} N(t)^{3} d t= \infty$, then
    \begin{equation}\label{vvvc}
        \int_0^T\int_{|x-x(t)|\leq\frac{R}{N(t)}}\tilde{N}(t) |P_{\leq \eta_{\ast}^{-1}K}u(t,x)|^{\frac{2(d+2)}{d}} dx dt\gtrsim \int_0^T\tilde{N}(t)N(t)^2dt
    \end{equation}
    if $K=\int_0^TN(t)^3dt\gg1$, $R\gg1$ and $\eta_{\ast}\ll 1$, where  $\tilde{N}(t)\leq N(t)$ satisfies $\max_{t\in J_l}\tilde{N}(t)\sim \min_{t\in J_l}\tilde{N}(t)$ on any small interval $J_l\subset [0,T]$.
 \end{lemma}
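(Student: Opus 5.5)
The plan is to prove \eqref{vvvc} on each complete small interval $J_l$ and then sum over $l$, exactly as in the proof of \eqref{truncateu2}. By Proposition \ref{zzzz2} and the local comparability of $\tilde N$, on a complete $J_l$ we have $\int_{J_l}\tilde N(t)N(t)^2\,dt\sim_u \sup_{J_l}\tilde N\cdot N(J_l)^{-1}\cdot N(J_l)^{2}\cdot N(J_l)^{-1}$; more precisely $|J_l|\sim N(J_l)^{-2}$ and $N(t)\sim N(J_l)$, so $\int_{J_l}\tilde N N^2\,dt\sim \tilde N(J_l)$. It therefore suffices to show
\[
\int_{J_l}\int_{|x-x(t)|\leq R/N(t)}|P_{\leq\eta_*^{-1}K}u|^{\frac{2(d+2)}{d}}\,dx\,dt\gtrsim_u 1
\]
for every complete $J_l$ with constant uniform in $l$, since multiplying by $\inf_{J_l}\tilde N\sim\sup_{J_l}\tilde N$ and summing then gives the claim. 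The possibly incomplete last interval is simply discarded, as the integrand is nonnegative (its contribution to the right-hand side is $O(1)$, absorbed because $K\gg1$ forces the right side to be large when $\tilde N$ is comparable to $N$; otherwise one restricts to complete intervals as in Lemma \ref{quasisolitonlemma1}).

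Step one is a lower bound without truncations: on a complete $J_l$, $\|u\|_{L^{2(d+2)/d}_{t,x}(J_l\times\R^d)}=1$ by definition. Step two removes the frequency truncation. On $[0,T]$ we have $|\xi(t)|\leq \eta_1^{-1}K$ and $N(t)\leq1$, so with $\eta_*\ll\eta_1$ the frequency support of $u(t)$ up to mass $\eta^2$ lies in $\{|\xi|\leq \eta_1^{-1}K+C(\eta)\}\subset\{|\xi|\le \eta_*^{-1}K/2\}$ once $K\gg_\eta1$. Hence $\|P_{\geq\eta_*^{-1}K}u\|_{L_t^\infty L_x^2}\lesssim\eta$. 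Interpolating this with the local Strichartz bound $\|u\|_{L_t^2L_x^{2d/(d-2)}(J_l)}\lesssim_u1$ (Lemma \ref{lorennorm}) gives $\|P_{>\eta_*^{-1}K}u\|_{L^{2(d+2)/d}_{t,x}(J_l)}\lesssim_u\eta^{2/(d+2)}$. Thus $\|w\|_{L^{2(d+2)/d}_{t,x}(J_l)}\geq 1/2$ for $\eta$ small.

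Step three removes the spatial truncation: by \eqref{truncateu} with $K_0=\eta_*^{-1}K$, $\|1_{|x-x(t)|\ge R/N(t)}w\|_{L_t^\infty L_x^2}=o_R(1)$ uniformly, and interpolating again with the $L_t^2L_x^{2d/(d-2)}$ bound on $w$ over $J_l$ gives that the exterior part has $L^{2(d+2)/d}_{t,x}(J_l)$ norm $o_R(1)$. So for $R$ large the interior integral is $\gtrsim 1$, uniformly in $l$. The main obstacle is just making sure the constants in steps two and three are uniform in $l$, $T$, and $K$; this rests on the uniformity in Lemma \ref{quasisolitonlemma1} and on local Strichartz bounds depending only on $u$. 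The only genuine requirement linking parameters is choosing $\eta_*\ll\eta_1$ and then $K$ large depending on $\eta$, consistent with the hypotheses.
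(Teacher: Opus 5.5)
Your proposal is correct and follows the same route as the paper's much terser proof. Both remove the frequency truncation using $|\xi(t)|\le\eta_1^{-1}K$ together with \eqref{localized}, then remove the spatial truncation by mass localization interpolated against the local $L_t^2L_x^{2d/(d-2)}$ bound, and finally use $\int_{J_l}N(t)^2\,dt\sim_u\int_{J_l}\int|u|^{2(d+2)/d}\,dx\,dt\sim_u1$ on complete small intervals, summing with the local comparability of $\tilde N$. Your use of \eqref{truncateu} for $w$ instead of \eqref{localized} for $u$ is an inessential variation, and restricting to $[0,T]$ a union of complete small intervals matches how the lemma is actually applied in the paper.
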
 
    
    \begin{proof}
        By choosing $\eta_{\ast}$ sufficiently small, for any small $\eta>0$, we see that 
        \[
       |\xi|\geq \eta_{\ast}^{-1}K\Longrightarrow |\xi-\xi(t)|\geq C(\eta)N(t),\quad \forall t\in [0,T],
        \]
        if $K\geq K_0(\eta)$. Therefore, \eqref{vvvc} comes from \eqref{localized}, and the fact that  
        \[
        \int_{J_l} N(t)^2 dt\lesssim\int_{J_l}\int_{\R^d} |u(t,x)|^{\frac{2(d+2)}{d}}dxdt\lesssim  1+\int_{J_l} N(t)^2 dt.
        \]
    \end{proof}

We are now in the position to prove Theorem \ref{quasi}. To highlight the main ideas of the proof and the technical route, the structure of our proof parallels that in \cite{Dodson-Adv}: we first demonstrate it in the simplified case where $N(t)\equiv1$, and then extend it to the general case.
\subsection{$N(t)\equiv1$}
We take $[0,T]$ to be a union of complete small intervals; then $K=T$.
Let  $\chi: \mathbb{R} \rightarrow[0, \infty)$ be an even bump function satisfying $\|\chi\|_{L^{1}(\mathbb{R})}=1$ and $\operatorname{supp}(\chi) \subset(-1,1)$. Given a constant $\eta_2\ll1$, we define a function $\zeta_{R}: \mathbb{R} \rightarrow[0,1]$ by the formula
\begin{equation}\label{equ:zetaRdef}
\zeta_{R}(r):=\frac{1}{\eta_{2} R} \int_{\mathbb{R}} 1_{\left[-\left(1+2 \eta_{2}\right) R,\left(1+2 \eta_{2}\right) R\right]}(r-s) \chi\left(\frac{s}{\eta_{2} R}\right) ds, 
\end{equation}
where $1_A(r)$ denotes the characteristic function of the set $A.$
 Observe that $\zeta_{R}$ is nonnegative, decreasing, and satisfies
\[
\zeta_{R}(r)=\left\{\begin{array}{ll}
1, & |r| \leq R \\
0, & |r| \geq\left(1+4 \eta_{2}\right) R.
\end{array} \right.
\]
Moreover, by Young's inequality, $\zeta_{R}$ satisfies the derivative estimates
\begin{equation}\label{mor1}
\left|\zeta_{R}^{(n)}(r)\right| \lesssim_n \frac{1}{\left(\eta_{2} R\right)^{n}}, \quad \forall n \in \mathbb{N}_{0} 
\end{equation}

Next, define a compactly supported function $\varphi_{R}: \mathbb{R} \rightarrow[0, \infty)$ by the formula
\begin{equation}\label{equ:varphiRdef}
\varphi_{R}(r):=\frac{1}{\left\|\zeta_{R}(|\cdot|)\right\|_{L^{1}\left(\mathbb{R}^{d}\right)}} \int_{\mathbb{S}^{d-1}} \int_{\mathbb{R}^{d}} \zeta_{R}(|r \omega-z|) \zeta_{R}(|z|) d z d \omega, 
\end{equation}
where $d \omega$ denotes the unit normalized surface measure on the sphere $\mathbb{S}^{d-1}$. Observe that by rotation invariance of the measure and the isometry property of rotations, for any $x, y \in \mathbb{R}^{d}$, we have that
\begin{align*}
\varphi_{R}(|x-y|) & =\frac{1}{\left\|\zeta_{R}(|\cdot|)\right\|_{L^{1}}} \int_{\mathbb{S}^{d-1}} \int_{\mathbb{R}^{d}} \zeta_{R}\big(\big||x-y| \omega-z\big|\big) \zeta_{R}(|z|) d z d \omega \\
& =\frac{1}{\left\|\zeta_{R}(|\cdot|)\right\|_{L^{1}}} \int_{\mathbb{S}^{d-1}} \int_{\mathbb{R}^{d}} \zeta_{R}\left(\left|\mathcal{O}_{\omega}(x-y)-z\right|\right) \zeta_{R}(|z|) d z d \omega \\
& =\frac{1}{\left\|\zeta_{R}(|\cdot|)\right\|_{L^{1}}} \int_{\mathbb{S}^{d-1}} \int_{\mathbb{R}^{d}} \zeta_{R}\left(\left|x-y-\mathcal{O}_{\omega}^{*} z\right|\right) \zeta_{R}\left(\left|\mathcal{O}_{\omega}^{*} z\right|\right) d z d \omega \\
& =\frac{1}{\left\|\zeta_{R}(|\cdot|)\right\|_{L^{1}}} \int_{\mathbb{R}^{d}} \zeta_{R}(|x-y-z|) \zeta_{R}(|z|) d z .
\end{align*}

Moreover, by translation invariance of the Lebesgue measure,
\begin{equation*}
\varphi_{R}(|x-y|)=\left\|\zeta_{R}(|\cdot|)\right\|_{L^{1}}^{-1} \int_{\mathbb{R}^{d}} \zeta_{R}(|x-z|) \zeta_{R}(|y-z|) d z. 
\end{equation*}

Additionally, $\varphi_{R}$ is decreasing and $\varphi_{R}$ satisfies the derivative estimates
\begin{equation*}
\operatorname{supp}\left(\varphi_{R}\right) \subset\left[-\left(2+8 \eta_{2}\right) R,\left(2+8 \eta_{2}\right) R\right] \quad \text { and } \quad\left|\varphi_{R}^{(n)}(r)\right| \lesssim_{n} \frac{1}{\left(\eta_{2} R\right)^{n}}, \forall n \in \mathbb{N} . 
\end{equation*}

We define a ``smoothed out" indicator function $\tilde{1}_{[0,1]}$ of the unit interval to be a bump function which satisfies $0 \leq \tilde{1}_{[0,1]} \leq 1$ and
\begin{equation}\label{def1}
    \tilde{1}_{[0,1]}(r)=\left\{\begin{array}{ll}
1, & r \leq 1 \\
0, & r \geq 5 / 4
\end{array} .\right.
\end{equation}

For general $a \in(0, \infty)$, we define $\tilde{1}_{[0, a]}:=\tilde{1}_{[0,1]}(\cdot / a)$ and $\tilde{1}_{(a, b]}:=\tilde{1}_{[0, b]}-\tilde{1}_{[0, a]}$, for $0<a<b<\infty$. We define $\tilde{1}_{(a, \infty)}:=1-\tilde{1}_{[0, a]}$. In particular,
\begin{gather}\label{mor2}
    \left|\tilde{1}_{[0, a]}^{(n)}(r)\right| \lesssim_{n} a^{-n} 1_{[a, 5 a / 4]}(r), \quad\left|\tilde{1}_{(a, b]}^{(n)}(r)\right| \lesssim_{n} \max \left\{a^{-n}, b^{-n}\right\} 1_{[a, 5 b / 4]}(r), \notag\\
    \left|\tilde{1}_{(a, \infty)}^{(n)}(r)\right| \lesssim_{n} a^{-n} 1_{[a, 5 a / 4]}(r).
\end{gather}

Here $n\geq1$, and the cutoff functions take values in $[0,1]$.

For later use in the estimation of error terms, we decompose the function $\varphi_{R}$ into a sum of three pieces $\varphi_{R, 1}$, $\varphi_{R, 2}$, and $\varphi_{R, 3}$, respectively defined by the formulae
\begin{align}
& \varphi_{R, 1}(r):=\left\|\zeta_{R}(|\cdot|)\right\|_{L^{1}}^{-1}\left(\int_{\mathbb{S}^{d-1}} \int_{|z| \leq\left(1-4 \eta_{2}\right) R} \zeta_{R}(|r \omega-z|) \zeta_{R}(|z|) d z d \omega\right) \tilde{1}_{\left[0, \eta_{2} R\right]}(r), \label{varphi1} \\
& \varphi_{R, 2}(r):=\left\|\zeta_{R}(|\cdot|)\right\|_{L^{1}}^{-1}\left(\int_{\mathbb{S}^{d-1}} \int_{|z|>\left(1-4 \eta_{2}\right) R} \zeta_{R}(|r \omega-z|) \zeta_{R}(|z|) d z d \omega\right) \tilde{1}_{\left[0, \eta_{2} R\right]}(r), \label{varphi2}\\
& \varphi_{R, 3}(r):=\left\|\zeta_{R}(|\cdot|)\right\|_{L^{1}}^{-1}\left(\int_{\mathbb{S}^{d-1}} \int_{\R^d} \zeta_{R}(|r \omega-z|) \zeta_{R}(|z|) d z d \omega\right) \tilde{1}_{\left( \eta_{2} R, \infty\right)}(r) .\label{varphi3}
\end{align}
We record some properties of the $\varphi_{R, j}$ functions in the next lemma.
\begin{lemma}[$\varphi_{R, j}$ properties]\label{lemmavarphi}
The functions $\varphi_{R, 1}, \varphi_{R, 2}$, and $\varphi_{R, 3}$ satisfy the following properties:
\begin{itemize}
\item[$(i)$] There exists a constant $C_{\eta_{2}} \sim 1$ such that
$$\varphi_{R, 1}(r)=C_{\eta_{2}} \tilde{1}_{\left[0, \eta_{2} R\right]}(r),$$
and for any $n \in \mathbb{N}$,
$$\left|\varphi_{R, 1}^{(n)}(r)\right| \lesssim_n \frac{1}{\left(\eta_{2} R\right)^{n}} 1_{\left[\eta_{2} R / 2,2 \eta_{2} R\right]}(r).$$

\item[$(ii)$] We have that
\begin{equation}\label{varphi22}
 \varphi_{R, 2}(r)=\left\|\zeta_{R}(|\cdot|)\right\|_{L^{1}}^{-1}\left(\int_{\mathbb{S}^{d-1}} \int_{\left(1-4 \eta_{2}\right) R<|z| \leq\left(1+5 \eta_{2}\right) R} \zeta_{R}(|r \omega-z|) \zeta_{R}(|z|) d z d \omega\right) \tilde{1}_{\left[0, \eta_{2} R\right]}(r)    
\end{equation}
and for every $n \in \mathbb{N}_{0}$.
\begin{equation}\label{varphi2est}
\left|\varphi_{R,2}^{(n)}(r)\right| \lesssim_n \frac{\eta_{2}}{\left(\eta_{2} R\right)^{n}} 1_{\left[0,5 \eta_{2} R / 4\right]}(r).
\end{equation}

\item[$(iii)$] We have that
\begin{equation}\label{varphi32}
\quad \varphi_{R, 3}(r)=\left\|\zeta_{R}(|\cdot|)\right\|_{L^{1}}^{-1}\left(\int_{\mathbb{S}^{d-1}} \int_{|z| \leq\left(1+4 \eta_{2}\right) R} \zeta_{R}(|r \omega-z|) \zeta_{R}(|z|) d z d \omega\right) \tilde{1}_{\left(\eta_{2} R,\infty\right)}(r),
\end{equation}
and for every $n \in \mathbb{N}_{0}$,
\begin{equation}\label{varphi3est}
\left|\varphi_{R, 3}^{(n)}(r)\right| \lesssim_{n}\left(\eta_{2} R\right)^{-n} 1_{\left[\eta_{2} R,\left(2+8 \eta_{2}\right) R\right]}(r).
\end{equation}
\end{itemize}
\end{lemma}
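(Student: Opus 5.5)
The plan is to work from the explicit convolution formula
\[
\Phi(r):=\|\zeta_R(|\cdot|)\|_{L^1}^{-1}\int_{\mathbb{S}^{d-1}}\int_{\R^d}\zeta_R(|r\omega-z|)\zeta_R(|z|)\,dz\,d\omega ,
\]
split according to the region of $z$, together with the support and derivative properties of $\zeta_R$ and $\tilde 1$. Two elementary facts will be used throughout. First, $\zeta_R(|z|)=1$ for $|z|\le R$ and vanishes for $|z|\ge(1+4\eta_2)R$. Second, $\|\zeta_R(|\cdot|)\|_{L^1}\sim R^d$, uniformly in $\eta_2\le 1$.

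\emph{Part (i).} On the support of $\tilde 1_{[0,\eta_2R]}$ we have $r\le \frac54\eta_2R$. For $|z|\le(1-4\eta_2)R$ this gives $|r\omega-z|\le (1-\frac{11}{4}\eta_2)R\le R$, so $\zeta_R(|r\omega-z|)=1$ and $\zeta_R(|z|)=1$ there. The inner integral therefore equals $|B_{(1-4\eta_2)R}|$, independently of $r$ and $\omega$. Setting $C_{\eta_2}:=|B_{(1-4\eta_2)R}|/\|\zeta_R\|_{L^1}$, which depends only on $\eta_2$ and $d$ by scaling and satisfies $C_{\eta_2}\sim1$, we get $\varphi_{R,1}=C_{\eta_2}\tilde 1_{[0,\eta_2R]}$. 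The derivative bounds then follow from \eqref{mor2}, since derivatives of $\tilde1_{[0,\eta_2R]}$ are supported in $[\eta_2R,\frac54\eta_2R]\subset[\eta_2R/2,2\eta_2R]$.

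\emph{Part (ii).} The identity \eqref{varphi22} holds because $\zeta_R(|z|)=0$ for $|z|\ge(1+4\eta_2)R$. For \eqref{varphi2est} with $n=0$, the integrand is bounded by $1$ on a shell of volume $\lesssim \eta_2R^d$; dividing by $\|\zeta_R\|_{L^1}\sim R^d$ gives $\lesssim\eta_2$. For $n\ge1$ we use Leibniz. When derivatives fall on $\tilde1_{[0,\eta_2R]}$, each costs a factor $(\eta_2R)^{-1}$ by \eqref{mor2}. When they fall on $\zeta_R(|r\omega-z|)$, \eqref{mor1} gives the same factor $(\eta_2R)^{-1}$ per derivative (the map $r\mapsto |r\omega - z|$ has bounded derivatives only away from $r\omega=z$, so it is cleaner to differentiate $\zeta_R\circ|\cdot|$ as a smooth radial function in the variable $r\omega$), while the shell still has volume $\lesssim\eta_2R^d$. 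The support claim $[0,\frac54\eta_2R]$ comes from $\tilde 1$.

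\emph{Part (iii).} The identity \eqref{varphi32} again follows from the support of $\zeta_R(|z|)$. The support bound follows from $\tilde1_{(\eta_2R,\infty)}$ vanishing for $r\le\eta_2R$ and from $\Phi(r)=0$ once $r\ge(2+8\eta_2)R$, since both factors cannot then be nonzero. The derivative bounds follow by Leibniz: derivatives of $\Phi$ bring $(\eta_2R)^{-n}$ via \eqref{mor1}, integrated against the normalized bounded mass, and derivatives of $\tilde 1_{(\eta_2R,\infty)}$ bring $(\eta_2R)^{-n}$ via \eqref{mor2}.

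The main obstacle is the $\eta_2$ gain in \eqref{varphi2est} for derivatives. Here one must check that differentiating in $r$ never produces boundary terms on the sphere $|z|=(1-4\eta_2)R$. This holds because the cutoff in $z$ is fixed and independent of $r$, so we differentiate under the integral sign and the shell measure $\eta_2R^d$ is retained.
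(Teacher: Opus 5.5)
Your proposal is correct and follows essentially the same route as the paper. The identities come from the support of $\zeta_R$ and the triangle inequality, the normalization uses $\|\zeta_R(|\cdot|)\|_{L^1}\sim R^d$, and the derivative bounds come from the Leibniz rule with \eqref{mor1} and \eqref{mor2}; you differentiate $x\mapsto\zeta_R(|x|)$ as a smooth function on $\R^d$, so the fixed shell of volume $\lesssim\eta_2R^d$ yields the $\eta_2$ gain in (ii). Your handling of the derivatives in (i), read off directly from $\varphi_{R,1}=C_{\eta_2}\tilde{1}_{[0,\eta_2R]}$, is slightly cleaner than the paper's appeal to the Leibniz rule.
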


\begin{proof}

(i) Since $\zeta_{R} \equiv 1$ on the interval $[0, R]$ and $\operatorname{supp}\left(\tilde{1}_{[0, \eta_{2} R]}\right) \subset[0,5 \eta_{2} R / 4]$, we see from the triangle inequality that
\begin{equation*}
|r \omega-z| \leq|r|+|z| \leq(1-2 \eta_{2}) R, \quad \forall|r| \leq 2 \eta_{2} R, \quad|z| \leq (1-4 \eta_{2}) R.
\end{equation*}
Hence the integrand
\begin{equation*}
\zeta_{R}(|r \omega-z|) \zeta_{R}(|z|) 
\end{equation*}
is identically one when both $|r| \leq 2 \eta_{2} R$ and $|z| \leq(1-4 \eta_{2}) R$. Therefore, for $r\leq\eta_2R$,
\begin{equation*}
\varphi_{R, 1}(r)=\|\zeta_{R}(|\cdot|)\|_{L^{1}}^{-1} \int_{\mathbb{S}^{d-1}} \int_{|z| \leq(1-4 \eta_{2}) R} d z d \omega=\frac{C_{1}((1-4 \eta_{2}) R)^{d}}{\|\zeta_{R}(|\cdot|)\|_{L^{1}}}=\frac{C_{1}(1-4 \eta_{2})^{d}}{C_{2}},
\end{equation*}
where we obtain the ultimate equality by using that $\|\zeta_{R}(|\cdot|)\|_{L^{1}(\mathbb{R}^{d})}=C_{2} R^{d}$, for some $C_{2}>0$, by scaling invariance of Lebesgue measure. The derivative estimates for $\varphi_{R, 1}$ follow from the Leibnitz rule and the bounds \eqref{mor1} and \eqref{mor2}.

(ii) If $|z|>(1+5 \eta_{2}) R$, then $\zeta_{R}(|z|)=0$, which implies that the integrand in \eqref{varphi2} is zero, yielding the desired identity \eqref{varphi22}. To obtain the derivative bounds, set $\tilde{\zeta}_{R}(x):=\zeta_{R}(|x|)$. Then by the chain rule,
\[
|\nabla^{n} \tilde{\zeta}_{R}(x)| \lesssim_n(\eta_{2} R)^{-n} 1_{A(0, R,(1+4 \eta_{2}) R)}(x), \quad \forall x \in \mathbb{R}^{d}, n \in \mathbb{N}.
\]
So by the Leibnitz rule and \eqref{mor2}, we see that
\begin{align*}
&|\varphi_{R, 2}^{(n)}(r)| \\
\lesssim& \sum_{\alpha+\beta=n} \frac{C_{\alpha \beta}| \tilde{1}_{[0, \eta_{2} R]}^{(\beta)}(r)|}{\|\zeta_{R}(|\cdot|)\|_{L^{1}(\mathbb{R}^{d})}}\left(\int_{\mathbb{S}^{d-1}} \int_{(1-4 \eta_{2}) R \leq|z| \leq(1+5 \eta_{2}) R}|\nabla^{\alpha} \tilde{z}_{R}(r \omega-z)||\zeta_{R}(|z|)| d z d \omega\right) \\
 \lesssim&_n \|\zeta_{R}(|\cdot|)\|_{L^{1}(\mathbb{R}^{d})}^{-1}(\eta_{2} R)^{-n}\left(\int_{(1-4 \eta_{2}) R \leq|z| \leq(1+5 \eta_{2}) R} d z\right) 1_{[0,5 \eta_{2}R / 4]}(r) \\
 \lesssim& \frac{\eta_{2} R^{d}}{(\eta_{2} R)^{n}\|\zeta_{R}(|\cdot|)\|_{L^{1}(\mathbb{R}^{d})}} 1_{[0,5 \eta_{2}R / 4]}(r) \\
\lesssim& \frac{\eta_{2}}{(\eta_{2} R)^{n}} 1_{[0,5 \eta_{2}R / 4]}(r)
\end{align*}
where the ultimate equality follows from $\|\zeta_{R}(|\cdot|)\|_{L^{1}} \sim R^{d}$.

 (iii) If $|z| \geq(1+4 \eta_{2}) R$, then $\zeta_{R}(z)=0$, implying the integrand in \eqref{varphi3} is zero. Now observe from the reverse triangle inequality that
\begin{equation*}
|r \omega-z| \geq(2+8 \eta_{2}) R-(1+4 \eta_{2}) R=(1+4 \eta_{2}) R, \quad \forall|z| \leq(1+4 \eta_{2}) R,|r| \geq(2+8 \eta_{2}) R,
\end{equation*}
which implies that $\zeta_{R}(|r \omega-z|)=0$ and therefore the integrand in \eqref{varphi3} is zero. The identity \eqref{varphi32} follows immediately. The derivative estimates follow by the same argument as in the proof of assertion (i).
\end{proof}

Next, we define a  spatially averaged version of the function $\varphi$. More precisely, define the function $\psi_{R}:[0, \infty) \rightarrow[0, \infty)$ by the formula
\begin{equation}\label{equ:psiRdef}
\psi_{R}( r):=\frac{1}{r} \int_{0}^{r} \varphi_{R}\left( s\right) d s.
\end{equation}

\begin{remark}We record the observation that
\begin{equation}\label{equ:idenpsi}
r\left(\partial_{r} \psi_{R}\right)(r)+\psi_{R}(r)=\frac{d}{d r}\left(r \psi_{R}(r)\right)=\varphi_{R}\left(r\right) 
\end{equation}
\end{remark}
As with $\varphi$, we decompose $\psi_{R}$ into three terms:
\begin{align}
 & \psi_{R, 1}(r):=\frac{1}{r} \int_{0}^{r} \varphi_{R, 1}\left( s\right) d s \label{psi1}\\
&\psi_{R,  2}(r) :=\frac{1}{r} \int_{0}^{r} \varphi_{R, 2}\left( s\right) d s\label{psi2}\\
& \psi_{R,  3}(r)
:=\frac{1}{r} \int_{0}^{r} \varphi_{R, 3}\left( s\right) d s.\label{psi3}  
\end{align}
We record some properties of the function $\psi_{R, j}$ with the next lemma.
\begin{lemma}[$\psi_{R,  j}$ properties]\label{lemmapsi}
The functions $\psi_{R, 1}, \psi_{R, 2}$, and $\psi_{R, 3}$ satisfy the following properties:
   \begin{itemize}
    \item[$(i)$] We have that
    \begin{equation}\label{psi12}
        \psi_{R,  1}(r)= \begin{cases}C_{\eta_{2}}, & r \leq \eta_{2}R\\ \frac{C_{\eta_{2}} }{r } \int_{0}^{\frac{5 \eta_{2} R}{4}} \tilde{1}_{[0,\eta_2R]}(s) d s, & r \geq \frac{5 \eta_{2} R}{4}\end{cases}
    \end{equation}
In particular, for every $n \in \mathbb{N}, \operatorname{supp}\left(\psi_{R,  1}^{(n)}\right) \subset\left[\eta_{2} R, \infty\right)$ and
    \begin{equation}\label{psi1est}
        \left|\psi_{R,1}^{(n)}(r)\right| \lesssim_{n} \frac{\eta_{2} R}{r^{n+1}} 1_{\left[\eta_{2} R, \infty\right)}(r).
    \end{equation}
 \item[$(ii)$] We have that
 \begin{equation}\label{psi22}
    \psi_{R, 2}(r)=\frac{1}{r} \int_{0}^{5 \eta_{2} R / 4} \varphi_{R, 2}(s) d s, \quad \forall r \geq \frac{5 \eta_{2} R}{4 }, 
 \end{equation}
and for every $n \in \mathbb{N}_{0}$,
\begin{equation}\label{psi2est}
\left|\psi_{R,2}^{(n)}(r)\right| \lesssim_{n} \eta_{2} \min \left\{\left(\frac{1}{\eta_{2} R}\right)^{n}, \frac{\eta_{2} R}{ r^{n+1}}\right\}. 
\end{equation}
\item[$(iii)$] We have that
\begin{equation}\label{psi32}
    \psi_{R, 3}(r)= \begin{cases}0, & r \leq \eta_{2} R \\ \frac{1}{r} \int_{0}^{\left(2+8 \eta_{2}\right) R} \varphi_{R, 3}(s) d s, & r \geq \left(2+8 \eta_{2}\right) R\end{cases}
\end{equation}
and for every $n \in \mathbb{N}_{0}$,
\begin{equation}\label{psi3est}
\left|\psi_{R,3}^{(n)}(r)\right| \lesssim_{n,d}
\begin{cases}
0, & 0\leq r\leq\eta_2R,\\
(\eta_2R)^{-n}, & \eta_2R<r\leq(2+8\eta_2)R,\\
Rr^{-n-1}, & r>(2+8\eta_2)R.
\end{cases}
\end{equation}
   \end{itemize} 
\end{lemma}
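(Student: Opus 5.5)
Everything rests on the averaging representation $\psi_{R,j}(r)=\frac1r\int_0^r\varphi_{R,j}(s)\,ds$ together with the bounds of Lemma \ref{lemmavarphi}. I will also use the standard trick for the derivatives of averages: writing $\psi(r)=\int_0^1\varphi(r\theta)\,d\theta$ gives
\[
\psi^{(n)}(r)=\int_0^1\theta^n\varphi^{(n)}(r\theta)\,d\theta .
\]
This controls small $r$. Alternatively, differentiate $r\psi=\int_0^r\varphi$ via the Leibniz rule:
\[
\psi^{(n)}(r)=\frac{(-1)^n n!}{r^{n+1}}\int_0^r\varphi+\sum_{k=1}^{n}c_{n,k}\frac{\varphi^{(k-1)}(r)}{r^{n-k+1}} .
\]
This controls large $r$ and shows that once $\varphi$ and its derivatives vanish, $\psi=c/r$ exactly.

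\textbf{(i)} By Lemma \ref{lemmavarphi}(i), $\varphi_{R,1}=C_{\eta_2}\tilde 1_{[0,\eta_2R]}$ equals $C_{\eta_2}$ on $[0,\eta_2R]$ and vanishes beyond $5\eta_2R/4$. This gives the two formulas in \eqref{psi12} directly: $\psi_{R,1}=C_{\eta_2}$ for $r\le\eta_2R$, so all derivatives vanish there. For $r\ge\eta_2R$ I use the Leibniz formula. The first term is $\lesssim r^{-n-1}\int_0^\infty\varphi_{R,1}\lesssim\eta_2R\,r^{-n-1}$. The other terms are supported in $[\eta_2R,5\eta_2R/4]$, where $r\sim\eta_2R$ and $|\varphi^{(k-1)}_{R,1}|\lesssim(\eta_2R)^{1-k}$. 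Each is therefore $\lesssim(\eta_2R)^{-n}\sim\eta_2R\,r^{-n-1}$.

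\textbf{(ii)} Identity \eqref{psi22} is immediate from the support of $\varphi_{R,2}$ in $[0,5\eta_2R/4]$ from \eqref{varphi2est}. For the bound \eqref{psi2est}:
\begin{itemize}
\item The $\theta$-average formula with \eqref{varphi2est} gives $|\psi^{(n)}_{R,2}|\lesssim\eta_2(\eta_2R)^{-n}$ for all $r$.
\item For $r\ge5\eta_2R/4$, $\psi_{R,2}=c/r$ with $|c|\le\int\varphi_{R,2}\lesssim\eta_2\cdot\eta_2R$, hence $|\psi^{(n)}_{R,2}|\lesssim\eta_2\cdot\eta_2R\,r^{-n-1}$.
\item In the overlap region $r\sim\eta_2R$, the two bounds agree.
\end{itemize}
Taking the minimum yields \eqref{psi2est}.

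\textbf{(iii)} Since $\varphi_{R,3}$ vanishes on $[0,\eta_2R]$ (the cutoff $\tilde1_{(\eta_2R,\infty)}$) and beyond $(2+8\eta_2)R$ by \eqref{varphi3est}, identity \eqref{psi32} follows. The derivative bounds then go region by region:
\begin{itemize}
\item On $r\le\eta_2R$, $\psi_{R,3}\equiv0$, so all derivatives vanish.
\item On $r>(2+8\eta_2)R$, $\psi_{R,3}=c/r$ with $|c|\lesssim R$, since $\varphi_{R,3}\lesssim1$ on an interval of length $\lesssim R$. This gives $Rr^{-n-1}$.
\item On the middle range $\eta_2R<r\le(2+8\eta_2)R$, use the Leibniz formula. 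The integral term is $\lesssim R\,r^{-n-1}\le R(\eta_2R)^{-n-1}\sim\eta_2^{-1}(\eta_2R)^{-n}$. The other terms are $\lesssim(\eta_2R)^{1-k}r^{k-n-1}\le(\eta_2R)^{-n}$.
\end{itemize}

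The integral term in the middle range is the main obstacle: it carries an extra factor $\eta_2^{-1}$. To remove it I would sharpen the estimate near $r\sim\eta_2R$ by using that $\varphi_{R,3}$ is supported in $[\eta_2R,\infty)$, so for $r\le2\eta_2R$ one has $\int_0^r\varphi_{R,3}\lesssim r$. This bounds the term by $r^{-n}\lesssim(\eta_2R)^{-n}$. For $r\ge2\eta_2R$, the bound $R\,r^{-n-1}$ is adequate for $n=0$ and otherwise exceeds $(\eta_2R)^{-n}$ by at most a factor $\eta_2^{-1}$. I expect the intended statement absorbs this (implicit constants allowed to depend on $\eta_2$ only polynomially, or the bound read as $\max\{(\eta_2R)^{-n},Rr^{-n-1}\}$, which is what the computation gives). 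I would state it in that form if necessary.
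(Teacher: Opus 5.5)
Your arguments for (i), for the identities \eqref{psi12}, \eqref{psi22}, \eqref{psi32}, and for (ii) are correct. In (ii) you take a shorter route than the paper. The averaging formula $\psi^{(n)}(r)=\int_0^1\theta^n\varphi^{(n)}(r\theta)\,d\theta$ gives $|\psi_{R,2}^{(n)}|\le\frac{1}{n+1}\sup|\varphi_{R,2}^{(n)}|\lesssim\eta_2(\eta_2R)^{-n}$ in one line. The paper instead Taylor-expands $\varphi_{R,2}$ to high order and estimates the remainder.

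The gap is the middle range of (iii). You never establish $|\psi_{R,3}^{(n)}(r)|\lesssim(\eta_2R)^{-n}$ for $\eta_2R<r\le(2+8\eta_2)R$, and you suggest weakening the statement instead. The extra factor $\eta_2^{-1}$ comes from your estimate, not from $\psi_{R,3}$. The statement holds exactly as written, with constants depending only on $n,d$.

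There are two easy fixes.
\begin{enumerate}
\item[(a)] This is the paper's route: use the averaging formula on the middle range too. The identity $\psi_{R,3}^{(n)}(r)=\int_0^1\theta^n\varphi_{R,3}^{(n)}(r\theta)\,d\theta$ together with \eqref{varphi3est} gives $|\psi_{R,3}^{(n)}(r)|\lesssim(\eta_2R)^{-n}$ for every $r$. You had this tool in hand but applied it only for small $r$.
\item[(b)] Your Leibniz route also closes. You used the bound $\int_0^r\varphi_{R,3}\lesssim r$ only for $r\le2\eta_2R$, but it holds for all $r$. Indeed $|\varphi_{R,3}|\lesssim1$ by \eqref{varphi3est} with $n=0$; in fact $0\le\varphi_{R,3}\le1$, since $0\le\varphi_R\le1$ and the cutoff takes values in $[0,1]$. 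So the integral term satisfies $\frac{n!}{r^{n+1}}\int_0^r\varphi_{R,3}\le n!\,r^{-n}\le n!\,(\eta_2R)^{-n}$ whenever $r>\eta_2R$. The spurious factor came only from the cruder bound $\int_0^r\varphi_{R,3}\lesssim R$.
\end{enumerate}
With either fix, the rest of your argument completes (iii) as stated. That is, your bounds $(\eta_2R)^{1-k}r^{k-n-1}\le(\eta_2R)^{-n}$ for the remaining Leibniz terms, together with your treatment of the regions $r\le\eta_2R$ and $r>(2+8\eta_2)R$.
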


\begin{proof}
(i) We know from Lemma \ref{lemmavarphi}(i) that $\varphi_{R, 1}(r)=C_{\eta_{2}}$ for $r \leq \eta_{2} R$. 
\begin{equation*}
\psi_{R,  1}(r)=\frac{1}{r} \int_{0}^{r } C_{\eta_{2}} d s=C_{\eta_{2}}, \quad r \leq \eta_{2}R.
\end{equation*}
Similarly, we know that $\operatorname{supp}\left(\varphi_{R, 1}\right) \subset\left[0,5 \eta_{2} R / 4\right]$,
the second assertion of \eqref{psi12} follows. For the derivative estimate \eqref{psi1est}, the support of $\psi_{R, 1}^{(n)}$ means we only need to consider $r \geq \eta_{2} R$. If
$$\eta_{2} R <r<\frac{5 \eta_{2} R}{4},$$
then by the Leibnitz rule and the fundamental theorem of calculus,
\begin{align*}
\left| \psi_{R,  1}^{(n)}(r)\right| & \leq \sum_{\alpha+\beta=n} C_{\alpha \beta} \frac{1}{r^{\alpha+1}}\left|\partial_{r}^{\beta}\left(\int_{0}^{(\cdot) } \varphi_{R, 1}(s) d s\right)(r)\right| \\
& \lesssim_{n} \left(\frac{r }{r^{n+1}}+\sum_{\alpha=0}^{n-1} \frac{1}{r^{\alpha+1}\left(\eta_{2} R\right)^{\beta-1}}\right) \\
& \lesssim_{n} \frac{\eta_{2} R}{r^{n+1} }, 
\end{align*}
where we use Lemma \ref{lemmavarphi}(i) to obtain the penultimate inequality and $r \sim \eta_{2} R$ to obtain the ultimate inequality. If $r \geq 5 \eta_{2} R/4$, then proceeding similarly now using \eqref{psi12}, we obtain the same estimate. 

(ii) Since $\operatorname{supp}\left(\varphi_{R, 2}\right) \subset\left[0,5 \eta_{2} R / 4\right]$, the identity \eqref{psi22} follows by the same reasoning as in the preceding case. Now using Lemma \ref{lemmavarphi}(ii) we see that for every $n \in \mathbb{N}_{0}$,
\begin{equation*}
\left|\psi_{R, 2}^{(n)}(r)\right| \lesssim_{n} \frac{1}{r^{n+1}} \int_{0}^{5 \eta_{2} R / 4} \varphi_{R, 2}(s) d s \lesssim_n \frac{\eta_{2}^{2} R}{r^{n+1}}, \quad \forall r \geq \frac{5 \eta_{2} R}{4 } .
\end{equation*}
Now suppose $r \leq 5 \eta_{2} R/4$. Then for $N \gg n$, we can use Taylor's theorem to write
\begin{equation*}
\varphi_{R, 2}(s)=\sum_{\alpha=0}^{N} \frac{\varphi_{R, 2}^{(\alpha)}(0)}{\alpha!} s^{\alpha}+\int_{0}^{s} \frac{\varphi_{R, 2}^{(N+1)}(\tau)}{N!}(s-\tau)^{N} d \tau, \quad \forall 0 \leq s \leq r \leq \frac{5 \eta_{2} R}{4} . 
\end{equation*}
Now by the definition of $\psi_{R, 2}$, we see that
\begin{equation*}
\psi_{R,  2}(r)=\sum_{\alpha=0}^{N} \frac{\varphi_{R, 2}^{(\alpha)}(0)}{(\alpha+1)!}\left(r\right)^{\alpha}+\frac{1}{r} \int_{0}^{r}\left(\int_{0}^{s} \frac{\varphi_{R, 2}^{(N+1)}(\tau)}{N!}(s-\tau)^{N} d \tau\right) d s .
\end{equation*}
By Lemma \ref{lemmavarphi}(ii),
\begin{align*}
\left|\sum_{\alpha=n}^{N} \frac{\varphi_{R, 2}^{(\alpha)}(0)}{(\alpha+1-n)!} r^{\alpha-n}\right| & \lesssim_{n, N} \sum_{\alpha=n}^{N} \frac{\eta_{2}}{\left(\eta_{2} R\right)^{\alpha}} r^{\alpha-n} \\
& \lesssim_{n} \sum_{\alpha=n}^{N} \frac{\eta_{2}}{\left(\eta_{2} R\right)^{\alpha}}\left(\eta_2R\right)^{\alpha-n} \\
& \lesssim_{n, N} \eta_{2}\left(\eta_{2} R\right)^{-n} .
\end{align*}
For the remainder term, set
\begin{equation*}
\rho_{R, N}(s):=\int_{0}^{s} \frac{\varphi_{R, 2}^{(N+1)}(\tau)}{N!}(s-\tau)^{N} d \tau.
\end{equation*}
It is straightforward to check from the chain rule and Lemma \ref{lemmavarphi}(ii) that
\begin{equation}\label{phoest}
\left|\rho_{R, N}^{(j)}(s)\right| \lesssim_{j, N} \frac{\eta_{2}}{\left(\eta_{2} R\right)^{N+1}} s^{N+1-j}, \quad \forall 0 \leq j \ll N.
\end{equation}
Hence by the chain rule and fundamental theorem of calculus,
\begin{align}
    \left|\left(\frac{1}{(\cdot)} \int_{0}^{(\cdot)} \rho_{R, N}(s) d s\right)^{(n)}(r)\right| &\lesssim_n \frac{1}{r^{n+1}}\left|\int_{0}^{r} \rho_{R, N}(s) d s\right|\notag\\
    &+\sum_{\alpha=0}^{n-1} \frac{1}{r^{\alpha+1}}\left|\rho_{R, N}^{(n-1-\alpha)}\left(r\right)\right|. 
\end{align}
Using \eqref{phoest} and recalling that $r \leq 5\eta_{2}R/4$, we see that
\begin{align*}
\frac{1}{r^{n+1}}\left|\int_{0}^{r} \rho_{R, N}(s) d s\right| & \lesssim_{N} \frac{1}{r^{n+1}} \frac{\eta_{2}}{\left(\eta_{2} R\right)^{N+1}}\left(r\right)^{N+2} \\
& \lesssim_{n, N} \frac{\eta_{2}}{\left(\eta_{2} R\right)^{N+1}}\left(\eta_{2} R\right)^{N+1-n} \\
& =\eta_2\left(\eta_{2} R\right)^{-n} 
\end{align*}
and similarly,
\begin{align*}
& \sum_{\alpha=0}^{n-1} \frac{1}{r^{\alpha+1}}\left|\rho_{R, N}^{(n-1-\alpha)}\left(r\right)\right| \\
 \lesssim&_{N, n} \sum_{\alpha=0}^{n-1} \frac{1}{r^{\alpha+1}}\frac{\eta_{2}}{\left(\eta_{2} R\right)^{N+1}}\left(r\right)^{N+\alpha-n+2} \\
 \lesssim&_{n, N} \eta_2\left(\eta_{2}R\right)^{-n} . 
\end{align*}
This completes the proof of \eqref{psi2est}.

 (iii) By Lemma \ref{lemmavarphi}(iii), we have
$\operatorname{supp}(\varphi_{R,3})\subset[\eta_2R,(2+8\eta_2)R]$,
which implies \eqref{psi32}. For $\eta_2R<r\leq(2+8\eta_2)R$, we use
\[
 \psi_{R,3}^{(n)}(r)=\int_0^1s^n\varphi_{R,3}^{(n)}(sr)\,ds
\]
and \eqref{varphi3est} to obtain the second bound in \eqref{psi3est}.
For $r>(2+8\eta_2)R$, direct differentiation gives
\[
 \psi_{R,3}^{(n)}(r)=\frac{(-1)^nn!}{r^{n+1}}
 \int_0^{(2+8\eta_2)R}\varphi_{R,3}(s)\,ds.
\]
Since the integral is bounded by a constant times $R$, the proof of \eqref{psi3est} is complete.
\end{proof}

Next, we define the vector-valued map $a_{R}$.
\begin{definition}[$a_{R}$ map]\label{defa}
   We define the  vector-valued map $a_{R}:[0, \infty) \times \mathbb{R}^{d} \rightarrow \mathbb{R}^{d}$ by the formula
\begin{equation*}
a_{R}(t, z):=\psi_{R}(|z|)  z, \quad \forall(t, z) \in[0, \infty) \times \mathbb{R}^{d} .
\end{equation*}
We denote the $j^{\text {th }}$ component, for $j \in\{1,\cdots,d\}$, of $a_{R}$ by $a_{R}^{j}$. 
\end{definition}

\begin{remark}[Uniform boundedness of $a_R$]
By direct verification, for all $\eta_2\leq1$ and $R\geq1$, the inequality  
\begin{equation}\label{arbound}
 \max \left\{ \|a_R\|_{L_{t, x}^{\infty}\left([0, \infty) \times \mathbb{R}^d\right)},\; \sup_{x \in \mathbb{R}^d} |x| \|\nabla a_R(x)\|_{L_t^{\infty}([0, \infty))} \right\} \leq C_0  R 
\end{equation}
holds uniformly for some positive constant $C_0$.
\end{remark}

\begin{remark}[Divergence of $a_{R}$]\label{dera}
From Definition \eqref{defa}, \eqref{equ:idenpsi} and the chain rule, we observe that the spatial divergence of $a_{R}$ is given by the formula
    \begin{align*}
\left(\nabla_{x} \cdot a_{R}\right)( z) & =d \psi_{R}(|z|) +\left(\partial_{r} \psi_{R}\right)(|z|) |z| \\
& =(d-1) \psi_{R}(|z|)+\varphi_{R}\left(|z|\right).
\end{align*}
\end{remark}

We next record the derivative estimates for $a_R$. By \eqref{equ:psiRdef},
\[
 \psi_R(r)=\int_0^1\varphi_R(sr)\,ds,\qquad
 \psi_R'(r)=\int_0^1s\varphi_R'(sr)\,ds,\qquad
 \psi_R''(r)=\int_0^1s^2\varphi_R''(sr)\,ds.
\]
Differentiating \eqref{equ:idenpsi}, we obtain
\[
 r\psi_R''(r)=\varphi_R'(r)-2\psi_R'(r).
\]
It follows from the derivative estimates for $\varphi_R$ that
\[
 |\psi_R'(r)|+|r\psi_R''(r)|\lesssim_d\frac1{\eta_2R}.
\]
Moreover, since $\varphi_R$ is smooth and even, $\varphi_R'(0)=0$. Hence
\[
 \frac{\varphi_R'(r)}r=\int_0^1\varphi_R''(sr)\,ds,\qquad
 \frac{\psi_R'(r)}r=\int_0^1s^2\frac{\varphi_R'(sr)}{sr}\,ds,
\]
where the quotients at the origin are defined by continuity. Consequently,
\[
 |\varphi_R''(r)|+\left|\frac{\varphi_R'(r)}r\right|
 +|\psi_R''(r)|+\left|\frac{\psi_R'(r)}r\right|
 \lesssim_d\frac1{(\eta_2R)^2}.
\]
By direct differentiation of $a_R(t,x)=x\psi_R(|x|)$ and Remark \ref{dera},
\[
 \|\nabla_x^2a_R\|_{L^\infty}\lesssim_d\frac1{\eta_2R},\qquad
 \|\Delta_x\nabla_x\cdot a_R\|_{L^\infty}
 \lesssim_d\frac1{(\eta_2R)^2}.
\]
Indeed, the second spatial derivatives of $a_R$ are bounded by
$|\psi_R'(r)|+|r\psi_R''(r)|$, up to a constant depending on $d$, while
\[
 \Delta_x\nabla_x\cdot a_R
 =\varphi_R''+\frac{d-1}{r}\varphi_R'
 +(d-1)\psi_R''+\frac{(d-1)^2}{r}\psi_R'.
\]
Finally, the monotonicity and definition of $\varphi_R$ imply
$0\leq\varphi_R(r)\leq\psi_R(r)\leq1$. For $r\leq\eta_2R$,
\[
 \varphi_{R,1}(r)=\psi_{R,1}(r)=C_{\eta_2},\qquad
 \varphi_{R,3}(r)=\psi_{R,3}(r)=0.
\]
Thus \eqref{varphi2est} and \eqref{psi2est} give
$|\psi_R(r)-\varphi_R(r)|\lesssim_d\eta_2$ on this interval. Combining
this with the uniform bounds for $r>\eta_2R$, we conclude that
\[
 0\leq\psi_R(r)-\varphi_R(r)
 \lesssim_d\eta_2+1_{[\eta_2R,\infty)}(r).
\]

Now we have all the necessary ingredients to define our interaction Morawetz functional $M_{R}$.

\begin{definition}[Interaction Morawetz functional] Let $w:=P_{\leq \eta_{\ast}^{-1} K} u$. Suppose that $\int_{0}^{T} N(t)^{3} d t=K$. We define $M_{R}:[0, \infty) \rightarrow \mathbb{R}$ by the formula
\begin{equation*}
M_{R}(t):=2 \int_{\mathbb{R}^{2d}} a_{R}(t, x-y) \cdot|w(t, y)|^{2} \operatorname{Im}\{\bar{w} \nabla w\}(t, x) d x d y, \quad \forall t \in[0, \infty).
\end{equation*}
\end{definition}

Since $w=P_{\leq\eta_{\ast}^{-1}K}u$, we have
\[
 (i\partial_t+\Delta)w=\mu F(w)+\mu\mathcal N.
\]
By direct computation,
\[
 \partial_t|w|^2=-2\partial_k\operatorname{Im}(\bar w\partial_kw)
 +2\mu\operatorname{Im}(\bar w\mathcal N)
\]
and
\begin{align*}
 \partial_t\operatorname{Im}(\bar w\partial_jw)
 ={}&-2\partial_k\operatorname{Re}(\overline{\partial_kw}\partial_jw)
 +\tfrac12\partial_j\Delta|w|^2\\
 &-\mu|w|^2\partial_j(V*|w|^2)
 +\mu\operatorname{Re}(\overline{\mathcal N}\partial_jw-\bar w\partial_j\mathcal N).
\end{align*}
Therefore, differentiating $M_R$ and integrating in time, we obtain
\begin{align}
&M_{R}(T)-M_{R}(0)\notag\\
=&  -4 \int_{0}^{T} \int_{\R^{2d}} a_{R}^{j}(t, x-y)|w(t, y)|^{2} \partial_{k} \operatorname{Re}\left\{\overline{\partial_{k} w} \partial_{j} w\right\}(t, x) d x d y d t \label{mo1}\\
& -4 \int_{0}^{T} \int_{\R^{2d}} a_{R}^{j}(t, x-y) \partial_k\operatorname{Im}\left\{\bar{w} \partial_{k} w\right\}(t, y) \operatorname{Im}\left\{\bar{w} \partial_{j} w\right\}(t, x) d x d y d t \label{mo2} \\
& -2\mu\int_{0}^{T} \int_{\R^{2d}} a_{R}(t, x-y) \cdot|w(t, y)|^{2}\left(\left(\nabla V *|w|^{2}\right)|w|^{2}\right)(t, x) d x d y d t \label{mo3}\\
& +\int_{0}^{T} \int_{\R^{2d}} a_{R}^{j}(t, x-y)|w(t, y)|^{2} \partial_{j} \partial_{k}^{2}\left(|w|^{2}\right)(t, x) d x d y d t \label{mo4}\\
& +4\mu \int_{0}^{T} \int_{\R^{2d}} a_{R}(t, x-y) \cdot \operatorname{Im}\{\bar{w} \mathcal{N}\}(t, y) \operatorname{Im}\{\bar{w} \nabla w\}(t, x) d x d y d t \label{mo6}\\
& +2\mu\int_{0}^{T} \int_{\R^{2d}} a_{R}(t, x-y) \cdot|w(t, y)|^{2} \operatorname{Re}\{\bar{\mathcal{N}}  \nabla w\}(t, x) d x d y d t \label{mo7}\\
& -2\mu \int_{0}^{T} \int_{\R^{2d}} a_{R}(t, x-y) \cdot|w(t, y)|^{2} \operatorname{Re}\{\bar{w} \nabla \mathcal{N}\}(t, x) d x d y d t,\label{mo8}
\end{align}
where $\mathcal{N}:=P_{\leq \eta_{\ast}^{-1}K}F(u)-F(P_{\leq \eta_{\ast}^{-1}K}u)$. Recalling \eqref{kineticb} and \eqref{arbound}, by H\"older's inequality, we see that
\begin{equation}\label{F-est1}
    \sup_{0\leq t\leq T}|M_{R}(t)|\lesssim_u Ro(K)
\end{equation}
as $K\to\infty$.

We next treat  terms \eqref{mo1}-\eqref{mo8}.

$\bullet$ Estimate of \eqref{mo6}+\eqref{mo7}+\eqref{mo8}: By Proposition \ref{reminder} and \eqref{arbound}, 
    \begin{equation}\label{F-est2}
        |\eqref{mo6}+\eqref{mo7}+\eqref{mo8}|=Ro(K)\qtq{uniformly for all}0<\eta_2\ll1.
    \end{equation}

$\bullet$ Estimate of \eqref{mo4}: Thrice integrating by parts in the variable $x$, we obtain that
\begin{equation*}
\eqref{mo4}=-\int_{0}^{T} \int_{\mathbb{R}^{2d}}\left(\Delta_{x} \nabla_{x} \cdot a_{R}\right)(t, x-y)|w(t, y)|^{2}|w(t, x)|^{2} d x d y d t.
\end{equation*}
Now using the identity for the divergence $\nabla_{x} \cdot a_{R}$ given in Remark \ref{dera}, we see that
\begin{equation*}
\left(\Delta_{x} \nabla_{x} \cdot a_{R}\right)(t, x-y)= \Delta_{x}\left(\varphi_{R}\left(|x-y|\right)+(d-1) \psi_{R}(|x-y|)\right), 
\end{equation*}
so that the preceding derivative estimates give
\begin{equation*}
\left|\Delta_{x}\left(\varphi_{R}\left(|x-y|\right)+(d-1) \psi_{R}(|x-y|)\right)\right| \lesssim \frac{1}{\left(\eta_{2} R\right)^{2}}, \quad \forall(t, x, y) \in[0, \infty) \times \mathbb{R}^{d} \times \mathbb{R}^{d} . 
\end{equation*}
So by conservation of mass and the Fubini-Tonelli theorem, we conclude that
\begin{equation}\label{F-est3}
   |\eqref{mo4}|\lesssim  \frac{K}{\left(\eta_{2} R\right)^{2}} . 
\end{equation}

 $\bullet$ Estimate of \eqref{mo1}+\eqref{mo2}:  we find that
\begin{align*}
\eqref{mo1}+\eqref{mo2}= & 4 \int_{0}^{T} \int_{\mathbb{R}^{2d}}\left(\partial_{k} a_{R}^{j}\right)(t, x-y)|w(t, y)|^{2} \operatorname{Re}\left\{\overline{\partial_{k} w} \partial_{j} w\right\}(t, x) d x d y d t \\
& -4 \int_{0}^{T} \int_{\mathbb{R}^{2d}}\left(\partial_{k} a_{R}^{j}\right)(t, x-y) \operatorname{Im}\left\{\bar{w} \partial_{k} w\right\}(t, y) \operatorname{Im}\left\{\bar{w} \partial_{j} w\right\}\left(t, x) dx d y d t\right. \\
= & : \operatorname{Term}_1+\operatorname{Term}_2 
\end{align*}

We introduce the following notation for the radial component of the gradient centered at the point $y \in \mathbb{R}^{d}$ :
\begin{align*}
& \nabla_{r a d, y} f(x):=\left(\nabla f(x) \cdot \frac{(x-y)}{|x-y|}\right) \frac{(x-y)}{|x-y|},  \\
& \nabla_{r a d, y}^{\perp} f(x):=\nabla f(x)-\nabla_{r a d, y} f(x) . 
\end{align*}
Using the calculus identity
\begin{equation}\label{lllk}
\left(\partial_{k} a_{R}^{j}\right)(t, z)=\left(\left(\partial_{r} \psi_{R}\right)(|z|) \frac{z_{j} z_{k}}{|z|}+\delta_{j k} \psi_{R}(|z|)\right), \quad \forall(t, z) \in[0, \infty) \times \mathbb{R}^{d}, 
\end{equation}
we obtain by direct computation that
\begin{align}\notag
\operatorname{Term}_{1}= & 4 \int_{0}^{T} \int_{\mathbb{R}^{2d}} \varphi_{R}\left(|x-y|\right)|\nabla w(t, x)|^{2}|w(t, y)|^{2} d x d y d t \\\notag
& +4 \int_{0}^{T} \int_{\mathbb{R}^{2d}} \left(\psi_{R}(|x-y|)-\varphi_{R}\left(|x-y|\right)\right)\left|\nabla_{r a d, y}^{\perp} w(t, x)\right|^{2}|w(t, y)|^{2} d x d y d t \\
= & \operatorname{Term}_{1,1}+\operatorname{Term}_{1,2}
\end{align}
and
\begin{align}\notag
\operatorname{Term}_{2}= & -4 \int_{0}^{T} \int_{\mathbb{R}^{2d}} \varphi_{R}\left(|x-y|\right) \operatorname{Im}\{\bar{w} \nabla w\}(t, y) \cdot \operatorname{Im}\{\bar{w} \nabla w\}(t, x) d x d y d t \\\notag
& -4 \int_{0}^{T} \int_{\mathbb{R}^{2d}} \left(\psi_{R}(|x-y|)-\varphi_{R}\left(|x-y|\right)\right)\notag\\
&\qquad\times \operatorname{Im}\left\{\bar{w} \nabla_{rad, x}^{\perp} w\right\}(t, y) \cdot \operatorname{Im}\left\{\bar{w} \nabla_{rad, y}^{\perp} w\right\}(t, x) d x d y d t\\
&:=\operatorname{Term}_{2,1}+\operatorname{Term}_{2,2}.
\end{align}
We first claim the nonnegativity condition $\operatorname{Term}_{1,2}+\operatorname{Term}_{2,2} \geq 0$, and hence the sum can be harmlessly discarded. Indeed, since the function $\varphi_{R}$ is nonincreasing, unpacking the definition of $\psi_{R}$, we see that
\begin{equation*}
\psi_{R}(|x-y|)-\varphi_{R}\left(|x-y|\right) \geq 0.
\end{equation*}
So by Cauchy-Schwarz and symmetry under swapping $x$ and $y$, we obtain the estimate
\begin{equation*}
\left|\operatorname{Term}_{2,2}\right| \leq 4 \int_{0}^{T} \int_{\mathbb{R}^{2d}} \left(\psi_{R}(|x-y|)-\varphi_{R}\left(|x-y|\right)\right)|w(t, y)|^{2}\left|\nabla_{r a d, y}^{\perp} w(t, x)\right|^{2} d x d y d t, 
\end{equation*}
which implies the claim.

To analyze the sum $\operatorname{Term}_{1,1}+\operatorname{Term}_{2,1}$, we use a Galilean invariance trick from \cite{Dodson-Adv}. More precisely, we define a map $\xi:[0, \infty) \times \mathbb{R}^{d} \rightarrow \mathbb{R}^{d}$
\begin{equation*}
\xi(t, z):=
\begin{cases}
0, \quad \text{ if }\ \int_{\mathbb{R}^{d}} \zeta_{R}\left(\left| x-z\right|\right)|w(t, x)|^{2} d x=0,\\
    \frac{\int_{\mathbb{R}^{d}} \zeta_{R}\left(\left| x-z\right|\right) \operatorname{Im}\{\bar{w} \nabla w\}(t, x) d x}{\int_{\mathbb{R}^{d}} \zeta_{R}\left(\left| x-z\right|\right)|w(t, x)|^{2} d x} \quad \text{otherwise}.
\end{cases}
\end{equation*}
 With this choice of $\xi(t, z)$, we see from unpacking the definition of $\varphi_{R}$ and using the Fubini-Tonelli theorem to swap the order of the $d x d y$ and $d z$ integrations that under the Galilean transformation
\begin{align*}
& w \mapsto e^{-i x \cdot \xi(t, z)} w(t, x)=: v_{z}(t, x)=: v(t, x, z)  \\
& \operatorname{Term}_{1,1}+\operatorname{Term}_{2,1} \mapsto \frac{4}{\left\|\zeta_{R}(|\cdot|)\right\|_{L^{1}}} \int_{0}^{T} d t  \int_{\mathbb{R}^{3d}} d x d y d z \zeta_{R}\left(\left|x-z\right|\right) \zeta_{R}\left(\left|y-z\right|\right) \\
& \times\left|\left(\nabla_{x} v\right)(t, x, z)\right|^{2}|v(t, y,z)|^{2}.
\end{align*}
Our goal is to bound from below the expression appearing in the right-hand side. To accomplish this goal, we first introduce a bump function $\vartheta_{R}: \mathbb{R} \rightarrow[0, \infty)$ defined by the formula 
\begin{equation}\label{equ:varthetadef}
\vartheta_{R}(r):=\frac{2}{\eta_{2} R} \int_{\mathbb{R}} 1_{\left[-\left(1-4 \eta_{2} \right)R,\left(1-4 \eta_{2}\right) R\right]}(r-s) \chi\left(\frac{2 s}{\eta_{2} R}\right) d s.
\end{equation}

One  may easily check that $\vartheta_{R}$ is $C^{\infty}, 0 \leq \vartheta_{R} \leq 1$, and
\[
\vartheta_{R}(r)=\left\{\begin{array}{ll}
1, & |r| \leq\left(1-6 \eta_{2}\right) R \\
0, & |r| \geq\left(1-2 \eta_{2}\right) R,
\end{array} \quad \text { and } \quad\left\|\vartheta_{R}^{(n)}\right\|_{L^{\infty}} \lesssim_{n}\left(\eta_{2} R\right)^{-n}, \quad \forall n \in \mathbb{N}.\right.
\]
Since $\zeta_{R} \equiv 1$ on the support of $\vartheta_{R}$, we have the lower bound
\begin{equation*}
\int_{\mathbb{R}^{d}} \zeta_{R}\left(\left| x-z\right|\right)\left|\nabla_{x} v(t, x, z)\right|^{2} d x \geq \int_{\mathbb{R}^{d}} \vartheta_{R}^{2}\left(\left|x-z\right|\right)\left|\nabla_{x} v(t, x, z)\right|^{2} d x. 
\end{equation*}
Out of convenience, we introduce the additional notation
\begin{equation*}
\vartheta_{R, z}(t, x):=\vartheta_{R}\left(\left| x-z\right|\right), \quad(t, x, z) \in[0, \infty) \times \mathbb{R}^{d} \times \mathbb{R}^{d}. 
\end{equation*} Then we have already proved that
\begin{align}\label{F-est4}
 \eqref{mo1}+\eqref{mo2}
 &\geq\frac{4}{\|\zeta_R(|\cdot|)\|_{L^1}}
   \int_0^T\int_{\R^{3d}}\zeta_R(|y-z|)\notag\\
 &\qquad\times|\vartheta_{R,z}(t,x)\nabla_xv_z(t,x)|^2
           |v_z(t,y)|^2\,dx\,dy\,dz\,dt.
\end{align}

$\bullet$ Estimate of \eqref{mo3}: By the oddness of $\nabla V$ and
symmetry under swapping $x$ and $z$, we have
\begin{align*}
 &\int_{\R^{2d}}a_R(t,x-y)\cdot(\nabla V*|w|^2)(t,x)
                   |w(t,y)|^2|w(t,x)|^2\,dx\,dy\\
 &=\frac12\int_{\R^{3d}}
       \big(a_R(t,x-y)-a_R(t,z-y)\big)\cdot\nabla V(x-z)\\
 &\qquad\times|w(t,y)|^2|w(t,x)|^2|w(t,z)|^2\,dx\,dy\,dz.
\end{align*}
By Taylor's theorem, we have that for each component $j \in\{1,\cdots,d\}$,
\begin{equation*}
a_{R}^{j}(t, x-y)-a_{R}^{j}(t, z-y)=\nabla_{x} a_{R}^{j}(t, x-y) \cdot(x-z)-\sum_{|\alpha|=2}(z-x)^{\alpha} \mathcal{R}_{\alpha}^{j}(t, x, y, z),
\end{equation*}
where
\[
\mathcal{R}_{\alpha}^{j}(t, x, y, z):=\frac{2}{\alpha!} \int_{0}^{1}(1-\theta) \partial^{\alpha} a_{R}^{j}(t, x-y+\theta(z-x)) d \theta.
\]
Therefore,
\begin{align}
\eqref{mo3}= & 2\mu \int_0^T dt\int_{\mathbb{R}^{2d}} d x d y \int_{\mathbb{R}^{d}} d z \partial_{k} a_{R}^{j}(t, x-y) \frac{(x-z)_{k}(x-z)_{j}}{|x-z|^{4}}|w(t, y)|^{2}|w(t, x)|^{2}|w(t, z)|^{2} \label{eest1}\\
& -2\mu \sum_{|\alpha|=2}\int_0^Tdt \int_{\mathbb{R}^{2d}} d x d y \int_{\mathbb{R}^{d}} d z(z-x)^{\alpha} \mathcal{R}_{\alpha}^{j}(t, x, y, z) \frac{(x-z)_{j}}{|x-z|^{4}}|w(t, x)|^{2}|w(t, y)|^{2}|w(t, z)|^{2} \label{eest2}.
\end{align}
We first treat \eqref{eest1}. In view of the identity \eqref{lllk}, using Remark \ref{dera} we have
\begin{align*}
\partial_{k} a_{R}^{j}(t, x-y) \frac{(x-z)_{k}(x-z)_{j}}{|x-z|^{4}}
={}&\varphi_R(|x-y|)V(x-z)\\
&+(\psi_R-\varphi_R)(|x-y|)
\frac{|x-z|^2-\left((x-z)\cdot\frac{x-y}{|x-y|}\right)^2}{|x-z|^4},
\end{align*}
then by Lemma \ref{quasisolitonlemma1},
\begin{align}
& 2\mu \int_{0}^{T} \int_{\mathbb{R}^{3d}} \partial_{k} a_{R}^{j}(t, x-y) \frac{(x-z)_{k}(x-z)_{j}}{|x-z|^{4}}|w(t, y)|^{2}|w(t, x)|^{2}|w(t, z)|^{2} d x d y d z d t\notag \\
& = 2\mu \int_{0}^{T} \int_{\mathbb{R}^{2d}} \varphi_{R}\left(|x-y|\right)|w(t, y)|^{2}\left(|w|^{2}\left(V *|w|^{2}\right)\right)(t, x) d x d y d t\notag  \\
& -O(\int_{0}^{T} \int_{|x-y| \geq \eta_2R} |w(t, y)|^{2}\left(|w|^{2}\left(V *|w|^{2}\right)\right)(t, x) d x d y d t) \notag \\
& \quad- \eta_{2} O(\int_{0}^{T} \int_{\mathbb{R}^{2d}} |w(t, y)|^{2}\left(|w|^{2}\left(V *|w|^{2}\right)\right)(t, x) d x d y d t)\\
& = 2\mu\int_{0}^{T} \int_{\mathbb{R}^{2d}} \varphi_{R}\left(|x-y|\right)|w(t, y)|^{2}\left(|w|^{2}\left(V *|w|{ }^{2}\right)\right)(t, x) d x d y d t \notag \\
&-o_{\eta_2 R}(1)K-\eta_2O(K).\label{eest1q}
\end{align}
Next we treat \eqref{eest2}. A simple computation shows
\begin{align*}
\partial_{l} \partial_{k} a_{R}^{j}(t, y)= & \left(\partial_{r}^{2} \psi_{R}\right)(|y|) \frac{y_{l} y_{k} y_{j}}{|y|^{2}} \\
& +\left(\partial_{r} \psi_{R}\right)(|y|) \frac{\left(\delta_{k l} y_{j}+\delta_{j l} y_{k}\right)}{|y|}  \\
& -\left(\partial_{r} \psi_{R}\right)(|y|) \frac{y_{k} y_{j} y_{l}}{|y|^{3}} \\
& +\delta_{j k}\left(\partial_{r} \psi_{R}\right)(|y|) \frac{y_{l}}{|y|}.
\end{align*}
 Hence by $\|\nabla_x^2a_R\|_{L^\infty}\lesssim_d(\eta_2R)^{-1}$, Hardy-Littilewood-Sobolev's inequality, H\"older's inequality the fact that $\|u\|^q_{L_t^qL_x^r([0,T]\times\R^d)}\lesssim_{q}1+ \int_0^T N(t)^2 dt$ for any admissible pair $(q,r)$,
\begin{align}
& \left|2 \sum_{|\alpha|=2} \int_{0}^{T} \int_{\mathbb{R}^{3d}} \frac{(z-x)^{\alpha}(x-z)_{j}}{|x-z|^{4}} \mathcal{R}_{\alpha}^{j}(t, x, y, z)|w(t, x)|^{2}|w(t, y)|^{2}|w(t, z)|^{2} d x d y d z d t \right|\notag \\
 \lesssim& \frac{1}{\eta_2 R}\int_{0}^{T} \int_{\mathbb{R}^{2d}} |w(t, y)|^{2}\left(|w|^{2}\left(\tilde{V} *|w|^{2}\right)\right)(t, x) d x d y d t \notag \\
\lesssim& \frac{K}{\eta_2 R},\label{eest2q}
\end{align}
where $\tilde{V}(x-z):=|x-z|^{-1}$.

Collecting \eqref{eest1q} and \eqref{eest2q}, we obtain that
\begin{align}\label{interme}
    \eqref{mo3}\geq& 2\mu \int_{0}^{T} \int_{\mathbb{R}^{2d}} \varphi_{R}\left(|x-y|\right)|w(t, y)|^{2}\left(|w|^{2}\left(V *|w|^{2}\right)\right)(t, x) d x d y d t \notag\\
&-\eta_2O(K)-o_{\eta_2R}(K).
\end{align}

Next, we proceed to decompose the first term of RHS of \eqref{interme} as
\begin{align}
& 2\mu \int_{0}^{T} \int_{\mathbb{R}^{2d}} \varphi_{R}\left(|x-y|\right)|w(t, y)|^{2}\left(|w|^{2}\left(V *|w|^{2}\right)\right)(t, x) d x d y d t \notag\\
=&\frac{2\mu}{\left\|\zeta_{R}(|\cdot|)\right\|_{L^{1}}} \int_{0}^{T} \int_{\mathbb{R}^{3d}}  \zeta_{R}\left(\left| y-z\right|\right)|w(t, y)|^{2}\left|\left(\vartheta_{R,z} v_{z}\right)(t, x)\right|^{2}\notag\\
&\hspace{26ex}\times\left(V *\left|\vartheta_{R,z} v_{z}\right|^{2}\right)(t, x) d z d x d y d t\label{eeest1}\\
&+ \frac{2\mu}{\left\|\zeta_{R}(|\cdot|)\right\|_{L^{1}}} \int_{0}^{T} d t \int_{\mathbb{R}^{3d}} d z d x d y  \zeta_{R}\left(\left| y-z\right|\right)\left(\zeta_{R}\left(\left| x-z\right|\right)-\vartheta_{R}^{2}\left(\left|x-z\right|\right)\right)\notag \\
&\hspace{26ex}\times|w(t, y)|^{2}\left(|w|^{2}\left(V *|w|^{2}\right)\right)(t, x) \label{eeest0} \\
&+\frac{2\mu}{\left\|\zeta_{R}(|\cdot|)\right\|_{L^{1}}} \int_{0}^{T} \int_{|x-y|\geq\frac{\eta_2 R}{4}}  \zeta_{R}\left(\left| y-z\right|\right)|w(t, y)|^{2}\left|\left(\vartheta_{R,z} v_{z}\right)(t, x)\right|^{2}\notag\\
&\hspace{26ex}\times\left(V *[(1-\vartheta^2_{R,z} )\left|w\right|^{2}]\right)(t, x) d z d x d y d t\label{eeest2}\\
&+\frac{2\mu}{\left\|\zeta_{R}(|\cdot|)\right\|_{L^{1}}} \int_{0}^{T} \int_{|x-y|\leq\frac{\eta_2 R}{4}}  \zeta_{R}\left(\left| y-z\right|\right)|w(t, y)|^{2}\left|\left(\vartheta_{R,z} v_{z}\right)(t, x)\right|^{2}\notag\\
&\hspace{26ex}\times\left(V_{loc} *[(1-\vartheta^2_{R,z} )\left|w\right|^{2}]\right)(t, x) d z d x d y d t\label{eeest3}\\
&+\frac{2\mu}{\left\|\zeta_{R}(|\cdot|)\right\|_{L^{1}}} \int_{0}^{T} \int_{|x-y|\leq\frac{\eta_2 R}{4}}  \zeta_{R}\left(\left| y-z\right|\right)|w(t, y)|^{2}\left|\left(\vartheta_{R,z} v_{z}\right)(t, x)\right|^{2}\notag\\
&\hspace{26ex}\times\left(V_{glob} *[(1-\vartheta^2_{R,z} )\left|w\right|^{2}]\right)(t, x) d z d x d y d t,\label{eeest4}
\end{align}
where
\[
V_{loc}(x):=V(x) 1_{[0, \frac{\eta_2 R}{4}]}(|x|)\qtq{and}V_{glob}(x):=V(x) 1_{[\frac{\eta_2 R}{4},\infty)}(|x|).
\]
We proceed to estimate \eqref{eeest0}-\eqref{eeest4} individually.

$\bullet$ For \eqref{eeest0},  recalling  that $\zeta_R \equiv 1$ on the interval $[0, R]$ and $\zeta_R \equiv 0$ on the interval $\left[\left(1+4 \eta_2\right) R, \infty\right)$, and  $\vartheta_R \equiv 1$ on the interval $\left[0,\left(1-6 \eta_2\right) R\right]$ and $\vartheta_R \equiv 0$ on the interval $\left[\left(1-2 \eta_2\right) R, \infty\right)$, we see that
\[
 \operatorname{supp}\left(\zeta_R-\vartheta_R^2\right) \subset\left[\left(1-6 \eta_2\right) R,\left(1+4 \eta_2\right) R\right].
\]
Since $0 \leq \zeta_R, \vartheta_R \leq 1$ by construction, it then follows that
\begin{align*}
& \frac{1}{\left\|\zeta_R(|\cdot|)\right\|_{L^1}} \int_{\mathbb{R}^d} \zeta_R\left(\left|\frac{y}{R} -z\right|\right)\left(\zeta_R\left(\left|\frac{x}{R} -z\right|\right)-\vartheta_R^2\left(\left|\frac{x}{R}-z\right|\right)\right) d z \\
 \leq& \frac{1}{\left\|\zeta_R(|\cdot|)\right\|_{L^1}} \int_{\left(1-6 \eta_2\right) R \leq|z| \leq\left(1+4 \eta_2\right) R} d z \\
 \lesssim& \eta_2,
\end{align*}
where the ultimate equality follows from $\left\|\zeta_R(|\cdot|)\right\|_{L^1} \sim R^d$. So by the Fubini-Tonelli theorem and the preceding estimate,
\begin{align}
|\eqref{eeest0}|& \lesssim \eta_2 \int_0^T \int_{\mathbb{R}^{2d}} |w(t, y)|^2\left(|w|^2\left(V *|w|^2\right)\right)(t, x) d x d y d t\notag \\
& \lesssim \eta_2 K.\label{aeeest0}
\end{align}

$\bullet$ For \eqref{eeest2}, we observe that
\begin{equation*}
\left|\left(\vartheta_{R,  z} v_{z}\right)(t, x)\right|^{2}\left(V *\left(\left(1-\vartheta_{R,  z}^{2}\right)\left|v_{z}\right|^{2}\right)\right)(t, x) \lesssim|w(t, x)|^{2}\left(V *|w|^{2}\right)(t, x)  
\end{equation*}
and hence by Lemma \ref{quasisolitonlemma1}, we see that
\begin{align}
    |\eqref{eeest2}|& \lesssim \int_{0}^{T} \int_{|x-y|\geq\frac{\eta_{2} R}{4 }} |w(t, y)|^{2}|w(t, x)|^{2}\left(V *|w|^{2}\right)(t, x) d x d y d t\notag\\
& \leq \int_{0}^{T} \left(\int_{|y-x(t)|\geq\frac{\eta_{2} R}{8}}|w(t, y)|^{2} d y\right)| ||w(t)|^{2}\left(V *|w|^{2}\right)(t) \|_{L_{x}^{1}} d t\notag \\
& \quad+M(u) \int_{0}^{T} \left(\int_{|x-x(t)|\geq\frac{\eta_{2} R}{8 }}|w(t, x)|^{2}\left(V *|w|^{2}\right)(t, x) d x\right) d t\notag\\
&\lesssim o_{\eta_2 R}(K). \label{aeeest2}
\end{align}

$\bullet$ For \eqref{eeest3}, we first claim that
\begin{equation*}
1_{\leq \frac{\eta_{2} R}{4 }}(|x-y|)\left(V_{loc} *\left(\left(1-\vartheta_{R, z}^{2}\right)|w|^{2}\right)\right)(t, x) \neq 0 \Longrightarrow\left| y-z\right|>\left(1-7 \eta_{2}\right) R. 
\end{equation*}
Indeed, observe that
\begin{align*}
\left(V_{loc} *\left(\left(1-\vartheta_{R, z}^{2}\right)|w|^{2}\right)\right)(t, x)&=\int_{\left|z^{\prime}\right| \leq \frac{\eta_{2} R}{4 }} V\left(z^{\prime}\right)\left|w\left(t, x-z^{\prime}\right)\right|^{2}\notag\\
&\hspace{5ex}\times\left(1-\vartheta_{R}^{2}\left(\left|\left(x-z^{\prime}\right)-z\right|\right)\right) d z^{\prime}. 
\end{align*}

Since $\operatorname{supp}\left(1-\vartheta_{R}^{2}\right) \subset\left[\left(1-6 \eta_{2}\right) R, \infty\right)$, there must exist $z_0\in\R^d$, $|z_0|\leq \frac{\eta_2R}{4}$ such that
\begin{equation*}
\left|x-z_0-z\right| \geq\left(1-6 \eta_{2}\right) R, 
\end{equation*}
which by the reverse triangle inequality implies that
\begin{align*}
\left|y-z\right| & =\left|(y-x)+\left(x-z^{\prime}\right)+ z^{\prime}-z\right| \\
& \geq\left|x-z_0-z\right|-\left(|y-x|+\left|z^{\prime}\right|\right) \\
& \geq\left(1-6 \eta_{2}\right) R-\frac{\eta_{2} R}{2} \\
& >\left(1-7 \eta_{2}\right) R,
\end{align*}
for all $|x-y| \leq \frac{\eta_2 R}{4}$. Hence, we can use the support of $\zeta_{R}$, H\"older's inequality, Hardy-Littlewood-Sobolev's inequality to obtain
\begin{align}
    |\eqref{eeest3}|&\lesssim \frac{1}{\left\|\zeta_{R}(|\cdot|)\right\|_{L^{1}}} \int_{0}^{T} d t  \int_{\mathbb{R}^{3d}} d z d x d y 1_{\left(1-7 \eta_{2}\right) R \leq \cdot \leq\left(1+4 \eta_{2}\right) R}\left(\left|y-z\right|\right)\notag\\
&\hspace{18ex}\times|w(t, y)|^{2}\left(|w|^{2}\left(V *|w|^{2}\right)\right)(t, x) \notag\\
&\lesssim \eta_{2} \int_{0}^{T} \int_{\mathbb{R}^{2d}} |w(t, y)|^{2}\left(|w|^{2}\left(V *|w|^{2}\right)\right)(t, x) d x d y d t\notag\\
    &\lesssim \eta_2K.\label{aeeest3}
\end{align}

$\bullet$ For \eqref{eeest4}, we first observe from the support of $V_{glob}$ the simple estimate
\begin{align*}
\left(V_{glob} *\left(\left(1-\vartheta_{R,  z}^{2}\right)\left|v_{z}\right|^{2}\right)\right)(t, x)  \lesssim \int_{\left|z^{\prime}\right|>\frac{\eta_{2} R}{4}} \frac{\left|w\left(t, x-z^{\prime}\right)\right|^{2}}{\left|z^{\prime}\right|^{2}} d z^{\prime}\lesssim\left(\frac{1}{\eta_{2} R}\right)^{2} M(u). 
\end{align*}
Therefore,
\begin{align}
|\eqref{eeest4}| &\lesssim \frac{1}{\left\|\zeta_{R}(|\cdot|)\right\|_{L^{1}}\left(\eta_{2} R\right)^{2}} \int_{0}^{T} \int_{\mathbb{R}^{3d}}  \zeta_{R}\left(\left| y-z\right|\right)|w(t, y)|^{2}|w(t, x)|^{2} d z d x d y d t \notag\\
& \lesssim \frac{K}{(\eta_2 R)^2}\label{aeeest4}
\end{align}
Collecting estimates \eqref{aeeest0}-\eqref{aeeest4}, in view of \eqref{interme}-\eqref{eeest4}, we readily obtain that
\begin{align}
    \eqref{mo3}&\geq \frac{2\mu}{\left\|\zeta_{R}(|\cdot|)\right\|_{L^{1}}} \int_{0}^{T} \int_{\mathbb{R}^{3d}}  \zeta_{R}\left(\left| y-z\right|\right)|w(t, y)|^{2}\left|\left(\vartheta_{R,z} v_{z}\right)(t, x)\right|^{2}\notag\\
    &\hspace{12ex}\times \left(V *\left|\vartheta_{R,z} v_{z}\right|^{2}\right)(t, x) d z d x d y d t-\eta_2 K-o_{\eta_2R}(1)K.\label{F-est5}
\end{align}

In view of \eqref{F-est1}--\eqref{F-est5}, we obtain
\begin{align}\label{interme2}
 &\frac4{\|\zeta_R(|\cdot|)\|_{L^1}}
 \int_0^T\int_{\R^{3d}}\zeta_R(|y-z|)|w(t,y)|^2
 |\vartheta_{R,z}\nabla v_z(t,x)|^2\,dx\,dy\,dz\,dt\notag\\
 &+\frac{2\mu}{\|\zeta_R(|\cdot|)\|_{L^1}}
 \int_0^T\int_{\R^{3d}}\zeta_R(|y-z|)|w(t,y)|^2|\vartheta_{R,z}v_z(t,x)|^2\notag\\
 &\hspace{18ex}\times(V*|\vartheta_{R,z}v_z|^2)(t,x)\,dx\,dy\,dz\,dt\notag\\
 &\lesssim\eta_2K+o_{\eta_2R}(1)K+Ro(K).
\end{align}
Next, we use the identity
\begin{align*}
 |\vartheta_{R,z}v_z(t,x)|^2
 ={}&\left|\tilde{1}_{[0,1]}\left(\frac{|x-x(t)|}{C(\eta)}\right)
       \vartheta_{R,z}v_z(t,x)\right|^2\\
 &+\left[1-\tilde{1}_{[0,1]}\left(\frac{|x-x(t)|}{C(\eta)}\right)^2\right]
       |\vartheta_{R,z}v_z(t,x)|^2.
\end{align*}
By the symmetry and nonnegativity of $V$, together with
$|v_z|=|w|$ and $0\leq\vartheta_{R,z}\leq1$, we obtain
\begin{align*}
 0\leq{}&\int_{\R^d}|\vartheta_{R,z}v_z(t,x)|^2
                (V*|\vartheta_{R,z}v_z|^2)(t,x)\,dx\\
 &-\int_{\R^d}\left|\tilde{1}_{[0,1]}\left(\frac{|x-x(t)|}{C(\eta)}\right)
                   \vartheta_{R,z}v_z(t,x)\right|^2\\
 &\hspace{5ex}\times\left(V*\left|\tilde{1}_{[0,1]}
            \left(\frac{|\cdot-x(t)|}{C(\eta)}\right)
            \vartheta_{R,z}v_z\right|^2\right)(t,x)\,dx\\
 \leq{}&2\int_{\R^d}\left[1-\tilde{1}_{[0,1]}
            \left(\frac{|x-x(t)|}{C(\eta)}\right)^2\right]
            |\vartheta_{R,z}v_z(t,x)|^2\\
 &\hspace{5ex}\times(V*|\vartheta_{R,z}v_z|^2)(t,x)\,dx\\
 \leq{}&2\int_{|x-x(t)|\geq C(\eta)}
           |w(t,x)|^2(V*|w|^2)(t,x)\,dx.
\end{align*}
Moreover, by the Fubini--Tonelli theorem and conservation of mass,
\[
 \frac1{\|\zeta_R(|\cdot|)\|_{L^1}}
 \int_{\R^{2d}}\zeta_R(|y-z|)|w(t,y)|^2\,dy\,dz
 =\|w(t)\|_{L^2}^2\leq M(u).
\]
Multiplying the preceding difference by
$\|\zeta_R(|\cdot|)\|_{L^1}^{-1}\zeta_R(|y-z|)|w(t,y)|^2$
and integrating in $t,y,z$, we can apply \eqref{truncateu2}, with
$N=\tilde N=1$, to bound the resulting error by
\[
 2M(u)\int_0^T\int_{|x-x(t)|\geq C(\eta)}
 |w(t,x)|^2(V*|w|^2)(t,x)\,dx\,dt
 \lesssim_u o_{C(\eta)}(1)K.
\]
Combining this estimate with \eqref{interme2}, we conclude that
\begin{align}\label{finalz1}
 &\frac4{\|\zeta_R(|\cdot|)\|_{L^1}}
 \int_0^T\int_{\R^{3d}}\zeta_R(|y-z|)|w(t,y)|^2
 |\vartheta_{R,z}\nabla v_z(t,x)|^2\,dx\,dy\,dz\,dt\notag\\
 &+\frac{2\mu}{\|\zeta_R(|\cdot|)\|_{L^1}}
 \int_0^T\int_{\R^{3d}}\zeta_R(|y-z|)|w(t,y)|^2\notag\\
 &\hspace{5ex}\times\left|\tilde{1}_{[0,1]}\left(\frac{|x-x(t)|}{C(\eta)}\right)
                 \vartheta_{R,z}v_z(t,x)\right|^2\notag\\
 &\hspace{5ex}\times\left(V*\left|\tilde{1}_{[0,1]}
                 \left(\frac{|\cdot-x(t)|}{C(\eta)}\right)
                 \vartheta_{R,z}v_z\right|^2\right)(t,x)\,dx\,dy\,dz\,dt\notag\\
 &\lesssim\eta_2K+o_{\eta_2R}(1)K+o_{C(\eta)}(1)K+Ro(K).
\end{align}

Now we choose $x^{\ast}=x^{\ast}(t,z)$ with $|x^{\ast}-x(t)|\leq2C(\eta)$ such that
\[
 \vartheta_{R,z}(x^{\ast})
 =\inf_{|x-x(t)|\leq2C(\eta)}\vartheta_{R,z}(x).
\]
Then by the fundamental theorem of calculus, for $|x-x(t)|\leq2C(\eta)$,
\[
 \vartheta_{R,z}(x)=\vartheta_{R,z}(x^{\ast})
 +O\left(\frac{C(\eta)}{\eta_2R}\right).
\]
Since $0\leq\vartheta_{R,z}\leq1$, the error in replacing the four cutoff
factors in the potential term of \eqref{finalz1} by
$\vartheta_{R,z}(x^{\ast})$ is bounded, after integration in $t,y,z$, by
\[
 C\frac{C(\eta)}{\eta_2R}
 \int_0^T\int_{\R^d}|w(t,x)|^2(V*|w|^2)(t,x)\,dx\,dt
 \lesssim_u\frac{C(\eta)}{\eta_2R}K.
\]
Moreover, integration by parts gives
\begin{align*}
 &\int_{\R^d}\tilde{1}_{[0,1]}
       \left(\frac{|x-x(t)|}{C(\eta)}\right)^2|\nabla v_z(t,x)|^2\,dx\\
 &=\left\|\nabla\left[\tilde{1}_{[0,1]}
       \left(\frac{|\cdot-x(t)|}{C(\eta)}\right)v_z\right]\right\|_{L_x^2}^2\\
 &\quad+\int_{\R^d}\tilde{1}_{[0,1]}
       \left(\frac{|x-x(t)|}{C(\eta)}\right)
       \Delta_x\left[\tilde{1}_{[0,1]}
       \left(\frac{|x-x(t)|}{C(\eta)}\right)\right]|v_z(t,x)|^2\,dx.
\end{align*}
The last integral is bounded in absolute value by $CC(\eta)^{-2}$.
Using $|\vartheta_{R,z}(x^{\ast})|^2\geq|\vartheta_{R,z}(x^{\ast})|^4$
and the definition of $E$, we therefore obtain
\begin{align}\label{finalz2}
 \text{left-hand side of }\eqref{finalz1}
 &\geq\frac8{\|\zeta_R(|\cdot|)\|_{L^1}}
 \int_0^T\int_{\R^{2d}}\zeta_R(|y-z|)|w(t,y)|^2
 |\vartheta_{R,z}(x^{\ast})|^4\notag\\
 &\qquad\times E\left(\tilde{1}_{[0,1]}
       \left(\frac{|\cdot-x(t)|}{C(\eta)}\right)v_z\right)\,dy\,dz\,dt\notag\\
 &\quad-C\left(\frac{C(\eta)}{\eta_2R}
                         +\frac1{C(\eta)^2}\right)K.
\end{align}
In view of \eqref{interme2}, \eqref{finalz1} and \eqref{finalz2}, we have proved that
\begin{align}\label{mmmn}
 &\frac1{\|\zeta_R(|\cdot|)\|_{L^1}}
 \int_0^T\int_{\R^{2d}}\zeta_R(|y-z|)|w(t,y)|^2
 |\vartheta_{R,z}(x^{\ast})|^4\notag\\
 &\qquad\times E\left(\tilde{1}_{[0,1]}
       \left(\frac{|\cdot-x(t)|}{C(\eta)}\right)v_z\right)\,dy\,dz\,dt\notag\\
 &\lesssim_u\left(\eta_2+o_{\eta_2R}(1)+o_{C(\eta)}(1)
       +\frac{C(\eta)}{\eta_2R}+\frac1{C(\eta)^2}\right)K+Ro(K).
\end{align}
\begin{proof}[Proof of Theorem \ref{quasi} when $N(t)\equiv1$]
For $|z-x(t)|\leq R/2$ and $R\gg C(\eta)/\eta_2$, we have
$\vartheta_{R,z}(x^{\ast})=1$. The sharp Gagliardo--Nirenberg inequality
in the focusing case, and the Sobolev interpolation inequality in either case, give
\[
 E\left(\tilde{1}_{[0,1]}
       \left(\frac{|\cdot-x(t)|}{C(\eta)}\right)v_z\right)
 \gtrsim_u\int_{\R^d}\left|\tilde{1}_{[0,1]}
       \left(\frac{|x-x(t)|}{C(\eta)}\right)w(t,x)\right|^{\frac{2(d+2)}d}\,dx.
\]
Hence by \eqref{localized} and Lemma \ref{quasisolitonlemma2},
\begin{align}\label{zvc}
 K\lesssim_u{}&\frac1{\|\zeta_R(|\cdot|)\|_{L^1}}
 \int_0^T\int_{\R^{2d}}\zeta_R(|y-z|)|w(t,y)|^2
 |\vartheta_{R,z}(x^{\ast})|^4\notag\\
 &\qquad\times E\left(\tilde{1}_{[0,1]}
       \left(\frac{|\cdot-x(t)|}{C(\eta)}\right)v_z\right)\,dy\,dz\,dt.
\end{align}
Choose $\eta$ so that $C(\eta)$ is sufficiently large, then choose $\eta_2$
sufficiently small and $R\gg C(\eta)/\eta_2$. Keeping these parameters
fixed and letting $K\to\infty$, \eqref{mmmn} contradicts \eqref{zvc}.
\end{proof}

\subsection{$N(t)$ varies}
In this general case, let $\tilde N(t)\leq N(t)$ be given by the smoothing
algorithm described below, and let $[0,T]$ be a union of complete small
intervals $J_l$. We take
\[
 w(t,x):=P_{\leq\eta_{\ast}^{-1}K}u(t,x),\qquad
 K:=\int_0^TN(t)^3\,dt,
\]
and
\[
 a_R(t,x-y):=\psi_R(\tilde N(t)|x-y|)\tilde N(t)(x-y).
\]
We define $M_R$ as before:
\[
 M_R(t):=2\int_{\R^{2d}}a_R(t,x-y)\cdot|w(t,y)|^2
                  \operatorname{Im}\{\bar w\nabla w\}(t,x)\,dx\,dy.
\]
For use in this subsection, we set
\[
 \vartheta_{R,z}(t,x):=\vartheta_R(\tilde N(t)|x-z|),\qquad
 v_z(t,x):=e^{-ix\cdot\xi(t,z)}w(t,x),
\]
where
\[
 \xi(t,z):=
 \begin{cases}
 \displaystyle\frac{\int_{\R^d}\zeta_R(\tilde N(t)|x-z|)
                  \operatorname{Im}\{\bar w\nabla w\}(t,x)\,dx}
                  {\int_{\R^d}\zeta_R(\tilde N(t)|x-z|)|w(t,x)|^2\,dx},
       &\displaystyle\int_{\R^d}\zeta_R(\tilde N(t)|x-z|)|w(t,x)|^2\,dx>0,\\
 0, &\text{otherwise}.
 \end{cases}
\]
By a change of variables in the definition of $\varphi_R$,
\begin{align*}
 \varphi_R(\tilde N(t)|x-y|)
 ={}&\frac{\tilde N(t)^d}{\|\zeta_R(|\cdot|)\|_{L^1}}\\
 &\quad\times\int_{\R^d}\zeta_R(\tilde N(t)|x-z|)
                          \zeta_R(\tilde N(t)|y-z|)\,dz.
\end{align*}
In particular,
\[
 \int_{\R^d}\zeta_R(\tilde N(t)|x-z|)
             \operatorname{Im}\{\bar v_z\nabla v_z\}(t,x)\,dx=0.
\]
Choose $x^{\ast}=x^{\ast}(t,z)$ such that
\[
 |x^{\ast}-x(t)|\leq\frac{2C(\eta)}{\tilde N(t)},\qquad
 \vartheta_{R,z}(t,x^{\ast})
 =\inf_{|x-x(t)|\leq2C(\eta)/\tilde N(t)}\vartheta_{R,z}(t,x).
\]
Then on this ball,
\[
 \vartheta_{R,z}(t,x)=\vartheta_{R,z}(t,x^{\ast})
                      +O\left(\frac{C(\eta)}{\eta_2R}\right).
\]
Differentiating $M_R$, we have
\begin{align}
 M_R(T)-M_R(0)
 ={}&\eqref{mo1}+\eqref{mo2}+\eqref{mo3}+\eqref{mo4}
       +\eqref{mo6}+\eqref{mo7}+\eqref{mo8}\label{mo50}\\
 &+2\int_0^T\int_{\R^{2d}}(\partial_ta_R)(t,x-y)\cdot|w(t,y)|^2
       \operatorname{Im}\{\bar w\nabla w\}(t,x)\,dx\,dy\,dt.\label{mo5}
\end{align}
By \eqref{kineticb}, Proposition \ref{reminder} and \eqref{arbound},
\begin{equation}\label{finala0}
 |M_R(T)|+|M_R(0)|+|\eqref{mo6}+\eqref{mo7}+\eqref{mo8}|
 \lesssim_u Ro(K).
\end{equation}

We first estimate the terms \eqref{mo1}--\eqref{mo4}. The derivative
estimates following Remark \ref{dera} give
\[
 \|\nabla_x^2a_R(t)\|_{L^\infty}
 \lesssim_d\frac{\tilde N(t)^2}{\eta_2R},\qquad
 \|\Delta_x\nabla_x\cdot a_R(t)\|_{L^\infty}
 \lesssim_d\frac{\tilde N(t)^3}{(\eta_2R)^2}.
\]
Also,
\[
 0\leq(\psi_R-\varphi_R)(\tilde N(t)|x-y|)
 \lesssim_d\eta_2+
 1_{\{|x-y|\geq\eta_2R/\tilde N(t)\}}.
\]
On each $J_l$, the local Strichartz estimate and the Hardy--Littlewood--Sobolev
inequality yield
\[
 \int_{J_l}\int_{\R^d}|w(t,x)|^2(V*|w|^2)(t,x)\,dx\,dt
 \lesssim_u1.
\]
By Cauchy--Schwarz, first in the spatial variables and then in time,
\[
 \int_{J_l}\int_{\R^d}|w(t,x)|^2(\tilde V*|w|^2)(t,x)\,dx\,dt
 \lesssim_u|J_l|^{1/2}\lesssim_u N(J_l)^{-1},
\]
where $\tilde V(x)=|x|^{-1}$ is as in \eqref{eest2q}. Using the local
comparability of $\tilde N$ on $J_l$ and $\tilde N\leq N$, we obtain
\begin{align*}
 &\frac1{\eta_2R}\int_0^T\tilde N(t)^2
       \int_{\R^d}|w(t,x)|^2(\tilde V*|w|^2)(t,x)\,dx\,dt\\
 &\lesssim_u\frac1{\eta_2R}\sum_{J_l\subset[0,T]}
                  \frac{(\sup_{J_l}\tilde N)^2}{N(J_l)}
 \lesssim_u\frac1{\eta_2R}\int_0^T\tilde N(t)N(t)^2\,dt.
\end{align*}
Thus the Taylor remainder has the same relative bound as in \eqref{eest2q}.
Similarly, \eqref{mo4} is bounded in absolute value by
\[
 \frac{C}{(\eta_2R)^2}\int_0^T\tilde N(t)^3\,dt
 \leq\frac{C}{(\eta_2R)^2}\int_0^T\tilde N(t)N(t)^2\,dt.
\]
Since $\tilde N\leq N$, Lemma \ref{quasisolitonlemma1} applies on the regions
$|x-y|\geq\eta_2R/\tilde N(t)$ and
$|x-x(t)|\geq C(\eta)/\tilde N(t)$. Consequently, the matrix remainder in
\eqref{eest1q} and the potential localization errors are bounded by
\[
 C\left(\eta_2+o_{\eta_2R}(1)+o_{C(\eta)}(1)
                  +\frac1{(\eta_2R)^2}\right)
       \int_0^T\tilde N(t)N(t)^2\,dt.
\]
The argument of \eqref{finalz2}, with the cutoff
$\tilde{1}_{[0,1]}(\tilde N(t)|x-x(t)|/C(\eta))$, gives the additional error
\[
 C\left(\frac{C(\eta)}{\eta_2R}+\frac1{C(\eta)^2}\right)
       \int_0^T\tilde N(t)N(t)^2\,dt.
\]
Here the second term follows from the cutoff Laplacian bound
$C\tilde N(t)^2/C(\eta)^2$ and $\tilde N^3\leq\tilde NN^2$.

We also retain part of the localized kinetic term. Indeed, integration by parts gives
\begin{align*}
 \|\nabla(\vartheta_{R,z}v_z)\|_{L_x^2}^2
 ={}&\int_{\R^d}\vartheta_{R,z}(t,x)^2|\nabla v_z(t,x)|^2\,dx\\
 &-\int_{\R^d}\vartheta_{R,z}(t,x)\Delta_x\vartheta_{R,z}(t,x)
                       |v_z(t,x)|^2\,dx\\
 \leq{}&\int_{\R^d}\zeta_R(\tilde N(t)|x-z|)|\nabla v_z(t,x)|^2\,dx
                +\frac{C\tilde N(t)^2}{(\eta_2R)^2}.
\end{align*}
For $\mu=-1$, the sharp Gagliardo--Nirenberg inequality therefore implies
\begin{align*}
 &4\int_{\R^d}\zeta_R(\tilde N(t)|x-z|)|\nabla v_z(t,x)|^2\,dx\\
 &\quad-2\int_{\R^d}|\vartheta_{R,z}v_z(t,x)|^2
                     (V*|\vartheta_{R,z}v_z|^2)(t,x)\,dx\\
 &\geq4\left(1-\frac{M(u)}{M(Q)}\right)
       \int_{\R^d}\zeta_R(\tilde N(t)|x-z|)|\nabla v_z(t,x)|^2\,dx
       -\frac{C\tilde N(t)^2}{(\eta_2R)^2}.
\end{align*}
For $\mu=1$, the potential term is nonnegative. Averaging this lower bound
with the lower bound obtained from \eqref{finalz2}, we find a constant
$\delta_0=\delta_0(u)>0$ such that
\begin{align}\label{mmmn2}
 &\eqref{mo1}+\eqref{mo2}+\eqref{mo3}+\eqref{mo4}\notag\\
 &\geq\frac{\delta_0}{\|\zeta_R(|\cdot|)\|_{L^1}}
       \int_0^T\tilde N(t)^{d+1}\int_{\R^{2d}}
       \zeta_R(\tilde N(t)|y-z|)|w(t,y)|^2
       |\vartheta_{R,z}(t,x^{\ast})|^4\notag\\
 &\qquad\times E\left(\tilde{1}_{[0,1]}
       \left(\frac{\tilde N(t)|\cdot-x(t)|}{C(\eta)}\right)v_z\right)
                 \,dy\,dz\,dt\notag\\
 &\quad+\frac{\delta_0}{\|\zeta_R(|\cdot|)\|_{L^1}}
       \int_0^T\tilde N(t)^{d+1}\int_{\R^{3d}}
       \zeta_R(\tilde N(t)|y-z|)\zeta_R(\tilde N(t)|x-z|)\notag\\
 &\qquad\times |w(t,y)|^2|\nabla v_z(t,x)|^2\,dx\,dy\,dz\,dt\notag\\
 &\quad-C\left(\eta_2+o_{\eta_2R}(1)+o_{C(\eta)}(1)
       +\frac1{\eta_2R}+\frac1{(\eta_2R)^2}
       +\frac{C(\eta)}{\eta_2R}+\frac1{C(\eta)^2}\right)
       \int_0^T\tilde N(t)N(t)^2\,dt.
\end{align}

It remains to consider \eqref{mo5}. By \eqref{equ:idenpsi},
\[
 \partial_ta_R(t,x-y)=\tilde N'(t)(x-y)
                            \varphi_R(\tilde N(t)|x-y|).
\]
Using the convolution identity above and the choice of $\xi(t,z)$, we have
\begin{align*}
 \eqref{mo5}
 ={}&\frac2{\|\zeta_R(|\cdot|)\|_{L^1}}
       \int_0^T\tilde N'(t)\tilde N(t)^d
       \int_{\R^d}\left(\int_{\R^d}
          \zeta_R(\tilde N(t)|y-z|)|w(t,y)|^2\,dy\right)\\
 &\quad\times\left(\int_{\R^d}\zeta_R(\tilde N(t)|x-z|)
       \operatorname{Im}\{\bar v_z\nabla v_z\}(t,x)\cdot(x-z)\,dx\right)
       \,dz\,dt.
\end{align*}
By Cauchy--Schwarz and Young's inequality,
\begin{align}
 |\eqref{mo5}|
 &\leq\frac{C\delta}{\|\zeta_R(|\cdot|)\|_{L^1}}
       \int_0^T\tilde N(t)^{d+1}\int_{\R^{3d}}
       \zeta_R(\tilde N(t)|y-z|)\zeta_R(\tilde N(t)|x-z|)\notag\\
 &\qquad\times |w(t,y)|^2|\nabla v_z(t,x)|^2\,dx\,dy\,dz\,dt\label{bbv1}\\
 &\quad+\frac{C}{\delta\|\zeta_R(|\cdot|)\|_{L^1}}
       \int_0^T|\tilde N'(t)|^2\tilde N(t)^{d-1}\int_{\R^{3d}}
       \zeta_R(\tilde N(t)|y-z|)\zeta_R(\tilde N(t)|x-z|)\notag\\
 &\qquad\times |w(t,y)|^2|w(t,x)|^2|x-z|^2\,dx\,dy\,dz\,dt.\label{bbv2}
\end{align}
Choose $\delta>0$ such that $C\delta\leq\delta_0/2$. Subtracting
\eqref{bbv1} from the second positive term in \eqref{mmmn2}, we obtain
\begin{align}\label{finala}
 &\frac{\delta_0-C\delta}{\|\zeta_R(|\cdot|)\|_{L^1}}
       \int_0^T\tilde N(t)^{d+1}\int_{\R^{3d}}
       \zeta_R(\tilde N(t)|y-z|)\zeta_R(\tilde N(t)|x-z|)\notag\\
 &\qquad\times |w(t,y)|^2|\nabla v_z(t,x)|^2\,dx\,dy\,dz\,dt
       \geq0.
\end{align}
On the support of $\zeta_R(\tilde N(t)|x-z|)$,
$|x-z|\leq(1+4\eta_2)R/\tilde N(t)$. Moreover,
\[
 \int_{\R^d}\zeta_R(\tilde N(t)|y-z|)\,dz
 =\tilde N(t)^{-d}\|\zeta_R(|\cdot|)\|_{L^1}.
\]
Thus conservation of mass yields
\begin{equation}\label{finalb}
 |\eqref{bbv2}|\leq\frac{C_1R^2}{\delta}
               \int_0^T\frac{|\tilde N'(t)|^2}{\tilde N(t)^3}\,dt.
\end{equation}

We now use the smoothing algorithm in \cite[Section 4]{Dodson-Adv}.
For each fixed $n\geq1$, we can choose $\tilde N$ and a sequence of endpoints
$T\to\infty$ of small intervals such that
\begin{gather*}
 \frac{N(t)}{C(n)}\leq\tilde N(t)\leq N(t),\qquad
 |\tilde N'(t)|\leq C_2\tilde N(t)^3,\\
 \int_0^T|\tilde N'(t)|\,dt
 \leq\frac{C_3}{n}\int_0^T\int_{\R^d}
          \tilde N(t)|u(t,x)|^{\frac{2(d+2)}d}\,dx\,dt.
\end{gather*}
The constants $C_2,C_3$ are independent of $n,T$. In particular,
$|\tilde N'|/\tilde N\lesssim_u N^2$, which gives local comparability
of $\tilde N$ on each $J_l$. Summing the local spacetime estimates over
these intervals, we obtain
\begin{align*}
 \int_0^T\frac{|\tilde N'(t)|^2}{\tilde N(t)^3}\,dt
 &\leq C_2\int_0^T|\tilde N'(t)|\,dt\\
 &\lesssim_u\frac1n\int_0^T\tilde N(t)N(t)^2\,dt.
\end{align*}
Combining \eqref{mo50}--\eqref{finalb}, we conclude that
\begin{align}\label{mmmn20}
 &\frac{\delta_0}{\|\zeta_R(|\cdot|)\|_{L^1}}
       \int_0^T\tilde N(t)^{d+1}\int_{\R^{2d}}
       \zeta_R(\tilde N(t)|y-z|)|w(t,y)|^2
       |\vartheta_{R,z}(t,x^{\ast})|^4\notag\\
 &\qquad\times E\left(\tilde{1}_{[0,1]}
       \left(\frac{\tilde N(t)|\cdot-x(t)|}{C(\eta)}\right)v_z\right)
                 \,dy\,dz\,dt\notag\\
 &\lesssim_u\left(\eta_2+o_{\eta_2R}(1)+o_{C(\eta)}(1)
       +\frac1{\eta_2R}+\frac1{(\eta_2R)^2}
       +\frac{C(\eta)}{\eta_2R}+\frac1{C(\eta)^2}
       +\frac{R^2}{\delta n}\right)
       \int_0^T\tilde N(t)N(t)^2\,dt+Ro(K).
\end{align}
For $|z-x(t)|\leq R/(2\tilde N(t))$ and $R\gg C(\eta)/\eta_2$,
$\vartheta_{R,z}(t,x^{\ast})=1$. By \eqref{truncateu} and mass conservation,
\[
 \int_{\R^d}\zeta_R(\tilde N(t)|y-z|)|w(t,y)|^2\,dy\gtrsim_u1
\]
on this region for sufficiently large $K,R$. The volume of the $z$-ball
is comparable to $R^d\tilde N(t)^{-d}$. Therefore, as in \eqref{zvc}, the
sharp Gagliardo--Nirenberg inequality and Sobolev interpolation give
\begin{align}\label{finalm}
 \text{left-hand side of }\eqref{mmmn20}
 &\gtrsim_u\int_0^T\tilde N(t)
       \int_{|x-x(t)|\leq C(\eta)/\tilde N(t)}
       |w(t,x)|^{\frac{2(d+2)}d}\,dx\,dt\notag\\
 &\gtrsim_u\int_0^T\tilde N(t)N(t)^2\,dt,
\end{align}
where the last inequality follows from Lemma \ref{quasisolitonlemma2}
and $\tilde N\leq N$.

\begin{proof}[Proof of Theorem \ref{quasi} in the general case]
Fix $\delta>0$ as in \eqref{finala}. Choose $\eta$ so that $C(\eta)$ is
sufficiently large, then choose $\eta_2$ sufficiently small,
$R\gg C(\eta)/\eta_2$, and finally $n\gg_u R^2/\delta$.
The terms multiplying $\int_0^T\tilde N(t)N(t)^2\,dt$ in \eqref{mmmn20}
are then absorbed into \eqref{finalm}. Since $\tilde N\geq N/C(n)$, we obtain
\begin{equation}\label{finaln}
 \frac{K}{C(n)}\leq\int_0^T\tilde N(t)N(t)^2\,dt
                    \lesssim_u Ro(K).
\end{equation}
Keeping the parameters fixed and letting $K\to\infty$ along the chosen
sequence, we arrive at a contradiction. This completes the proof.
\end{proof}

\appendix 

\section{Analysis at the minimal blow-up mass}\label{app}

As discussed in the introduction, the uniqueness of the ground state is known only for $d=4$ (and is otherwise unknown). 
Therefore, in this appendix we assume uniqueness for dimensions other than $d=4$.

The argument in the proof of Theorem \ref{quasi} can also be used to study the threshold dynamics, 
leading to the following theorem.

\begin{theorem}
Let $u$ be an almost-periodic solution on $[0,\infty)$  with frequency function $N(t)\leq 1$. 
If $\mu=-1$ and $M(u)=M(Q)$, then there exist a sequence $t_n\geq 0$ and parameters 
$(\gamma_n,x_n,\xi_n,N_n)\in\mathbb{R}\times\mathbb{R}^d\times\mathbb{R}^d\times\mathbb{R}_+$ such that
\begin{equation}\label{sequence}
\|e^{i\gamma_n}e^{ix\cdot\xi_n}N_n^{\frac{d}{2}}u(t_n,N_n(x-x_n))-Q\|_{L^2}\to 0.
\end{equation}
\end{theorem}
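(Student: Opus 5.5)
We argue through a dichotomy on $\int_0^\infty N(t)^3\,dt$, as in Section \ref{cascadesec} and Section \ref{quasisec}, with the strict inequality $M(u)<M(Q)$ everywhere replaced by a quantitative defect.

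\emph{Case $\int_0^\infty N(t)^3dt<\infty$.} Lemma \ref{addregu} still applies; its proof uses the long-time Strichartz estimate, which never uses $M(u)<M(Q)$. Following the proof of Theorem \ref{rapidcascade}, after a Galilean transformation we get $\|u(t)\|_{\dot H^1}\to0$ and hence $E(u)=0$. Now apply the sharp Gagliardo--Nirenberg inequality at equality mass: $E(u_0)=0$ with $M(u_0)=M(Q)$ makes $u_0$, and likewise each $u(t)$ after the Galilean shift, an optimizer of $J$. Uniqueness of optimizers modulo symmetries, which is assumed in this appendix, then gives $u(t)=g_t Q$ for some $g_t\in G$, and this is stronger than \eqref{sequence}. (In fact $E=0$ together with $\|\nabla u(t)\|\to0$ forces $u\equiv0$, so this case cannot occur; either way the claim holds.)

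\emph{Case $\int_0^\infty N(t)^3dt=\infty$.} We rerun the frequency-localized interaction Morawetz argument. All the error bounds in \eqref{mmmn20} were obtained without using $M(u)<M(Q)$. The one place that used it is the positivity constant $\delta_0$ from the Gagliardo--Nirenberg step, which fed the bound \eqref{finalm}. At $M(u)=M(Q)$ we instead keep the full quantity $\mathcal E(t,z):=E\big(\tilde 1_{[0,1]}(\tilde N|\cdot-x(t)|/C(\eta))v_z\big)\ge0$, the localized energy. This is nonnegative because the localized function has mass at most $M(Q)$. Its kinetic part is controlled by $\mathcal E$ itself. The one delicate piece is the $\partial_t a_R$ term \eqref{bbv1}. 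There we do not absorb it using $\delta_0$; we use the smoothing algorithm and Cauchy--Schwarz, with a small factor $n^{-1/2}$, against the kinetic integral. That integral is bounded, up to the errors, by the energy integral plus $\int \tilde N^3$-type terms. This gives
\[
\int_0^T \tilde N(t)^{d+1}\!\!\int \zeta_R|w|^2|\vartheta_{R,z}(x^*)|^4\,\mathcal E\,dy\,dz\,dt\ \le\ \epsilon\int_0^T\tilde N N^2\,dt + R\,o(K),
\]
with $\epsilon$ arbitrarily small after the parameters are chosen as in the proof of Theorem \ref{quasi}. Dividing by $\int_0^T\tilde N N^2\,dt\gtrsim K/C(n)$, the weighted time average of the normalized energy $\mathcal E(t,z)/\tilde N(t)^2$, over $z$ near $x(t)$, tends to $0$. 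We then extract times $t_n$ along which $\tilde N(t_n)^{-2}E(\tilde 1\, v_{z_n}(t_n))\to0$. For these times we also want $\int|\tilde 1 v|^{2(d+2)/d}\gtrsim N^2$, which is Lemma \ref{quasisolitonlemma2} applied pointwise by averaging, and $\tilde N\sim N$.

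\emph{Concentration-compactness.} Rescale by $N(t_n)$ and translate by $x(t_n)$ and $\xi(t_n)$. By almost periodicity, the rescaled data $f_n$ converge in $L^2$ along a subsequence to some $f$ with $M(f)=M(Q)$. The truncation $\tilde 1$ costs only $o_{C(\eta)}(1)$ in $L^2$, and $w$ differs from $u$ by $o(1)$, since $\eta_*^{-1}K\gg N(t_n)$. The rescaled energies satisfy $E(\tilde 1 f_n e^{-ix\cdot\zeta_n})\to0$, while the potential energies stay bounded below. Lower semicontinuity of $\|\nabla\cdot\|^2$, together with convergence of the Hartree term under $L^2$ convergence plus $\dot H^1$ boundedness (by HLS and interpolation), gives an $H^1$ limit with zero energy, nonzero potential energy, and mass $\le M(Q)$. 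Gagliardo--Nirenberg then forces mass exactly $M(Q)$ and equality in the inequality. By assumed uniqueness, $f=g Q$ for some $g\in G$. Letting $\eta\to0$ diagonally yields \eqref{sequence}.

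The main obstacle is the middle step. Without $\delta_0$, we must show that \eqref{bbv1} and the kinetic-term absorption close using only the nonnegative energy functional. The first thing to try is to bound the localized kinetic energy by a multiple of $\mathcal E$ plus potential energy, and then use the $\frac1n$ gain from the smoothing algorithm to make the $\partial_t a_R$ term small.
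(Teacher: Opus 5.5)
Your treatment of the case $\int_0^\infty N(t)^3\,dt<\infty$ is fine and agrees with the paper; this case is in fact impossible.

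\textbf{The gap.} It sits in the quasi-soliton case, at exactly the step you flag yourself, and the claim you rely on there is false. At threshold mass the localized kinetic energy is \emph{not} controlled by the localized energy. Sharp Gagliardo--Nirenberg only gives $E(f)\geq\frac12\bigl(1-M(f)/M(Q)\bigr)\|\nabla f\|_{L^2}^2$, and your truncated functions have mass tending to $M(Q)$. Indeed $\lambda^{d/2}Q(\lambda\cdot)$ has zero energy and arbitrary kinetic energy. So once $\delta_0$ is gone, nothing in your proposal controls \eqref{bbv1}.

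Cauchy--Schwarz with the $n^{-1/2}$ gain from the smoothing algorithm only gives $R\,n^{-1/2}\bigl(\int_0^T\tilde NN^2\,dt\bigr)^{1/2}\mathrm{Kin}^{1/2}$, where $\mathrm{Kin}$ is the kinetic integral in \eqref{bbv1}. That integral carries the full weight $\zeta_R(\tilde N(t)|x-z|)$. It therefore includes the kinetic energy on the annulus where $\zeta_R>\vartheta_R^2$, which no truncated energy sees.

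To close your route you would need to supply three things:
\begin{enumerate}
\item Young's inequality with a small parameter $\epsilon$ in \eqref{bbv1}--\eqref{bbv2}.
\item The splitting $\int\zeta_R|\nabla v_z|^2=\int(\zeta_R-\vartheta_{R,z}^2)|\nabla v_z|^2+\|\nabla(\vartheta_{R,z}v_z)\|_{L^2}^2+O(\tilde N^2(\eta_2R)^{-2})$.
\item The identity $\|\nabla f\|_{L^2}^2=2E(f)+\frac12\int|f|^2(V*|f|^2)$.
\end{enumerate}
The annular and energy pieces are then absorbed by the positive terms $4\int(\zeta_R-\vartheta_{R,z}^2)|\nabla v_z|^2+8E(\vartheta_{R,z}v_z)$ of the Morawetz identity. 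The potential piece, weighted by $\tilde N(t)$ and integrated in time, is $\lesssim_u\int_0^T\tilde NN^2\,dt$, by local Strichartz and Hardy--Littlewood--Sobolev on each $J_l$ plus local comparability of $\tilde N$. None of this is in your write-up.

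\textbf{Extraction step.} Lemma \ref{quasisolitonlemma2} is a time-integrated statement, not a pointwise one. Also $\tilde N\sim N$ is false: you only have $\tilde N\geq N/C(n)$, with $C(n)$ uncontrolled. So smallness of the average of $\mathcal E/\tilde N^2$ and a lower bound on $\|f_{t,z}\|_{L^{2(d+2)/d}}$ need not hold at the same times. Instead, compare $\int\omega\,E(f_{t,z})$ with $\int\omega\,\|\nabla f_{t,z}\|_{L^2}^2\gtrsim_u\int_0^T\tilde NN^2\,dt$ for one and the same weight $\omega$. Then choose $(t_k,z_k)$ with $E(f_k)\leq\frac1k\|\nabla f_k\|_{L^2}^2$.

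\textbf{Compactness step.} Rescaling by $N(t_k)$ gives no $\dot H^1$ bound, so your passage to the limit in the Hartree term is unjustified. Normalize $\|\nabla f_k\|_{L^2}=\|\nabla Q\|_{L^2}$ instead, and invoke the compactness of Gagliardo--Nirenberg near-optimizers from \cite{MiaoXuZhaoblowup}.

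\textbf{The paper's route.} The paper avoids the absorption problem entirely by arguing by contradiction. If \eqref{sequence} fails, that same compactness, together with $\|v_{z_k}(t_k)-w_k\|_{L^2}^2\leq M(Q)-M(w_k)$ for the truncations $w_k$ (cutoffs take values in $[0,1]$), yields a uniform $c_0>0$. This $c_0$ satisfies $E(f)\geq c_0\|\nabla f\|_{L^2}^2$ for every truncated function $f$ used in Section \ref{quasisec}. It replaces $1-M(u)/M(Q)$, and the quasi-soliton argument then runs verbatim with $\delta_0=\delta_0(u,c_0)$.
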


\begin{remark}
This theorem will play a key role in a future paper, where we establish the complete classification of minimal-mass blow-up solutions to \eqref{NLH} with $\mu=-1$.
\end{remark}

\begin{remark}
Analogous results are known for the mass-critical NLS; we refer the interested reader to \cite{Dodson-SIAM,Dodson-NA,F}.
\end{remark}

\begin{proof}
If $\int_0^\infty N(t)^3\,dt<\infty$, let
$\xi_0=\lim_{t\to\infty}\xi(t)$. The argument in the proof of
Theorem \ref{rapidcascade} gives $E(e^{-ix\cdot\xi_0}u_0)=0$.
By \cite[Proposition 3.2]{MiaoXuZhaoblowup}, we have
\[
 u(t,x)=\lambda_0^{d/2}e^{i\gamma_0-it|\xi_0|^2}e^{ix\cdot\xi_0}
       e^{i\lambda_0^2t}Q(\lambda_0(x-x_0-2\xi_0t))
\]
for some $\lambda_0>0$, $\gamma_0\in\R$ and $x_0,\xi_0\in\R^d$.
In this case $N(t)\sim\lambda_0$, which contradicts
$\int_0^\infty N(t)^3\,dt<\infty$.

If $\int_0^\infty N(t)^3\,dt=\infty$, suppose that \eqref{sequence}
fails. Then there exists $\eps>0$ such that
\[
 \inf_{\substack{t\geq0,\ \gamma_0\in\R,\ \lambda_0>0\\
                  x_0,\xi_0\in\R^d}}
 \left\|e^{i\gamma_0}e^{ix\cdot\xi_0}\lambda_0^{d/2}
 u\bigl(t,\lambda_0(x-x_0)\bigr)-Q\right\|_{L^2}\geq4\eps.
\]
As in Section \ref{quasisec}, choose the parameters satisfying
\eqref{xxibound}, with $\xi(0)=0$, and set
\[
 K=\int_0^TN(t)^3\,dt,\qquad w=P_{\leq\eta_{\ast}^{-1}K}u.
\]
For $\eta_{\ast}$ sufficiently small, \eqref{xxibound}, $N(t)\leq1$
and \eqref{localized} imply
\begin{equation}\label{appd5}
 \lim_{T\to\infty}\sup_{0\leq t\leq T}
 \|u(t)-w(t)\|_{L^2}
 =\lim_{T\to\infty}\sup_{0\leq t\leq T}
 \|P_{\geq\eta_{\ast}^{-1}K}u(t)\|_{L^2}=0,
\end{equation}
where $P_{\geq\eta_{\ast}^{-1}K}=1-P_{\leq\eta_{\ast}^{-1}K}$.
Let $\tilde N,\vartheta_{R,z},\xi(t,z)$ and $v_z$ be as in the general
case of Section \ref{quasisec}.

We claim that there exists $c_0\in(0,1/2]$ such that, for all
sufficiently large $K$, uniformly in $t,z$ and the spatial cutoff parameters,
\begin{equation}\label{appd3}
\begin{split}
 E(\vartheta_{R,z}v_z)
 &\geq c_0\|\nabla(\vartheta_{R,z}v_z)\|_{L^2}^2,\\
 E\left(\tilde{1}_{[0,1]}
       \left(\frac{\tilde N(t)|\cdot-x(t)|}{C(\eta)}\right)v_z\right)
 &\geq c_0\left\|\nabla\left(\tilde{1}_{[0,1]}
       \left(\frac{\tilde N(t)|\cdot-x(t)|}{C(\eta)}\right)v_z\right)
       \right\|_{L^2}^2.
\end{split}
\end{equation}
Otherwise, along a sequence of intervals with $K\to\infty$, there exist
times $t_n$, points $z_n$ and nonzero functions $w_n$ of one of the two
forms in \eqref{appd3} such that
\begin{equation}\label{appd0}
 M(w_n)\leq M(Q),\qquad
 0\leq E(w_n)\leq\frac1n\|\nabla w_n\|_{L^2}^2.
\end{equation}
The sharp Gagliardo--Nirenberg inequality gives
\[
 \frac{E(w_n)}{\|\nabla w_n\|_{L^2}^2}
 \geq\frac12\left(1-\frac{M(w_n)}{M(Q)}\right),
\]
so that
\begin{equation}\label{appd00}
 \|w_n\|_{L^2}\longrightarrow\|Q\|_{L^2}.
\end{equation}
Define
\[
 \tilde w_n(x):=
 \left(\frac{\|\nabla Q\|_{L^2}}{\|\nabla w_n\|_{L^2}}\right)^{d/2}
 w_n\left(\frac{\|\nabla Q\|_{L^2}}{\|\nabla w_n\|_{L^2}}x\right).
\]
Then
\[
 \|\tilde w_n\|_{L^2}\to\|Q\|_{L^2},\qquad
 \|\nabla\tilde w_n\|_{L^2}=\|\nabla Q\|_{L^2},\qquad
 E(\tilde w_n)\to0.
\]
Following the proofs of Propositions 3.1 and 3.3 in \cite{MiaoXuZhaoblowup},\footnote{Miao, Xu
and Zhao treat the case $d=4$. Under the uniqueness assumption for the
ground state, their argument extends readily to every dimension $d\geq3$.}
after passing to a subsequence, there exist $\theta\in\R$ and $x_n\in\R^d$
such that
\begin{equation}\label{finalblow}
 \|\tilde w_n(\cdot-x_n)-e^{i\theta}Q\|_{H^1}\longrightarrow0.
\end{equation}
On the other hand, since both cutoff functions take values in $[0,1]$,
\eqref{appd00} implies
\begin{equation}\label{appd4}
 \|v_{z_n}(t_n)-w_n\|_{L^2}^2
 \leq M(w(t_n))-M(w_n)\leq M(Q)-M(w_n)\longrightarrow0.
\end{equation}
Together with \eqref{appd5}, this yields
\begin{align*}
 \|u(t_n)-e^{ix\cdot\xi(t_n,z_n)}w_n\|_{L^2}
 &\leq\|u(t_n)-w(t_n)\|_{L^2}
       +\|v_{z_n}(t_n)-w_n\|_{L^2}\\
 &\longrightarrow0.
\end{align*}
Combining this with \eqref{finalblow}, we obtain \eqref{sequence},
a contradiction. This proves \eqref{appd3}.

By the first inequality in \eqref{appd3},
\[
 \int_{\R^d}|\vartheta_{R,z}v_z|^2
                  (V*|\vartheta_{R,z}v_z|^2)\,dx
 \leq(2-4c_0)\|\nabla(\vartheta_{R,z}v_z)\|_{L^2}^2.
\]
Therefore, integrating by parts as in the estimate preceding \eqref{mmmn2},
we have
\begin{align*}
 &4\int_{\R^d}\zeta_R(\tilde N(t)|x-z|)|\nabla v_z(t,x)|^2\,dx\\
 &\quad-2\int_{\R^d}|\vartheta_{R,z}v_z(t,x)|^2
                         (V*|\vartheta_{R,z}v_z|^2)(t,x)\,dx\\
 &\geq8c_0\int_{\R^d}\zeta_R(\tilde N(t)|x-z|)
                         |\nabla v_z(t,x)|^2\,dx
          -\frac{C\tilde N(t)^2}{(\eta_2R)^2}.
\end{align*}
Combining this estimate with \eqref{finalz2}, we obtain \eqref{mmmn2}
with $\delta_0=\delta_0(u,c_0)>0$. The second inequality in \eqref{appd3}
and Sobolev interpolation give \eqref{finalm}. Choose $\delta>0$ so that
$C\delta\leq\delta_0/2$. Then \eqref{bbv1} is absorbed as in
\eqref{finala}, while \eqref{bbv2} is estimated by \eqref{finalb}.
Proceeding as in \eqref{mmmn20}--\eqref{finaln}, we choose $C(\eta)$
sufficiently large, then $\eta_2$ sufficiently small,
$R\gg C(\eta)/\eta_2$ and $n\gg_uR^2/\delta$, to obtain
\[
 \frac{K}{C(n)}
 \leq\int_0^T\tilde N(t)N(t)^2\,dt\lesssim_u Ro(K).
\]
Keeping the parameters fixed and letting $K\to\infty$ along the
endpoints supplied by the smoothing algorithm, we arrive at a contradiction.
The proof is complete.
\end{proof}

\end{document}